\theoremstyle{plain}
\newtheorem{theorem}{Theorem}[section]
\newtheorem{proposition}[theorem]{Proposition}
\newtheorem{corollary}[theorem]{Corollary}
\newtheorem{lemma}[theorem]{Lemma}
\newtheorem{assertion}{Assertion}
\theoremstyle{definition}
\newtheorem{definition}[theorem]{Definition}
\newtheorem{remark}[theorem]{Remark}
\def\Z{\mathbb{Z}}
\def\Q{\mathbb{Q}}
\def\R{\mathbb{R}}
\def\C{\mathbb{C}}
\def\p{\mathfrak{p}}
\def\F{\mathcal{F}}
\def\G{\mathcal{G}}
\def\BF{\mathcal{BF}}
\def\K{\mathcal{K}}
\def\M{\mathcal{M}}
\def\Ker{\operatorname{Ker}}
\def\Coker{\operatorname{Coker}}
\def\Hom{\operatorname{Hom}}
\def\sign{\operatorname{sign}}
\def\rank{\operatorname{rank}}
\def\dis{\operatorname{dis}}
\def\inte{\operatorname{int}}
\def\Arf{\mathrm{Arf}}
\def\id{\mathrm{id}}
\def\to{\mathchoice{\longrightarrow}{\rightarrow}{\rightarrow}{\rightarrow}}
\newcommand{\shortxra}[2][]{\ext@arrow 0359\rightarrowfill@{#1}{#2}}
\def\longrightarrowfill@{\arrowfill@\relbar\relbar\longrightarrow}
\newcommand{\longxra}[2][]{\ext@arrow 0359\longrightarrowfill@{#1}{#2}}
\renewcommand{\xrightarrow}[2][]{\mathchoice{\longxra[#1]{#2}}%
  {\shortxra[#1]{#2}}{\shortxra[#1]{#2}}{\shortxra[#1]{#2}}}
\def\@addressinfootnote{%
    \begingroup
    \def\author##1{##1:}%
    \def\\{, }%
    \def\address##1##2{
      {##2}%
    }%
    \def\email##1##2{%
      \@ifnotempty{##2}{, e-mail: \ignorespaces{\ttfamily##2}}}%
    \def\curraddr##1##2{}%
    \def\urladdr##1##2{}%
    \addresses
    \endgroup
    \nolinebreak\vrule width 0mm depth 2ex
}
\def\@adminfootnotes{%
  \let\@makefnmark\relax  \let\@thefnmark\relax
  \ifx\@empty\authors\else \@footnotetext{\@addressinfootnote}\fi
  \ifx\@empty\@date\else \@footnotetext{\@setdate}\fi
  \ifx\@empty\@subjclass\else \@footnotetext{\@setsubjclass}\fi
  \ifx\@empty\@keywords\else \@footnotetext{\@setkeywords}\fi
  \ifx\@empty\thankses\else \@footnotetext{%
    \def\par{\let\par\@par}\@setthanks}%
  \fi
}
\renewcommand{\author}[2][]{%
  \ifx\@empty\authors
    \gdef\authors{#2}%
  \else
    \g@addto@macro\authors{\and#2}%
  \fi
  \g@addto@macro\addresses{\author{#2}}%
  \@ifnotempty{#1}{%
    \ifx\@empty\shortauthors
      \gdef\shortauthors{#1}%
    \else
      \g@addto@macro\shortauthors{\and#1}%
    \fi
  }%
}
\def\enddoc@text{
}
\def\Nopagebreak{\@nobreaktrue\nopagebreak}
\begin{document}

\title%
[Hirzebruch-type defects from iterated $p$-covers]%
{Link concordance, homology cobordism, and Hirzebruch-type
  defects from iterated $p$-covers}

\author{Jae Choon Cha}

\address{Department of Mathematics and Pohang Mathematics Institute, Pohang University of Science and Technology\\
  Pohang, Gyungbuk 790--784, Republic of Korea}

\email{jccha@postech.ac.kr}

\def\subjclassname{\textup{2000} Mathematics Subject Classification}
\expandafter\let\csname subjclassname@1991\endcsname=\subjclassname
\expandafter\let\csname subjclassname@2000\endcsname=\subjclassname
\subjclass{%
57M25, 
57M27, 
57N70. 
}

\keywords{Link concordance, Homology cobordism, Iterated $p$-cover,
  Intersection form defect, Bing double}

\begin{abstract}
  We obtain new invariants of topological link concordance and
  homology cobordism of 3-manifolds from Hirzebruch-type intersection
  form defects of towers of iterated $p$-covers.  Our invariants can
  extract geometric information from an arbitrary depth of the derived
  series of the fundamental group, and can detect torsion which is
  invisible via signature invariants.  Applications illustrating these
  features include the following: (1) There are infinitely many
  homology equivalent rational 3-spheres which are indistinguishable
  via multisignatures, $\eta$-invariants, and $L^2$-signatures but
  have distinct homology cobordism types.  (2) There is an infinite
  family of 2-torsion (amphichiral) knots with non-slice iterated Bing
  doubles; as a special case, we give the first proof of the conjecture
  that the Bing double of the figure eight knot is not slice.  (3)
  There exist infinitely many torsion elements at any depth of the
  Cochran-Orr-Teichner filtration of link concordance.
\end{abstract}

\maketitle

%


\section{Introduction and main results}

In this paper we define invariants of 3-manifolds and links in $S^3$
from towers of iterated $p$-covers, where $p$ is prime, and employ the
invariants to study homology cobordism and link concordance.
Essentially our invariants are defects of the (Witt classes of)
twisted intersection forms of topological 4-manifolds bounded by the
covers.  The invariant has two remarkable merits: (i)~it can extract
geometric information from an arbitrary depth of the derived series of
the fundamental group, and (ii)~it can detect ``torsion'', which is
invisible via signature invariants.  Also, in many interesting cases,
the invariant can be computed by combinatorial algorithms as
illustrated in our applications.

The above properties of our invariants may be discussed from the
viewpoint of a geometric technique producing new 3-manifolds and
links, which is often referred to as ``tying a knot'', ``satellite
construction'', or ``infection''.
Figure~\ref{figure:infection-example} illustrates infection on a link
$L\subset S^3$ by the figure eight knot, which gives us the
twice-iterated Bing double: given an unknotted circle $\alpha\subset
S^3$ disjoint from $L$, by tying the figure eight knot along a 2-disk
bounded by $\alpha$, we obtain a new link.  Or alternatively, the new
link is obtained from $L$ by filling in the exterior of $\alpha$ with
the figure eight knot exterior.  Infection on a 3-manifold is
defined in a similar way.  (For a precise definition, refer to
Section~\ref{subsection:knot-infection}.)

\begin{figure}[ht]
  \begin{center}
    \includegraphics[scale=.9]{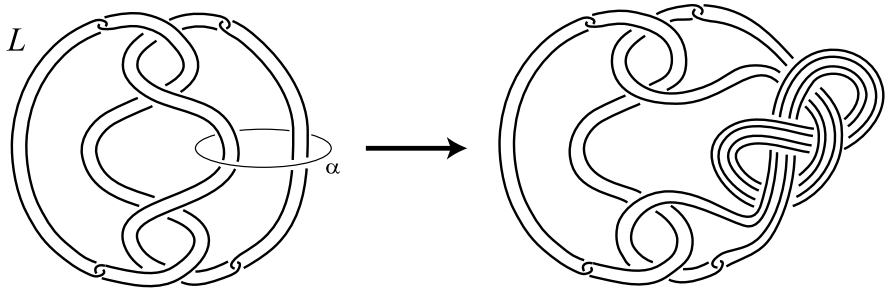}
    \caption{Infection on a link $L$ by the figure eight knot}
    \label{figure:infection-example}
  \end{center}
\end{figure}

Roughly speaking, if $\alpha$ lies in the $n$th derived subgroup of
the fundamental group, then by infection along $\alpha$, further
complication from the group of the infection knot appears in the $n$th
derived subgroup.  Based on this property, in many papers infection
has been used as a primary source of examples revealing the structure
of knot and link concordance.  In particular, in their landmark
paper~\cite{Cochran-Orr-Teichner:1999-1} and subsequent works
\cite{Cochran-Orr-Teichner:2002-1,Cochran-Teichner:2003-1}, Cochran,
Orr, and Teichner gave a new framework of a systematic study of
concordance of knots and links in terms of a filtration related to the
derived series, and used infection as a realization tool to illustrate
the rich contents of their theory.  (For definitions of concordance
and the Cochran-Orr-Teichner filtration, see
Section~\ref{section:witt-class-defects-of-links}
and~\ref{section:obstruction-to-solvability}.)  There are several
recent related results using infection, including works of Cochran,
Taehee Kim, Harvey, Leidy, and the author \cite{Cochran:2002-1,
  Cochran-Kim:2004-1, Kim:2004-1, Kim:2005-1, Kim:2006-1,
  Harvey:2006-1, Cochran-Harvey:2006-01, Cochran-Harvey-Leidy:2006-1,
  Cha:2006-1}.  Such reams of results leads us to study how to detect
the effect of infection along a curve contained in a higher term of
the derived series of the fundamental group, up to concordance and
homology cobordism.

Our result provides a new method to detect the effect of infection.
Our method is effective even when the infection knot $K$ is
\emph{torsion}, that is, of finite order in the knot concordance
group.  So far, the von Neumann-Cheeger-Gromov $L^2$-signature
invariants have been used as the only available tool to detect
infection when the infection curve $\alpha$ is in a higher term of the
derived series.  Roughly speaking, $L^2$-signatures of the infected
manifold or link associated to certain solvable coefficient systems
reflect $L^2$-signatures of the infection knot~$K$ associated to much
simpler coefficient systems (e.g., abelian or metabelian ones).  All
recent works mentioned above \cite{Cochran-Orr-Teichner:1999-1,
  Cochran-Orr-Teichner:2002-1, Cochran-Teichner:2003-1,
  Cochran:2002-1, Cochran-Kim:2004-1, Kim:2004-1, Kim:2005-1,
  Kim:2006-1, Harvey:2006-1, Cochran-Harvey:2006-01,
  Cochran-Harvey-Leidy:2006-1, Cha:2006-1} depend on results of this
type.
%
When the infection knot $K$ is torsion, however, those
$L^2$-signatures have failed to detect anything.  Partly because of
this limitation, many questions on torsion still remain unsolved.  Our
invariants can detect torsion in many interesting cases, as
illustrated by the following applications:

\begin{enumerate}
\item There are homology equivalent rational homology 3-spheres which
  have vanishing previously known signature invariants but are not
  homology cobordant.
\item There are infinitely many 2-torsion (amphichiral) knots whose
  iterated Bing doubles are not slice; as a special case, we give the
  first proof of the conjecture that the Bing double of the figure
  eight knot is not slice.
\item There exist infinitely many 2-torsion elements in an arbitrarily
  deep level of the Cochran-Orr-Teichner filtration of link
  concordance.
\end{enumerate}
Before discussing more about the applications, we start with an
overview of our invariants.  In this paper all manifolds are oriented
topological manifolds, and submanifolds are assumed to be locally
flat.

\subsection{Intersection form defects and homology cobordism}

Our invariant is basically an intersection form analogue of
Hirzebruch-type signature defects of odd dimensional closed manifolds.
Let $\Gamma$ be a group with $H_4(\Gamma)=0$ and fix a homomorphism of
the integral group ring $\Z\Gamma$ into a (skew-)field $\K$ with
involution.  Suppose $M$ is a closed 3-manifold endowed with a group
homomorphism $\phi\colon \pi_1(M) \to \Gamma$ such that $(M,\phi)$ is
null-bordant over $\Gamma$, i.e., $(M,\phi)=0$ in the bordism group
$\Omega^{top}_3(B\Gamma)$.  Roughly speaking, we define an invariant
$\lambda(M,\phi)$ to be the difference of the Witt class of
``$\K$-coefficient intersection form'' of a topological 4-manifold
bounded by $M$ over $\phi$ and the Witt class of its ordinary
intersection form.  The value of $\lambda(M,\phi)$ lives in the Witt
group $L^0(\K)$ of nonsingular hermitian forms over~$\K$.  We prove
that $\lambda(M,\phi)$ is well-defined along the lines of a standard
bordism approach, appealing to an Atiyah-type result
(Lemma~\ref{lemma:atiyah-type-result}) on the symmetric signatures of
Mishchenko-Ranicki.  We remark that $\lambda(M,\phi)$ can also be
defined, as an element of $S^{-1}\Z\otimes_\Z L^0(\K)$, under an
weaker assumption that $(M,\phi)$ is $S$-torsion in
$\Omega^{top}_4(B\Gamma)$ for a multiplicatively closed subset $S$
of~$\Z$.  To simplify statements, in this section we will always
consider the case that $S=\{1\}$.  For the general case, see
Section~\ref{section:Witt-class-defect}.

In order to extract homology cobordism invariants, we consider towers
of abelian $p$-covers.  Fix a prime~$p$ and let $M_0$ be~$M$.
Inductively choosing surjections $\phi_i \colon \pi_1(M_i) \to
\Gamma_i$ with $\Gamma_i$ an abelian $p$-group (that is, $\Gamma_i$ is
abelian and $|\Gamma_i|=p^a$), a tower
\[
M_n \to M_{n-1} \to \cdots \to M_0=M
\]
of connected $p$-covers of $M$ is constructed.  We call it a
\emph{$p$-tower} determined by $\{\phi_i\}$.  For
$\phi_n\colon\pi_1(M_n) \to \Gamma_n$ with $\Gamma_n = \Z_{d}$ for
some $d=p^a$, we consider the invariant $\lambda(M_n,\phi_n)$.  Here,
in order to define the invariant, $\Z_d$ is endowed with the canonical
map $\Z[\Z_d] \to \Q(\zeta_{d})$ sending $1\in \Z_{d}$ to
$\zeta_{d}=\exp(2\pi\sqrt{-1}/d)$.  The following result says that
there is a bijection between $p$-towers of homology cobordant
manifolds, and corresponding intersection form defect invariants have
the same value:

\begin{theorem}
  \label{theorem:intro-homology-cobordism-invariance}
  Suppose $M$ and $M'$ are homology cobordant, $\Gamma_0$, $\Gamma_1$,
  \ldots, $\Gamma_{n-1}$ are abelian $p$-groups, and $\Gamma_n$ is a
  cyclic $p$-group.  Then for any $p$-tower
  \[
  M_n \to M_{n-1} \to \cdots \to M_0=M
  \]
  determined by $\{\phi_i\colon \pi_1(M_i) \to \Gamma_i\}$, there is a
  unique corresponding $p$-tower
  \[
  M'_n \to M'_{n-1} \to \cdots \to M'_0=M'
  \]
  which satisfies the following:
  \begin{enumerate}
  \item For each $i=0,1\ldots,n$, there is a bijection
    \[
    f_i\colon \Hom(\pi_1(M_i),\Gamma_i) \xrightarrow{\cong}
    \Hom(\pi_1(M'_i),\Gamma_i).
    \]
  \item $M'_{i+1}$ is determined by~$f_i(\phi_i)$ for
    $i=0,1,\ldots,n-1$.
  \item For any $\phi_n\colon \pi_1(M_n) \to \Gamma_n$, $\lambda(M_n,
    \phi_n)$ is well-defined if and only if so is $\lambda(M'_n,
    f_n(\phi_n))$, and in this case, $\lambda(M_n, \phi_n) =
    \lambda(M'_n, f_n(\phi_n))$ in $L^0(\Q(\zeta_d))$.
  \end{enumerate}
\end{theorem}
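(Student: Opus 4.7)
My plan is to build, starting from a homology cobordism $W$ between $M$ and $M'$, a compatible tower of 4-dimensional cobordisms $W_n \to W_{n-1} \to \cdots \to W$ between the corresponding covers, and then to use the top cobordism $W_n$ to transport the defect from one side to the other.

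First I would use that a homology cobordism $W$ induces isomorphisms $H_1(M) \cong H_1(W) \cong H_1(M')$, so any $\phi_0\colon \pi_1(M) \to \Gamma_0$ with $\Gamma_0$ abelian factors through $H_1$, extends uniquely to $\tilde\phi_0\colon \pi_1(W) \to \Gamma_0$, and restricts to a unique $f_0(\phi_0)\colon \pi_1(M') \to \Gamma_0$. The $\Gamma_0$-cover $W_1$ of $W$ determined by $\tilde\phi_0$ has $\partial W_1 = M_1 \sqcup -M'_1$, where $M_1, M'_1$ are the covers determined by $\phi_0$ and $f_0(\phi_0)$. A standard transfer argument shows that $|\Gamma_0|$ annihilates $H_*(W_1,M_1;\Z)$, and since $|\Gamma_0|$ is a power of $p$, $W_1$ is a $\Z[1/p]$-homology cobordism. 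I would then iterate this construction at every level, using that every abelian $p$-group is a $\Z[1/p]$-module, so that homomorphisms $\pi_1(M_i) \to \Gamma_i$ still factor through $H_1(M_i;\Z[1/p])$ and can be matched across the $\Z[1/p]$-homology cobordism $W_i$. This produces the bijections $f_1,\ldots,f_{n-1}$ and a tower of $\Z[1/p]$-homology cobordisms $W_n \to \cdots \to W_1 \to W$ with $\partial W_i = M_i \sqcup -M'_i$; the bijection $f_n$ at the top is defined analogously from $W_n$, and existence of a 4-manifold bounding $(M_n,\phi_n)$ over $\Gamma_n$ is equivalent to existence of one bounding $(M'_n, f_n(\phi_n))$ since $W_n$ itself provides a bordism between them.

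For the final equality of defects, I would choose any 4-manifold $V$ bounding $(M_n, \phi_n)$, glue to form $V' = V \cup_{M_n} W_n$ bounding $(M'_n, f_n(\phi_n))$, and apply Novikov additivity to reduce the claim to Witt-triviality of both the ordinary intersection form of $W_n$ and its $\K$-twisted intersection form. For the ordinary form, $H_2(W_n;\Q)$ is the image of $H_2(M_n;\Q)$ by the $\Z[1/p]$-homology cobordism property, and cycles pushed into a collar show the form vanishes. For the twisted form, one further application of transfer at the $\Gamma_n$-cover $\hat W_n$ gives $H_*(\hat W_n, \hat M_n;\Z[1/p])=0$; the bounded acyclic complex $C_*(\hat W_n, \hat M_n)\otimes_\Z \Z[1/p]$ of free $\Z[1/p][\Gamma_n]$-modules is then contractible, so tensoring with $\K$ over $\Z[1/p][\Gamma_n]$---valid because $\K = \Q(\zeta_d) \supset \Q \supset \Z[1/p]$---yields $H_*(W_n, M_n;\K) = 0$, and the same collar argument concludes Witt-triviality. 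The $S$-torsion case requires only replacing $V$ by a null-bordism of a multiple and tracking denominators, with no new ideas.

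The hard part will be this final passage from $\Z[1/p]$-acyclicity to $\K$-acyclicity: the transfer arguments naturally produce vanishing of integral homology after inverting $p$, so bridging to the $\K$-coefficient intersection form used in the definition of $\lambda$ requires the chain-level contractibility observation together with the flatness of $\K$ over $\Z[1/p]$ that comes for free in characteristic zero. Making this rigorous---and confirming that Novikov additivity for the Witt-class defect, presumably via the Atiyah-type lemma flagged in the excerpt, is robust enough to accommodate the gluing along a $\Z[1/p]$-homology cobordism rather than an integral one---is where the delicate bookkeeping lies.
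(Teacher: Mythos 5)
Your overall scaffolding (build a tower $W_n\to\cdots\to W_0=W$ of covering cobordisms, transport $\phi_i$ across each $W_i$, then glue $W_n$ onto a null-bordism and invoke Novikov additivity and the Atiyah-type lemma) is the same as the paper's. The problem is in the mechanism you use to control the homology of the covers.

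The transfer argument does not prove what you claim, and in fact the conclusion you are aiming for is the wrong one. Transfer for a finite $\Gamma$-cover $p\colon\tilde X\to X$ gives a map $\tau$ with $p_*\tau=|\Gamma|$ on $H_*(X)$ and $\tau p_*=\sum_{g\in\Gamma}g_*$ on $H_*(\tilde X)$; this controls the homology of the \emph{base} in terms of the \emph{cover}, not the other way around. From $H_*(W,M;\Z)=0$ you learn only that the norm element acts as zero on $H_*(W_1,M_1)$, not that $|\Gamma_0|$ annihilates it. Indeed $H_*(W_1,M_1;\Z)$ can contain arbitrary prime-to-$p$ torsion, so it is not a $\Z[1/p]$-homology cobordism in general; what is true (and is exactly what the paper proves) is that $(W_1,M_1)$ is $\Z_p$-acyclic, i.e., $W_1$ is a $\Z_p$-homology cobordism. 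This is the content of Levine's lemma (Lemma~\ref{lemma:levine's-lemma}), which is a Nakayama-type statement: because $\Z_p[\Gamma]$ is a local ring when $\Gamma$ is a $p$-group, $\Z_p$-acyclicity of $C_*\otimes_{\Z\Gamma}\Z_p$ forces $\Z_p$-acyclicity of $C_*\otimes_\Z\Z_p$. It is a mod-$p$ statement, not an invert-$p$ statement, and it is genuinely different from transfer.

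This is not just a cosmetic choice of coefficients. The bijections $f_i$ in part (1) concern $\Hom(\pi_1(-),\Gamma_i)\cong\Hom(H_1(-),\Gamma_i)$ with $\Gamma_i$ a $p$-group, i.e., a $p$-torsion target. A $\Z[1/p]$-homology cobordism allows $H_1(M_i)\to H_1(W_i)$ to have $p$-torsion kernel and cokernel; for instance if the map were multiplication by $p$ on $\Z$ the induced map on $\Hom(-,\Z_p)$ is zero, not a bijection. So even granting the $\Z[1/p]$-acyclicity, you would not get (1). You need the $\Z_p$-acyclicity from Levine's lemma to conclude $H_1(M_i;\Z_r)\cong H_1(W_i;\Z_r)\cong H_1(M'_i;\Z_r)$ for $r$ a power of $p$, and then also the induction that preserves this condition through the tower (Lemma~\ref{lemma:connectedness-of-p-covers}). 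The same substitution is needed in the last step: the paper shows $H_*(W,M;\Z_{(p)}\Gamma)=0$ (localize away from $p$, keep $p$) and uses that $\Z\Gamma\to\K$ factors through $\Z_{(p)}\Gamma$; your $\Z[1/p]\Gamma$ would not be produced by the correct argument. Replacing your transfer step by Levine's lemma and $\Z[1/p]$ by $\Z_{(p)}$ (equivalently, arguing throughout mod $p$) turns your outline into essentially the paper's proof.
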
  

Indeed Theorem~\ref{theorem:intro-homology-cobordism-invariance} holds
when $M$ and $M'$ are $\Z_p$-homology cobordant.  See
Section~\ref{section:homology-cobordism-invariants} for more details.

We remark that for any (possibly nonabelian) $p$-cover $\tilde M$ of
$M$, there is some $p$-tower which has $\tilde M$ as its top
cover~$M_n$.  Covers from which one can extract invariants using
Theorem~\ref{theorem:intro-homology-cobordism-invariance} are not
limited to $p$-covers of~$M$; in general, for a $p$-tower considered
in Theorem~\ref{theorem:intro-homology-cobordism-invariance}, $M_n$ is
not necessarily a regular cover of~$M$.  We also remark that
in~\cite{Cha-Ko:1999-1} metabelian towers were used to define
signature invariants of links.

In many cases (e.g., for manifolds obtained by surgery along a link,
see Corollary~\ref{corollary:size-of-character-group}) there are
infinitely many highly nontrivial $p$-towers so that
Theorem~\ref{theorem:intro-homology-cobordism-invariance} is not
vacuous.  To prove results of this type we reduce the problem to the
case of another space that is easier to investigate, by appealing to
the following proposition.  For a group $G$, let $\widehat G$ be the
``algebraic closure of $G$ with respect to $\Z_{(p)}$-coefficients''
in the sense of~\cite{Cha:2004-1}.

\begin{proposition}
  \label{proposition:t-equivalence-intro}
  If $X$ and $Y$ are (of the homotopy type of) finite CW-complexes and
  $f\colon X \to Y$ is a map inducing an isomorphism
  $\widehat{\pi_1(X)} \to \widehat{\pi_1(Y)}$, then $f$ induces a 1-1
  correspondence between $p$-towers of $X$ and $Y$ via pullback.
\end{proposition}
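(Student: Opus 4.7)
The plan is to proceed by induction on the height $n$ of the $p$-tower, exploiting the universal property of the algebraic closure. The $n=0$ case is vacuous. For the inductive step, two things are needed: first, a bijection between surjections $\pi_1(X_i) \twoheadrightarrow \Gamma_i$ onto an abelian $p$-group $\Gamma_i$ and the corresponding surjections out of $\pi_1(Y_i)$; second, verification that after pulling back the cover, the induced map $X_{i+1} \to Y_{i+1}$ again induces an isomorphism on algebraic closures of fundamental groups, so that the induction continues on the pair $(X_{i+1}, Y_{i+1})$ of finite CW-complexes.

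For the first task, I would invoke two expected properties of the closure $\widehat{(-)}$ established in \cite{Cha:2004-1}: (a) restriction along $G \to \widehat{G}$ gives a bijection $\Hom(\widehat{G}, A) \to \Hom(G, A)$ for any finite $\Z_{(p)}$-solvable group $A$, in particular for any abelian $p$-group; and (b) a homomorphism $G \to A$ and its extension $\widehat{G} \to A$ have the same image. Combining (a) with the given isomorphism $\widehat{f_*}\colon \widehat{\pi_1(X)} \to \widehat{\pi_1(Y)}$ produces the desired bijection of $\Hom$-sets, and (b) ensures that surjectivity is preserved, so connectedness of the pulled-back cover is automatic.

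For the second task, fix corresponding surjections $\phi_i\colon \pi_1(X_i) \twoheadrightarrow \Gamma_i$ and $\psi_i\colon \pi_1(Y_i) \twoheadrightarrow \Gamma_i$, matched under the bijection so that $\phi_i = \psi_i \circ (f_i)_*$. The connected covers $X_{i+1}, Y_{i+1}$ then have fundamental groups $\Ker\phi_i, \Ker\psi_i$, and $f_i$ lifts uniquely to the pullback map $f_{i+1}\colon X_{i+1} \to Y_{i+1}$. The key algebraic input is a \emph{kernel-compatibility} property: for any surjection $G \twoheadrightarrow A$ onto a finite abelian $p$-group with kernel $N$, the natural map $\widehat{N} \to \widehat{G}$ identifies $\widehat{N}$ with the kernel of the unique extension $\widehat{G} \to A$. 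Granted this, the isomorphism $\widehat{(f_i)_*}$ restricts to an isomorphism of the kernels of $\widehat{\phi_i}$ and $\widehat{\psi_i}$, which is precisely $\widehat{(f_{i+1})_*}$, and the induction proceeds.

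I expect the main obstacle to be establishing (or carefully extracting from \cite{Cha:2004-1}) this kernel-compatibility property $\widehat{N} \cong \Ker(\widehat{G} \to A)$. This is the technical bridge that allows the hypothesis ``isomorphism of algebraic closures'' to propagate up a tower of abelian $p$-covers; once it is in hand, the rest of the argument is a direct induction using the universal and functorial properties of $\widehat{(-)}$, with the correspondence of $p$-towers given tautologically by pullback at each stage.
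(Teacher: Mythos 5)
Your approach is genuinely different from the paper's, and it hinges on a claim that is not established and is far from clear. The paper never tries to push the isomorphism $\widehat{\pi_1(X)}\cong\widehat{\pi_1(Y)}$ up the tower step by step. Instead it uses a structural fact from~\cite{Cha:2004-1}: $\widehat{\pi_1(X)}$ is a direct limit $\varinjlim G_k$ of finitely presented groups along maps that are 2-connected with respect to $\Z_p$-coefficients, with $\pi_1(X)\to\widehat{\pi_1(X)}$ being the limit map. Because $\pi_1(Y)$ is finitely presented, the composite $\pi_1(Y)\to\widehat{\pi_1(Y)}\cong\widehat{\pi_1(X)}$ factors through some $G_k$, and one checks that both $\pi_1(X)\to G_k$ and $\pi_1(Y)\to G_k$ are 2-connected. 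Taking $Z=K(G_k,1)$, both $X\to Z$ and $Y\to Z$ are $p$-tower maps by Lemma~\ref{lemma:2-connected-maps-induce-t-equivalence}, which is itself proved by a clean induction using Levine's homological lemma (Lemma~\ref{lemma:levine's-lemma}); the two-out-of-three property (Remark~\ref{remark:composition-of-t-equivalence}) then gives the result for $f$. The point of this route is that it only needs the direct limit description and a homological statement about chain complexes, and never has to understand how $\widehat{(-)}$ behaves on subgroups.

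The gap in your proposal is exactly the step you flag: the ``kernel-compatibility'' claim that for a surjection $G\twoheadrightarrow A$ onto a finite abelian $p$-group with kernel $N$, the natural map identifies $\widehat{N}$ with $\Ker(\widehat{G}\to A)$. This is not among the properties of the $\Z_{(p)}$-algebraic closure established in~\cite{Cha:2004-1}, and it is not a formal consequence of the universal property; localization-type functors do not in general commute with passing to kernels, even of finite-index normal subgroups. Without it, your induction cannot restart on $(X_{i+1},Y_{i+1})$ because you have no way to conclude $\widehat{\pi_1(X_{i+1})}\cong\widehat{\pi_1(Y_{i+1})}$. A secondary, smaller point is that your item~(a) (the bijection $\Hom(\widehat{G},A)\to\Hom(G,A)$ for finite abelian $p$-groups $A$) also needs justification — it amounts to saying finite abelian $p$-groups are algebraically closed with respect to $\Z_{(p)}$-coefficients — whereas the paper instead proves the corresponding bijection on $\Hom(\pi_1(-),\Gamma)$ by the elementary computation $\Hom(\pi_1(X),\Gamma)\cong\Hom(H_1(X;\Z_r),\Gamma)$ and 2-connectedness on $\Z_r$-homology. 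In short, your outline is plausible but incomplete at its crux; the paper's factorization through a finitely presented $G_k$ and reduction to the 2-connected case is precisely the device that makes the proof go through without needing a kernel-compatibility theorem.
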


For a more precise statement, the reader is referred to
Definition~\ref{definition:t-equivalence} and
Proposition~\ref{proposition:algebraic-closure-and-t-equivalences}.
We note that a map which is 2-connected on the integral homology
satisfies (the hypothesis of)
Proposition~\ref{proposition:t-equivalence-intro}, due
to~\cite{Cha:2004-1}.

When we have a map of a simpler space $X$ (e.g., a 1-complex) into a
3-manifold $M$ satisfying
Proposition~\ref{proposition:t-equivalence-intro}, we can obtain
$p$-towers of $M$ from those of $X$ which are easier to construct.
Furthermore, for a 3-manifold obtained by infection, we can compute
algorithmically the intersection form defect invariants in terms of
some combinatorial data from the complex $X$ and the infection knot.
This algorithmic method applies to several interesting cases,
including our applications that will be discussed below (e.g., see
Section \ref{section:homology-cobordism-for-b1=0} and
\ref{section:iterated-bing-double}).

To investigate the structure of the Witt group $L^0(\Q(\zeta_d))$
where our invariant lives, in this paper we use the signature and
discriminant of a Witt class.  In particular, the discriminant
\[
\dis\lambda \in \frac{\Q(\zeta_d^{\mathstrut}+\zeta_d^{-1})^\times}{
  \{z\cdot \bar z \mid z \in \Q(\zeta_d)^\times \}},
\]
which is defined for $\lambda\in L^0(\Q(\zeta_d))$ as the determinant
with a sign correction, plays an essential role in our results on
torsion examples.  We employ tools from algebraic number theory to
detect nontrivial values of the discriminant; for $x \in
\Q(\zeta_d^{\vphantom{1}}+\zeta_d^{-1})^\times$ and a prime (ideal)
$\p$ of (the integer ring of)
$\Q(\zeta_d^{\vphantom{1}}+\zeta_d^{-1})$, the ``norm
residue symbol''
\[
(x, D)_\p \in \{1, -1\}, \text{\ where\ }
D=(\zeta_d^{\mathstrut}+\zeta_d^{-1})^2 - 4,
\]
is defined.  For a rapid summary of algebraic properties on the norm
residue symbol for non-experts, see
Section~\ref{subsection:norm-residue-symbol}.  (See also Chapter 3 of
the author's monograph~\cite{Cha:2003-1}.)  By the Hasse principle and
local Artin reciprocity of algebraic number theory, if $x=z\cdot \bar
z$ for some $z\in \Q(\zeta_d)$ then $(x,D)_\p$ is trivial for all~$\p$
(and the converse is true if we consider the archimedian valuations as
well as non-archimedian ones associated to primes).  Since $(-,D)_\p$
is multiplicative, it follows that $(-,D)_\p$ induces a well-defined
homomorphism
\[
(-,D)_\p\colon
\frac{\Q(\zeta_d^{\vphantom{1}}+\zeta_d^{-1})^\times}{\{z\cdot \bar z
  \mid z \in \Q(\zeta_d)^\times \}} \to \{1,-1\}.
\]
In the applications, we produce torsion examples realizing nontrivial
values of the norm residue symbols.

\subsection{Homology cobordism of rational homology 3-spheres}
\label{subsection:homology-cobordism-of-rational-3-sphere-intro}

The work of Cappell and Shaneson~\cite{Cappell-Shaneson:1974-1}
provides a surgery theoretic strategy for the study of homology
cobordism, which gives classication results in higher dimensions.  For
3-manifolds, it is known that Cappell-Shaneson theory does not
classify homology cobordism classes, however, it still gives a
framework to understand some useful invariants as homology surgery
obstructions.  For example, Wall's multisignature, or equivalently
Atiyah-Singer's $\alpha$-invariant or Casson-Gordon's invariant, is
defined for a given $M$ and a character $\chi\colon \pi_1(M) \to
\Z_d$, as in~\cite{Wall-Ranicki:1999-1}.  Due to Gilmer and
Livingston~\cite{Gilmer-Livingston:1983-1}, multisignatures for prime
power order characters are invariant under (topological) homology
cobordism.  Related works include the study of (topological and
smooth) homology cobordism of (homology) lens spaces, due to
Gilmer-Livingston~\cite{Gilmer-Livingston:1983-1},
Ruberman~\cite{Ruberman:1984-1,Ruberman:1988-1},
Cappell-Ruberman~\cite{Cappell-Ruberman:1988-1}, and
Fintushel-Stern~\cite{Fintushel-Stern:1987-1}.

Levine studied the Atiyah-Patodi-Singer $\eta$-invariants of homology
cobordant manifolds, focusing on applications to link
concordance~\cite{Levine:1994-1}.  His result for the most general
situation can be described in terms of the algebraic closure of
groups; for the algebraic closure $\widehat G$ of a group $G$ with
respect to $\Z$-coefficients in the sense of~\cite{Cha:2004-1}, there
is a natural map $p_G\colon G \to \widehat G$.  Denote by $R_k(G)$ the
variety of $k$-dimensional unitary representations of~$G$.  Then, for
$M$ with $G=\pi_1(M)$ and for $\theta\in R_k(\widehat{G})$, there is
defined the Atiyah-Patodi-Singer $\eta$-invariant
$\tilde\eta(M,\theta\circ p_{G})\in \R$.  Roughly speaking, Levine
showed the following: the function $\tilde\eta(M,-\circ p_{G})\colon
R_k(\widehat G) \to \R$ depends only on the homology cobordism class
of $M$, except for representations $\theta$ in some proper subvariety
of $R_k(G)$ called ``special'' in~\cite{Levine:1994-1}.  We note that
for homology cobordisms obtained from link concordances, it is more
natural to consider the algebraic closure in the sense of
Levine~\cite{Levine:1989-1} instead of~\cite{Cha:2004-1}, as
originally done in~\cite{Levine:1994-1}.

Recently, Harvey has proved the homology cobordism invariance of
certain $L^2$-signatures~\cite{Harvey:2006-1}.  For a group $G$, let
$G^{(n)}_H$ be the ``torsion-free-derived-series'' due to Harvey (for
the definition, refer to~\cite{Harvey:2006-1,Cochran-Harvey:2004-1}).
For $M$ with $G=\pi_1(M)$, let $\rho_n(M)$ be the von Neumann
$L^2$-signature defect of a 4-manifold bounded by $M$ over $G \to
G/G^{(n)}_H$, or equivalently, the Cheeger-Gromov invariant of $M$
associated to $G \to G/G^{(n)}_H$.  In~\cite{Harvey:2006-1}, it was
shown that if $M$ and $M'$ are homology cobordant, then $\rho_n(M) =
\rho_n(M')$.

As an application of our result, we illustrate that our invariant
distinguishes ``exotic homology cobordism types'' of some rational
homology spheres for which the known signature invariants always
vanish.

\begin{theorem}
  \label{theorem:exotic-rational-spheres-intro}
  There are infinitely many rational homology 3-spheres $\Sigma_0,
  \Sigma_1, \Sigma_2, \ldots$ with the following properties:
  \begin{enumerate}
  \item There is a homology equivalence (i.e., a map inducing
    isomorphisms on homology groups) $\Sigma_i \to \Sigma_0$ for
    any~$i$.
  \item Each $\Sigma_i$ has vanishing multisignatures.
  \item For any finite dimensional unitary representation $\theta$ of
    the algebraic closure of $\pi_1(\Sigma_i)$,
    $\tilde\eta(\Sigma_i,\theta\circ p_{\pi_1(\Sigma_i)})$ vanishes.
  \item $\rho_n(\Sigma_i)=0$ for any~$n$.
  \item $\Sigma_i$ and $\Sigma_j$ are not homology cobordant for $i\ne
    j$.
  \end{enumerate}
\end{theorem}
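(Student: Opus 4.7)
The plan is to build the family $\{\Sigma_i\}$ by infection. Start with a carefully chosen rational homology 3-sphere $\Sigma_0$ (for instance obtained by surgery on a link in $S^3$), and pick an infection curve $\alpha \subset \Sigma_0$ lying in a sufficiently deep term of the derived series of $\pi_1(\Sigma_0)$ so that there is a $p$-tower $\Sigma_0 = M_0 \leftarrow M_1 \leftarrow \cdots \leftarrow M_n$ through which $\alpha$ lifts to a curve whose image in $H_1$ of some $M_n$ has a prescribed order $d = p^a$. Then take a family of 2-torsion (amphichiral) knots $\{K_i\}$, e.g.\ built from the figure eight knot or more general amphichiral building blocks scaled by a parameter depending on $i$, and let $\Sigma_i$ be $\Sigma_0$ infected along $\alpha$ by $K_i$.

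Properties (1)--(4) should come from general features of the infection construction. Since $\alpha$ is null-homologous (being deep in the derived series), infection preserves integral homology, so there is a natural degree-one map $\Sigma_i \to \Sigma_0$ inducing an isomorphism on $H_*$; this gives (1). For (2)--(4), the multisignature, $\eta$-invariant, and Harvey $\rho_n$-invariant of an infection all differ from those of the base by terms controlled by (ordinary or $L^2$-)signatures of the infection knot, evaluated at a representation pulled back from the base. Since each $K_i$ is 2-torsion in the knot concordance group, each of these signature-type contributions is a torsion element in a $\R$-valued concordance homomorphism and must vanish. For $\rho_n$ one also appeals to Harvey's computation of $\rho_n$ of an infection in terms of $L^2$-signatures of the infection knot over abelian/metabelian covers.

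Property (5) is where $\lambda$ is used. The plan is: first apply Proposition~\ref{proposition:t-equivalence-intro} to transfer the $p$-tower structure relevant to $\Sigma_i$ from a simple model space (a graph or 2-complex modeling the infection-curve data inside the derived series), using the fact that the infection geometry is encoded by a map from such a model into $\Sigma_i$ that is 2-connected on integral homology. Using the resulting algorithmic description of $\lambda(M_n, \phi_n)$ promised in the introduction, express the discriminant
\[
\dis \lambda(M_n, \phi_n) \in \frac{\Q(\zeta_d + \zeta_d^{-1})^\times}{\{z \bar z\}}
\]
as an explicit product of algebraic invariants extracted from $K_i$. Then evaluate the norm residue symbol $(\dis\lambda, D)_\p$ at suitable primes $\p$ of $\Q(\zeta_d + \zeta_d^{-1})$ and arrange that the values form an infinite family of distinct elements in $\{1,-1\}^{\{\p\}}$. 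If $\Sigma_i$ and $\Sigma_j$ were homology cobordant, Theorem~\ref{theorem:intro-homology-cobordism-invariance} would identify the corresponding $\lambda$-values for any compatible $p$-tower, in particular making their discriminants and hence all norm residue symbols agree, giving the required contradiction.

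The hard part is the last step: producing infinitely many distinct norm residue symbol values from 2-torsion infection knots. Amphichirality automatically kills all signature-based invariants of $K_i$, so one cannot lean on ordinary signatures or $L^2$-signatures to separate the $K_i$'s. Instead one must extract from the higher-order Alexander module (or, concretely, from linking forms of finite covers of $K_i$) an element of $\Q(\zeta_d + \zeta_d^{-1})^\times$ whose class modulo norms from $\Q(\zeta_d)^\times$ is detected by $(\cdot, D)_\p$, and then realize infinitely many such classes by explicit amphichiral building blocks. I expect this realization step, together with verifying that the algorithmic formula for $\lambda$ actually reads off this class from $K_i$ without being swamped by contributions from the ambient manifold, to be the main technical obstacle.
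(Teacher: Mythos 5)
Your overall strategy for item (5) matches the paper's: take $\Sigma_i$ to be a fixed rational homology sphere $M$ infected along a curve $\alpha$ by a family of amphichiral knots $K_{a_i}$, push a combinatorially constructed $p$-tower from a model $2$-complex through the $p$-tower maps $X\to M \leftarrow \Sigma_i$, compute $\dis\lambda$ for the top cover, and separate the $\Sigma_i$ by norm residue symbols via a number-theoretic realization argument. One small overcomplication: you suggest reading $\dis\lambda$ off ``higher-order Alexander modules or linking forms of finite covers''; in the paper it comes directly from the ordinary Alexander polynomial of $K_{a_i}$ evaluated at $p$-power roots of unity (via Lemma~\ref{lemma:discriminant-of-Seifert-matrix} and Corollary~\ref{corollary:lambda-of-infected-manifold}).

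There are, however, genuine gaps in your treatment of (2)--(4). For (4), $\rho_n(\Sigma_i)=0$ is automatic and has nothing to do with $K_i$ being $2$-torsion: since $\Sigma_i$ is a rational homology sphere, $b_1=0$ forces Harvey's torsion-free derived series to stabilize at $\pi_1(\Sigma_i)^{(n)}_H=\pi_1(\Sigma_i)$ for all $n$, so $\rho_n$ is an untwisted signature defect and vanishes; appealing to Harvey's infection formula is unnecessary. For (2)--(3), the claim that ``$K_i$ is $2$-torsion $\Rightarrow$ the signature-type contribution vanishes because it is torsion in an $\R$-valued homomorphism'' does not go through as stated: the $\eta$-invariant of the infection knot's surgery manifold at a character $\mu\mapsto\omega$ sees the \emph{unaveraged} Levine--Tristram signature $\sign\lambda_1(A,\omega)$, which is not a concordance invariant at zeros of $\Delta_K$ on $S^1$ and may be nonzero even when the averaged $\sigma_K$ vanishes. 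The paper sidesteps this by working with the specific knots $K_a$ whose Alexander polynomial $-a^2t+(2a^2+1)-a^2t^{-1}$ has no zeros on the unit circle, so the unaveraged $\omega$-signature is identically zero (Lemma~\ref{lemma:vanishing-of-eta-invariants}); you also need the base $M$ to contribute zero, which the paper arranges by taking $M=L_1\#L_2$ with lens spaces chosen to have vanishing multisignature. Your sketch leaves both of these choices implicit, and the $2$-torsion hypothesis alone would not close these holes.
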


In the proof of Theorem~\ref{theorem:exotic-rational-spheres-intro},
to distinguish the homology cobordism classes of manifolds $\Sigma_i$
we compute and compare our invariants defined from $p$-towers $M_n \to
\cdots\to M_0$ of the manifolds $\Sigma_i$ with a fixed height $n$ and
with fixed deck transformation groups $\Gamma_0,\ldots,\Gamma_{n-1}$.
Roughly speaking, we construct primes $\p_i$ which are ``algebraically
dual'' to the intersection form defect invariants of the $\Sigma_i$
with respect to the norm residue symbols in the following sense:
\begin{enumerate}
\item If $i\ne j$, then for any $M_{n} \to \cdots \to M_{0}=\Sigma_j$
  and any $\phi\colon \pi_1(M_{n})\to \Z_d$,
  $(\dis\lambda(M_{n},\phi),D)_{\p_i}$ is trivial.
\item For each $i$, there exist $M_{n} \to \cdots \to M_{0}=\Sigma_i$
  and $\phi\colon \pi_1(M_{n})\to \Z_d$ such that
  $(\dis\lambda(M_{n},\phi), D)_{\p_i}$ is nontrivial.
\end{enumerate}
From Theorem~\ref{theorem:intro-homology-cobordism-invariance} and the
existence of the algebraically dual primes $\p_i$, it follows that
$\Sigma_i$ is not homology cobordant to $\Sigma_j$.  See
Section~\ref{section:homology-cobordism-for-b1=0} for more details and
further discussions.

\subsection{Obstructions to being a slice link and applications to
  iterated Bing doubles}

For a link $L$ in $S^3$, the \emph{zero-surgery manifold} of $L$ is
obtained by performing surgery on $S^3$ along the zero-framing of each
component of~$L$.  It is well known that if two links in $S^3$ are
concordant, then their zero-surgery manifolds are homology cobordant.
Therefore the invariants introduced above give rise to link
concordance invariants.  In particular we show the following result as
a consequence of
Theorem~\ref{theorem:intro-homology-cobordism-invariance}:

\begin{theorem}
  \label{theorem:slice-obstruction}
  If $L$ is a slice link with zero-surgery manifold $M$, then for any
  $p$-tower
  \[
  M_n \to M_{n-1} \to \cdots \to M_0=M
  \]
  and for any $\phi\colon\pi_1(M_n) \to \Z_d$ with $d$ a power of $p$,
  $\lambda(M_n,\phi)\in L^0(\Q(\zeta_d))$ is well-defined and vanishes.
\end{theorem}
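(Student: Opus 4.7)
The plan is to combine the homology cobordism invariance from Theorem~\ref{theorem:intro-homology-cobordism-invariance} with a direct calculation on a standard null-bordism whose fundamental group is free. Let $m$ denote the number of components of~$L$.

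\emph{Step 1 (Reduction to the unlink).} Because $L$ is topologically slice, it is concordant to the $m$-component unlink $U$: taking disjoint locally flat slice disks for $L$ in $D^4$ and drilling a small open disk out of each produces $m$ disjoint locally flat annuli in $S^3\times[0,1]$ giving a concordance from $L$ to $U$. Performing $0$-framed surgery on $S^3\times[0,1]$ along this concordance yields a topological $4$-manifold with boundary $M\sqcup(-N)$, where $N:=M_U=\#^m(S^1\times S^2)$, and a standard Mayer--Vietoris calculation shows it is a homology cobordism. Theorem~\ref{theorem:intro-homology-cobordism-invariance} then supplies, for any $p$-tower $M_n\to\cdots\to M_0=M$ and any $\phi\colon\pi_1(M_n)\to\Z_d$, a corresponding $p$-tower $N_n\to\cdots\to N_0=N$ and a character $\phi'\colon\pi_1(N_n)\to\Z_d$ such that $\lambda(M_n,\phi)$ is well-defined iff $\lambda(N_n,\phi')$ is, in which case they agree in $L^0(\Q(\zeta_d))$. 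It suffices to prove the conclusion for the unlink.

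\emph{Step 2 (Aspherical null-bordism).} Set $W=\natural^m(S^1\times D^3)$; then $\partial W=N$, and $W$ deformation retracts onto $\bigvee^m S^1$, so $W$ is a $K(F_m,1)$ and the inclusion $N\hookrightarrow W$ induces an isomorphism $\pi_1(N)\xrightarrow{\cong}\pi_1(W)=F_m$. Because this map is an isomorphism, every cover in the $p$-tower of $N$ extends uniquely to a cover of $W$: each character $\phi_i\colon\pi_1(N_i)\to\Gamma_i$ transfers through the induced isomorphism $\pi_1(N_i)\cong\pi_1(W_i)$ to $\pi_1(W_i)$, yielding covers $W_i\to W_{i-1}$ with $\partial W_i=N_i$. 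By the Nielsen--Schreier theorem each $\pi_1(W_i)$ is a finitely generated free group, so $W_i$ is aspherical with the homotopy type of a finite wedge of circles. In particular $(N_n,\phi')$ bounds $(W_n,\phi')$ over $\Z_d$, so $\lambda(N_n,\phi')$ is well-defined.

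\emph{Step 3 (Vanishing).} Since $W_n$ has the homotopy type of a wedge of circles, $H_2(W_n;V)=0$ for every local coefficient system $V$. Hence both the ordinary intersection form on $H_2(W_n;\Z)$ and the twisted hermitian form on $H_2(W_n;\Q(\zeta_d))$ coming from $\phi'$ are the zero form on the zero module, which represent the trivial Witt classes in $L^0(\Z)$ and $L^0(\Q(\zeta_d))$ respectively. Their difference, which is $\lambda(N_n,\phi')$ by definition, therefore vanishes, and by Step~1 so does $\lambda(M_n,\phi)$.

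The delicate step is Step~1, where one must confirm that the surgery-on-concordance construction yields a genuine homology cobordism to which Theorem~\ref{theorem:intro-homology-cobordism-invariance} applies and that its bijection of $p$-towers pairs characters as claimed; the main obstacle in Steps~2 and~3 is then essentially formal, being no more than the observation that a $4$-manifold with free fundamental group carries no second (twisted) homology, so its intersection forms are automatically trivial.
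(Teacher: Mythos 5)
Your argument is correct and matches the paper's proof in all essentials: the paper also reduces to the unlink via homology cobordism invariance, observes that the relevant covers of $\#^m(S^1\times S^2)$ bound $4$-manifolds with the homotopy type of a $1$-complex, and concludes both the twisted and ordinary intersection forms vanish. The only cosmetic difference is that the paper observes each cover $M_n$ is again a connected sum of $S^1\times S^2$'s and invokes Lemma~\ref{lemma:vanishing-for-S^1xD^2}, whereas you lift the single null-bordism $\natural^m(S^1\times D^3)$ up the tower and use asphericity plus Nielsen--Schreier to see $H_2$ vanishes; these are two ways of packaging the same computation.
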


We apply Theorem~\ref{theorem:slice-obstruction} to iterated Bing
doubles.  Figure~\ref{fig:bing-double} illustrates the construction of
the $n$th iterated Bing double $BD_n(K)$ of a knot~$K$.  In this paper
Bing doubles are always untwisted, i.e., in
Figure~\ref{fig:bing-double}, the parallel strands passing through the
box are twisted in such a way that their linking number is zero.

\begin{figure}[ht]
  \begin{center}
    \includegraphics[scale=.8]{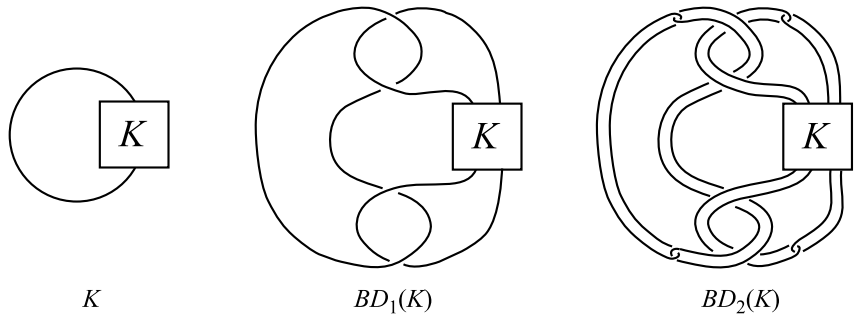}
  \end{center}
  \caption{Iterated Bing doubles of a knot $K$.}\label{fig:bing-double}
\end{figure}

Recently the problem of detecting non-slice Bing doubles have been an
interesting subject of research, partly motivated by the 4-dimensional
topological surgery problem.  It is known that many known link
concordance invariants vanish for (iterated) Bing doubles.  For an
excellent discussion on this, the reader is referred to Cimasoni's
paper~\cite{Cimasoni:2006-1}.  Harvey~\cite{Harvey:2006-1} and
Teichner (unpublished) proved, independently, that the integral of the
Levine-Tristram signature function $\sigma_K$ of $K$ over the unit
circle is zero if the Bing double $BD_1(K)$ is slice.  Cimasoni proved
a stronger result from a stronger assumption; namely, he proved that
if $L$ is boundary slice, i.e., if there are disjoint 3-manifolds in
the 4-ball bounded by the union of slice disks and Seifert surfaces of
components of $L$, then $K$ is algebraically
slice~\cite{Cimasoni:2006-1}.  Cochran, Harvey and Leidy announced a
result that there exist algebraically slice knots $K$ with non-slice
$BD_n(K)$.

As illustrated in Figure~\ref{figure:infection-example} in case of the
figure eight knot, $BD_n(K)$ is obtained from a trivial link by
infection by~$K$.  It is folklore that the infection curve producing
$BD_n(K)$ is in the $n$th derived subgroup (see
Section~\ref{section:iterated-bing-double} for more details).  So for
larger $n$, one needs to investigate geometric information from higher
terms of the derived series.  Using our invariants, we can detect the
non-sliceness of $BD_n(K)$ in several interesting cases.  First, we
generalize the result of Harvey and Teichner:

\begin{theorem}
  \label{theorem:signature-of-bing-double}
  For any $n$, the Levine-Tristram signature function $\sigma_K$ is
  determined by the iterated Bing double $BD_n(K)$ of~$K$.  In
  particular, if $\sigma_K$ is nontrivial, then for any~$n$, $BD_n(K)$
  is not slice.
\end{theorem}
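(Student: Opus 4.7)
The plan is to use Theorem~\ref{theorem:slice-obstruction} as a slice obstruction and show that, for every prime power $d=p^{a}$, the value $\sigma_K(\zeta_d)$ can be read off from an intersection form defect $\lambda(M_n,\phi_n)$ attached to a suitable $p$-tower of the zero-surgery manifold $M$ of $BD_n(K)$.  Since $\sigma_K\colon S^1\to\Z$ is a step function whose jumps occur only at roots of the Alexander polynomial of $K$, and since the set of prime-power roots of unity is dense in $S^1$, recovering $\sigma_K(\zeta_d)$ for all $d=p^{a}$ recovers the entire function.  The second assertion of the theorem then follows at once, as Theorem~\ref{theorem:slice-obstruction} forces every such $\lambda(M_n,\phi_n)$ to vanish when $BD_n(K)$ is slice.

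To set up the computation, I first identify $M$ as an infection.  Let $U_n$ denote the trivial $2^{n}$-component link obtained by iteratively Bing-doubling the unknot; then $BD_n(K)$ is obtained from $U_n$ by infection along an unknotted circle $\alpha$ that lies in the $n$th derived subgroup of $\pi_1(S^3\setminus U_n)$, so that $M$ is obtained from $E=\#^{2^n}(S^1\times S^2)$ by infection along $\alpha\in\pi_1(E)^{(n)}$.  Appealing to Proposition~\ref{proposition:t-equivalence-intro} applied to a convenient simpler model of $E$, I would construct, for each prime power $d=p^{a}$, a $p$-tower $M_n\to\cdots\to M_0=M$ with abelian deck transformation groups $\Gamma_0,\dots,\Gamma_{n-1}$ arranged so that the infection curve $\alpha$ possesses a preferred lift $\tilde\alpha\subset M_n$; I then pick $\phi_n\colon\pi_1(M_n)\to\Z_d$ sending $[\tilde\alpha]$ to a generator of $\Z_d$.

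The main obstacle is the infection formula: one has to show that $\lambda(M_n,\phi_n)\in L^0(\Q(\zeta_d))$ equals, up to a fixed link-independent correction, the Witt class of the $\Q(\zeta_d)$-twisted intersection form of a $4$-manifold bounded by the zero-surgery on $K$ with the character sending the meridian of $K$ to $\zeta_d$; the signature of this Witt class is precisely $\sigma_K(\zeta_d)$.  I would establish this by an excision argument: the cover $M_n\to M$ restricts over the infection solid torus to the $d$-fold cyclic cover of $S^3\setminus K$, while over its complement it restricts to a cover of a piece of $E$ that bounds a rational-homology handlebody and contributes no $\Q(\zeta_d)$-defect.  Additivity of the Witt-class defect across this decomposition then yields the desired identification.

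Once the infection formula is in place, the proof is complete: if $BD_n(K)$ were slice, Theorem~\ref{theorem:slice-obstruction} would give $\lambda(M_n,\phi_n)=0$ for all such $p$-towers and characters, forcing $\sigma_K(\zeta_d)=0$ for every prime power $d$ and hence $\sigma_K\equiv 0$ by density.  The stronger statement that $\sigma_K$ is \emph{determined} by $BD_n(K)$ follows because Theorem~\ref{theorem:intro-homology-cobordism-invariance} shows that $\lambda$ depends only on the homology cobordism class of $M$, hence only on the concordance class of $BD_n(K)$, while the infection formula expresses each value $\sigma_K(\zeta_d)$ in terms of such $\lambda$'s.
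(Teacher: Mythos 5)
Your overall strategy is the right one --- use Theorem~\ref{theorem:slice-obstruction} as a vanishing obstruction, reduce to a combinatorial model of the zero-surgery manifold of the trivial link via the $p$-tower map, apply the infection additivity to express $\lambda(M_n,\phi_n)$ in terms of cyclic covers of $M_K$, and conclude by density of $2$-power roots of unity.  But there is a genuine gap exactly where you wave your hands: the claim that one can choose a $p$-tower in which $\alpha$ has ``a preferred lift'' sent to a generator of $\Z_d$, and that this alone controls $\lambda(M_n,\phi_n)$.

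Concretely, the preimage of the infection curve $\alpha$ in $M_n$ is not a single circle but an entire collection $\mathcal{L}(\alpha,X_n|X)$ of lifts, and the infection formula (Corollary~\ref{corollary:lambda-of-infected-manifold}) expresses $\lambda(N_n,\psi_n)$ as a \emph{sum over all} these lifts $\tilde\alpha_j$ of terms $[\lambda_{r_j}(A,\zeta_d^{\phi_n([\tilde\alpha_j])})]$.  With an arbitrary tower and an arbitrary character sending one lift to a generator, the remaining lifts will in general be sent to a mix of values $s_j\in\Z_d$, so the signature you extract is $\sum_j \sigma_K(\zeta_d^{s_j})$ rather than $\sigma_K(\zeta_d^s)$ for a single chosen $s$.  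That does not obviously let you recover the individual values $\sigma_K(\zeta_d^s)$, which is exactly what the ``determined by $BD_n(K)$'' clause requires.  (It also does not a priori let you conclude non-sliceness, since cancellation among lifts could in principle occur.)  What the paper's proof supplies, and what your sketch omits, is the specific inductive $2$-tower on $X=\bigvee^{2^n}S^1$ with deck groups $(\Z_2)^{2^{n-k}}$ together with the combinatorial analysis (Lemma~\ref{lemma:computation-of-lifts} and its consequence inside the proof of Theorem~\ref{theorem:bd_n-and-signature}) showing that the chosen character kills \emph{all} lifts of $\alpha$ except exactly two, sent to $s$ and $-s$; this is what pins down $\sign\lambda(N_n,\psi_n)=2\sign\lambda_1(A,\zeta_d^s)$.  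Your excision picture (``the cover restricts over the infection solid torus to the $d$-fold cyclic cover'') is also not quite right: over each component of the preimage of the solid torus the cover has some degree $r_j$, and in the paper's height-$n$ tower one uses $\alpha\in\pi_1^{(n)}$ to guarantee $r_j=1$ for all $j$; that fact and the kernel control are what you would need to prove, not assume.

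Two smaller remarks: you don't actually need all primes $p$ --- the paper uses only $p=2$ and $d=2^r$, since $\{\zeta_{2^r}^s\}$ is already dense in $S^1$; and the vanishing of the contribution from the trivial-link piece is handled cleanly by Lemma~\ref{lemma:vanishing-for-S^1xD^2} (or equivalently Theorem~\ref{theorem:slice-obstruction-from-covering-tower} applied to $BD_n$), which is the rigorous version of your ``rational-homology handlebody'' remark.
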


We remark that even for $n=1$
Theorem~\ref{theorem:signature-of-bing-double} is stronger than the
result of Harvey and Teichner since $\sigma_K$ may be nontrivial even
when the integral of $\sigma_K$ is zero.

Because previously known obstructions are signatures, it has been
unknown whether the Bing double of $K$ can be non-slice for a torsion
knot $K$ (see also
Remark~\ref{remark:vanishing-of-signatures-for-bd_n}).  In particular,
the following question on the second-simplest knot has been asked by
several authors, including Schneiderman-Teichner
\cite{Schneiderman-Teichner:2006-1},
Cochran-Friedl-Teichner~\cite{Cochran-Friedl-Teichner:2006-1}, and
Cimasoni~\cite{Cimasoni:2006-1}: is the Bing double of the figure
eight knot a slice link?  We give a first answer to this question.

\begin{theorem}
  \label{theorem:non-slice-bing-souble-intro}
  There are infinitely many amphichiral knots $K$, including the
  figure eight knot, such that $BD_n(K)$ is not slice for all~$n$.
\end{theorem}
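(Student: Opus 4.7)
The plan is to derive a contradiction from the slice obstruction Theorem~\ref{theorem:slice-obstruction} with $p=2$. Let $M$ denote the zero-surgery manifold of $BD_n(K)$, where $K$ is the figure eight knot. It suffices to exhibit a $2$-tower $M_n\to\cdots\to M_0=M$ and a character $\phi\colon\pi_1(M_n)\to\Z_d$ with $d$ a power of $2$ such that the discriminant $\dis\lambda(M_n,\phi)$ has nontrivial norm residue symbol $(-,D)_\p$ at a suitable prime~$\p$; by the multiplicativity of $(-,D)_\p$ on $\Q(\zeta_d^{\vphantom{1}}+\zeta_d^{-1})^\times / \{z\bar z\}$, this forces $\lambda(M_n,\phi)\ne 0$, contradicting sliceness.

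First I would construct the tower from a $1$-complex model. Since $BD_n(K)$ is obtained from the trivial link $U$ by infection along a single curve $\alpha$ lying in the $n$th derived subgroup of $\pi_1(S^3\setminus U)$ (the folklore fact recalled after Theorem~\ref{theorem:signature-of-bing-double}), the zero-surgery manifold $M^U$ of $U$ contains an obvious wedge-of-circles model whose inclusion is a $\Z_{(2)}$-homological $2$-equivalence. Proposition~\ref{proposition:t-equivalence-intro} then gives a bijection between $2$-towers of the $1$-complex and $2$-towers of $M^U$, and via the standard cobordism produced by the infection construction these pull back to $2$-towers of~$M$. At each of the $n$ stages I would choose a surjection to $\Z_2$ that halves one of the free generators of $H_1$, arranged so that $\alpha$ lifts to a distinguished element at the top cover $M_n$. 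Finally I would let $\phi\colon\pi_1(M_n)\to\Z_d$ send this lift to a generator.

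Next I would invoke the algorithmic computation of $\lambda(M_n,\phi)$ developed in Section~\ref{section:iterated-bing-double}. By the standard trick of replacing $K$ with a slice knot in a relative cobordism and measuring the difference, $\lambda(M_n,\phi)$ reduces to a Witt class in $L^0(\Q(\zeta_d))$ depending only on the twisted Blanchfield-type form of $K$ at $\zeta_d$; its discriminant equals $\Delta_K(\zeta_d)$ modulo norms from $\Q(\zeta_d)^\times$. For the figure eight knot, $\Delta_K(t)=-t+3-t^{-1}$, so I must show that $3-(\zeta_d^{\mathstrut}+\zeta_d^{-1})$ is not a norm. I would take $d=2^k$ and a prime $\p$ of the integer ring of $\Q(\zeta_d^{\mathstrut}+\zeta_d^{-1})$ lying over the rational prime $5$ (guided by $\Delta_K(-1)=5$), and carry out the local computation $(3-(\zeta_d^{\mathstrut}+\zeta_d^{-1}),D)_\p=-1$ directly in the completion, using the standard formulas reviewed in Section~\ref{subsection:norm-residue-symbol}.

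For the infinite family I would take $K_m=K\# J_m$, where $J_m$ runs over a sequence of negative-amphichiral knots (so each $K_m$ remains $2$-torsion in the concordance group) chosen so that the contributions of the $J_m$ to $\dis\lambda$ are detected by pairwise distinct primes $\p_m$. Connected sum does not affect the tower construction below the top, and at the top the intersection form defect is, up to Witt equivalence, the orthogonal sum of the contributions from $K$ and from $J_m$; choosing the $J_m$ with independently detected discriminants yields infinitely many distinct 2-torsion knots with $BD_n(K_m)$ non-slice for all~$n$. The hardest step will be the explicit local verification $(3-(\zeta_d^{\mathstrut}+\zeta_d^{-1}),D)_\p=-1$ for a prime $\p$ over $5$, which hinges on the arithmetic of the extension $\Q(\zeta_d)/\Q(\zeta_d^{\mathstrut}+\zeta_d^{-1})$ at primes over $5$; once this is in hand, the remainder is bookkeeping within the framework provided by Theorem~\ref{theorem:slice-obstruction} and Proposition~\ref{proposition:t-equivalence-intro}.
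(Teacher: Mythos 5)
Your overall strategy matches the paper's in outline—a $2$-tower of the zero-surgery manifold of $BD_n(K)$, a $\Z_{2^k}$-valued character, the intersection form defect, and a norm residue symbol to detect a nontrivial discriminant—but two of the steps, as stated, fail.

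\textbf{(1) A height-$n$ tower yields a square discriminant.} You propose a $2$-tower $M_n\to\cdots\to M_0$ of height $n$ with a character $\phi\colon\pi_1(M_n)\to\Z_d$ picking out ``the'' lift of the infection curve~$\alpha$. But Corollary~\ref{corollary:lambda-of-infected-manifold} shows that $\lambda(M_n,\phi)$ receives a contribution from \emph{every} component of the preimage of~$\alpha$, not just a distinguished one. Because $[\alpha]$ is an $n$-fold iterated commutator of meridians and the tower is built from abelian $p$-covers, after $n$ stages $\alpha$ lifts to a loop, and the lifts outside $\ker\phi$ pair up as $\tilde\alpha_1\mapsto s$, $\tilde\alpha_2\mapsto -s$ (this is exactly the lemma inside the proof of Theorem~\ref{theorem:bd_n-and-signature}). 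The resulting Witt class is $[\lambda_1(A,\zeta_d^s)]+[\lambda_1(A,\zeta_d^{-s})]$, and since the Alexander polynomial is symmetric, its discriminant is $\Delta_A(\zeta_d^s)\,\Delta_A(\zeta_d^{-s})=\Delta_A(\zeta_d^s)^2$, a square, hence trivial in $\Q(\zeta_d^{\vphantom{-1}}+\zeta_d^{-1})^\times/\{z\bar z\}$. This is precisely why the paper only extracts the signature (Theorem~\ref{theorem:bd_n-and-signature}), not torsion information, from a height-$n$ tower. The paper's torsion argument (Theorem~\ref{theorem:non-slice-torsion-bd_n}, via Lemma~\ref{lemma:lift-behaviour-of-bd-infection-curve}) uses a tower of height $n+1$; the extra double cover breaks the $\pm s$ symmetry so that one lift covers $\alpha$ twice and the other once, giving $[\lambda_2(A,\sqrt{-1})]+[\lambda_1(A,-\sqrt{-1})]$ with discriminant $(2a^2+1)(2a^4+4a^2+1)$, which need not be a square. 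Your reduction ``its discriminant equals $\Delta_K(\zeta_d)$'' only accounts for a single $r=1$ lift and misses both the conjugate partner and the $r=2$ contribution governed by Lemma~\ref{lemma:discriminant-of-Seifert-matrix}~(2).

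\textbf{(2) The prime over $5$ cannot detect anything.} With $d=2^k$ and $k\ge 2$ the relevant norm residue symbol at a rational prime $p$ is $(x,D)_p$ with $D\equiv -1$ modulo squares, and Lemma~\ref{lemma:computation-norm-residue-symbol} gives $(x,-1)_p=(-1)^{v_p(x)(p-1)/2}$; this is identically $1$ when $p\equiv 1\pmod 4$, which holds for $p=5$. So a prime over $5$ (and, in fact, $\Delta_K(-1)$ itself, since $d=2$ makes $L/K$ a trivial extension) gives no information. The paper instead factors the actual discriminant $(2a^2+1)(2a^4+4a^2+1)$ and, via Proposition~\ref{proposition:dual-primes}, chooses primes $p_i\equiv -1\pmod 4$ dividing $2a_i^2+1$ to odd multiplicity; for the figure eight knot ($a=1$) the discriminant is $3\cdot 7 = 21$, detected at $p=3$. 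For the infinite family, the paper uses a Pell-equation argument (Lemma~\ref{lemma:non-square-lemma}) to produce a sequence of parameters $a_i$ for the twist knots $K_{a_i}$ with mutually dual primes, rather than taking connected sums $K\# J_m$; your approach of orthogonally summing contributions via connected sum is plausible in spirit but you would still need the height-$(n+1)$ tower and a correct norm residue computation to make it work.
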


\begin{figure}[ht]
  \begin{center}
    \includegraphics[scale=.7]{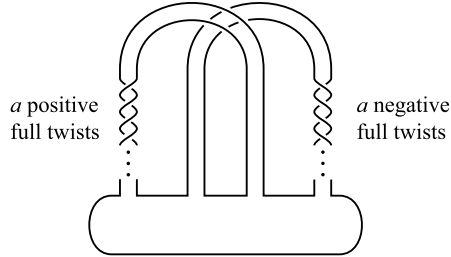}
  \end{center}
  \caption{Amphichiral knots $K_a$ used for infection.}
  \label{fig:infection-amph-knot}
\end{figure}

Our proof of Theorem~\ref{theorem:non-slice-bing-souble-intro} is
constructive; we prove that there is a sequence $1=a_1, a_2, \ldots$
of integers such that $BD(K_{a_i})$ is not slice for any $a_i$ and
$n$, where $K_{a}$ is the knot shown in
Figure~\ref{fig:infection-amph-knot}.  The sequence $\{a_n\}$~is
constructed by number theoretic arguments based on an investigation of
the norm residue symbols of discriminants.

Subsequent to this work, in \cite{Cha-Livingston-Ruberman:2006-1}
Livingston, Ruberman, and the author generalized the above results for
$n=1$ by proving that if $BD_1(K)$ is slice then $K$ is algebraically
slice, based on the ideas of rational knot concordance studied in a
monograph of the author~\cite{Cha:2003-1}.

\subsection{Torsion in the Cochran-Orr-Teichner filtration of link
  concordance}

Recently, as part of the on-going effort to understand concordance,
the structure of the solvable filtration has been studied actively.
The notion of $(n)$-solvability of knots and links was first
introduced by Cochran, Orr, and Teichner
in~\cite{Cochran-Orr-Teichner:1999-1}.  Very roughly speaking, a knot
or link is $(n)$-solvable if there is a topological 4-manifold bounded
by the zero-surgery manifold such that the twisted intersection form
of the cover associated to the $n$th derived subgroup of the
fundamental group looks like that of a slice disk complement.
They also defined $(n.5)$-solvability as a refinement between $(n)$-
and $(n+1)$-solvability.  The sets $\F_{(n)}$ of (concordance classes
of) $(n)$-solvable links form a filtration of the set of link
concordance classes, which is often referred to as
Cochran-Orr-Teichner's \emph{solvable filtration}.

It is known that there exist (infinitely many) nontrivial elements in
any depth of the filtration, i.e., in $\F_{(n)}$ for any~$n$.  In case
of knots, it was proved for $n=2$ by Cochran-Orr-Teichner
\cite{Cochran-Orr-Teichner:1999-1, Cochran-Orr-Teichner:2002-1}, and
for all higher~$n$ by Cochran-Teichner \cite{Cochran-Teichner:2003-1}.
(For $n<2$ the result is classical; e.g., see
\cite{Levine:1969-2,Jiang:1981-1}.)  Harvey first studied the case of
links and proved a nontriviality result in the solvable filtration of
links modulo ``local knotting'', using her invariant
$\rho_n$~\cite{Harvey:2006-1}.  Note that (the sum of $m$ copies of)
the solvable filtration of knots injects into that of ($m$-component)
links; Harvey's result shows that the solvable filtration of links has
its own perculiar complication even modulo the sophistication from
knots.  Taehee Kim considered a similar filtration for
double-sliceness of knots, and proved an analogous nontriviality
result for the filtration~\cite{Kim:2006-1}.

An open question is whether there is a nontrivial torsion element in
$\F_{(n)}$ for higher~$n$.  Here, generalizing the notion of torsion
knots, we say that a link $L$ is \emph{torsion} if for some $r\ne 0$ a
connected sum of $r$ copies of $L$ is slice for some choice of disk
basings.  (We remark that in order to construct a connect sum of
links, one needs to choose a disk basing for each summand link in the
sense of~\cite{Habegger-Lin:1990-1}, or equivalently, choose a string
link whose closure is the link.  For knots a connected sum is
well-defined independent of disk basings.)  All previously known
nontrivial elements in $\F_{(n)}$ for higher $n$ are not torsion, that
is, of infinite order (even modulo $\F_{(n+h)}$ for some $h>0$).
$L^2$-signatures have been used as the only computable obstruction to
being $(n)$-solvable for higher $n$, but so far, they have failed to
detect torsion.  We remark that for lower $n$, classical invariants
other than the $L^2$-signatures were used to detect torsion; it is a
well-known that there are torsion elements in~$\F_{(0)}$ detected by
the Alexander polynomial, and in \cite{Livingston:1999-2} Livingston
proved that there are torsion elements in~$\F_{(1)}$ using the
Casson-Gordon invariant.

The following refinement of Theorem~\ref{theorem:slice-obstruction}
gives a new obstruction for a \emph{link} to be $(n)$-solvable:



\begin{theorem}
  \label{theorem:solvablity-obstruction}
  If $L$ is an $(n)$-solvable link with zero-surgery manifold $M$,
  then for any $p$-tower
  \[
  M_k \to M_{k-1} \to \cdots \to M_0=M
  \]
  with height $k<n$ and for any $\phi\colon\pi_1(M_k) \to \Z_d$ with
  $d$ a power of $p$, $\lambda(M_k,\phi)\in L^0(\Q(\zeta_d))$ is
  well-defined and vanishes.
\end{theorem}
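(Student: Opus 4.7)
The plan is to imitate the proof of Theorem~\ref{theorem:slice-obstruction} but replace the slice disk exterior with an $(n)$-solution, and track carefully how the height-$k$ tower data assembles into a representation whose derived length is bounded by $k+1 \leq n$. The point is that $(n)$-solvability provides a Lagrangian in $H_2$ that, once pushed to the tower cover $W_k$ and to $\Q(\zeta_d)$-coefficients, kills both the twisted and ordinary intersection form Witt classes simultaneously, so their difference $\lambda(M_k,\phi)$ vanishes.

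Concretely, I would first fix a 4-manifold $W$ with $\partial W = M$ witnessing $(n)$-solvability of $L$: $H_1(M)\to H_1(W)$ is an isomorphism, and inside $H_2$ of the $\pi_1(W)^{(n)}$-cover there is an $(n)$-Lagrangian together with $(n)$-duals in the sense of Cochran--Orr--Teichner. Next, I would lift the given $p$-tower from $M$ to $W$ inductively, building $W_i \to W_{i-1}$ matching $M_i \to M_{i-1}$ on the boundary. To extend each surjection $\phi_i \colon \pi_1(M_i) \to \Gamma_i$ to $\pi_1(W_i)$, I need that $H_1(M_i; \Z_{(p)}) \to H_1(W_i; \Z_{(p)})$ is an isomorphism; this follows by induction on $i$ from the base case (which is the defining property of an $(n)$-solution) together with a standard transfer argument for abelian $p$-covers and the fact that $\Gamma_i$ is an abelian $p$-group so that $\phi_i$ is detected by the $p$-local $H_1$.

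Once the lift $W_k \to \cdots \to W_0 = W$ is constructed and $\phi$ is extended to $\Phi \colon \pi_1(W_k) \to \Z_d$, the combined data defines a representation $\rho \colon \pi_1(W) \to G$ where $G$ is the iterated extension built out of $\Gamma_0,\ldots,\Gamma_{k-1},\Z_d$. Since each $\Gamma_i$ and $\Z_d$ are abelian, $G$ has derived length at most $k+1$, and because $k+1 \leq n$, the representation $\rho$ factors through $\pi_1(W)/\pi_1(W)^{(n)}$. Now view the $\Q(\zeta_d)$-twisted intersection form of $W_k$ equivalently as the intersection pairing on $H_2(W;\Q(\zeta_d))$ induced by $\rho$ and~$\phi$. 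The $(n)$-Lagrangian of $W$, pushed forward along $\pi_1(W)/\pi_1(W)^{(n)} \to G$ and then twisted into $\Q(\zeta_d)$, produces a half-rank isotropic summand with duals for the twisted form, hence a trivial Witt class; the same surfaces, regarded with trivial local system, likewise give a Lagrangian of the ordinary $\Q(\zeta_d)$-coefficient intersection form. Subtracting the two Witt classes in $L^0(\Q(\zeta_d))$ as in the definition of $\lambda$ yields $\lambda(M_k,\phi) = 0$.

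The hardest part is the inductive construction of the tower lift together with the verification that the $(n)$-Lagrangian really descends to a Lagrangian for the induced forms on the cover $W_k$; this requires translating between $H_2$ with coefficients in $\Z[\pi_1(W)/\pi_1(W)^{(n)}]$ and $H_2(W_k;\Q(\zeta_d))$ via the factorization through $G$, and checking that the rank and duality relations in the $(n)$-Lagrangian are preserved after this change of coefficients. The well-definedness assertion for $\lambda(M_k,\phi)$ comes for free along the way, since the extended $\Phi$ provides an explicit null-bordism of $(M_k,\phi)$ over $\Z_d$, so the bordism condition in the definition of $\lambda$ is verified automatically.
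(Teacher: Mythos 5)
Your overall approach is the same as the paper's: take an $(n)$-solution $W$ of $M$, lift the height-$k$ tower from $M$ to $W$, and push the $(n)$-Lagrangian and $(n)$-duals of $W$ into the twisted and untwisted intersection forms of the top cover $W_k$ to show both Witt classes vanish. But two of the steps you dispatch casually are exactly where all the difficulty lives, and one of your justifications is wrong.

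The ``standard transfer argument'' cannot give $H_1(M_i;\Z_{(p)}) \to H_1(W_i;\Z_{(p)})$ an isomorphism. Transfer is useful when the covering degree is invertible in the coefficient ring; here the degree is a power of $p$ and the coefficients are $p$-local, so transfer is vacuous. Surjectivity does follow from Levine's lemma, but injectivity reduces (via the long exact sequence of $(W_i,M_i)$) to surjectivity of $H_2(W_i;\Z_r) \to H_2(W_i,M_i;\Z_r)$, and that is proved by exhibiting the pushed-forward $(n)$-Lagrangian and duals as a spanning set of $H_2(W_i)$ together with a Betti-number count. So the statement you want to verify \emph{before} invoking the $(n)$-solution actually \emph{uses} the $(n)$-solution data in an essential way; in the paper the tower-lifting claim and the vanishing of $\lambda$ are proved in tandem, with the Lagrangian/dual computation and the rank bounds shared between them. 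Asserting the $H_1$-isomorphism as an independent preliminary with a one-line transfer justification leaves a genuine gap.

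The rank count you flag as ``the hardest part'' is indeed needed, and your sketch gives no mechanism for it. Linearly independent $u_{ik}$ with orthogonal relations and duals $v_{jl}$ only yield a \emph{Lagrangian} if the total rank of $H_2$ equals twice their number; otherwise the isotropic subspace may be too small to metabolize the form. The paper closes this by two covering inequalities, $b_2(\widetilde V;\Z_p) \le p^a\, b_2(V;\Z_p)$ and $b_2(V;\K) \le b_2(V;\Z_p)$ for a nontrivial $\K$-coefficient system factoring through a $p$-group, combined with $b_2(W)=2r$; these inequalities are themselves not routine and are proved by arguments parallel to Levine's lemma.

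Finally, a conceptual slip worth correcting: in general there is \emph{no} representation $\rho\colon \pi_1(W) \to G$ encoding a $p$-tower, because $\pi_1(W_k)$ and $\ker\Phi$ need not be normal in $\pi_1(W)$ --- the top cover $W_k \to W$ is typically irregular. What does hold, and is what makes the argument go, is the chain $\pi^{(n)} \subset \ker\Phi \subset \pi_1(W_k) \subset \pi_1(W)$, so the $\pi^{(n)}$-cover $W^{(n)}$ maps to $W_k$ and the $(n)$-Lagrangian descends to $H_2(W_k)$. The paper then works directly with right cosets $\pi_1(W_k) g_\ell$ and the classes $u_i\cdot g_\ell$, $v_i\cdot g_\ell$, verifying orthogonality and duality by an explicit coset summation rather than by factoring through a quotient group that does not exist.
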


An analogue for 3-manifolds also holds (see
Theorem~\ref{theorem:Witt-invariant-for-solvable-manifold}.)  As an
application of Theorem~\ref{theorem:solvablity-obstruction}, we prove
a first result on the existence of ``torsion'' at an arbitrary depth
of the solvable filtration. 

\begin{theorem}
  \label{theorem:torsion-in-COT-filtration}
  For any $n$, there are infinitely many boundary links $L_i$ in $S^3$
  which are 2-torsion and $(n)$-solvable but not $(n+2)$-solvable.
\end{theorem}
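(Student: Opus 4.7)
\medskip

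\noindent\textbf{Proof proposal.} The plan is to construct the $L_i$ by iterated infection of an unlink, using amphichiral infection knots chosen from the number-theoretic family $\{K_{a}\}$ produced for Theorem~\ref{theorem:non-slice-bing-souble-intro}, with infection curves placed at successively deeper levels of the derived series. Concretely, starting from an unlink $U$ of appropriate component number, I would perform $n+1$ stages of infection along curves $\alpha_0,\ldots,\alpha_{n}$ where $\alpha_j$ lies in the $j$th derived subgroup of the fundamental group of the link exterior obtained so far. The infection knot used at the last (deepest) stage will be varied over our family $\{K_{a_i}\}$ to produce the infinite family $L_i$. Each $L_i$ is a boundary link by Stallings' theorem applied to the free nilpotent quotients of its group (the infection curves being in sufficiently deep commutator subgroups), and each $L_i$ is 2-torsion because a connected sum of two copies of $L_i$ can be realized, via the amphichirality of $K_{a_i}$, as a link obtained by infection using $K_{a_i}\#(-K_{a_i})$, which is a slice knot; sliceness of the resulting link follows from a standard movie/disk-trading argument on the infection datum.

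To obtain $(n)$-solvability, I would verify that at depth $n$ the infection curves lie in the $n$th derived subgroup of a slice link complement. The general principle, used repeatedly in the Cochran--Orr--Teichner program, is that infection along an $(n)$th-order commutator in the group of an $(n)$-solvable link by any knot yields an $(n)$-solvable link. Since the unlink is $(n)$-solvable for every $n$, and our curves $\alpha_0,\ldots,\alpha_n$ are chosen to push the infection data into the appropriate derived terms, each $L_i$ will be $(n)$-solvable. (Technical care is needed: the depth of the \emph{last} infection curve must be arranged to yield $(n)$- but not $(n+1)$-solvability in general; I can be flexible about the precise depths and push all curves to depth exactly $n$ if desired.)

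The main obstacle, and the heart of the proof, is verifying that each $L_i$ fails to be $(n+2)$-solvable. For this I invoke Theorem~\ref{theorem:solvablity-obstruction} with $p=2$ and $k=n+1<n+2$: it suffices to exhibit a $2$-tower $M_{n+1}\to\cdots\to M_0$ of the zero-surgery manifold $M$ of $L_i$, and a character $\phi\colon\pi_1(M_{n+1})\to \Z_{d}$ with $d$ a power of $2$, such that $\lambda(M_{n+1},\phi)\neq 0$ in $L^0(\Q(\zeta_d))$. I would build the tower inductively from the infection structure, using the characters dual to the infection curves at each stage; Proposition~\ref{proposition:t-equivalence-intro} together with the algorithmic procedure for infected manifolds reduces the computation of $\lambda(M_{n+1},\phi)$ to a combinatorial expression involving the discriminant invariants of the infection knots, exactly as in the Bing-double argument for Theorem~\ref{theorem:non-slice-bing-souble-intro}. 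To detect nonvanishing I would evaluate the norm residue symbol $(\dis\lambda(M_{n+1},\phi),D)_\p$ at a suitably chosen prime $\p$, and I would select the index $a_i$ so that the resulting symbol equals $-1$.

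Finally, to produce infinitely many mutually distinct $L_i$, the number-theoretic mechanism is to find, for each previously constructed $L_1,\ldots,L_{i-1}$, a new prime $\p_i$ of $\Q(\zeta_d+\zeta_d^{-1})$ at which the norm residue symbol of $\dis\lambda(M_{n+1},\phi)$ for the new $L_i$ is nontrivial while it is trivial for the earlier examples; this is analogous to the selection procedure for the sequence $\{a_n\}$ in Theorem~\ref{theorem:non-slice-bing-souble-intro}. The hard part is the book-keeping that simultaneously guarantees $(n)$-solvability (which fixes the derived-series depth of the infection curves) and a nonvanishing discriminant computation at the next depth, all while keeping the underlying knot $2$-torsion. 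Everything else---boundary-link property, 2-torsion, the $L^0$-valued computation, and the prime selection---is then driven by standard infection machinery together with the number-theoretic argument already developed in the earlier sections.
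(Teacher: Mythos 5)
Your overall strategy---realize the $L_i$ as iterated Bing doubles of amphichiral knots drawn from the family $\{K_a\}$, then obstruct $(n+2)$-solvability via the discriminant of $\lambda$ on a height-$(n+1)$ $2$-tower and detect it with norm residue symbols---is the paper's framework, but there is a genuine gap in your $(n)$-solvability step. You state the principle as ``infection along an $(n)$th-order commutator in the group of an $(n)$-solvable link by \emph{any} knot yields an $(n)$-solvable link,'' whereas the Cochran--Orr--Teichner result (\cite[Proposition~3.1]{Cochran-Orr-Teichner:2002-1}) requires the infection knot to have $\Arf=0$. The knots $K_{a_i}$ do not satisfy this: since $a_i$ is odd, $\Delta_{K_{a_i}}(-1)=-(4a_i^2+1)\equiv 3\pmod 8$, so $\Arf(K_{a_i})=1$. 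Infecting by a single $K_{a_i}$ therefore does not produce an $(n)$-solvable link by this route, and your construction as written fails to deliver the $(n)$-solvability half of the theorem.

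The paper resolves this by infecting with $K=K_{a_i}\#K_{a_j}$ for $i\neq j$: the connected sum of two Arf-$1$ knots has $\Arf(K)=0$, so $BD_n(K)$ is $(n)$-solvable; $K$ is still amphichiral, so $BD_n(K)$ remains $2$-torsion; and because the relevant discriminant is governed by the Alexander polynomial, which is multiplicative under connected sum, $\dis\lambda(N_{n+1},\psi_{n+1})$ is the product of the corresponding expressions for $K_{a_i}$ and $K_{a_j}$---still detected as nontrivial by the norm residue symbol at $p_i$ (or $p_j$) via Proposition~\ref{proposition:dual-primes}. You should also repair the $2$-torsion argument: the claim that $L_i\#L_i$ ``can be realized as a link obtained by infection using $K_{a_i}\#(-K_{a_i})$'' is not correct, since a connected sum of two copies of the infected link is not the same as a single infection by $K\#(-K)$. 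The paper instead uses the geometric symmetry of the pattern (Lemma~\ref{lemma:torsion-property-of-bd_n}): $BD_n\cup\alpha$ is isotopic to $-(BD_n\cup\alpha)$, so $BD_n(K)$ is isotopic to $-BD_n(-K)$, and amphichirality of $K$ then gives $BD_n(K)\cong -BD_n(K)$, which is the $2$-torsion statement you need.
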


The links $L_i$ in Theorem~\ref{theorem:torsion-in-COT-filtration} are
constructed by taking the $n$th iterated Bing doubles of certain
carefully chosen amphichiral knots.

We remark that it can be shown that the 2-torsion links $L_i$ are
``independent over $\Z_2$'' modulo $\F_{(n+2)}$ for an
\emph{arbitrary} choice of disk basings, in the following sense: for
any $a_i\in\{0,1\}$, if a connected sum of the $a_iL_i$ is
$(n+2)$-solvable for some disk basings, then $a_i=0$ for all~$i$.  A
proof of this independence result will appear in a subsequent paper
\cite{Cha:2007-2} because of additional technicalities required to
treat the sophistication from disk basings.

We remark that there are some previously known results on the
existence of independent infinite order elements in the
Cochran-Orr-Teichner filtration.  In case of knots, for $n\le 2$ there
are infinitely many infinite order elements in $\F_{(n)}$ which are
independent (over $\Z$) modulo~$\F_{(n.5)}$, that is,
$\F_{(n)}/\F_{(n.5)}$ is of infinite rank.  (For $n\le 1$, it is a
classical result, and for $n=2$, it was shown
in~\cite{Cochran-Orr-Teichner:2002-1}.)  In case of links, Harvey
considered the solvable filtration $\{\BF_{(n)}\}$ of the boundary
string link concordance group instead of spherical links, so that the
difficulty from disk basings is avoided, and proved that there are
infinite order boundary string links which generate an infinite rank
subgroup in (the abelianization of) $\BF_{(n)}/\BF_{(n+1)}$, using her
invariant $\rho_n$~\cite{Harvey:2006-1}.  It was generalized to
$\BF_{(n)}/\BF_{(n.5)}$ in~\cite{Cochran-Harvey:2006-01}.

Theorem \ref{theorem:solvablity-obstruction} also enables us to prove
the following result on infinite order elements: there are infinitely
many $(n)$-solvable boundary link $L'_i$ which are not
$(n+1)$-solvable but have vanishing Harvey's invariant $\rho_n$ (see
Corollary~\ref{corollary:n-solvable-links-with-vanishing-harvey-invariants}.)
It can also be shown that these $L'_i$ are ``independent over $\Z$''
modulo $\F_{(n+1)}$ for an arbitrary choice of disk basings, in the
sense that for any $a_i\in\Z$, if a connected sum of the $a_iL'_i$ is
$(n+1)$-solvable for some disk basings, then $a_i=0$ for
all~$i$~\cite{Cha:2007-2}.  Considering the subgroup generated by
string links whose closures are the $L'_i$, it can be shown that the
kernel of $\rho_n$ is large enough to contain a subgroup whose
abelianization is isomorphic to $\Z^\infty$~\cite{Cha:2007-2}.  We
also remark that previously known signature invariants of link
concordance, including $\rho_n$, vanish on the 2-torsion links $L_i$
given in Theorem~\ref{theorem:torsion-in-COT-filtration}.  (See
Remark~\ref{remark:vanishing-of-signatures-for-bd_n}.)  So, the kernel
of $\rho_n$ also contains infinitely many independent 2-torsion links.
We remark that Cochran-Harvey-Leidy have recently announced a proof
that the kernel of $\rho_n$ contains an infinite cyclic subgroup using
a different technique.

In addition, our results also hold for the \emph{grope filtration} of
links which is defined and investigated
in~\cite{Cochran-Orr-Teichner:1999-1, Cochran-Orr-Teichner:2002-1,
  Cochran-Teichner:2003-1,Harvey:2006-1}.  In particular, for any
$n\ge 2$ there are infinitely many 2-torsion elements in the $n$th
term of the grope filtration of links (which are independent in an
appropriate sense).  For more details, see
Section~\ref{section:computation-for-knot-surgery-manifold}.

\subsection*{Acknowledgements}
The author thanks an anonymous referee for helpful comments.  This work
was supported by Korea Research Foundation Grant KRF-2004-041-C00047.

\section{An Atiyah-type lemma and intersection form defects}
\label{section:Witt-class-defect}

In this section we define an invariant of 3-manifolds which is
essentially a Witt class defect of intersection forms.  We start with
a discussion on an Atiyah-type result for Mishchenko-Ranicki's
symmetric signature of topological 4-manifolds, which will be used to
show the well-definedness of the invariant.

\subsection{An Atiyah-type lemma}

We recall some terms of the $L$-theory which is used to state our
Atiyah-type result in a general context.  A standard reference to the
terms is Ranicki's book~\cite{Ranicki:1981-1}.  For a group $\pi$, we
regard the integral group ring $\Z\pi$ as a ring with involution via
$(\sum_{g\in \pi} n_g g)^- = \sum_{g\in \pi} n_g g^{-1}$.  Let
$L^n(\Z\pi)$ be the \emph{symmetric $L$-group over $\Z\pi$}, that is,
the abelian group of cobordism classes of symmetric Poincar\'e chain
complexes of dimension $n$ over~$\Z\pi$.  We say that $X$ is a
\emph{geometric $n$-dimensional Poincar\'e space} if $X$ is a finite
CW-complex satisfying the Poincar\'e duality for any coefficients.
The cellular chain complex $C_*(X;\Z\pi_1(X))$ endowed with the chain
equivalence given by the cap product with the orientation class gives
rise to element $\sigma^*(X) \in L^n(\Z\pi_1(X))$ which is called the
\emph{Mishchenko-Ranicki symmetric signature}.

We say that a space $X$ is \emph{over a group $\Gamma$} if it is
endowed with the homotopy class of a map $\phi\colon X \to B\Gamma$,
where $B\Gamma$ denotes the classifying space for $\Gamma$, or
equivalently, $K(\Gamma,1)$.  When the choice of $\phi$ is clearly
understood from the context, we will not specify it explicitly.  For a
Poincar\'e space $X$ of dimension $n$ which is over $\Gamma$, we
denote by $\sigma^*_\Gamma(X)$ the image of $\sigma^*(X)$ under the
natural map
\[
L^n(\Z\pi_1(X)) \to L^n(\Z\Gamma).
\]

We will focus on the case of topological 4-manifolds.  A closed
topological 4-manifold $W$ over $\Gamma$ has a canonical homotopy type
of a 4-dimensional Poincare space, and therefore $\sigma^*_\Gamma(W)
\in L^4(\Z\Gamma)$ is defined.  In particular, when $\Gamma$ is a
trivial group, we obtain an element in $L^4(\Z) \cong \Z$ which is
equal to the ordinary signature~$\sigma(W)$.  Let
\[
i^*_\Gamma \colon \Z=L^4(\Z) \to L^4(\Z\Gamma)
\]
be the abelian group homomorphism induced by the inclusion of a
trivial group into~$\Gamma$.

\begin{lemma}
  \label{lemma:atiyah-type-result}
  Suppose $H_4(\Gamma)=0$.  Then for any closed topological 4-manifold
  $W$ over $\Gamma$, $\sigma^*_\Gamma(W) = i^*_\Gamma \sigma(W)$ in
  $L^4(\Z\Gamma)$.
\end{lemma}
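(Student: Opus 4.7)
The plan is to show that both sides of the claimed identity factor through oriented topological bordism, and then use $H_4(\Gamma)=0$ to reduce to the case of a null-homotopic classifying map, where the equality is essentially tautological. First I would check that each of the assignments $(W,\phi)\mapsto \sigma^*_\Gamma(W)$ and $(W,\phi)\mapsto i^*_\Gamma\sigma(W)$ defines a homomorphism $\Omega^{top}_4(B\Gamma)\to L^4(\Z\Gamma)$. For the symmetric signature this is standard from~\cite{Ranicki:1981-1}: a null-bordism $(V,\psi)$ of $(W,\phi)$ over $B\Gamma$ produces, via $C_*(V;\Z\Gamma)$, a null-cobordism of symmetric Poincar\'e pairs representing $\sigma^*_\Gamma(W)$ in $L^4(\Z\Gamma)$. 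Bordism invariance of the ordinary signature $\sigma$ is classical.

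Next, I would show that the map $\Omega^{top}_4(\mathrm{pt})\to \Omega^{top}_4(B\Gamma)$ sending $[V]\mapsto[V,\mathrm{const}]$ is surjective under the hypothesis $H_4(\Gamma)=0$. For this I invoke the Atiyah-Hirzebruch spectral sequence
\[
E^2_{p,q}=H_p(B\Gamma;\Omega^{top}_q(\mathrm{pt})) \Longrightarrow \Omega^{top}_{p+q}(B\Gamma).
\]
In dimensions $\le 3$ oriented topological, PL, and smooth bordism coincide with $\Omega^{SO}_i=0$ for $i=1,2,3$, which kills $E^2_{p,4-p}$ for $p=1,2,3$; combined with $E^2_{4,0}=H_4(B\Gamma)=H_4(\Gamma)=0$, this makes the reduced bordism group $\tilde\Omega^{top}_4(B\Gamma)$ vanish, giving the desired surjectivity.

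Finally, on classes of the form $[V,\mathrm{const}]$ the equality of the two homomorphisms is immediate: since $\pi_1(V)\to\Gamma$ is trivial, the $\Gamma$-cover is $V\times\Gamma$ and the chain complex identification $C_*(V;\Z\Gamma)\cong C_*(V;\Z)\otimes_\Z\Z\Gamma$ respects the symmetric Poincar\'e structure, so by definition of $i^*_\Gamma$ we obtain $\sigma^*_\Gamma(V,\mathrm{const})=i^*_\Gamma\sigma(V)$. The three steps combine to prove the lemma. The one point that requires some care, rather than being a genuine obstacle, is making sense of $\sigma^*_\Gamma$ and its bordism invariance in the \emph{topological} (as opposed to smooth or PL) category; this rests on the existence of a canonical simple-homotopy CW-structure on a closed topological 4-manifold, which fits into Ranicki's framework and upgrades the smooth statement verbatim.
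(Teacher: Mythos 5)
Your proposal is correct and follows essentially the same route as the paper: both assignments factor through $\Omega^{top}_4(B\Gamma)$, the Atiyah-Hirzebruch spectral sequence (using $\Omega^{top}_q=0$ for $1\le q\le 3$ and $E^2_{4,0}=H_4(\Gamma)=0$) shows the classes with constant reference map exhaust $\Omega^{top}_4(B\Gamma)$, and the identity is then immediate by definition of $i^*_\Gamma$. Your remark flagging the need for topological bordism invariance of the Mishchenko–Ranicki symmetric signature is the same point the paper addresses when it appeals to the canonical Poincaré homotopy type of a closed topological 4-manifold.
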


\begin{proof}
  Our proof follows the lines of the bordism theoretic approach to
  Atiyah-type theorems, based on the fact that the association $W \to
  \sigma^*_\Gamma(W)$ gives rise to a homomorphism
  \[
  \sigma^*_\Gamma\colon \Omega_4^{top}(B\Gamma) \to L^4(\Z\Gamma),
  \]
  where $\Omega_4^{top}(B\Gamma)$ is the 4-dimensional topological
  bordism group over~$B\Gamma$.  First we claim that
  $\Omega_4^{top}(B\Gamma)$ is generated by $\C P^2$, which is
  automatically endowed with a constant map into $B\Gamma$, being a
  simply connected space.  To show the claim, we consider the
  Atiyah-Hirzebruch spectral sequence for bordism groups:
  \[
  E_{p,q}^2 = H_p(\Gamma;\Omega_q^{top}) \Longrightarrow
  \Omega_n^{top}(B\Gamma)
  \]
  where $\Omega_n^{top}$ denotes the topological cobordism group of
  $n$-manifolds.  Note that $\Omega^{top}_0=\Z$,
  $\Omega^{top}_2=\Omega^{top}_3=0$, and $\Omega^{top}_4=\Z$ generated
  by $\C P^2$.  Also we have $E^2_{4,0}=H_4(\Gamma;\Z)=0$ by the
  hypothesis.  So it follows that all the $E^2_{p,q}$ terms vanish for
  $p+q=4$ but $E^2_{0,4}=H_0(\Gamma;\Omega_4^{top}) \cong
  \Omega_4^{top}$.  Since $d^r_{0,4}\colon E^r_{0,4} \to E^r_{-r,r+3}$
  is obviously trivial, $E^r_{0,4}$ is a quotient of $E^2_{0,4}$ for
  all~$r\ge 2$.  (In fact, the only case that $d^r_{r,5-r}\colon
  E^r_{r,5-r} \to E^r_{0,4}$ is potentially nontrivial is when $r=5$.)
  From this it follows that $E^\infty_{p,q}=0$ for all $p+q=4$ but
  $E^\infty_{0,4}$, which is a quotient of $\Omega^{top}_4\cong \Z$.
  This shows that $\Omega_4(B\Gamma)$ is generated by the bordism
  class of~$\C P^2$.

  By the claim, it suffices to consider the special case that $W$ is
  simply connected.  The symmetric signature $\sigma^*(W)$ of $W$ is
  in $L^4(\Z\pi_1(W))=L^4(\Z)=\Z$, and indeed equal to the ordinary
  signature~$\sigma(W)$.  Now by the definition, $\sigma^*_\Gamma(W) =
  i^*_\Gamma \sigma(W)$.
\end{proof}

\subsection{Intersection form defects}

We now define an invariant for 3-manifolds which lives in an
$L$-group.  For technical and computational convenience, we pass from
the $L$-theory over $\Z\Gamma$ to that of a (skew-)field; let $\K$ be
a (skew-)field with involution, and fix a homomorphism $\Z\Gamma \to
\K$ between rings with involutions, which induces a map
\[
L^4(\Z\Gamma) \to L^4(\K).
\]
Since $\K$ is a skew-field, every $\K$-module is free.  Also,
$L^4(\K)$ is canonically identified with $L^0(\K)$, which is the Witt
group of nonsingular hermitian forms on finitely generated (free)
modules over~$\K$.  For a closed topological 4-manifold $W$ over
$\Gamma$, the image of $\sigma^*_\Gamma(W)$ in $L^0(\K)$ is
represented by the intersection form of $W$ with $\K$-coefficients.
More precisely, on the $\K$-coefficient homology module
\[
H_2(W;\K)=H_2(C_*(W;\Z\Gamma)\otimes_{\Z\Gamma}\K)
\]
of $W$, the $\K$-coefficient intersection form
\[
\lambda_\K(W) \colon H_2(W;\K) \times H_2(W;\K) \to \K
\]
is defined as a nonsingular hermitian form over~$\K$.  Its Witt class
$[\lambda_\K(W)]\in L^0(\K)$ is the image of $\sigma^*_\Gamma(W)$.

If $W$ is not closed, then the above $\lambda_\K(W)$ is not
necessarily nonsingular.  However, it can be seen that, letting $\bar
H_2(W;\K)$ be the image of
\[
H_2(W;\K) \to H_2(W,\partial W;\K)
\]
which is automatically finitely generated and $\K$-free,
$\lambda_\K(W)$ gives rise to a nonsingular hermitian form
\[
\bar H_2(W;\K) \times \bar H_2(W;\K) \to \K.
\]
We will denote it by $\lambda_\K(W)$ as an abuse of notation.  Indeed
it is well-defined and nonsingular since
\[
H_2(W,\partial W;\K) \cong H^2(W;\K) \cong \Hom(H_2(W;\K),\K)
\]
by the Poincar\'e duality and the universal coefficient theorem over
the skew-field~$\K$.  Therefore, even for the case that $W$ is not
closed, we can think of the Witt class $[\lambda_\K(W)] \in L^0(\K)$.

Let $S$ be a fixed nonempty multiplicatively closed subset of~$\Z$.

\begin{definition}
  Suppose $M$ is a closed 3-manifold over $\Gamma$ via $\phi\colon M
  \to B\Gamma$ such that the bordism class of $(M,\phi)$ is
  $S$-torsion in $\Omega^{top}(B\Gamma)$, i.e., $r(M,\phi)=0$ in
  $\Omega^{top}(B\Gamma)$ for some $r\in S$.  Then there is a compact
  topological 4-manifold $W$ endowed with a map $W\to B\Gamma$ which
  is bounded by $rM$ over~$\Gamma$.  Define
  \[
  \lambda(M,\phi) = \frac 1r \otimes \big([\lambda_\K(W)] - i^*_\K
  \sigma(W)\big) \in S^{-1}\Z \otimes_\Z L^0(\K)
  \]
  where $i^*_\K \colon \Z=L^0(\Z) \to L^0(\K)$ is the map induced by
  the canonical inclusion $\Z\to \K$.
\end{definition}

%

\begin{lemma}
  If $H_4(\Gamma)=0$, then $\lambda(M,\phi)$ is well-defined for any
  $(M,\phi)$ which is $S$-torsion in $\Omega^{top}_3(B\Gamma)$.
\end{lemma}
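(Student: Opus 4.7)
The plan is to establish well-definedness in two stages: first, independence of the bounding 4-manifold $W$ for a fixed $r\in S$ with $r(M,\phi)=0$ in $\Omega^{top}_3(B\Gamma)$, and second, independence of the choice of such~$r$.

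For the first stage, given two such 4-manifolds $W$ and $W'$ over $\Gamma$ with $\partial W = \partial W' = rM$, I would form the closed 4-manifold $V = W \cup_{rM} (-W')$ over $\Gamma$; the $B\Gamma$-structure on $V$ is well-defined up to homotopy since those on $W$ and $W'$ agree on $rM$. Applying Lemma~\ref{lemma:atiyah-type-result} to $V$, which is legitimate because $H_4(\Gamma)=0$, yields $\sigma^*_\Gamma(V) = i^*_\Gamma\sigma(V)$ in $L^4(\Z\Gamma)$. Pushing forward under the map $L^4(\Z\Gamma) \to L^4(\K) \cong L^0(\K)$ induced by $\Z\Gamma \to \K$, and identifying the image of $\sigma^*_\Gamma(V)$ with $[\lambda_\K(V)]$ as in the discussion preceding the lemma, yields $[\lambda_\K(V)] = i^*_\K \sigma(V)$. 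By Novikov-type additivity of the Witt class of the $\K$-coefficient intersection form under gluing along a common boundary, together with ordinary Novikov additivity of the signature, this rewrites as $[\lambda_\K(W)] - [\lambda_\K(W')] = i^*_\K(\sigma(W) - \sigma(W'))$, which rearranges to the desired equality $[\lambda_\K(W)] - i^*_\K \sigma(W) = [\lambda_\K(W')] - i^*_\K \sigma(W')$ in $L^0(\K)$.

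For the second stage, if $r,s\in S$ both annihilate $(M,\phi)$ with bounding 4-manifolds $W$ (for $rM$) and $W'$ (for $sM$), then $rs\in S$ (by multiplicative closure of $S$) also annihilates $(M,\phi)$, witnessed by both the disjoint union $sW$ of $s$ copies of $W$ and the disjoint union $rW'$ of $r$ copies of $W'$. Applying the first stage with $rs$ in place of $r$ shows that both choices give the same value of $\tfrac{1}{rs}\otimes(\cdot)$; additivity of the $\K$-coefficient intersection form and of the ordinary signature under disjoint union then reduces this to $\tfrac{1}{r}\otimes([\lambda_\K(W)] - i^*_\K\sigma(W)) = \tfrac{1}{s}\otimes([\lambda_\K(W')] - i^*_\K\sigma(W'))$ in $S^{-1}\Z \otimes_\Z L^0(\K)$, completing the verification.

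The main obstacle I anticipate is the additivity step in stage one: ordinary Novikov additivity of the signature under boundary gluing is classical, but the analogous statement for the Witt class of the $\K$-coefficient intersection form is subtler, since $H_2(V;\K)$ is not generally the literal orthogonal direct sum of $H_2(W;\K)$ and $H_2(-W';\K)$ because the Mayer--Vietoris sequence contributes terms involving $H_*(rM;\K)$. I would handle this either by invoking the chain-level formalism of symmetric signatures of Poincar\'e pairs from Ranicki's $L$-theory, in which the relevant contribution is shown to be metabolic and hence zero in $L^0(\K)$, or by a direct linear-algebraic computation showing that the gluing contributions cancel out of the Witt class.
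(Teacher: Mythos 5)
Your proof is correct and uses the same key ideas as the paper's: glue bounding $4$-manifolds along copies of $M$ to form a closed manifold over $\Gamma$, then apply Lemma~\ref{lemma:atiyah-type-result} and Novikov additivity of the signature and of the Witt class of $\lambda_\K$. The only difference is organizational --- the paper handles both dependencies (on $W$ and on $r \in S$) at once via the single gluing $V = r'W \cup_{rr'M} r(-W')$, whereas you split the argument into two stages --- but the substance is identical, and the additivity of $[\lambda_\K(-)]$ that you flag as a potential obstacle is exactly what the paper dispatches as ``the standard Novikov additivity argument.''
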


\begin{proof}
  Suppose $W$ and $W'$ are null-bordisms of $rM$ and $r'M$
  over~$\Gamma$, respectively, where $r,r'\in S$.  Let $V=r'W
  \cup_{rr'M} r(-W')$ be the manifold obtained by gluing $r'W$ and
  $r(-W')$ along~$rr'M$.  Then $V$ is over~$\Gamma$.  The standard
  Novikov additivity argument shows
  \[
  [\lambda_\K(V)] = r'[\lambda_\K(W)]-r[\lambda_\K(W')] \quad
  \text{and}\quad \sigma(V)=r'\sigma(W)-r\sigma(W').
  \]
  By Lemma~\ref{lemma:atiyah-type-result}, $[\sigma_\Gamma^*(V)] =
  i^*_\Gamma \sigma(V)$, and therefore, $[\lambda_\K(V)] = i^*_\K
  \sigma(V)$.  It follows that
  \[
  r'\big([\lambda_\K(W)]-i^*_\K \sigma(W)\big) =
  r\big([\lambda_\K(W')]-i^*_\K \sigma(W')\big) \qedhere
  \]
\end{proof}

\begin{remark}\noindent\par\Nopagebreak
  \begin{enumerate}
  \item Our $\lambda$ has naturality with respect to $S$ in the
    following sense: let $\M_S$ be the collection of pairs $(M,\phi)$
    which are $S$-torsion in~$\Omega^{top}_3(B\Gamma)$.  Then
    $\lambda(-,-)$ can be viewed as a function on $\M_S$.  For $S'
    \subset S$, we have $\M_{S'} \subset \M_S$ and there is a
    commutative diagram
    \[
    \begin{diagram}
      \node{\M_{S'}} \arrow{e,J} \arrow{s,l}{\lambda}
      \node{\M_S} \arrow{s,r}{\lambda}
      \\
      \node{S^{\prime -1} \Z \otimes L^0(\K)} \arrow{e,t}{f_{S',S}}
      \node{S^{-1} \Z \otimes L^0(\K)}
    \end{diagram}
    \]
    where $f_{S',S}$ is induced by $S'^{-1}\Z \to S^{-1}\Z$.  The
    kernel of $f_{S',S}$ is nontrivial in general.  Indeed, it
    consists of the $S$-torsion part:
    \[
    \Ker f_{S',S} = \{ x\in S^{\prime -1} \Z \otimes L^0(\K) \mid
    r\cdot x = 0 \text{ for some } r\in S\}
    \]
    Therefore, for a larger $S$, one has the advantage that
    $\lambda(M,\phi)$ is defined on a larger collection $\M_S$, but
    more (torsion) information is lost.
  \item In particular when $S=\{1\}$ and $(M,\phi)$ is null-bordant,
    $\lambda(M,\phi)$ lives in $L^0(\K)$, without losing any torsion
    information.  This special case will be used in our applications
    discussed later.
  \item If one wanted to extract information on the torsion-free part
    only (e.g., by considering signature-type invariants of
    $L^0(\K)$), then $S=\Z-\{0\}$ could be used to define
    $\lambda(M,\phi) \in \Q\otimes L^0(\K)$ for any $(M,\phi)$ which
    has finite order in~$\Omega_3^{top}(B\Gamma)$.
  \end{enumerate}
\end{remark}

\section{$p$-towers and homology cobordism}
\label{section:homology-cobordism-invariants}

Let $R$ be an abelian group.  We say that two closed 3-manifolds $M$
and $M'$ are \emph{$R$-homology cobordant} if there is a compact
4-manifold $W$ such that $\partial W=M\cup -M'$ and the inclusions of
$M$ and $M'$ into $W$ are $R$-homology equivalences, that is, the
induced maps $H_i(M;R) \to H_i(W;R)$, $H_i(M';R) \to H_i(W;R)$ are
isomorphisms for all~$i$.  Such a manifold $W$ is called an
\emph{$R$-homology cobordism}.  When $R=\Z$, one usually says that $M$
and $M'$ are homology cobordant.  We will often consider the case that
$R=\Z_p$, the abelian group of residue classes modulo~$p$,
where $p$ is prime.

The aim of this section is to show the homology cobordism invariance
of the invariant $\lambda(M,\phi)$ which is defined in the previous
section for $\phi\colon M \to B\Gamma$.  As the first step, we
investigate the case that $\Gamma$ is a $p$-group and $\phi$ factors
through~$W$.  As an abuse of notation, for a CW-complex $X$, we will
regard (the homotopy class of) $\phi\colon X\to B\Gamma$ as a group
homomorphism $\phi\colon \pi_1(X) \to \Gamma$.  (For disconnected $X$,
one may adopt a convention that $\pi_1(X)$ designates the free product
of the fundamental groups of components of~$X$.)

As in the previous section, we assume $H_4(\Gamma)=0$ and fix a
multiplicatively closed subset $S$ of $\Z$ and a map $\Z\Gamma\to \K$.
In addition, we assume that $\K$ is of characteristic zero.

\begin{proposition}
  \label{proposition:homology-cobordism-and-p-covers}
  Suppose $W$ is a $\Z_p$-homology cobordism between 3-manifolds $M$
  to~$M'$, and $\psi\colon\pi_1(W) \to \Gamma$ is a group homomorphism
  into a $p$-group~$\Gamma$.
  \begin{enumerate}
  \item Let $\phi\colon \pi_1(M) \to \Gamma$ and $\phi'\colon
    \pi_1(M') \to \Gamma$ be the restrictions of~$\psi$.  Then
    $(M,\phi)$ is $S$-torsion in $\Omega^{top}_3(B\Gamma)$ if and only
    if so is $(M',\phi')$.  If it is the case, then
    $\lambda(M,\phi)=\lambda(M',\phi')$ in $S^{-1}\Z\otimes L^0(\K)$.
  \item Let $W_\Gamma$, $M_\Gamma$, and $M'_\Gamma$ be the
    $\Gamma$-covers of $W$, $M$, and $M'$ determined by $\psi$,
    $\phi$, and $\phi'$, respectively.  Then $W_\Gamma$ is a
    $\Z_p$-homology cobordism between $M_\Gamma$ and~$M'_\Gamma$.
  \end{enumerate}
\end{proposition}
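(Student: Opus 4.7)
My plan is to prove part~(2) first and then feed it into part~(1). For (1), the $S$-torsion statement is immediate: since $(W,\psi)$ is a bordism over $B\Gamma$ between $(M,\phi)$ and $(M',\phi')$, these classes coincide in $\Omega_3^{top}(B\Gamma)$, so one is $S$-torsion iff the other is. Assuming they are, fix a null-bordism $V$ of $rM$ over~$\Gamma$ with $r\in S$, and form $V':=V\cup_{rM} r(-W)$, a null-bordism of $rM'$ over~$\Gamma$. Because the gluing locus $rM$ is a closed $3$-manifold, standard Novikov additivity gives $\sigma(V')=\sigma(V)-r\sigma(W)$ and $[\lambda_\K(V')]=[\lambda_\K(V)]-r[\lambda_\K(W)]$. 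Substituting into the definition of $\lambda$ reduces the desired equality in $S^{-1}\Z\otimes L^0(\K)$ to the single claim
\[
[\lambda_\K(W)]=i^*_\K\sigma(W) \text{ in } L^0(\K).
\]

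For (2), the key point is that $\Z_p\Gamma$ is a local Artinian ring: since $\Gamma$ is a finite $p$-group, its augmentation ideal $I=\ker(\Z_p\Gamma\to\Z_p)$ is the Jacobson radical and is nilpotent, $I^N=0$ for some~$N$. Consider the bounded complex $C:=C_*(W,M;\Z\Gamma)\cong C_*(W_\Gamma,M_\Gamma;\Z)$ of finitely generated free $\Z\Gamma$-modules. Its reduction $C\otimes_\Z\Z_p$ is a free $\Z_p\Gamma$-complex satisfying $(C\otimes_\Z\Z_p)\otimes_{\Z_p\Gamma}\Z_p\cong C_*(W,M;\Z_p)$, which is acyclic by the $\Z_p$-homology cobordism hypothesis. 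I would filter $C\otimes\Z_p$ by the subcomplexes $I^k(C\otimes\Z_p)$; since $C$ is $\Z_p\Gamma$-free, each successive quotient equals $(I^k/I^{k+1})\otimes_{\Z_p}C_*(W,M;\Z_p)$, hence is acyclic. Descending induction from $I^N(C\otimes\Z_p)=0$ via the long exact sequences in homology then shows $C\otimes\Z_p$ itself is acyclic, i.e.\ $H_*(W_\Gamma,M_\Gamma;\Z_p)=0$. Symmetrically $H_*(W_\Gamma,M'_\Gamma;\Z_p)=0$, so $W_\Gamma$ is a $\Z_p$-homology cobordism.

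Finally, to prove the key equality, I would show both sides are zero. For the ordinary signature: the $\Z_p$-cobordism hypothesis forces $H_*(W,M;\Z)$ to be a finite abelian group of order coprime to~$p$, so $H_*(W,M;\Q)=0$ and likewise for $M'$; the long exact sequence of $(W,\partial W)$ then makes the map $H_2(W;\Q)\to H_2(W,\partial W;\Q)$ zero, giving $\bar H_2(W;\Q)=0$ and $\sigma(W)=0$. For the $\K$-coefficient form: by part~(2), $H_*(W_\Gamma,M_\Gamma;\Z_p)=0$, hence $H_*(W_\Gamma,M_\Gamma;\Q)=0$; since $\Q\Gamma$ is semisimple (Maschke), the map $\Z\Gamma\to\K$ makes $\K$ a flat $\Q\Gamma$-module, so
\[
H_*(W,M;\K)\cong H_*(W_\Gamma,M_\Gamma;\Q)\otimes_{\Q\Gamma}\K=0,
\]
and similarly for $M'$. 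The same long-exact-sequence argument yields $\bar H_2(W;\K)=0$, hence $[\lambda_\K(W)]=0$. The one step that requires real content is the Nakayama-type ascent in part~(2); granted that, everything else is formal, resting on the universal coefficient theorem, Maschke's theorem, and Novikov additivity.
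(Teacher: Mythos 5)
Your proof is correct and follows essentially the same outline as the paper's, with one structural difference worth noting: the paper's key step is Levine's Lemma (Lemma~\ref{lemma:levine's-lemma}), cited from~\cite{Levine:1994-1} and packaged as Lemma~\ref{lemma:connectedness-of-p-covers}, whereas you re-prove it from scratch via the nilpotent filtration $I^k(C\otimes\Z_p)$ of the augmentation ideal in the local Artinian ring $\Z_p\Gamma$. That filtration argument is precisely the content of Levine's lemma (a Nakayama-type ascent for $p$-groups), so you gain self-containedness at the cost of re-deriving a cited result. A second, smaller difference is in deducing $H_*(W,M;\K)=0$: the paper runs through $H_*(W,M;\Z_{(p)}\Gamma)=0$ and the fact that $\Z\Gamma\to\K$ factors through $\Z_{(p)}\Gamma$ (an acyclic bounded complex of free modules is contractible, and contractibility is preserved by base change), while you instead pass through $\Q\Gamma$ and invoke Maschke's theorem for flatness of $\K$ over $\Q\Gamma$. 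Both are valid; the paper's route stays closer to the $\Z_{(p)}$-coefficient bookkeeping used throughout the section. Your reduction of the $S$-torsion equivalence to the observation that $(M,\phi)$ and $(M',\phi')$ are bordant over $B\Gamma$ via $(W,\psi)$, hence represent the same class, is a mild streamlining of the paper's explicit attach-$rW$-to-$V$ argument but amounts to the same thing. One cosmetic point: where you write \emph{``the long exact sequence of $(W,\partial W)$ then makes the map $H_2(W;\Q)\to H_2(W,\partial W;\Q)$ zero,''} the cleaner phrasing is that this map factors as $H_2(W;\Q)\to H_2(W,M;\Q)\to H_2(W,\partial W;\Q)$ and the middle group vanishes; the long exact sequence of the pair $(W,\partial W)$ by itself does not see $M$.
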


In proving
Proposition~\ref{proposition:homology-cobordism-and-p-covers} and
other results in this section, the following result which is
essentially due to Levine (see the proof of Proposition 3.2 of
\cite{Levine:1994-1}) plays a crucial role.

\begin{lemma}[Levine]
  \label{lemma:levine's-lemma}
  Suppose $\Gamma$ is a $p$-group, and $C_*$ is a chain complex
  consisting of free $\Z\Gamma$-modules such that $\bigoplus_{i \le n}
  C_i$ is finitely generated.  If $H_i(C_*\otimes_{\Z\Gamma} \Z_p)$
  vanishes for $i\le n$ (where $\Z_p$ is regard as a $\Z\Gamma$-module
  with trivial $\Gamma$-action), then $H_i(C_*\otimes_{\Z} \Z_p)$
  vanishes for $i\le n$.
\end{lemma}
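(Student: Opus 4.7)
The plan is to reformulate everything in terms of the mod-$p$ group ring $R := \Z_p\Gamma$ and exploit the elementary fact that for a finite $p$-group $\Gamma$, the augmentation ideal $I \subset R$ is nilpotent (since the generators $g-1$ with $g \in \Gamma$ are nilpotent in $\Z_p\Gamma$ with a common bound). Set $D_* := C_* \otimes_{\Z} \Z_p$, which is a chain complex of free $R$-modules with a basis in bijection with any $\Z\Gamma$-basis of $C_*$. The identification of functors $- \otimes_{\Z\Gamma} \Z_p \;\cong\; \bigl(- \otimes_{\Z} \Z_p\bigr) \otimes_{R} \Z_p$ (where $R \to R/I = \Z_p$ is the augmentation) rewrites the hypothesis as $H_i(D_* \otimes_R \Z_p) = 0$ for $i \le n$, while the conclusion becomes $H_i(D_*) = 0$ for $i \le n$. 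So the whole problem reduces to a Nakayama-type statement: for a free chain complex over a local ring whose maximal ideal is nilpotent, homological vanishing after reduction modulo the maximal ideal propagates to the complex itself.

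The second step is to exploit the $I$-adic filtration of $D_*$. Because $I$ is nilpotent, we have a finite filtration
\[
D_* = I^0 D_* \;\supset\; I D_* \;\supset\; I^2 D_* \;\supset\; \cdots \;\supset\; I^k D_* = 0.
\]
Since $D_*$ is free over $R$, the subquotient $I^j D_*/I^{j+1} D_*$ is canonically isomorphic to $I^j/I^{j+1} \otimes_{\Z_p} \bigl(D_* \otimes_R \Z_p\bigr)$ as a chain complex, with differential $\id \otimes d$. Because $I^j/I^{j+1}$ is a $\Z_p$-vector space, the functor $I^j/I^{j+1} \otimes_{\Z_p} -$ is exact, so
\[
H_i\bigl(I^j D_*/I^{j+1} D_*\bigr) \;=\; I^j/I^{j+1} \otimes_{\Z_p} H_i\bigl(D_* \otimes_R \Z_p\bigr) \;=\; 0 \quad \text{for } i \le n.
\]

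The third and final step is a downward induction on $j$ using the short exact sequence
\[
0 \to I^{j+1} D_* \to I^j D_* \to I^j D_*/I^{j+1} D_* \to 0
\]
and its long exact homology sequence. The base case $j = k$ is trivial since $I^k D_* = 0$. Assuming inductively that $H_i(I^{j+1} D_*) = 0$ for $i \le n$, the vanishing of $H_i(I^j D_*/I^{j+1} D_*)$ for $i \le n$ established above forces $H_i(I^j D_*) = 0$ for $i \le n$. Iterating down to $j = 0$ gives $H_i(D_*) = 0$ for $i \le n$, completing the proof. The only potential obstacle is the identification of the subquotients $I^j D_*/I^{j+1} D_*$, but this is immediate from the freeness of $D_*$ over $R = \Z_p\Gamma$; the finite generation hypothesis in low degrees does not explicitly enter the argument, though it is reassuring in case one wishes to replace the filtration approach with a hyper-Tor spectral sequence and invoke Nakayama on $H_j(D_*)$ directly.
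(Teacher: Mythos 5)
The paper does not include a proof of this lemma; it attributes the result to Levine and refers to the proof of Proposition~3.2 of~\cite{Levine:1994-1}. Your self-contained filtration argument over the local ring $R=\Z_p\Gamma$ is correct and is the standard route to this kind of ``homological Nakayama'' statement. One detail needs repair: the parenthetical justification that $I$ is nilpotent ``since the generators $g-1$ are nilpotent with a common bound'' is not a valid argument when $\Gamma$ is nonabelian — in a noncommutative ring an ideal generated by nilpotent elements need not be nilpotent (e.g.\ the matrix units $e_{12},e_{21}$ generate all of the $2\times2$ matrix ring over $\Z_p$). The fact itself is classical: $\Z_p\Gamma$ is a finite-dimensional $\Z_p$-algebra, so its Jacobson radical is nilpotent, and for a finite $p$-group the trivial module is the unique simple $\Z_p\Gamma$-module, whence the radical coincides with the augmentation ideal~$I$; alternatively one can argue by induction on $|\Gamma|$ through a central subgroup of order~$p$. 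With that supplied, the remaining steps — the base-change identification $-\otimes_{\Z\Gamma}\Z_p\cong(-\otimes_\Z\Z_p)\otimes_R\Z_p$, the freeness/flatness isomorphism $I^jD_*/I^{j+1}D_*\cong(I^j/I^{j+1})\otimes_{\Z_p}(D_*\otimes_R\Z_p)$ carrying the differential to $\id\otimes\bar d$, and the downward induction via the long exact sequence — are all correct. Your observation that the finite generation hypothesis never enters the argument is also right: the nilpotent-ideal form of Nakayama requires no finiteness. The paper keeps the hypothesis because the remark immediately following the lemma uses it to identify this $\Z_p$-version with Levine's original $\Z_{(p)}$-formulation, and it is in any case automatic in the intended topological applications.
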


We remark that in~\cite{Levine:1994-1}, $\Z_{(p)}=$ (the localization
of $\Z$ away from~$p$) is used instead of~$\Z_p$.  Since
$H_i(C_*\otimes_\Z \Z_p)$ vanishes for $i\le n$ if and only if so does
$H_i(C_*\otimes_\Z \Z_{(p)})$ for any finitely generated free chain
complex $C_*$ over $\Z$, Lemma~\ref{lemma:levine's-lemma} is
equivalent to Levine's original statement in~\cite{Levine:1994-1}.

For an abelian group $R$, we say that a map $f\colon Y \to X$ is
\emph{$n$-connected with respect to $R$-coefficients} if
$H_i(X,Y;R)=0$ for $i \le n$.  If $X$ and $Y$ have finite
$n$-skeletons, then $f\colon X \to Y$ is $n$-connected with respect to
$\Z_p$-coefficients if and only if $f$ is $n$-connected with respect
to $\Z_{(p)}$-coefficients.  The following is an immediate consequence
of Levine's lemma:

\begin{lemma}
  \label{lemma:connectedness-of-p-covers}
  Suppose $X$ and $Y$ are CW-complexes with finite $n$-skeletons,
  $f\colon Y\to X$ is $n$-connected with respect to
  $\Z_p$-coefficients, and $\pi_1(X) \to \Gamma$ is a map into a
  $p$-group~$\Gamma$.  Then $H_i(X,Y;\Z_{(p)}\Gamma)=0$ for $i\le n$.
  In other words, denoting the associated $\Gamma$-covers of $X$ and
  $Y$ by $X_\Gamma$ and $Y_\Gamma$, respectively, the lift $Y_\Gamma
  \to X_\Gamma$ of $f$ is $n$-connected with respect to
  $\Z_p$-coefficients.
\end{lemma}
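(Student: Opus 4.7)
The plan is to deduce this directly from Lemma~\ref{lemma:levine's-lemma}, applied to the cellular chain complex of the $\Gamma$-cover pair $(X_\Gamma, Y_\Gamma)$ with its canonical structure of a free $\Z\Gamma$-complex coming from deck transformations.

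First I would set $C_* = C_*(X_\Gamma, Y_\Gamma; \Z)$, viewed as a chain complex of free $\Z\Gamma$-modules generated by the relative cells of $(X,Y)$. Since $\Gamma$ is a (finite) $p$-group in the convention of the paper and the pair $(X,Y)$ has finite $n$-skeletons, the submodule $\bigoplus_{i\le n} C_i$ is finitely generated over $\Z\Gamma$, so the finiteness hypothesis of Levine's lemma is met. Next I would make the two key identifications of chain complexes:
\[
C_*\otimes_{\Z\Gamma}\Z_p \;\cong\; C_*(X,Y;\Z_p) \quad\text{and}\quad
C_*\otimes_{\Z}\Z_p \;\cong\; C_*(X_\Gamma, Y_\Gamma;\Z_p),
\]
where the first uses that quotienting a free $\Z\Gamma$-complex by the (trivial) $\Gamma$-action on $\Z_p$ gives back the chain complex downstairs. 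The hypothesis that $f$ is $n$-connected with $\Z_p$-coefficients translates to $H_i(C_*\otimes_{\Z\Gamma}\Z_p)=0$ for $i\le n$, which is exactly the input required by Lemma~\ref{lemma:levine's-lemma}.

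Applying Levine's lemma then yields $H_i(C_*\otimes_\Z \Z_p)=0$ for $i\le n$, i.e.\ $H_i(X_\Gamma, Y_\Gamma;\Z_p)=0$ for $i\le n$, which is the ``in other words'' conclusion. To obtain the stated form $H_i(X,Y;\Z_{(p)}\Gamma)=0$, I would note the identification
\[
C_*(X,Y;\Z_{(p)}\Gamma) \;\cong\; C_*\otimes_\Z \Z_{(p)} \;\cong\; C_*(X_\Gamma, Y_\Gamma;\Z_{(p)}),
\]
and invoke the equivalence (recalled in the paragraph preceding this lemma) that for a finitely generated free $\Z$-chain complex, vanishing of homology in $\Z_p$-coefficients through degree $n$ is equivalent to vanishing in $\Z_{(p)}$-coefficients through degree $n$. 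This converts the $\Z_p$-vanishing just obtained into the desired $\Z_{(p)}\Gamma$-vanishing.

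There is essentially no hard step beyond Levine's lemma itself; the only thing requiring any care is the bookkeeping of the three tensor-product identifications, in particular verifying that $C_*\otimes_{\Z\Gamma}\Z_p$ really is $C_*(X,Y;\Z_p)$ (which is where the freeness and finite generation of the relative cellular chains over $\Z\Gamma$, and the triviality of the $\Gamma$-action on $\Z_p$, come in). Once this is checked, both conclusions of the lemma follow in a few lines.
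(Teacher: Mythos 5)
Your proof is correct and matches the paper's (implicit) argument: the paper states Lemma~\ref{lemma:connectedness-of-p-covers} as an immediate consequence of Lemma~\ref{lemma:levine's-lemma}, which is exactly what you do by taking $C_*=C_*(X_\Gamma,Y_\Gamma)=C_*(X,Y;\Z\Gamma)$ and tracking the three tensor-product identifications. The only small imprecision is that $C_*$ is not a finitely generated free $\Z$-complex overall, only in degrees $\le n$; but the $\Z_p$-versus-$\Z_{(p)}$ vanishing equivalence you invoke is valid under that weaker hypothesis, so the argument stands.
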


\begin{proof}
  [Proof of
  Proposition~\ref{proposition:homology-cobordism-and-p-covers}]%
  (1) If $(M,\phi)$ is $S$-torsion in $\Omega^{top}_3(B\Gamma)$, then
  there is a null-cobordism, say $V$, of $rM$ for some $r \in S$.
  Attaching $rW$ to $V$ along $rM$, we obtain a null-cobordism of
  $rM'$ over~$\Gamma$.  It follows that $(M',\phi')$ is $S$-torsion in
  $\Omega^{top}_3(B\Gamma)$.  The converse is proved similarly.

  By the definition of $\lambda(-,-)$ and a Novikov additivity
  argument, it can be seen that
  \[
  \lambda(M,\phi)-\lambda(M',\phi')=[\lambda_\K(W)]-i^*_\K
  \sigma(W).
  \]
  Since $H_*(W,M;\Z_p)=0$, $\sigma(W)$ vanishes.  Also, by
  Lemma~\ref{lemma:connectedness-of-p-covers}, it follows that
  $H_*(W,M;\Z_{(p)}\Gamma)=0$.  Since $\K$ has characteristic zero,
  $\Z \to \Z\Gamma \to \K$ is injective.  Since $\K$ is a division
  ring, the map $\Z\Gamma \to \K$ factors through $\Z_{(p)}\Gamma$.
  It follows that $H_*(W,M;\K)=0$ so that $[\lambda_\K(W)]=0$.
  
  (2) Since $H_*(W,M;\Z_p)=0$, $H_*(W_\Gamma,M_\Gamma;\Z_p)=0$ by
  Lemma~\ref{lemma:connectedness-of-p-covers}.  Similarly
  $H_*(W_\Gamma,M'_\Gamma;\Z_p)=0$.
\end{proof}  

In general, determining whether a given $\phi$ factors through
$\pi_1(W)$ for some homology cobordism $W$ is known to be a difficult
problem.  As an easy special case, if $\Gamma$ is abelian, then $\phi$
factors through $\pi_1(W)$ for any homology cobordism $W$, since any
map into an abelian group factors through $H_1(-)$.  In order to
investigate further sophistication beyond the abelianization, we consider
certain towers of covers of $M$ in the next subsection.

\subsection{$p$-towers}

Let $X$ be a CW-complex.  As in the introduction, a tower
\[
X_n \to \cdots \to X_1 \to X_0 =X
\]
of covering maps is called a \emph{$p$-tower of height $n$} if each
$X_{i+1} \to X_{i}$ is a regular cover whose covering transformation
group is an abelian $p$-group~$\Gamma_i$.  Such a tower is determined
by inductively choosing maps $\phi_i\colon \pi_1(X_i) \to \Gamma_i$.
In this paper, we always assume that a $p$-tower consists of connected
spaces, unless stated otherwise.  In other words, ($X$ is connected
and) the maps $\phi_i$ are surjective.

Obviously, given a map $f\colon Y \to X$, a $p$-tower for $X$ induces
one for $Y$ via pullback.  We will frequently consider $f$ which gives
rise to a 1-1 correspondence between $p$-towers for $X$ and $Y$.  We
give a formal definition below.

\begin{definition}
  \label{definition:t-equivalence}
  A map $f_0\colon Y_0 \to X_0$ is called a \emph{$p$-tower map
    of height $n$} if for any abelian $p$-groups
  $\Gamma_0,\ldots,\Gamma_n$, the following holds:
  \begin{enumerate}
  \item[(0)] $f_0$ induces a bijection
    \[
    f_0^*\colon \Hom(\pi_1(X_0),\Gamma_0) \to
    \Hom(\pi_1(Y_0),\Gamma_0).
    \]
  \item[(1)] Let $X_1$ and $Y_1$ be the covers of $X_0$ and $Y_0$
    determined by a map $\phi_0\colon \pi_1(X_0) \to \Gamma_0$ and
    $\psi_0=f_0^*(\phi_0^{\vphantom{*}})\colon \pi_1(Y_0) \to
    \Gamma_0$, respectively.  Then the lift $f_1\colon Y_1 \to X_1$ of
    $f_0$ induces a bijection
    \begin{gather*}
      f_1^*\colon \Hom(\pi_1(X_1),\Gamma_1) \to
      \Hom(\pi_1(Y_1),\Gamma_1).\\
      \vdots
    \end{gather*}
    \item[($n$)] Let $X_n$ and $Y_n$ be the covers of $X_{n-1}$ and
    $Y_{n-1}$ determined by a map $\phi_{n-1}\colon \pi_1(X_{n-1}) \to
    \Gamma_{n-1}$ and
    $\psi_{n-1}=f_{n-1}^*(\phi_{n-1}^{\vphantom{*}})\colon
    \pi_1(Y_{n-1}) \to \Gamma_{n-1}$, respectively.  Then the lift
    $f_n\colon Y_n \to X_n$ of $f_{n-1}$ induces a bijection
    \[
    f_n^*\colon \Hom(\pi_1(X_n),\Gamma_n) \to
    \Hom(\pi_1(Y_n),\Gamma_n).
    \]
  \end{enumerate}
  If the above condition is satisfied for all $n$, then $f_0$ is
  called a \emph{$p$-tower map}.
\end{definition}

\begin{remark}
  \label{remark:composition-of-t-equivalence}
  For $f\colon Y \to X$ and $g\colon Z \to Y$, if any two of $f$, $g$,
  and $f\circ g$ are $p$-tower maps, then the third is also a
  $p$-tower map.
\end{remark}

The following simple observation says that in our case pullback
preserves the connectedness condition.

\begin{lemma}
  Suppose $G$ and $\pi$ are finitely presented groups and $f\colon G
  \to \pi$ is a homomorphism inducing an injection $f^*\colon
  \Hom(\pi,\Z_p) \to \Hom(G,\Z_p)$.  Then for any abelian $p$-group
  $\Gamma$, a map $\phi\colon \pi \to \Gamma$ is surjective if and
  only if so is $\phi f\colon G \to \Gamma$.
\end{lemma}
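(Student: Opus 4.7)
The proof plan is to reduce the statement to a separation property: finite abelian $p$-groups admit enough $\Z_p$-valued characters to detect proper subgroups, and once this is isolated, the hypothesis on $f^*$ does the rest.

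First, the ``if'' direction is immediate from the inclusion $\phi f(G) \subset \phi(\pi)$: surjectivity of $\phi f$ onto $\Gamma$ forces surjectivity of~$\phi$. For the ``only if'' direction I will argue by contrapositive. Assume $\phi\colon \pi \to \Gamma$ is surjective, and suppose toward contradiction that $\phi f$ is not surjective. Set $H = \phi f(G)$, a proper subgroup of~$\Gamma$. Since $\Gamma$ is a finite abelian $p$-group, so is the nontrivial quotient~$\Gamma/H$. Any nontrivial finite abelian $p$-group admits a nonzero homomorphism to $\Z_p$ (decompose into cyclic $p$-power factors, pick a nontrivial one, and project to its unique $\Z_p$-quotient), so composing with the quotient map yields a nonzero character $\chi\colon \Gamma \to \Z_p$ which vanishes on~$H$.

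The contradiction is then immediate. The character $\chi\phi \in \Hom(\pi,\Z_p)$ is nonzero since $\phi$ is surjective and $\chi$ is nonzero, while $f^*(\chi\phi) = \chi\phi f$ is identically zero because $\chi$ kills $H = \phi f(G)$. Hence $\chi\phi$ and the zero map have the same image under $f^*$ but are distinct, contradicting injectivity of $f^*\colon \Hom(\pi,\Z_p) \to \Hom(G,\Z_p)$.

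There is no serious obstacle; the only thing worth flagging is that the argument uses essentially that $\Z_p$ is a cogenerator for the category of finite abelian $p$-groups, which is why detecting surjectivity into an arbitrary $\Gamma$ can be reduced to the single test module~$\Z_p$ appearing in the hypothesis on~$f^*$. The finite presentability assumption on $G$ and $\pi$ plays no role in this particular lemma, though it belongs to the ambient setting of the paper.
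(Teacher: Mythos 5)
Your proof is correct. The paper takes a different route: it first reduces to the case where $G$ and $\pi$ are themselves finite abelian $p$-groups by replacing them with $H_1(-)\otimes\Z_r$, $r=|\Gamma|$, noting that $\Hom(-,\Gamma)$ and surjectivity onto $\Gamma$ only see this quotient. In the reduced setting it lets $C$ be the cokernel of $f$, observes that injectivity of $f^*$ and left-exactness of $\Hom(-,\Z_p)$ force $\Hom(C,\Z_p)=0$, hence $C$ is both $p$-torsion-free and a $p$-group, so $C=0$ and the reduced $f$ is surjective; composing surjections then finishes. You instead argue by contrapositive directly on the original groups: you take the image $H=\phi f(G)$, use that a nontrivial finite abelian $p$-group $\Gamma/H$ admits a nonzero $\Z_p$-character, and produce a nonzero $\chi\phi\in\Hom(\pi,\Z_p)$ killed by $f^*$. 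This bypasses the abelianization step and is a bit more direct. Both proofs hinge on the same algebraic fact, dually stated --- that $\Z_p$ cogenerates finite abelian $p$-groups, i.e.\ $\Hom(C,\Z_p)=0\Rightarrow C=0$ --- but the paper extracts the stronger intermediate conclusion that the abelianized and $\Z_r$-reduced map is onto (a statement uniform in $\phi$ and $\Gamma$), while your argument settles only the given $\phi$ and $\Gamma$, which is all the lemma asks. You are also right that finite presentability plays no role here; the paper carries it only for consistency with the surrounding hypotheses.
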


\begin{proof}
  The if part is obvious.  For the only if part, observe that for any
  abelian $p$-group $A$ and a power $r$ of $p$ such that $r\ge |A|$,
  $\Hom(-,A)\cong \Hom(H_1(-)\otimes\Z_r,A)$ and a group homomorphism
  $B \to A$ is surjective if and only if so is the induced map
  $H_1(B)\otimes\Z_r \to A$.  So we may assume that both $G$ and $\pi$
  are abelian $p$-groups by applying $H_1(-)\otimes\Z_r$ to $G$
  and~$\pi$ where $r=|\Gamma|$.

  Let $C$ be the cokernel of~$f$.  Since $f^*$ is injective and
  $\Hom(-,\Z_p)$ is left exact, $\Hom(C,\Z_p)=0$.  Thus $C$ is
  $p$-torsion free, that is, there is no nontrivial element in $C$
  whose order is a power of $p$.  However, being a quotient of a
  $p$-group, $C$ is a $p$-group.  It follows that $C=0$, that is, $f$
  is surjective.  So $\phi f$ is surjective whenever so is~$\phi$.
\end{proof}

\begin{lemma}
  \label{lemma:2-connected-maps-induce-t-equivalence}
  If $X$ and $Y$ are CW-complexes with finite 2-skeletons and $f\colon
  Y\to X$ is 2-connected with respect to $\Z_p$-coefficients, then $f$
  is a $p$-tower map.
\end{lemma}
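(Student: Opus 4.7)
The plan is to induct on the level of the tower, maintaining throughout the combined hypothesis that $f_k\colon Y_k \to X_k$ is $2$-connected with respect to $\Z_p$-coefficients and has finite $2$-skeletons. The base case $k=0$ is given by hypothesis. Under this inductive hypothesis at level $k$, condition ($k$) of Definition \ref{definition:t-equivalence} asks that $f_k^*\colon \Hom(\pi_1(X_k),\Gamma_k) \to \Hom(\pi_1(Y_k),\Gamma_k)$ be a bijection for every abelian $p$-group $\Gamma_k$. Since $\Gamma_k$ is abelian and the spaces are connected, this is equivalent to showing that $f_k^*\colon H^1(X_k;\Gamma_k) \to H^1(Y_k;\Gamma_k)$ is an isomorphism, which I would extract from the long exact sequence of the pair $(X_k,Y_k)$ as soon as $H^i(X_k,Y_k;\Gamma_k)=0$ for $i\le 2$.

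To establish that vanishing, I would apply Lemma \ref{lemma:connectedness-of-p-covers} with the trivial group to deduce $H_i(X_k,Y_k;\Z_{(p)})=0$ for $i\le 2$. Since $\Gamma_k$, being a finite abelian $p$-group, is naturally a $\Z_{(p)}$-module, and the relative cellular chain complex of $(X_k,Y_k)$ becomes a complex of free, finitely generated $\Z_{(p)}$-modules in low degrees after tensoring with $\Z_{(p)}$, the universal coefficient theorem over $\Z_{(p)}$ immediately yields $H^i(X_k,Y_k;\Gamma_k)=0$ for $i\le 2$, which is what was needed.

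For the inductive step from level $k$ to level $k+1$, fix $\phi_k\colon \pi_1(X_k)\to \Gamma_k$ and set $\psi_k = f_k^*(\phi_k)$. The lemma immediately preceding the statement ensures that $\psi_k$ is surjective whenever $\phi_k$ is (since the bijection just established is certainly an injection on $\Hom(-,\Z_p)$), so the covers $X_{k+1}$ and $Y_{k+1}$ determined by $\phi_k$ and $\psi_k$ remain connected. Lemma \ref{lemma:connectedness-of-p-covers} applied at level $k$ with $\Gamma=\Gamma_k$ then tells me that the lift $f_{k+1}\colon Y_{k+1}\to X_{k+1}$ is again $2$-connected with $\Z_p$-coefficients, and the finite $2$-skeleton condition is preserved because $\Gamma_k$ is finite. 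This closes the induction.

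The hard part is really encapsulated in Lemma \ref{lemma:connectedness-of-p-covers}, i.e.\ in the Levine-style algebraic input that propagates $\Z_p$-acyclicity through abelian $p$-covers; once that is in hand, the present lemma is essentially a formal package of universal coefficient manipulations together with the surjectivity observation of the preceding lemma, so I expect no further obstacle.
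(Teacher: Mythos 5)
Your proof is correct and follows essentially the same strategy as the paper's: establish a one-off claim that $\Z_p$-2-connectedness (plus finite 2-skeletons) forces a bijection on $\Hom(\pi_1(-),\Gamma)$ for every abelian $p$-group $\Gamma$, then induct up the tower using Lemma~\ref{lemma:connectedness-of-p-covers} to propagate 2-connectedness to the lifts. The only cosmetic difference is that the paper extracts the $\Hom$-bijection from $H_1(-;\Z_r)\cong H_1(-;\Z_r)$ with $r=|\Gamma|$, whereas you go through the vanishing of $H^i(X_k,Y_k;\Gamma_k)$ via UCT over $\Z_{(p)}$; both are routine and equivalent, and your explicit appeal to the preceding surjectivity lemma to keep the covers connected is a nice point that the paper leaves implicit.
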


\begin{proof}
  We claim the following: for any CW-complexes $X$ and $Y$ with finite
  $2$-skeletons and any abelian $p$-group $\Gamma$, if $f\colon Y \to
  X$ is a 2-connected with respect to $\Z_p$-coefficients, then
  \[
  f^*\colon \Hom(\pi_1(X), \Gamma) \to \Hom(\pi_1(Y),\Gamma)
  \]
  is a one-to-one correspondence.  For, let $r=|\Gamma|$.  Then
  \[
  \Hom(\pi_1(X),\Gamma) \cong \Hom(H_1(X)\otimes
  \Z_r,\Gamma)=\Hom(H_1(X;\Z_r),\Gamma)
  \]
  and similarly for~$Y$.  Since $H_i(X,Y;\Z_p)=0$ for $i\le 2$,
  $H_i(X,Y;\Z_r)=0$ for $i\le 2$.  From the long exact sequence for
  $(X,Y)$, it follows that $Y \to X$ induces an isomorphism on
  $H_1(-;\Z_r)$.  This proves the claim.

  Now, we use an induction on $n$ to show that
  Definition~\ref{definition:t-equivalence} ($n$) holds, and in
  addition, that the map $f_n\colon Y_n \to X_n$ is 2-connected with
  respect to $\Z_p$-coefficients.  By the above claim,
  Definition~\ref{definition:t-equivalence} (0) holds.  Also, $f$ is
  2-connected with respect to $\Z_p$-coefficients by the hypothesis.
  Suppose Definition~\ref{definition:t-equivalence} ($n-1$) holds and
  $f_{n-1}$ is 2-connected with respect to $\Z_p$-coefficients.  Then
  by Lemma~\ref{lemma:connectedness-of-p-covers}, the lift $f_n\colon
  Y_n \to X_n$ is 2-connected with respect to $\Z_p$-coefficients.  By
  the above claim, it follows that $f_n$ induces a bijection on
  $\Hom(\pi_1(-),\Gamma_n)$, that is,
  Definition~\ref{definition:t-equivalence} $(n)$ holds.
\end{proof}

Lemma~\ref{lemma:2-connected-maps-induce-t-equivalence} enables us to
apply Proposition~\ref{proposition:homology-cobordism-and-p-covers}
inductively to $p$-towers.  As an immediate consequence, one obtains
a sequence of homology cobordism invariants, as stated below.  From
now on, a cyclic group $\Gamma=\Z_{d}$ is always endowed with the map
$\Z\Gamma \to \K=\Q(\zeta_d)$ sending $1\in \Z_d$ to
$\zeta_d=\exp(2\pi\sqrt{-1}/d)$.  Note that $H_4(\Z_d)=0$.

\begin{theorem}
  \label{theorem:invariants-from-covering-tower}
  Suppose $W$ is a $\Z_p$-homology cobordism between 3-manifolds $M$
  and~$M'$.  Then the following holds:
  \begin{enumerate}
  \item $M \to W$ and $M' \to W$ are $p$-tower maps.
  \item For a given $p$-tower 
    \[
    M_n \to \cdots \to M_1 \to M_0=M,
    \]
    let 
    \begin{gather*}
      W_n \to \cdots \to W_1 \to W_0=W,\\
      M'_n \to \cdots \to M'_1 \to M'_0=M'
    \end{gather*}
    be the $p$-towers of $W$ and $M'$ which correspond to the $M_i$
    via pullback along the $p$-tower maps $M \to W \leftarrow M'$.
    Then $W_i$ is a $\Z_p$-homology cobordism between $M_i$ and $M'_i$
    for each~$i$.
  \item Let $d$ be a power of $p$ and
    \[
    \Hom(\pi_1(M_n),\Z_d) \approx \Hom(\pi_1(M'_n),\Z_d)
    \]
    be the bijection induced by the $p$-tower maps $M \to W \leftarrow
    M'$.  For any $\phi_n\colon\pi_1(M_n)\to \Z_d$ and the
    corresponding $\phi'_n\colon\pi_1(M'_n)\to \Z_d$, $(M_n,\phi_n)$
    is $S$-torsion in $\Omega^{top}_3(B\Z_d)$ if and only if so is
    $(M'_n,\phi'_n)$, and if it is the case then
    \[
    \lambda(M_n,\phi_n) = \lambda(M'_n,\phi'_n) \text{ in } S^{-1}\Z
    \otimes L^0(\Q(\zeta_{d})).
    \]
  \end{enumerate}
\end{theorem}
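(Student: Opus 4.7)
The plan is to reduce everything to iterated applications of Proposition~\ref{proposition:homology-cobordism-and-p-covers}, walking up the tower one level at a time and transporting the $\Z_p$-homology cobordism property from $W_i$ to $W_{i+1}$ as we go.

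For (1), the hypothesis that $W$ is a $\Z_p$-homology cobordism gives $H_*(W,M;\Z_p)=0$ in every degree, so in particular $M\hookrightarrow W$ is $2$-connected with respect to $\Z_p$-coefficients. Since $M$ and $W$ are finite CW-complexes, Lemma~\ref{lemma:2-connected-maps-induce-t-equivalence} immediately yields that $M\to W$ is a $p$-tower map, and the same argument applies to $M'\to W$.

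For (2), I would proceed by induction on $i$, with inductive statement that $W_i$ is a compact $4$-manifold with $\partial W_i = M_i\cup(-M'_i)$ and that both inclusions $M_i\hookrightarrow W_i$ and $M'_i\hookrightarrow W_i$ are $\Z_p$-homology equivalences. The base case is the hypothesis. For the inductive step, compatibility of the three towers is built in by (1) together with Remark~\ref{remark:composition-of-t-equivalence}: the composite $M_i\to W_i$ is a $p$-tower map, so $\phi_i\colon\pi_1(M_i)\to\Gamma_i$ extends to a unique $\psi_i\colon\pi_1(W_i)\to\Gamma_i$, which in turn corresponds to $\phi'_i$ on $M'_i$. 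Taking the associated $\Gamma_i$-cover $W_{i+1}\to W_i$ then produces a compact $4$-manifold whose boundary components are precisely $M_{i+1}$ and $-M'_{i+1}$ by general covering-space theory; that the new inclusions are $\Z_p$-homology equivalences follows from Lemma~\ref{lemma:connectedness-of-p-covers} applied separately to $M_i\hookrightarrow W_i$ and $M'_i\hookrightarrow W_i$ for every integer $n$ (legitimate because $W_i$ is a finite CW-complex), which closes the induction.

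Finally, for (3), after (1) and (2) at height $n$ I have a $\Z_p$-homology cobordism $W_n$ between $M_n$ and $M'_n$ together with a homomorphism $\psi_n\colon\pi_1(W_n)\to\Z_d$ — which exists because, $d$ being a power of $p$, $\Z_d$ is an abelian $p$-group and the $p$-tower map property supplies the requisite extension — restricting to $\phi_n$ and $\phi'_n$ on the two boundary components. Proposition~\ref{proposition:homology-cobordism-and-p-covers}(1) then directly delivers both the equivalence of $S$-torsion statuses in $\Omega^{top}_3(B\Z_d)$ and the identity $\lambda(M_n,\phi_n)=\lambda(M'_n,\phi'_n)$ in $S^{-1}\Z\otimes L^0(\Q(\zeta_d))$. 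The step I expect to be the main obstacle is the bookkeeping in (2): one must verify that the unique extensions produced by the $p$-tower map property at each level assemble coherently, so that the $\psi_n$ one ends up with on $\pi_1(W_n)$ is simultaneously the lift of $\phi_n$ and of exactly the $\phi'_n$ that the bijection in the statement pairs with $\phi_n$. This is what forces Remark~\ref{remark:composition-of-t-equivalence} to be invoked at every level rather than only once at the end.
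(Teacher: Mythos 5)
Your proof is correct and matches the paper's intended argument, which the paper compresses into a single sentence: apply Lemma~\ref{lemma:2-connected-maps-induce-t-equivalence} to obtain~(1), then apply Proposition~\ref{proposition:homology-cobordism-and-p-covers} inductively up the tower to obtain (2) and~(3). (One small quibble: Remark~\ref{remark:composition-of-t-equivalence} is not really load-bearing here---the bijections at each level come directly from Definition~\ref{definition:t-equivalence} once $M\to W$ and $M'\to W$ are known to be $p$-tower maps, and since those two maps point in opposite directions there is no composition to which the two-out-of-three property would apply.)
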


Since a homology cobordism is a $\Z_p$-homology cobordism,
Theorem~\ref{theorem:intro-homology-cobordism-invariance} follows
immediately from
Theorem~\ref{theorem:invariants-from-covering-tower}.


\subsection{Algebraic closures and $p$-towers}

For later use, we generalize
Lemma~\ref{lemma:2-connected-maps-induce-t-equivalence} by weakening
the 2-connectedness condition.  For a group $G$, let $\widehat G$ be
the algebraic closure with respect to $\Z_{(p)}$-coefficients in the
sense of~\cite{Cha:2004-1}.  It is known that the association $G
\mapsto \widehat G$ is a functor on the category of groups and there
is a natural transformation $p_G \colon G \to \widehat
G$~\cite{Cha:2004-1}.  The most essential property of $\widehat G$ is
the following: as in case of spaces, we say that a group homomorphism
$f\colon \pi\to G$ is $n$-connected with respect to $R$-coefficients
if $H_i(f;R)=0$ for $i\le n$.  Then, whenever $\pi$, $G$ are finitely
presented and $f\colon \pi \to G$ is 2-connected with respect to
$\Z_{p}$(or equivalently $\Z_{(p)}$)-coefficients, $f$ induces an
isomorphism $\widehat\pi \to \widehat G$.  For proofs of these facts
and more details, see~\cite{Cha:2004-1}.  In fact the functor $G
\mapsto \widehat G$ is initial among those satisfying this inverting
property whenever $f$ is a homomorphism between finitely presented
groups which is 2-connected with respect to $\Z_{p}$-coefficients.

\begin{proposition}
  \label{proposition:algebraic-closure-and-t-equivalences}
  If $X$ and $Y$ are CW-complexes with finite 2-skeletons and $f\colon
  X\to Y$ induces an isomorphism $\widehat{\pi_1(X)} \to
  \widehat{\pi_1(Y)}$, then $f$ is a $p$-tower map.
\end{proposition}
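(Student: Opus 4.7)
The plan is to verify the conditions of Definition~\ref{definition:t-equivalence} by induction on the height $n$, paralleling the proof of Lemma~\ref{lemma:2-connected-maps-induce-t-equivalence} but using the universal property of the algebraic closure in place of direct $\Z_p$-homological 2-connectedness.

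For the base case (condition (0)), I would first establish the key consequence of the universal property of $\widehat{(-)}$ from \cite{Cha:2004-1}: for every finitely presented group $G$ and every finite abelian $p$-group $\Gamma$, the natural map $p_G\colon G\to\widehat G$ induces a bijection
\[
\Hom(\widehat G,\Gamma) \xrightarrow{\cong} \Hom(G,\Gamma).
\]
This follows because the contravariant functor $\Hom(-,\Gamma)$ inverts every homomorphism of finitely presented groups that is 2-connected with $\Z_p$-coefficients (by a straightforward cohomological argument, since $\Gamma$ is a finite abelian $p$-group), and by the initiality of $\widehat{(-)}$ recorded in \cite{Cha:2004-1} any such functor factors through $\widehat{(-)}$. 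Combined with the hypothesis that $f$ induces an isomorphism $\widehat{\pi_1(X)} \cong \widehat{\pi_1(Y)}$, this immediately yields the bijection required by condition~(0) for any finite abelian $p$-group $\Gamma_0$.

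For the inductive step, assume the conclusion at height $n-1$ and fix compatible choices $\phi_i\colon\pi_1(X_i)\to\Gamma_i$ and $\psi_i = f_i^*(\phi_i)$ for $i<n$, giving $p$-towers $X_n\to\cdots\to X_0$ and $Y_n\to\cdots\to Y_0$ connected by lifts $f_i$. The crucial claim is that the lift $f_n\colon X_n\to Y_n$ again induces an isomorphism $\widehat{\pi_1(X_n)}\xrightarrow{\cong}\widehat{\pi_1(Y_n)}$; granted this, the base case argument applies verbatim to give the required bijection in condition~$(n)$. The main obstacle is that this propagation is not automatic: the algebraic closure hypothesis is strictly weaker than 2-connectedness with $\Z_p$-coefficients, so Lemma~\ref{lemma:connectedness-of-p-covers} does not directly promote the hypothesis to the covers, in contrast to the situation of Lemma~\ref{lemma:2-connected-maps-induce-t-equivalence}.

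To execute the propagation step, I would combine the functoriality of $\widehat{(-)}$ with the short exact sequences of fundamental groups
\[
1 \to \pi_1(X_n) \to \pi_1(X_{n-1}) \to \Gamma_{n-1} \to 1
\]
and its analog for $Y$ (where exactness on the right uses the assumed connectedness of the covers, hence the surjectivity of $\phi_{n-1}$ and $\psi_{n-1}$), together with the compatibility of the inductive isomorphism $\widehat{\pi_1(X_{n-1})}\cong\widehat{\pi_1(Y_{n-1})}$ with the quotient maps to the common group $\Gamma_{n-1}$. An alternative, more geometric route would be to appeal to the initiality characterization of $\widehat{(-)}$ to construct an enlargement $Z$ (obtained by attaching cells of dimension at most $3$ to the mapping cylinder of $f$) so that both $X\to Z$ and $Y\to Z$ become 2-connected with $\Z_p$-coefficients; Lemma~\ref{lemma:2-connected-maps-induce-t-equivalence} together with the 2-out-of-3 property in Remark~\ref{remark:composition-of-t-equivalence} would then conclude. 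Either way, the technical heart of the argument is showing that the algebraic closure hypothesis is preserved under passage to $p$-group covers, which is the step where the detailed machinery of \cite{Cha:2004-1} enters essentially.
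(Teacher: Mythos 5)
Your second (``alternative, more geometric'') route is essentially the paper's actual argument, but you sketch it too vaguely and then dismiss it with an incorrect diagnosis; meanwhile your primary route has a real gap. Concretely: the paper uses the fact from \cite{Cha:2004-1} that $\widehat{\pi_1(X)}$ is the direct limit of a sequence of $\Z_p$-coefficient 2-connected maps $\pi_1(X)=G_0\to G_1\to\cdots$ on finitely presented groups. Since $\pi_1(Y)$ is finitely presented, the composite $\pi_1(Y)\to\widehat{\pi_1(Y)}\cong\widehat{\pi_1(X)}$ factors through some $G_k$, and one checks that $\pi_1(Y)\to G_k$ is also 2-connected with $\Z_p$-coefficients. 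Setting $Z=K(G_k,1)$ (with finite 2-skeleton since $G_k$ is finitely presented), both $X\to Z$ and $Y\to Z$ are 2-connected with $\Z_p$-coefficients, hence $p$-tower maps by Lemma~\ref{lemma:2-connected-maps-induce-t-equivalence}, and Remark~\ref{remark:composition-of-t-equivalence} finishes. This is not ``attaching cells of dimension at most $3$ to the mapping cylinder''; $K(G_k,1)$ generally needs cells in all dimensions, and the finite-2-skeleton condition is what the argument actually requires.

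The deeper problem is your closing sentence: you claim the technical heart of either route is ``showing that the algebraic closure hypothesis is preserved under passage to $p$-group covers.'' The paper's proof never establishes (and never needs) that $f_n$ induces an isomorphism $\widehat{\pi_1(X_n)}\to\widehat{\pi_1(Y_n)}$. By factoring both $X$ and $Y$ through $Z$, the argument converts the closure hypothesis \emph{once and for all} into $\Z_p$-coefficient 2-connectedness of $X\to Z$ and $Y\to Z$; it is the 2-connectedness that propagates to the towers, via Levine's Lemma~\ref{lemma:levine's-lemma} as packaged in Lemma~\ref{lemma:connectedness-of-p-covers}, and this is done entirely inside the already-proved Lemma~\ref{lemma:2-connected-maps-induce-t-equivalence}. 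In your primary route the ``crucial claim'' that $\widehat{\pi_1(X_n)}\cong\widehat{\pi_1(Y_n)}$ propagates is left unproved, the short-exact-sequence argument you gesture at is not carried out, and there is no obvious reason it should succeed directly; you are correct that this does not follow from Lemma~\ref{lemma:connectedness-of-p-covers}, but you should conclude from that observation that the induction must be reorganized so that 2-connectedness, not the closure isomorphism, is the inductive hypothesis --- which is precisely what the $Z$-factorization accomplishes.
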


\begin{proof}
  By \cite{Cha:2004-1}, there is a sequence of 2-connected (with
  respect to $\Z_p$-coefficients) maps
  \[
  \pi_1(X)=G_0 \to G_1 \to \cdots
  \]
  on finitely presented groups $G_k$ such that
  $\widehat{\pi_1(X)}\cong\varinjlim G_k$ and the natural map
  $p_{\pi_1(X)}\colon \pi_1(X) \to \widehat{\pi_1(X)}$ is the limit
  map $G_0 \to \varinjlim G_k$.  Since $\pi_1(Y)$ is finitely
  presented, the composition
  \[
  \pi_1(Y) \to \widehat{\pi_1(Y)} \xrightarrow{\cong}
  \widehat{\pi_1(X)}
  \]
  factors through some~$G_k$, so that we have the following
  commutative diagram:
  \[
  \begin{diagram}
    \dgARROWLENGTH=.8\dgARROWLENGTH
    \node{\pi_1(X)} \arrow{e} \arrow{s}
    \node{G_k} \arrow{e}
    \node{\widehat{\pi_1(X)}} \arrow{s,r}{\cong}
    \\
    \node{\pi_1(Y)} \arrow[2]{e} \arrow{ne,..}
    \node[2]{\widehat{\pi_1(Y)}}
  \end{diagram}
  \]
  Since the horizontal maps induce isomorphisms on $H_1(-;\Z_p)$, the
  map $\pi_1(Y) \to G_k$ induces an isomorphism on $H_1(-;\Z_p)$.
  Since $H_2(\pi_1(X);\Z_p) \to H_2(G_k;\Z_p)$ is surjective,
  $H_2(\pi_1(Y);\Z_p) \to H_2(G_k;\Z_p)$ is surjective.  It follows
  that $\pi_1(Y) \to G_k$ is 2-connected with respect to
  $\Z_p$-coefficients.

  Let $Z=K(G_k,1)$ be the Eilenberg-MacLane space.  Since $G_k$ is
  finitely presented, we may assume that $Z$ has finite 2-skeleton.
  Consider maps $X \to Z$ and $Y \to Z$ which induce our $\pi_1(X) \to
  G_k$ and $\pi_1(Y) \to G_k$.  Since $\pi_1(X) \to G_k$ and $\pi_1(Y)
  \to G_k$ are 2-connected with respect to $\Z_p$-coefficients, so are
  $X \to Z$ and $Y \to Z$, and therefore they are $p$-tower maps by
  Lemma~\ref{lemma:2-connected-maps-induce-t-equivalence}.  By
  Remark~\ref{remark:composition-of-t-equivalence}, it follows that
  $f\colon X\to Y$ is a $p$-tower map.
\end{proof}

\begin{remark}
  \label{remark:Levine's-closure-and-t-equivalence}
  In~\cite{Levine:1989-1} Levine defined another algebraic closure of
  a group~$G$ (much earlier than~\cite{Cha:2004-1}).  Levine's
  algebraic closure has the following property which is appropriate
  for studying manifold embeddings into a simply connected ambient
  space: if $\pi$, $G$ are finitely presented, $f\colon \pi \to G$ is
  integrally 2-connected, and $f(\pi)$ normally generates $G$, then
  $f$ induces an isomorphism on Levine's algebraic closures.
  Comparing this with the universal property of the algebraic closure
  $\widehat G$ of \cite{Cha:2004-1}, it follows easily that there is a
  natural transformation from Levine's algebraic closure to $\widehat
  G$.  An immediate consequence is that if $\pi\to G$ induces an
  isomorphism on Levine's algebraic closures, then $\widehat \pi \to
  \widehat G$ is also an isomorphism.  Therefore
  Proposition~\ref{proposition:algebraic-closure-and-t-equivalences}
  applies for $f\colon X\to Y$ which induces an isomorphism on
  Levine's algebraic closures.
\end{remark}

\begin{remark}
  The 2-connectedness assumption in
  Lemma~\ref{lemma:2-connected-maps-induce-t-equivalence} is stronger
  than the assumption of
  Proposition~\ref{proposition:algebraic-closure-and-t-equivalences}.
  For example, a $p$-tower map considered in
  Proposition~\ref{proposition:t-equivalence-of-F-hat-links} induces
  an isomorphism on $\widehat{\pi_1(-)}$ while it is not $H_2$-onto.
\end{remark}

\section{Computation of intersection form defects}
\label{section:basic-properties}

In this section we suppose that $\Gamma$ is endowed with
$\Z\Gamma\to \K$ and $H_4(\Gamma)=0$.

\subsection{Connected sum}

Suppose $M$ and $M'$ are closed 3-manifolds $M$ endowed with
$\phi\colon \pi_1(M) \to \Gamma$ and $\phi'\colon \pi_1(M') \to
\Gamma$, respectively.  Let $\psi\colon \pi_1(M\# M') \to \Gamma$ be
the map induced by $\phi$ and $\phi'$, regarding $\pi_1(M\# M')$ as
the free product $\pi_1(M) * \pi_1(M')$.  Note that any map
$\pi_1(M\#M') \to \Gamma$ is of this form.

\begin{lemma}
  \label{lemma:additivity-under-connected-sum}
  If $(M,\phi)$ and $(M',\phi')$ are $S$-torsion in
  $\Omega^{top}_3(B\Gamma)$, then $(M \# M', \psi)$ is $S$-torsion in
  $\Omega^{top}_3(B\Gamma)$ and
  \[
  \lambda(M \# M', \psi)=\lambda(M,\phi)+\lambda(M',\phi')
  \]
  in $S^{-1}\Z \otimes L^0(\K)$.
\end{lemma}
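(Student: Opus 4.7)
The plan is to reduce the claim to the additivity of $\lambda$ under disjoint union via a simple bordism. First, disjoint-union additivity is immediate from the definition: if $V$ and $V'$ are null-bordisms over $\Gamma$ of $rM$ and $r'M'$ respectively, then $r'V\sqcup rV'$ null-bounds $rr'(M\sqcup M')$ over $\Gamma$, and since $\lambda_\K$ and $\sigma$ are additive under disjoint union of 4-manifolds, one obtains $\lambda(M\sqcup M',\phi\sqcup\phi')=\lambda(M,\phi)+\lambda(M',\phi')$ in $S^{-1}\Z\otimes L^0(\K)$ after dividing by $rr'$.

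Next I bordism-relate $(M\# M',\psi)$ to $(M\sqcup M',\phi\sqcup\phi')$. Let $Z$ be the 4-manifold obtained from $(M\sqcup M')\times I$ by attaching a single 4-dimensional 1-handle $D^1\times D^3$ with attaching region $S^0\times D^3$, one 3-ball in $M\times\{1\}$ and the other in $M'\times\{1\}$. Then $\partial Z=-(M\sqcup M')\sqcup(M\# M')$, and by van Kampen $\pi_1(Z)=\pi_1(M)*\pi_1(M')=\pi_1(M\# M')$, so $\phi$ and $\phi'$ extend to a map $Z\to B\Gamma$ restricting to $\psi$ on the $M\# M'$ side. Hence $Z$ is a bordism over $B\Gamma$; in particular $[M\# M',\psi]=[M,\phi]+[M',\phi']$ in $\Omega^{top}_3(B\Gamma)$, so $(M\# M',\psi)$ is $S$-torsion whenever $(M,\phi)$ and $(M',\phi')$ are. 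To compute $\lambda(M\# M',\psi)$ I glue $rr'$ copies of $Z$ to $r'V\sqcup rV'$, pairing each $M$-boundary component of $r'V$ with an $M'$-boundary component of $rV'$, obtaining a null-bordism $U$ of $rr'(M\# M')$ over $\Gamma$. By Novikov additivity,
\[
[\lambda_\K(U)]=r'[\lambda_\K(V)]+r[\lambda_\K(V')]+rr'[\lambda_\K(Z)]
\]
and $\sigma(U)=r'\sigma(V)+r\sigma(V')+rr'\sigma(Z)$; so once $[\lambda_\K(Z)]=0$ and $\sigma(Z)=0$ are established, substituting into $\lambda(M\# M',\psi)=\frac{1}{rr'}([\lambda_\K(U)]-i^*_\K\sigma(U))$ immediately yields $\lambda(M,\phi)+\lambda(M',\phi')$.

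The main remaining step is the vanishing $\bar H_2(Z;\K)=0$, which implies $[\lambda_\K(Z)]=0$ in $L^0(\K)$ and, by the same argument with trivial coefficients, $\sigma(Z)=0$. Since the 1-handle deformation retracts onto its core arc $D^1\times\{0\}$, $Z$ deformation retracts onto $(M\sqcup M')\times I$ with a single 1-cell attached between its two components; attaching a 1-cell leaves the cellular chain modules $C_i$ (with any local system) unchanged for $i\geq 2$, whence $H_2((M\sqcup M')\times I;\K)\cong H_2(Z;\K)$. Since $(M\sqcup M')\times I$ further deformation retracts onto $(M\sqcup M')\times\{0\}\subset\partial Z$, every class in $H_2(Z;\K)$ is represented by a 2-cycle lying in $\partial Z$, so the map $H_2(Z;\K)\to H_2(Z,\partial Z;\K)$ is zero and $\bar H_2(Z;\K)=0$, completing the proof.
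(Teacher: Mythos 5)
Your proof is correct, and it reorganizes the argument in a way that differs from the paper's. The paper directly builds a null-bordism of $rr'(M\# M')$ by forming a boundary connected sum of $r'W$ and $rW'$ along $rr'$ contractible $4$-balls $B_i$, each meeting one boundary component of $\partial(r'W)$ and one of $\partial(rW')$ in a $3$-ball; it then computes $H_2$ of the result by Mayer--Vietoris over the $B_i$ and reads off the orthogonal decomposition of the intersection form. You instead factor the statement through two independent pieces: additivity under disjoint union, and the vanishing of $\lambda$ on the explicit $1$-handle bordism $Z$ realizing $[M\# M',\psi]=[M,\phi]+[M',\phi']$ in $\Omega^{top}_3(B\Gamma)$. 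Since attaching a $1$-handle to $r'V\sqcup rV'$ is the boundary connected sum, your total manifold $U$ is essentially the paper's $V$; what differs is the accounting (Novikov additivity over the cobordism $Z$ plus $\bar H_2(Z;\K)=0$, versus Mayer--Vietoris over the contractible $B_i$). Your version has the advantage of isolating the connect-sum relation in bordism as its own observation and reducing the computation to the clean homological fact that $Z$ retracts to $\partial Z$ rel the interesting part, so $\bar H_2(Z;\K)=0$; the paper's version is a bit more direct but folds these steps together. Both are valid, and both rely on the same Novikov-additivity--for--$[\lambda_\K]$ principle that the paper already invokes in proving well-definedness.
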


\begin{proof}
  Choose $W$ and $W'$ such that $\partial W=rM$ and $\partial W'=r'M'$
  over $\Gamma$ for some $r$ and $r'$ in~$S$.  Consider $r'W$ and
  $rW'$ which have boundaries $rr'M$ and $rr'M'$, respectively.  Let
  $M_i$ be the $i$th copy of $M$ in $\partial (r'W)$ for
  $i=1,\ldots,rr'$.  Choose a 4-ball $B_i$ in $r'W$ which is disjoint
  to $M_j$ for $j\ne i$ and intersects $M_i$ at a 3-ball contained in
  $\partial B_i$.  Choose a 4-ball $B_i'$ in $rW'$ for each
  $i=1,\ldots,rr'$ in a similar way.  Let $V = (r'W \cup rW') / \sim$
  where $B_i \subset r'W$ and $B_i' \subset rW'$ are identified for
  each $i$.  It can be seen that $\partial V = rr'(M\# M')$ over
  $\Gamma$.  Therefore $(M\# M',\psi)$ is $S$-torsion in
  $\Omega_3^{top}(B\Gamma)$ and we can compute $\lambda(M\# M',\psi)$
  using the intersection form of~$V$.  Since each $B_i$ is
  contractible, a standard Mayer-Vieotoris argument shows that
  \[
  [\lambda_\K(V)] = r'[\lambda_\K(W)] + r[\lambda_\K(W')],
  \]
  and similarly for the ordinary signature~$\sigma(V)$.  From this the
  desired additivity of $\lambda(-,-)$ follows.
\end{proof}

The following example will be used later to show that our invariant
vanishes for some manifolds:

\begin{lemma}
  \label{lemma:vanishing-for-S^1xD^2}
  Let $M$ be the connected sum of disjoint copies of $S^1\times S^2$.
  Then for any $\phi\colon\pi_1(M) \to \Gamma$, $\lambda(M,\phi)$ is
  well-defined as an element in $L^0(\K)$ and vanishes.
\end{lemma}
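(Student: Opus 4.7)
The plan is to exhibit an explicit null-bordism $W$ of $M$ over $\Gamma$ and observe that both its $\K$-coefficient intersection form and its ordinary signature vanish for homotopical reasons.

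Concretely, if $M = \#_{k} (S^1 \times S^2)$, let $W = \natural_k (S^1 \times D^3)$ be the boundary connected sum of $k$ copies of $S^1 \times D^3$. Then $\partial W = M$, and the inclusion $M \hookrightarrow W$ induces an isomorphism $\pi_1(M) \to \pi_1(W)$ (both are free of rank $k$). In particular, any $\phi\colon \pi_1(M) \to \Gamma$ extends (uniquely) to a map $\pi_1(W) \to \Gamma$, so $W$ bounds $M$ \emph{over $\Gamma$}. This already shows $(M,\phi) = 0$ in $\Omega_3^{top}(B\Gamma)$, hence $\lambda(M,\phi)$ is defined in $L^0(\K)$ itself (taking $r=1$ in the definition), not merely in $S^{-1}\Z \otimes L^0(\K)$.

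Next I would compute that $\lambda_\K(W) = 0$ and $\sigma(W) = 0$. Since $W$ deformation retracts onto $\bigvee_k S^1$, its cellular chain complex is concentrated in degrees $0$ and $1$; equivalently, the equivariant chain complex $C_*(\widetilde W)$ is a finite free $\Z\pi_1(W)$-complex vanishing above degree $1$. Therefore, for any coefficient system factoring through $\pi_1(W)$, in particular with coefficients in $\K$ or in $\Q$, we have $H_i(W;\K) = 0$ and $H_i(W;\Q) = 0$ for all $i \ge 2$. Consequently $\bar H_2(W;\K) = 0$, so $\lambda_\K(W)$ is the zero form on the zero module and $[\lambda_\K(W)] = 0 \in L^0(\K)$; similarly $\sigma(W) = 0$.

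Putting these together,
\[
\lambda(M,\phi) = [\lambda_\K(W)] - i^*_\K \sigma(W) = 0 - 0 = 0
\]
in $L^0(\K)$, as desired. There is no serious obstacle in this argument; the only point to be careful about is that the map $\phi$ extends to $\pi_1(W)$, which is automatic here because $\pi_1(M) \to \pi_1(W)$ is an isomorphism, and that the homotopy type of $W$ is one-dimensional so all higher homology (twisted or untwisted) vanishes.
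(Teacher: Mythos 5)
Your argument is correct and is essentially the paper's own: both use a thickened $1$-complex as the null-bordism and note that $H_2$ vanishes for all coefficients because $W$ is homotopy equivalent to a wedge of circles. The only cosmetic difference is that the paper first reduces to $M = S^1 \times S^2$ via the connected sum additivity lemma (Lemma~\ref{lemma:additivity-under-connected-sum}) and then uses $W = S^1\times D^3$, whereas you work directly with the boundary connected sum $W = \natural_k (S^1\times D^3)$, sidestepping that lemma.
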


\begin{proof}
  By Lemma~\ref{lemma:additivity-under-connected-sum}, we may assume
  that $M=S^1\times S^2$.  Then, letting $W=S^1\times D^3$, $\partial
  W=M$ over~$\pi_1(M)$.  So $(M,\phi)=0$ in $\Omega^{top}_3(B\Gamma)$,
  and $\lambda(M,\phi)$ can be computed from the intersection form
  of~$W$.  Since $W$ has the homotopy type of a 1-complex,
  $H_2(W;\Q)=0=H_2(W;\K)$.  It follows that $\lambda(W,\phi)=0$ for
  any~$\phi$.
\end{proof}

\subsection{Toral sum}

We consider a special case of toral sum described below.  Let $M$ and
$M'$ be closed 3-manifolds, and $T$ and $T'$ are solid tori embedded
in $M$ and $M'$, respectively.  Choose an orientation reversing
homeomorphism $h\colon T'\to T$, and let $N$ be the manifold obtained
by gluing the boundaries of $M-\inte T$ and $M'-\inte T'$
along~$h|_\partial$.  Suppose there is a retract $s\colon M' \to T'$
of~$T'$.  For a homomorphism $\phi\colon \pi_1(M) \to \Gamma$, let
$\phi'$ be the composition
\[
\pi_1(M') \xrightarrow{s_*} \pi_1(T') \xrightarrow{h_*} \pi_1(T) \to
\pi_1(M) \xrightarrow{\phi} \Gamma.
\]
Then $\phi$ and $\phi'$ induce a map $\psi\colon \pi_1(N) \to \Gamma$.

\long\def\ignoreme{
\[
{
  \fboxsep=.3em \fboxrule=0mm
  \begin{diagram}\dgHORIZPAD=0mm \dgVERTPAD=0mm \dgARROWLENGTH=.2em
    \node[2]{\mathstrut} \arrow{e,t,-}{s_*}
    \node{} \arrow{se}
    \\
    \node{\fb{\pi_1(M')}}\arrow[3]{see,b}{\phi'} \arrow{ne,-}
    \node[3]{\fb{\pi_1(T')=\pi_1(T)}}
    \arrow[3]{w} \arrow[3]{e}
    \node[3]{\fb{\pi_1(M)}} \arrow[3]{s,r}{\phi}
    \\ \\ \\
    \node[7]{\fb{\Gamma}}
  \end{diagram}
}
\]
i.e., $\phi'=\phi i_* s$.
}

\begin{lemma}
  \label{lemma:toral-sum-addivitiy}
  \noindent\Nopagebreak
  \begin{enumerate}
  \item $(M',\phi')$ is null-bordant over $\Gamma$.  Consequently,
    $(M',\phi')$ is $S$-torsion in $\Omega^{top}_3(B\Gamma)$.
  \item If $(M,\phi)$ is $S$-torsion in $\Omega^{top}_3(B\Gamma)$,
    then so is $(N,\psi)$, and
    \[
    \lambda(N,\psi) = \lambda(M,\phi)+\lambda(M',\phi') \text{ in }
    S^{-1}\Z \otimes L^0(\K).
    \]
  \end{enumerate}
\end{lemma}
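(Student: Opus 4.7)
The plan is to construct an explicit cobordism $W$ from $M\sqcup M'$ to $N$ with vanishing $\K$-coefficient intersection form and signature, and then combine $W$ with null-bordisms of $rM$ and $M'$ to obtain a null-bordism of $rN$ exhibiting the desired additivity.

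For Part (1), by definition $\phi' = \phi\circ i_T\circ h\circ s$ where $i_T\colon T\hookrightarrow M$, so $\phi'$ factors through $\pi_1(T')\cong\Z$, and the classifying map $M'\to B\Gamma$ factors through $B\Z=S^1$. The Atiyah--Hirzebruch spectral sequence yields $\Omega^{top}_3(S^1) = 0$ (all $p+q=3$ entries vanish, using $\Omega^{top}_q = 0$ for $q = 1,2,3$ and $H_p(S^1;\Z) = 0$ for $p \ge 2$), so $(M',\phi')$ is null-bordant over $\Gamma$.

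For Part (2), form $W = (M \times I) \cup_{T=T'} (M' \times I)$ by identifying $T \subset M\times\{1\}$ with $T' \subset M'\times\{1\}$ via $h$, as codimension-zero subsets of the boundaries. A local analysis at the corner along $T^2 \times \{1\}$ shows $W$ is a topological 4-manifold: the 2-dimensional transverse slice is a ``cut rectangle'' that is homeomorphic to a half-disk via $z \mapsto \sqrt z$, so the local 4-dimensional model is $\R^2 \times \R^2_+$. Then $\partial W = N \sqcup (-M) \sqcup (-M')$, and the $\Gamma$-structures on $M\times I$ and $M'\times I$ agree on the gluing because $s|_{T'} = \id$ forces $\phi'|_{T'} = \phi\circ h$ on $T'$, matching $\phi|_T$ under $h$. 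The key claim is $[\lambda_\K(W)] = 0$ and $\sigma(W) = 0$. Mayer--Vietoris with $A = M\times I \simeq M$, $B = M'\times I \simeq M'$, and $A\cap B = T \simeq S^1$, combined with the split injectivity of $H_1(T';\K)\hookrightarrow H_1(M';\K)$ provided by the retract, gives $H_2(W;\K) = H_2(M;\K) \oplus H_2(M';\K)$. Any two cycles from the same summand can be pushed into disjoint time slices of $M\times I$ or $M'\times I$, and cycles from different summands lie in disjoint halves of $W$, so the intersection form on $H_2(W;\K)$ vanishes identically. Non-degeneracy of the form on $\bar H_2(W;\K)$ forces $\bar H_2(W;\K) = 0$, whence $[\lambda_\K(W)] = 0$ in $L^0(\K)$; the same argument with $\K = \Q$ and trivial action gives $\sigma(W) = 0$.

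Finally, take a null-bordism $V$ of $rM$ and a null-bordism $V'$ of $M'$ (from Part (1)), and form $U = rW \cup_{rM} V \cup_{rM'} rV'$; then $\partial U = rN$, so $(N,\psi)$ is $S$-torsion. Since the gluings are along full boundary components, Novikov additivity yields $\sigma(U) = r\sigma(W) + \sigma(V) + r\sigma(V') = \sigma(V) + r\sigma(V')$ and $[\lambda_\K(U)] = [\lambda_\K(V)] + r[\lambda_\K(V')]$; dividing by $r$ gives $\lambda(N,\psi) = \lambda(M,\phi) + \lambda(M',\phi')$. The main substantive step is establishing the vanishing of the intersection-form defect of $W$, which uses the retract in an essential way both to fit the $\Gamma$-structures together and to effect the direct-sum decomposition of $H_2(W;\K)$.
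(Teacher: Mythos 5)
Your proof is correct, but it follows a route that differs meaningfully from the paper's. The paper takes a null-bordism $V$ of $rM$ over $\Gamma$, takes $r$ copies of the null-bordism $W'$ of $M'$ produced in part (1), and glues $V$ to $rW'$ directly along the $r$ copies of the solid torus $T\subset rM$ sitting in $\partial V$; the Mayer--Vietoris sequence for this gluing is especially clean because $H_2(S^1\times D^2;\K)=0$ and $H_1(T';\K)\to H_1(W';\K)$ is split injective (via the retraction $s$ composed with the null-bordism), so one gets an immediate orthogonal decomposition $H_2(V;\K)\cong H_2(\text{null-bordism of }rM;\K)\oplus H_2(W';\K)^r$ and hence Witt-class additivity in one stroke. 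You instead build the \emph{trace cobordism} $W=(M\times I)\cup_{T=T'}(M'\times I)$ from $M\sqcup M'$ to $N$, prove the stronger statement that its intersection-form defect vanishes outright (via the geometric observation that all classes in $H_2(W;\K)$ are represented by cycles that can be made pairwise disjoint, forcing $\bar H_2(W;\K)=0$), and then glue null-bordisms along the full boundary components $rM$ and $rM'$. This makes conceptually explicit that toral sum is cobordism-trivial, which is a nice added insight; the trade-off is that you need corner-smoothing along $\partial T\times\{1\}$ (which you handle correctly) and you need Novikov additivity of the Witt class for gluings along \emph{closed} 3-manifolds rather than solid tori, which is standard (and is also invoked in the paper's well-definedness lemma) but is less immediate than the solid-torus Mayer--Vietoris argument since $H_2$ of a closed $3$-manifold need not vanish. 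Both arguments are sound; the paper's is slightly more economical, while yours isolates the cobordism-theoretic triviality of the construction.
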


\begin{proof}
  It can be seen that $\Omega_3^{top}(\Z)=0$ from the
  Atiyah-Hirzebruch spectral sequence; indeed each $E^2$ term
  $E^2_{p,q}=H_p(\Z;\Omega^{top}_q)$ vanishes for $p+q=3$ since $B\Z$
  has the homotopy type a 1-complex, namely $S^1$, and
  $\Omega^{top}_q=0$ for $1\le q \le 3$.  Therefore $M'$ endowed with
  $s_*\colon \pi_1(M') \to \pi_1(T')=\Z$ has a null-bordism $W'$, i.e.,
  $\partial W'=M'$ over~$\Z$. By the definition of $\phi'$, $\partial
  W'=M'$ over $\Gamma$ as well.  This proves the first assertion.

  Now suppose that $(M,\phi)$ is $S$-torsion
  in~$\Omega^{top}_3(B\Gamma)$.  Choose $W$ such that $\partial W=rM$
  over $\Gamma$ for some $r\in S$.  Let $V$ be the manifold obtained
  by taking the disjoint union of $W$ and $rW'$ and then attaching the
  $j$th copy of $T$ in $\partial W=rM$ to the $j$th copy of $T'$ in
  $\partial (rW')=rM'$.  It can be seen that $\partial V=rN$
  over~$\Gamma$.  Thus $(N,\psi)$ is $S$-torsion in
  $\Omega^{top}_3(B\Gamma)$.

  By Mayer-Vieotoris, we have an exact sequence
  \begin{multline*}
    H_2(S^1\times D^2;\K)^r \to H_2(W;\K)\oplus H_2(W';\K)^r \to H_2(V;\K)
    \\
    \to H_1(S^1\times D^2;\K)^r \to H_1(W;\K)\oplus H_1(W';\K)^r.
  \end{multline*}
  $H_2(S^1\times D^2;\K)=0$ obviously.  The map 
  \[
  H_1(S^1\times D^2;\K) = H_1(T';\K) \to H_1(M';\K) \to H_1(W';\K)
  \]
  is injective since it has a left inverse.  Therefore it follows that
  \[
  H_2(V;\K) \cong H_2(W;\K)\oplus H_2(W';\K)^r.
  \]
  From this we obtain an orthogonal decomposition of the intersection
  form, namely
  \[
  [\lambda_\K(V)] = [\lambda_\K(W)]+r[\lambda_\K(W')].
  \]
  An analogous formula for the ordinary signature is proved by a
  similar argument.  From this the desired additivity follows.
\end{proof}

For later use, we state the following lemma which was proved in the
proof of Lemma~\ref{lemma:toral-sum-addivitiy}~(1).

\begin{lemma}
  \label{lemma:null-bordism-over-Z}
  If $\phi\colon\pi_1(M)\to \Gamma$ factors through $\Z$, then
  $(M,\phi)$ is null-bordant over $\Gamma$ so that $\lambda(M,\phi)$
  is well-defined as an element in $L^0(\K)$.
\end{lemma}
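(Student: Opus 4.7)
The plan is straightforward: exploit the factorization through $\Z$ to reduce to a bordism computation over $B\Z$ that was already carried out in the proof of Lemma~\ref{lemma:toral-sum-addivitiy}(1), namely the vanishing of $\Omega^{top}_3(B\Z)$.

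Concretely, I would first write the given factorization as $\phi = \iota \circ \alpha$, where $\alpha\colon \pi_1(M) \to \Z$ and $\iota\colon \Z \to \Gamma$. Realizing $\alpha$ as a map $M \to B\Z = S^1$, the pair $(M,\alpha)$ represents a class in the topological bordism group $\Omega^{top}_3(B\Z)$. Next I would invoke the Atiyah-Hirzebruch spectral sequence computation already given in Lemma~\ref{lemma:toral-sum-addivitiy}(1): since $B\Z \simeq S^1$ has the homotopy type of a 1-complex and $\Omega^{top}_q = 0$ for $q = 1,2,3$, every $E^2_{p,q} = H_p(\Z;\Omega^{top}_q)$ with $p+q = 3$ vanishes, and therefore $\Omega^{top}_3(B\Z) = 0$.

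Consequently $(M,\alpha) = 0$ in $\Omega^{top}_3(B\Z)$, so there exists a compact topological $4$-manifold $W$ together with a map $W \to B\Z$ satisfying $\partial W = M$ over~$\Z$. Post-composing this reference map with $B\iota \colon B\Z \to B\Gamma$ produces a map $W \to B\Gamma$ restricting to $\iota \circ \alpha = \phi$ on the boundary. Hence $\partial W = M$ over~$\Gamma$, which shows that $(M,\phi)$ is null-bordant in $\Omega^{top}_3(B\Gamma)$ (i.e., $S$-torsion for $S = \{1\}$). By the definition of $\lambda(M,\phi)$ with $r = 1$, the class $\lambda(M,\phi)$ is then well-defined as an element of $L^0(\K)$, not merely of $S^{-1}\Z \otimes L^0(\K)$.

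There is no real obstacle to this argument since the main ingredient, the vanishing of $\Omega^{top}_3(B\Z)$, has already been established in the paper; the only thing to verify is that the $B\Z$-null-bordism can be promoted to a $B\Gamma$-null-bordism, which is immediate by composing with $B\iota$.
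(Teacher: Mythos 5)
Your argument is correct and is essentially identical to the paper's: the paper explicitly defers the proof of this lemma to the proof of Lemma~\ref{lemma:toral-sum-addivitiy}~(1), where the vanishing $\Omega^{top}_3(B\Z)=0$ is derived from the Atiyah--Hirzebruch spectral sequence (using $B\Z\simeq S^1$ and $\Omega^{top}_q=0$ for $1\le q\le 3$) and the resulting null-bordism over $\Z$ is promoted to one over $\Gamma$ by composing with $B\Z\to B\Gamma$. Your write-up follows the same route step for step.
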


\subsection{$p$-Towers of a toral sum}

Suppose $M$, $M'$, $T$, $T'$, $s$, and $N$ are as above.  Note that
$M-\inte T$ can be viewed as a subspace of both $M$ and~$N$.  From the
existence of the retract $s \colon M' \to T'$, it follows that the
inclusion $M-\inte T \to M$ extends to a map $f\colon N \to M$; we
give a proof below.  Since $s$ is a retract, we have the following
commutative diagram, where the vertical maps are induced by the
inclusions:
\[
\begin{diagram}
  \node{\pi_1(\partial T')} \arrow{s} \arrow{se}
  \\
  \node{\pi_1(M'-\inte T')} \arrow{e,b}{(s|)_*}
  \node{\pi_1(T')}
\end{diagram}
\]
it follows that the inclusion $\partial T' \to T'$ extends to $r\colon
M'-\inte T' \to T'$.  Now, define $f\colon N \to M$ by
\[
f\colon N = (M-\inte T) \cup_\partial (M'-\inte T') \xrightarrow{\id
  \cup r} (M-\inte T) \cup_\partial T' \cong M. \qedhere
\]

Suppose
\[
M_n \to \cdots \to M_1 \to M_0 = M
\]
is a $p$-tower determined by $\{\phi_i\colon \pi_1(M_i) \to
\Gamma_i\}$.  We consider the pullback $p$-tower
\[
N_n \to \cdots \to N_1 \to N_0 = N
\]
of $N$ via~$f$.  By ``lifting'' the toral sum structure of $N$, we can
obtain $N_n$ from $M_n$ via toral sum.  Note that $T$ may not be
lifted to $M_n$; in general, the pre-image of $T\subset M$ under $M_n
\to M$ is a disjoint union of solid tori, say $\widetilde T_{1},
\widetilde T_2, \ldots$, and the restriction $\widetilde T_{j} \to T$
is a $r_{j}$-fold cyclic cover for some divisor $r_{j}$
of~$\prod_{i=0}^{n-1} |\Gamma_i|$.  (In general, $r_{j}$ depends on
$j$ since $M_n \to M$ may not be a regular cover.)  Therefore, we need
to consider a corresponding $r_{j}$-fold covers of $M'$ to construct
$N_n$ from~$M_n$.  Details are as follows.  Let $\widetilde M'_{j}$ be
the $r_{j}$-fold cyclic cover of $M'$ determined by
\[
\pi_1(M') \xrightarrow{s_*} \pi_1(T')=\Z \xrightarrow{proj.}
\Z_{r_{j}}
\]
and $\widetilde T'_{j} \subset \widetilde M'_{j}$ be the pre-image of
$T'$.  Obviously each $\widetilde T'_{j}$ is a solid torus and
$\widetilde T'_{j} \to T'$ is a $r_{j}$-fold cyclic cover which can be
identified with $\widetilde T_{j} \to T$.  Let
\[
N_n=\Big[M_n -\bigcup_{j} \inte \widetilde T_{j} \Big] \cup
\Big[\bigcup_{j} (\widetilde M'_{j}-\inte \widetilde T'_{j}) \Big]
\Big/ \partial \widetilde T_{j} \sim \partial \widetilde T'_{j} \text{
  for } j=1,2,\ldots
\]
Also, $s$ lifts to $\widetilde s_{j}\colon \widetilde M'_{j} \to
\widetilde T'_{j}$.  For any $\phi_n\colon\pi_1(M_n) \to \Z_d$ with
$d$ a power of $p$, $\phi_n$ and the map $\widetilde\phi'_{j}$ defined
to be
\[
\widetilde\phi'_j\colon \pi_1(\widetilde M'_{j})
\xrightarrow{(\widetilde s_{j})_*} \pi_1(\widetilde T'_{j})
=\pi_1(\widetilde T_{j}) \to \pi_1(M_n) \xrightarrow{\phi_n} \Z_d.
\]
induce a map $\psi_n\colon \pi_1(N_n) \to \Z_d$.  The following lemma
follows immediately by applying Lemma~\ref{lemma:toral-sum-addivitiy}
to the toral decomposition of~$N_n$.

\begin{lemma}
  \label{lemma:tower-of-toral-sum}
  $(N_n,\psi_n)$ is $S$-torsion in $\Omega^{top}_3(B\Z_{d})$ if and
  only if so is $(M_n,\phi_n)$, and if it is the case,
  \[
  \lambda(N_n,\psi_n)=\lambda(M_n,\phi_n) + \sum_{j}
  \lambda(\widetilde M'_{j}, \widetilde\phi'_{j}).
  \]
\end{lemma}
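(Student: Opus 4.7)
The plan is to realize $N_n$ as an iterated toral sum of $M_n$ with the covers $\widetilde M'_{1}, \widetilde M'_{2}, \ldots$ along the solid tori $\widetilde T_{j}\subset M_n$ and $\widetilde T'_{j}\subset \widetilde M'_{j}$, and then apply Lemma~\ref{lemma:toral-sum-addivitiy} once per summand. Explicitly, setting $N_n^{(0)}=M_n$ and inductively letting $N_n^{(j)}$ be the toral sum of $N_n^{(j-1)}$ with $\widetilde M'_{j}$ along $\widetilde T_{j}$ and $\widetilde T'_{j}$ (via the identifying homeomorphism coming from the identification $\widetilde T'_{j}\cong \widetilde T_{j}$ described in the setup), one has $N_n^{(\text{total})}=N_n$. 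The strategy is thus to reduce the whole statement to $\#\{j\}$ applications of Lemma~\ref{lemma:toral-sum-addivitiy}.

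To apply Lemma~\ref{lemma:toral-sum-addivitiy} at the $j$th step I would first check its hypotheses. The needed retract $\widetilde M'_{j}\to \widetilde T'_{j}$ is exactly $\widetilde s_{j}$, whose existence was already observed in the construction: since $\widetilde M'_{j}$ was defined as the cover associated to $\pi_1(M')\xrightarrow{s_*}\pi_1(T')\to \Z_{r_j}$, the retract $s\colon M'\to T'$ lifts to a retract $\widetilde s_{j}\colon \widetilde M'_{j}\to \widetilde T'_{j}$. Next I would verify that the ``$\phi'$'' produced by Lemma~\ref{lemma:toral-sum-addivitiy} in this setting coincides with the map $\widetilde\phi'_j$ defined in the statement; this is simply the observation that the composition
\[
\pi_1(\widetilde M'_{j})\xrightarrow{(\widetilde s_{j})_*}\pi_1(\widetilde T'_{j})=\pi_1(\widetilde T_{j})\to \pi_1(N_n^{(j-1)})\to \Z_d
\]
agrees with $\widetilde\phi'_j$ because $\widetilde T_{j}\subset M_n\subset N_n^{(j-1)}$ and the map $\pi_1(N_n^{(j-1)})\to \Z_d$ agrees with $\phi_n$ on~$\pi_1(M_n)$.

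With the hypotheses in place, one then runs the induction. Lemma~\ref{lemma:toral-sum-addivitiy}(1) says each $(\widetilde M'_{j},\widetilde\phi'_j)$ is null-bordant over $\Z_d$, so by Lemma~\ref{lemma:toral-sum-addivitiy}(2) the $S$-torsion status of $(N_n^{(j)},\psi_n^{(j)})$ coincides with that of $(N_n^{(j-1)},\psi_n^{(j-1)})$, and
\[
\lambda(N_n^{(j)},\psi_n^{(j)})=\lambda(N_n^{(j-1)},\psi_n^{(j-1)})+\lambda(\widetilde M'_{j},\widetilde\phi'_j).
\]
Iterating and using $N_n^{(0)}=M_n$ yields both the iff-statement and the displayed additivity formula.

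The only genuine obstacle is bookkeeping: one must check that the inductive ``toral sum'' description really produces $N_n$ as defined, and that at each stage the restriction of $\psi_n$ to $\pi_1(N_n^{(j)})$ is built out of the same homomorphisms that Lemma~\ref{lemma:toral-sum-addivitiy} would produce. Both are structural and follow directly from the construction of $\widetilde T_{j}$, $\widetilde T'_{j}$ and $\widetilde M'_{j}$ described before the lemma, so I expect no serious difficulty beyond making the iteration explicit.
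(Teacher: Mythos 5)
Your proposal is correct and matches the paper's approach: the paper's own proof is a single sentence saying the result follows by applying Lemma~\ref{lemma:toral-sum-addivitiy} to the toral decomposition of $N_n$, and you have simply spelled out the intended one-solid-torus-at-a-time iteration and the consistency checks for the induced homomorphisms. The only (inconsequential) imprecision is the inclusion ``$\widetilde T_j \subset M_n \subset N_n^{(j-1)}$'': for $j>1$ it is really $\widetilde T_j \subset M_n - \bigcup_{k<j}\inte\widetilde T_k \subset N_n^{(j-1)}$, which is what actually makes the two compositions to $\Z_d$ agree.
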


\subsection{Infection by a knot}
\label{subsection:knot-infection}

The following toral sum construction will be used to as a main tool to
construct examples in our applications.  Let $K$ be a knot in~$S^3$.
The \emph{zero-surgery manifold} $M_K$ of $K$ is defined to be the
manifold obtained by filling in the exterior of $K$ with a solid
torus, say $T'$, in such a way that the preferred longitude of $K$
bounds a disk in~$T'$.  Note that there is a retract $s\colon M_K \to
T'$ as assumed in the previous subsection.  Let $M$ be a closed
3-manifold and $\alpha$ be a simple closed curve in $M$ with tubular
neighborhood $T$.  Forming a toral sum of $M$ and $M_K$ via a
homeomorphism $h\colon T \cong T'$, we obtain a new manifold~$N$.  We
say that $N$ is obtained from $M$ by \emph{infection by $K$
  along~$\alpha$}.  Note that a meridian of $K$ is identified with a
parallel of $\alpha$, and a preferred longitude of $K$ is identified
with a meridian of~$\alpha$.

As in the above subsection, let
\begin{gather*}
  M_n \to \cdots \to M_1 \to M_0 = M,\\
  N_n \to \cdots \to N_1 \to N_0 = N
\end{gather*}
be a $p$-tower of~$M$ and the $p$-tower of $N$ induced by it,
respectively.  Let $X_r$ be the $r$-fold cyclic cover of~$M_K$.  Then
by Lemma~\ref{lemma:tower-of-toral-sum} $\lambda(N_n,-)$ is determined
by $\lambda(M_n,-)$ and $\lambda(X_{r_{j}}, -)$ where $r_j$ is as
above.  The involved characters of $X_{r_{j}}$ are of the following
form: the canonical surjection $\pi_1(M_K) \to \Z$ sending the
(positive) meridian $\mu$ of $K$ to $1$ restricts to a surjection
$\pi_1(X_r) \to r\Z$, viewing $\pi_1(X_r)$ as a subgroup of
$\pi_1(M_K)$.  Composing it with an appropriate map $r\Z\to \Z_d$, we
define a map $\phi_r^{s,d}\colon \pi_1(X_r) \to \Z_d$ sending $\mu^r$
to $s\in \Z_d$.  Note that $\lambda(X_r,\phi_r^{s,d})$ is always
well-defined as an element in $L^0(\Q(\zeta_d))$ by
Lemma~\ref{lemma:null-bordism-over-Z}.

\begin{lemma}
  \label{lemma:lambda-of-surgery-manifolds}
  Let $A$ be a Seifert matrix of $K$.  Then
  \[
  \lambda(X_r,\phi_r^{s,d})=[\lambda_{r}(A,\zeta_d^s)]-[\lambda_{r}(A,1)]
  \quad\text{in } L^0(\Q(\zeta_d))
  \]
  where $[\lambda_{r}(A,\omega)]$ is the Witt class of (the
  nonsingular part of) the hermitian form represented by the following
  $r\times r$ block matrix:
  \[
  \lambda_{r}(A,\omega) =
  \begin{bmatrix}
    \vphantom{\ddots} A+A^T & -A & & & -\omega^{-1} A^T\\
    \vphantom{\ddots} -A^T & A+A^T & -A \\
    \vphantom{\ddots} & -A^T & A+A^T & \ddots \\
    \vphantom{\ddots} & & \ddots & \ddots & -A \\
    \vphantom{\ddots} -\omega A & & & -A^T & A+A^T
  \end{bmatrix}_{r\times r}
  \]
  For $r=1,2$, $\lambda_{r}(A,\omega)$ should be understood as
  \[
  \begin{bmatrix}
    (1-\omega)A+(1-\omega^{-1})A^T
  \end{bmatrix}
  \quad\text{and}\quad
  \begin{bmatrix}
    A+A^T & -A-\omega^{-1}A^T \\
    -A^T-\omega A & A+A^T
  \end{bmatrix}.
  \]
\end{lemma}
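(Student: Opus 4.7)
Since $\phi_r^{s,d}$ factors through $\Z$ via $\pi_1(X_r) \twoheadrightarrow r\Z \hookrightarrow \Z \twoheadrightarrow \Z_d$, Lemma~\ref{lemma:null-bordism-over-Z} guarantees that $\lambda(X_r,\phi_r^{s,d}) \in L^0(\Q(\zeta_d))$ is defined from a single honest null-bordism (no denominators). The plan is to construct such a null-bordism $W_r$ explicitly out of the Seifert surface and to compute both the $\K$-coefficient and ordinary intersection forms in the same geometric basis. Concretely, I would push $F$ into the interior of $D^4$ to a properly embedded surface $F'$ with $\partial F'=K\subset S^3$, take $V_r$ the $r$-fold cyclic cover of $D^4$ branched along $F'$ (so that $\partial V_r$ is the $r$-fold branched cover $\Sigma_r$ of $S^3$ over $K$), and attach a single $2$-handle along a lift of the branch locus with $0$-framing in order to convert $\Sigma_r$ into $X_r$. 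The character $\phi_r^{s,d}$ extends over $\pi_1(W_r)$ because $H_1(W_r;\Z)$ is cyclic, generated by the image of $\mu$.

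Next I would identify $\bar H_2(W_r;\K)$ with a free $\K$-module of rank $2g\cdot r$, with explicit generators $e_{i,j}$ indexed by $(i,j)\in\Z_r\times\{1,\ldots,2g\}$, where $x_1,\ldots,x_{2g}$ is a basis of $H_1(F;\Z)$ adapted to the Seifert matrix $A$, and $e_{i,j}$ is the $2$-cycle obtained by capping $x_j$ on the $i$-th lift of $F'$ using the branching structure. A Mayer--Vietoris calculation along the surgery torus shows that the extra class coming from the $2$-handle becomes null in $\bar H_2(W_r;\K)$, so it does not contribute to the Witt class.

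The main step is the pairing computation. Interpreting $A_{j,j'}=\operatorname{lk}(x_j,x_{j'}^+)$ as a $4$-dimensional intersection number between the appropriate copies of $e_{i,j}$ and $e_{i',j'}$ (a classical fact about cyclic branched covers of surfaces pushed into $D^4$), one finds that the matrix of $\lambda_\K(W_r)$ in the basis $\{e_{i,j}\}$ has $A+A^T$ on each diagonal block (from the $\pm$-push-offs meeting in the same level), $-A$ on the super-diagonal and $-A^T$ on the sub-diagonal (adjacent levels), and zero elsewhere \emph{except} in the $(1,r)$ and $(r,1)$ corners. The corners pick up the twisting factor $\zeta_d^{\pm s}$ because the deck transformation around the wrap-around $r-1\to 0$ acts on the $\K$-coefficients by multiplication by $\zeta_d^s$; here $\zeta_d^s$ is precisely $\phi_r^{s,d}(\mu^r)$. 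This yields the stated block matrix $\lambda_r(A,\zeta_d^s)$. Running the identical geometric argument over $\Q$ (equivalently, $\omega=1$) produces the matrix $\lambda_r(A,1)$, which by construction represents the ordinary intersection form of $W_r$; hence $[\lambda_r(A,1)]=i^*_\K\sigma(W_r)$ in $L^0(\K)$ by Witt-equivalence of the rational form with a diagonal $\pm 1$ form of signature $\sigma(W_r)$.

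Substituting into the defining formula $\lambda(X_r,\phi_r^{s,d})=[\lambda_\K(W_r)]-i_\K^*\sigma(W_r)$ gives the conclusion. The hardest part will be the intersection form computation: tracking signs and orientations carefully enough to be sure that the corner twist is $\zeta_d^s$ (resp.\ $\zeta_d^{-s}$) and not its conjugate, and that the Seifert pairing geometrically materializes as the 4-dimensional intersection pairing in exactly the claimed block-tridiagonal-plus-corners pattern. The degenerate cases $r=1$ (where the diagonal, corners, and both off-diagonals collide into a single block) and $r=2$ (where the sub-/super-diagonals merge with the corners) require separate direct inspection, as acknowledged in the statement.
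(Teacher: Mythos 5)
Your branched-cover construction has a fatal problem with the coefficient system. In the $r$-fold cyclic cover $V_r$ of $D^4$ branched along the pushed-in Seifert surface $F'$, the branch locus $\tilde F'$ has a tubular neighborhood $N(\tilde F')\cong \tilde F'\times D^2$, and the meridian circle of $\tilde F'$ (which is the lift of $\mu^r$, where $\mu$ is a meridian of $F'$) \emph{bounds a meridian disk} inside $N(\tilde F')\subset V_r$. Hence this meridian is null-homotopic in $V_r$, and a fortiori in $W_r=V_r\cup h^2$. But the character $\phi^{s,d}_r\colon\pi_1(X_r)\to\Z_d$ sends precisely that class, $[\tilde\mu_r]=[\text{lift of }\mu^r]$, to $s\in\Z_d$. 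So for $s\ne0$ the character does \emph{not} extend over $\pi_1(W_r)$, and your $W_r$ is not a null-bordism of $(X_r,\phi^{s,d}_r)$ over $\Z_d$. The claim ``$H_1(W_r;\Z)$ is cyclic, generated by the image of $\mu$'' is exactly where this breaks: the image of $\mu$ dies in $H_1$ of any branched cover. (A secondary symptom of the same problem: $H_2(V_r)$ has rank $2g(r-1)$, not $2gr$, since the $r$ copies of $H_1(F)$ get one linear relation imposed by the branching; your basis $\{e_{i,j}\}_{i\in\Z_r}$ over-counts.)

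The paper sidesteps this by never passing to a branched cover. It sets $V=D^4\cup_K h^2$ (the trace of $0$-surgery), caps off the Seifert surface inside the $2$-handle to get a \emph{closed} genus-$g$ surface $F$, pushes $F$ into the interior, and takes $W=V-\operatorname{int}(F\times D^2)$. This $W$ has $H_1(W)\cong\Z$ generated by a meridian of $F$, and the paper takes the $r$-fold \emph{unbranched} cyclic cover $W_r$. Because the cover is unbranched, the meridian survives in $H_1(W_r)$, so $\psi^{s,d}_r$ is defined and restricts to $\phi^{s,d}_r$ on $\partial$. The price is an extra boundary component $F\times S^1$; the paper disposes of it by capping $F$ with a handlebody $H$ and using $H\times S^1$ to show $\lambda(F\times S^1,\phi')=0$. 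The $r\times r$ block matrix with the twisted corner terms $-\omega^{\pm1}A^{(T)}$ then falls out of cutting $W$ along $F\times[0,1]$ into a single copy of $Y\cong D^4$ and reassembling $r$ copies of $Y$; in particular the wrap-around term comes from the deck transformation of the \emph{unbranched} cover, not from a branching structure. If you want to salvage your write-up, replace the branched cover of $D^4$ with this unbranched cover of the closed-surface complement; otherwise the twist you need in the coefficients simply isn't there.
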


We postpone the proof of Lemma~\ref{lemma:lambda-of-surgery-manifolds}
to the Appendix.

Now, from Lemma~\ref{lemma:tower-of-toral-sum} and
lemma~\ref{lemma:lambda-of-surgery-manifolds}, we obtain a formula for
the intersection form defect invariants of manifolds infected by
knots:

\begin{corollary}
  \label{corollary:lambda-of-infected-manifold}
  Let $\tilde\alpha_1, \tilde\alpha_2,\ldots \subset M_n$ be the
  components of the pre-image of $\alpha\subset M$, and $r_j$ be the
  degree of the covering map $\tilde\alpha_j \to \alpha$.  Let
  $\phi_n\colon \pi_1(M_n) \to \Z_d$ be a character with $d$ a power
  of $p$ and $\psi_n\colon \pi_1(N_n) \to \Z_d$ be the character
  induced by~$\phi_n$.  Then
  \[  
  \lambda(N_n,\psi_n)=\lambda(M_n,\phi_n)+\sum_{j} \Big(
  [\lambda_{r_{j}}(A,\zeta_d^{\phi_n([\tilde\alpha_j])})]-[\lambda_{r_{j}}(A,1)]
  \Big)
  \]
\end{corollary}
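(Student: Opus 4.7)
The plan is to assemble the formula by combining Lemma~\ref{lemma:tower-of-toral-sum} with Lemma~\ref{lemma:lambda-of-surgery-manifolds}, since all the work has essentially been done already; what remains is to match up the covers and the characters carefully.

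First I would observe that $N$ is, by the very definition of infection, the toral sum of $M$ and $M'=M_K$ along $T$ and $T'$ via the chosen homeomorphism $h\colon T\to T'$, and that the canonical retract $s\colon M_K \to T'$ plays the role of the retract needed in Section~\ref{subsection:knot-infection}. Thus the hypotheses of Lemma~\ref{lemma:tower-of-toral-sum} are satisfied, and applying it directly yields
\[
\lambda(N_n,\psi_n)=\lambda(M_n,\phi_n)+\sum_{j}\lambda(\widetilde M'_j,\widetilde\phi'_j),
\]
where $\widetilde M'_j$ is the $r_j$-fold cyclic cover of $M_K$ corresponding to $\pi_1(M_K)\xrightarrow{s_*}\Z\to\Z_{r_j}$. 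Since this is precisely the cover $X_{r_j}$ defined just before Lemma~\ref{lemma:lambda-of-surgery-manifolds}, the two manifolds are canonically identified.

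Next I would match $\widetilde\phi'_j$ with a character of the form $\phi_{r_j}^{s,d}$. By definition,
\[
\widetilde\phi'_j\colon \pi_1(\widetilde M'_j)\xrightarrow{(\widetilde s_j)_*}\pi_1(\widetilde T'_j)=\pi_1(\widetilde T_j)\to \pi_1(M_n)\xrightarrow{\phi_n}\Z_d.
\]
The meridian $\mu$ of $K$ generates $\pi_1(T')=\Z$, and under the infection identification $h$ it is sent to a parallel of $\alpha$ in $T$, so $\mu^{r_j}$ lifts to a generator of $\pi_1(\widetilde T_j)$ which is freely homotopic to the core $\tilde\alpha_j$ of $\widetilde T_j$. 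Therefore $\widetilde\phi'_j(\mu^{r_j})=\phi_n([\tilde\alpha_j])$, which is exactly the defining property of $\phi_{r_j}^{s,d}$ with $s=\phi_n([\tilde\alpha_j])$. Hence $\widetilde\phi'_j=\phi_{r_j}^{s,d}$ with this value of $s$.

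Finally, applying Lemma~\ref{lemma:lambda-of-surgery-manifolds} to each summand replaces $\lambda(\widetilde M'_j,\widetilde\phi'_j)=\lambda(X_{r_j},\phi_{r_j}^{s,d})$ with $[\lambda_{r_j}(A,\zeta_d^{\phi_n([\tilde\alpha_j])})]-[\lambda_{r_j}(A,1)]$, giving exactly the stated formula. The only real point requiring care — and the ``main obstacle,'' such as it is — is the identification of the character $\widetilde\phi'_j$ with $\phi_{r_j}^{s,d}$, which amounts to bookkeeping of the meridian/longitude correspondence in the infection construction together with how a component $\tilde\alpha_j$ of the preimage of $\alpha$ lifts to the appropriate cover.
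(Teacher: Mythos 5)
Your proof is correct and follows exactly the route the paper takes: the statement is obtained by feeding Lemma~\ref{lemma:tower-of-toral-sum} into Lemma~\ref{lemma:lambda-of-surgery-manifolds}, and the only substantive point is the identification of the character $\widetilde\phi'_j$ with $\phi_{r_j}^{s,d}$ for $s=\phi_n([\tilde\alpha_j])$, which you carry out via the meridian-to-parallel bookkeeping in the infection construction. This matches the paper's (implicit) argument.
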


In the previous subsection, we defined a map $f\colon N \to M$ for $N$
obtained from $M$ by a toral sum with $M'$; in general, it can be seen
easily that $f$ is not necessarily a $p$-tower map in the sense of
Definition~\ref{definition:t-equivalence}.  However, for knot
infection, we have the following result:

\begin{proposition}
  \label{proposition:t-equivalence-of-infected-manifold}
  Suppose $N$ is obtained from $M$ by knot infection along a solid
  torus $T\subset M$.  Then the map $f\colon N \to M$ is a $p$-tower
  map for any prime~$p$.
\end{proposition}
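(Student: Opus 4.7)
The plan is to invoke Proposition~\ref{proposition:algebraic-closure-and-t-equivalences}, so that it suffices to check that $f\colon N\to M$ induces an isomorphism $\widehat{\pi_1(N)} \to \widehat{\pi_1(M)}$ on the algebraic closures with respect to $\Z_{(p)}$-coefficients for each prime~$p$. Since a map that is $2$-connected on integral homology already induces an isomorphism on $\widehat{\pi_1(-)}$ (the consequence of~\cite{Cha:2004-1} recorded just after Proposition~\ref{proposition:t-equivalence-intro}), it is enough to prove the stronger statement that $f$ is an integral homology equivalence. This is $p$-independent and so handles all primes simultaneously.

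To set up the computation, decompose $N = A \cup_\partial B$ and $M = A \cup_\partial T'$ along the infection torus, where $A = M - \inte T$, $B = M_K - \inte T'$, and $\partial T$ is identified with $\partial T'$ via the gluing homeomorphism. By the construction of $f$ recalled just before the statement, $f$ restricts to the identity on $A$ and to the retract $r = s|_B \colon B \to T'$ on $B$, and $r$ extends the identity on~$\partial T'$.

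The core step is to verify that $r$ is itself an integral homology equivalence. Both $B$ and $T'$ have the integral homology of~$S^1$: $H_0 = H_1 = \Z$ and $H_i = 0$ for $i \ge 2$. A generator of $H_1(T')$ is represented by the longitude of $T'$, and since $r|_{\partial T'} = \id$, the class $r_*([\text{longitude of }T'])$ is a generator of $H_1(T')$. Inside $B$, however, the longitude of $T'$ coincides with the meridian of~$K$, which generates $H_1(B) = \Z$; hence $r_*$ is an isomorphism on $H_1$, and trivially so on higher~$H_i$.

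Finally, compare the Mayer-Vietoris sequences for $N = A \cup B$ and $M = A \cup T'$, after replacing the pieces by open thickenings in the standard way. The resulting ladder has identity vertical maps on $H_*(A)$ and on $H_*(A\cap B) \cong H_*(\partial T')$, and $\id \oplus r_*$ on $H_*(A) \oplus H_*(B) \to H_*(A) \oplus H_*(T')$, which is an isomorphism by the preceding step. The five-lemma then yields that $f_*\colon H_*(N;\Z) \to H_*(M;\Z)$ is an isomorphism in every degree, completing the argument. The only delicate point is the bookkeeping hidden in identifying the longitude of $T'$ with the meridian of $K$ in~$B$; once that is in hand, the remainder is a routine homological comparison that works uniformly in~$p$.
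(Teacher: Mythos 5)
Your proof is correct and follows essentially the same path as the paper: both arguments reduce to showing that $f\colon N\to M$ is an integral homology equivalence by comparing Mayer--Vietoris sequences, using the fact that the knot exterior has the homology of $S^1\times D^2$. The one small detour is that you route the conclusion through Proposition~\ref{proposition:algebraic-closure-and-t-equivalences} and the $\widehat{\pi_1}$-isomorphism criterion, whereas the paper applies Lemma~\ref{lemma:2-connected-maps-induce-t-equivalence} directly once $f$ is known to be $2$-connected; the two are equivalent here since the proposition is itself a generalization of that lemma, but the direct route is shorter.
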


\begin{proof}
  Let $E$ be the exterior of~$K$.  Using the fact that $H_*(E)\cong
  H_*(S^1\times D^2)$, it can be shown easily that there is a homology
  equivalence $h\colon E \to S^1\times D^2$ which restricts to a
  homeomorphism on the boundary.  Using $h$, the inclusion $M-\inte T
  \to M$ extends to
  \[
  f\colon N=(M-\inte T) \cup_\partial E \xrightarrow{\id\cup h}
  (M-\inte T) \cup_\partial (S^1\times D^2) = M.
  \]
  By a Mayer-Vieotoris argument, it follows that $f$ is a homology
  equivalence.  By
  Lemma~\ref{lemma:2-connected-maps-induce-t-equivalence}, $f$ is a
  $p$-tower map.
\end{proof}

\subsection{Invariants of $L^0(\Q(\zeta_d))$ and norm residue symbols}
\label{subsection:norm-residue-symbol}

We give a quick review of known invariants of $L^0(\Q(\zeta_d))$.  For
more details, the reader is referred to Milnor-Husemoller's
book~\cite{Milnor-Husemoller:1973-1}.  See also
\cite[Chapter~3]{Cha:2003-1}.  

For a nonsingular hermitian form $\lambda$ on a finite dimensional
$\Q(\zeta_d)$-space~$V$, the following invariants are defined:

\begin{description}
\item[Signature] The natural inclusion $\Q(\zeta_d) \to \C$ gives rise to
  \[
  \sign\colon L^0(\Q(\zeta_d)) \to L^0(\C) \cong \Z.
  \]
  In other words, $\sign\lambda$ is the signature of $\lambda$ viewed
  as a hermitian form over~$\C$.
\item[Rank modulo $2$] Let $r$ be the $\Q(\zeta_d)$-dimension of the
  underlying space~$V$.  Since every hyperbolic form has even
  dimension,
  \[
  \rank \lambda = \text{modulo $2$ residue class of }r \in \Z_2
  \]
  is an invariant of the Witt class of $\lambda$.
\item[Discriminant] Since $\lambda$ is nonsingular and hermitian, the
  determinant of a matrix representing $\lambda$ is nonzero and fixed
  under the involution, i.e., $\det \lambda \in \Q(\zeta_d +
  \zeta_d^{-1})^\times$.  The discriminant of $\lambda$ is defined by
  \[
  \dis \lambda= (-1)^{\frac{r(r+1)}{2}} \det \lambda \in
  \frac{\Q(\zeta_d^{\vphantom{-1}}+\zeta_d^{-1})^\times}{\{z\cdot \bar
    z \mid z\in \Q(\zeta_d)^\times\}}
  \]
  Here $z\to \bar z$ denotes the involution on $\Q(\zeta_d)$ induced
  by $\zeta_d^{\mathstrut} \to \zeta_d^{-1}$.  It can be verified
  easily that $\dis \lambda$ is an invariant of the Witt class
  of~$\lambda$.
\end{description}

We remark that $\dis \lambda$ is regarded as an element of a
multiplicative group, while $\sign \lambda$ and $\rank\lambda$ are in
additive groups.  It is known that $\{\sign,\rank,\dis\}$ is a
complete set of invariants of $L^0(\Q(\zeta_d))$ if $d>2$, i.e,
$\zeta_d\ne \pm 1$ \cite{Milnor-Husemoller:1973-1}.

Since $\dis \lambda$ is defined up to multiplication by $z\cdot \bar
z$ where $z\in \Q(\zeta_d)$, detecting a nonvanishing value of
$\dis\lambda$ is a nontrivial problem.  For this purpose, we will
employ some algebraic number theory.  The remaining part of this
subsection is devoted to a quick summary of necessary results.

We call $x\in \Q(\zeta_d^{\vphantom{-1}}+\zeta_d^{-1})^\times$ a
\emph{norm} if $x=z\cdot \bar z$ for some $z \in \Q(\zeta_d)^\times$.
In fact, $L=\Q(\zeta_d)$ is a quadratic extension over
$K=\Q(\zeta_d^{\mathstrut}+\zeta_d^{-1})$, and the norm map $
N^L_K\colon L^\times \to K^\times$ is given by $N^L_K(z)=z\bar z$.
Note that $L=K(\sqrt{D})$ where
\[
D=(\zeta_d^{\vphantom{-1}}+\zeta_d^{-1})^{\vphantom{-}2}_{\vphantom{d}}-4 \in K.
\]

The problem of deciding whether $x\in K^\times$ is a norm or not can
be reduced to the computation of \emph{norm residue symbols}.  For a
prime $\p$ of $K$ (that is, a prime ideal in the ring of algebraic
integers of $K$) and $a,b\in K^\times$, there is defined the norm
residue symbol $(a,b)_\p$ satisfying the following properties:

\begin{enumerate}
\item $(a,b)_\p = +1$ or $-1$.
\item $(a,b)_\p$ is symmetric and bilinear, i.e., $(a,b)_\p =
  (b,a)_\p$ and $(aa',b)_\p = (a,b)_\p (a',b)_\p$.  Consequently
  $(a,b)_\p = (a^{-1}, b)_\p$.
\item If $x$ is a norm, then $(x,D)_\p = 1$ for all~$\p$.
\end{enumerate}

For our purpose, (3) is essential.  From (3) it easily follows that
the induced homomorphism
\[
(-,D)_\p\colon
\frac{\Q(\zeta_d^{\vphantom{-1}}+\zeta_d^{-1})^\times}{\{z\cdot \bar z
  \mid z\in \Q(\zeta_d)^\times\}} \to \{\pm 1\}
\]
is well-defined for each prime~$\p$.  We remark that the converse of
(3) holds if we think of the norm residue symbols for archimedian
valuations as well as those for primes.  For more detailed discussions
on the norm residue symbol, the reader is referred
to~\cite{Serre:1979-1,Cassels-Froehlich:1967-1}.

For the case that $K=\Q$ and $\p$ is (the ideal generated by) an odd
prime~$p$, which will be used in our applications discussed in later
sections, the following lemma provides a formula for the computation
of the norm residue symbols:

\begin{lemma}
  \label{lemma:computation-norm-residue-symbol}
  Suppose $p$ is an odd prime and $a,b\in \Q$.  Then
  \[
  (a,b)_p = \bigg((-1)^{v_p(a)v_p(b)} \cdot
  \frac{a^{v_p(b)}}{b^{v_p(a)}}\bigg)^{\frac{p-1}{2}} \quad
  \text{in }\Z_p,
  \]
  where $v_p(x)$ is the valuation associated to $p$, i.e., writing
  $x=p^r(s/t)$ where $s,t$ are integers relatively prime to $p$,
  $v_p(x)$ is defined to be~$r$.
\end{lemma}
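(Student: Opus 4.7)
The plan is to exploit bilinearity and symmetry of the norm residue symbol to reduce the formula to three base cases, each evaluated by a short $p$-adic computation, and then to reassemble the result using Fermat's little theorem. Write $a=p^{\alpha}u$ and $b=p^{\beta}v$ with $\alpha=v_p(a)$, $\beta=v_p(b)$, where $u,v$ are $p$-adic units (i.e.\ rationals with $v_p(\cdot)=0$). Bilinearity of $(-,-)_p$ gives
\[
(a,b)_p \;=\; (p,p)_p^{\alpha\beta}\cdot (p,v)_p^{\alpha}\cdot (u,p)_p^{\beta}\cdot (u,v)_p,
\]
so it suffices to evaluate the three symbols $(u,v)_p$, $(p,u)_p$, and $(p,p)_p$ for $p$-adic units $u,v$.

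For $(u,v)_p$: since $p$ is odd and $v$ is a $p$-adic unit, the quadratic extension $\Q_p(\sqrt v)/\Q_p$ is either trivial (if $v$ is a square) or unramified (otherwise), and in both situations every $p$-adic unit is a norm from this extension; hence $(u,v)_p=1$. For $(p,u)_p$, I would identify the symbol with the Legendre symbol $\bigl(\tfrac{u}{p}\bigr)$: indeed $p$ is a norm from $\Q_p(\sqrt u)$ precisely when $u$ is a square in $\Q_p$, and by Hensel's lemma (using $p$ odd) this is equivalent to $u$ being a square mod $p$. Euler's criterion then gives $(p,u)_p\equiv u^{(p-1)/2}\pmod{p}$. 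For $(p,p)_p$, I would use the universal identity $(x,-x)_p=1$ together with bilinearity to obtain $(p,p)_p=(p,-1)_p\cdot(p,-p)_p=(p,-1)_p$, which by the previous case equals $(-1)^{(p-1)/2}$.

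Substituting these evaluations yields
\[
(a,b)_p \;\equiv\; (-1)^{\alpha\beta(p-1)/2}\cdot u^{\beta(p-1)/2}\cdot v^{\alpha(p-1)/2} \pmod{p},
\]
and since $v^{p-1}\equiv 1\pmod{p}$ by Fermat's little theorem, the factor $v^{\alpha(p-1)/2}$ equals $v^{-\alpha(p-1)/2}$ modulo $p$. Combining the factors into one $(p-1)/2$-th power and noting that $u^{\beta}/v^{\alpha}=a^{\beta}/b^{\alpha}$ (the powers of $p$ cancel in numerator and denominator) recovers the claimed formula. The one step that genuinely uses the odd-prime hypothesis is the identification of $(p,u)_p$ with the Legendre symbol via Hensel's lemma, so this is the main point to handle carefully; the rest of the argument is formal manipulation of symbols together with an application of Fermat's little theorem.
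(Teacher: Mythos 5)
Your proof is correct. The paper does not supply its own argument for this lemma (it simply cites Serre's \emph{A Course in Arithmetic}), and your reduction via bilinearity and symmetry to the three base symbols $(u,v)_p$, $(p,u)_p$, $(p,p)_p$, evaluated respectively by the surjectivity of the norm on units of an unramified extension, by Hensel's lemma and Euler's criterion, and by the identity $(x,-x)_p=1$, is exactly the standard computation found in that reference; the final reassembly using Fermat's little theorem to replace $v^{\alpha(p-1)/2}$ by $v^{-\alpha(p-1)/2}$ modulo $p$ is also carried out correctly.
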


Note that $x^{(p-1)/2}\equiv \pm 1\pmod p$ for any $x\not\equiv 0
\pmod p$.  For a proof of
Lemma~\ref{lemma:computation-norm-residue-symbol}, refer
to~\cite{Serre:1979-1}.  Also, \cite[Section 3.4]{Cha:2003-1} provides
a summary of results including the general case where $K$ is not
necessarily~$\Q$, for non-experts of algebraic number theory.

\section{Exotic homology cobordism types of rational homology
  3-spheres with vanishing signature invariants}
\label{section:homology-cobordism-for-b1=0}

In this section we construct rational homology spheres $\Sigma_i$
satisfying Theorem~\ref{theorem:exotic-rational-spheres-intro}.
Indeed, we will show that $\Sigma_i$ and $\Sigma_j$ are not
$\Z_2$-homology cobordant for $i\ne j$, while each $\Sigma_i$ has
vanishing known signature invariants.

\subsection{Construction of rational homology spheres by infection}
\label{subsection:construction=of-exotic-examples}

We construct the manifolds $\Sigma_i$ as follows.  First we describe a
``seed'' manifold~$M$.  Choose two lens spaces $L_1=L(r_1,s_1)$ and
$L_2=L(r_2,s_2)$ such that $r_i$ is a power of $2$ and all the
multisignatures of $L_i$ vanish for $i=1,2$.  (For example, if we
choose $(r_i, s_i)$ such that $L_i$ bounds a rational 4-ball, then
$L_i$ has vanishing multisignatures.)  Let $M=L_1\# L_2$.
Figure~\ref{fig:infection-on-Q-sphere} illustrates a Kirby diagram
of~$M$, together with a simple closed curve $\alpha$ in $M$.
($r_i/s_i$ represents the surgery slope.)  It can be easily seen that
$\pi_1(M)=\Z_{r_1} * \Z_{r_2}$, and denoting the generators of the
$\Z_{r_1}$ and $\Z_{r_2}$ factors by $x$ and $y$, $\alpha$ represents
the commutator $(x,y)=xyx^{-1}y^{-1}\in\pi_1(M)$.

\begin{figure}[ht]
  \begin{center}
    \includegraphics[scale=.9]{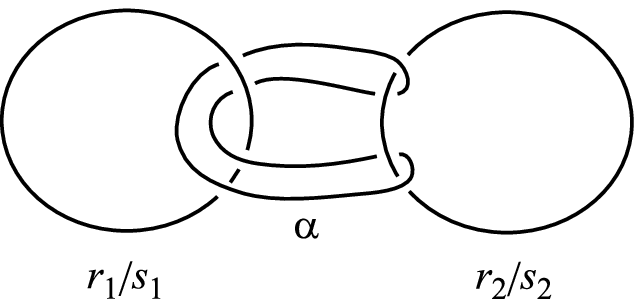}
    \caption{}\label{fig:infection-on-Q-sphere}
  \end{center}
\end{figure}

For a given integer $a$, consider the knot $K_a$ shown in
Figure~\ref{fig:infection-amph-knot}.  Note that $K_a$ is (negatively)
amphichiral, that is, isotopic to its (concordance) inverse~$-K_a$.
The knot $K_a$~has an obvious Seifert surface of genus one, on which
the following Seifert matrix is defined:
\[
A=
\begin{bmatrix}
  a & 1 \\
  0 & -a
\end{bmatrix}
\]


Let $\Sigma_0=M$.  For $i>0$ the manifolds $\Sigma_i$ are obtained by
infection on $M$ along $\alpha$ by $K_{a_i}$, where $a_1,a_2,\ldots$
are integers which will be specified later.  By (the proof of)
Proposition~\ref{proposition:t-equivalence-of-infected-manifold},
there is a homology equivalence $\Sigma_i \to \Sigma_0$ for all~$i$.
Before discussing how to choose the $a_i$, we investigate the Witt
class defect invariants of the infected manifolds.  Let $K=K_a$ and
$N$ be $M$ infected along $\alpha$ by~$K$.

Let
\[
\Gamma_0=\Z_{r_1} \oplus \Z_{r_2}, \quad \Gamma_1=\Z_2, \quad
\Gamma_2=\Z_4.
\]
We will consider $2$-towers of height two with deck transformation
groups $\Gamma_0$ and $\Gamma_1$, and intersection form defect
invariants associated to $\Gamma_2$-valued characters.  In this
section we always assume that a $p$-tower consists of connected
spaces.

For the special case of $a=0$, that is, when $K$ is unknotted and
$N=M$, the invariant vanishes for any such tower and character:

\begin{lemma}
  \label{lemma:vanishing-for-seed-manifold}
  For any $2$-tower $M_2 \to M_1 \to M_0 = M$ with deck transformation
  groups $\Gamma_0$ and $\Gamma_1$ and for any $\phi_2\colon
  \pi_1(M_2) \to \Gamma_2$, $(M,\phi_2)=0$ in
  $\Omega_3^{top}(B\Gamma_2)$, and $\lambda(M_2,\phi_2)=0$ in
  $L^0(\Q(\sqrt{-1}))$.
\end{lemma}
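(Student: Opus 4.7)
The plan is to show that the top cover $M_2$ is homeomorphic to a connected sum of copies of $S^1\times S^2$, so that the vanishing then follows from the additivity of $\lambda$ under connected sum together with its vanishing on $S^1\times S^2$.

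First I would analyze $\pi_1(M_1)$. Since $\pi_1(M)=\Z_{r_1}*\Z_{r_2}$ and $\Gamma_0$ is abelian of the same order as $H_1(M)$, the surjection $\phi_0$ factors through the abelianization $H_1(M)\to\Gamma_0$, and this latter map is an isomorphism of finite abelian groups. After composing with an automorphism of $\Gamma_0$, I may assume that $\phi_0$ is the abelianization map, so that $\pi_1(M_1)$ is the commutator subgroup $[\Z_{r_1}*\Z_{r_2},\Z_{r_1}*\Z_{r_2}]$. Every torsion element of a free product of finite cyclic groups is conjugate to a power of a generator, and abelianization sends such a nontrivial conjugate to a nontrivial element of $H_1(M)$; hence $\pi_1(M_1)$ is torsion-free. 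By the Kurosh subgroup theorem, a torsion-free subgroup of a free product of finite cyclic groups is free, so $\pi_1(M_1)$ is a free group, and it is finitely generated because $M_1$ is a compact 3-manifold.

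Next, $\pi_1(M_2)$ is the kernel of the surjection $\phi_1\colon\pi_1(M_1)\to\Z_2$, an index-2 subgroup of a finitely generated free group, hence finitely generated free by Nielsen--Schreier. Invoking Stallings' theorem on closed orientable 3-manifolds with finitely generated free fundamental group, $M_2$ is homeomorphic to a connected sum of finitely many copies of $S^1\times S^2$.

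Finally, Lemma~\ref{lemma:vanishing-for-S^1xD^2} tells us that $\lambda(S^1\times S^2,\phi)$ is well-defined in $L^0(\Q(\sqrt{-1}))$ (i.e., with $S=\{1\}$) and vanishes for any character $\phi$; iteratively applying the additivity Lemma~\ref{lemma:additivity-under-connected-sum} then yields both the null-bordism $(M_2,\phi_2)=0$ in $\Omega_3^{top}(B\Z_4)$ and the vanishing $\lambda(M_2,\phi_2)=0$ in $L^0(\Q(\sqrt{-1}))$. The main obstacle is the verification that $\pi_1(M_1)$ is torsion-free and free; the remainder of the argument is a direct application of Stallings' theorem and the structural lemmas of Section~\ref{section:basic-properties}.
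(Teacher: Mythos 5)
Your proof is correct and arrives at the same intermediate conclusion as the paper---that $M_2$ is a connected sum of copies of $S^1\times S^2$---but by a genuinely different argument. Both proofs first observe that, because $\phi_0$ is surjective onto an abelian group of the same order as $H_1(M)$, its kernel is the commutator subgroup, so $M_1$ is the universal abelian cover of $M=L_1\#L_2$. From there the paper argues geometrically: the universal abelian cover of each lens space $L_i$ is $S^3$, from which it follows that $M_1$, and hence its double cover $M_2$, is a connected sum of copies of $S^1\times S^2$. You instead argue group-theoretically: Kurosh's subgroup theorem shows that the torsion-free commutator subgroup $\pi_1(M_1)$ of $\Z_{r_1}*\Z_{r_2}$ is free, Nielsen--Schreier makes the index-two subgroup $\pi_1(M_2)$ finitely generated free as well, and the classification of closed orientable 3-manifolds with free fundamental group (prime decomposition together with the Poincar\'e conjecture, to rule out fake $S^3$ summands) yields the connected-sum structure. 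The paper's route is more elementary, needing only the geometry of lens spaces and the behavior of connected sums under finite covers, and in particular avoids the Poincar\'e conjecture that your appeal to the free-fundamental-group classification quietly invokes; your route has the virtue of proceeding purely from the fundamental group and applying a general structure theorem rather than ad hoc features of lens spaces. One minor redundancy: once $M_2$ is a connected sum of copies of $S^1\times S^2$, Lemma~\ref{lemma:vanishing-for-S^1xD^2} alone gives both the null-bordism and the vanishing, since the connected-sum additivity is already invoked in its proof; re-citing Lemma~\ref{lemma:additivity-under-connected-sum} is unnecessary, though harmless.
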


\begin{proof}
  The map $\phi_0\colon \pi_1(M) \to \Gamma_0$ inducing $M_1 \to M$
  factors through $H_1(M)$, which has the same cardinality as
  $\Gamma_0$.  It follows that $\phi_1$ is identical with the
  abelianization map, since $\phi_1$ is surjective.  (Note that the
  $M_i$ are assumed to be connected.)  Therefore $M_1$ is the
  universal abelian cover of~$M$.  Note that $M=L_1 \# L_2$ and the
  universal (abelian) cover of each $L_i$ is $S^3$.  From this it
  follows that $M_1$ is a connected sum of disjoint copies of
  $S^1\times S^2$, and therefore so is~$M_2$.  By
  Lemma~\ref{lemma:vanishing-for-S^1xD^2}, the conclusion follows.
\end{proof}

Next, we investigate the general case:

\begin{proposition}
  \label{proposition:discriminant-of-Q-sphere}
  \mbox{}\Nopagebreak
  \begin{enumerate}
  \item For any $2$-tower $N_2 \to N_1 \to N_0 = N$ with deck
    transformation groups $\Gamma_0$ and $\Gamma_1$ and for any
    $\psi_2\colon \pi_1(N_2) \to \Gamma_2$, $\lambda(N_2,\psi_2)$ is
    always well-defined as an element in $L^0(\Q(\sqrt{-1}))$, and
    \[
    \dis\lambda(N_2, \psi_2) = (2a^2+1)^{n_1} (2a^4+4a^2+1)^{n_2}
    \]
    for some integers $n_1$ and~$n_2$.
  \item For some $2$-tower $N_2 \to N_1 \to N_0 = N$ with deck
    transformation groups $\Gamma_0$ and $\Gamma_1$ and for some
    $\psi_2\colon \pi_1(N_2) \to \Gamma_2$,
    \[
    \dis \lambda(N_2,\psi_2) = (2a^2+1)(2a^4+4a^2+1).
    \]
  \end{enumerate}
  Here, $\dis \lambda(N_2,\psi_2)$ is understood as an element in
  $\Q^\times/\{z\bar z \mid z\in \Q(\sqrt{-1})^\times\}$.
\end{proposition}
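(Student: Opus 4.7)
The plan is to reduce the intersection form defect $\lambda(N_2,\psi_2)$ to an explicit sum of Witt classes of Seifert-matrix forms via Corollary~\ref{corollary:lambda-of-infected-manifold}, and then to compute the discriminants of those Witt classes modulo norms from $\Q(i)$ to~$\Q$.

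First, since $\alpha=[x,y]$ lies in the commutator subgroup of $\pi_1(M)$ and $|\Gamma_0|=r_1r_2=|H_1(M)|$, any surjection $\phi_0\colon\pi_1(M)\to\Gamma_0$ factors through the abelianization, so $\alpha\in\ker\phi_0$. Hence the preimage of $\alpha$ in $M_1$ is a disjoint union of $r_1r_2$ curves $\alpha_g$ ($g\in\Gamma_0$), each mapping homeomorphically to $\alpha$. At the next level, each $\alpha_g$ lifts to $M_2$ either as two degree-$1$ components (if $\phi_1([\alpha_g])=0\in\Z_2$) or as a single degree-$2$ cover (if $\phi_1([\alpha_g])=1$); in particular every $r_j$ lies in $\{1,2\}$. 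By Proposition~\ref{proposition:t-equivalence-of-infected-manifold}, Lemma~\ref{lemma:tower-of-toral-sum}, and Lemma~\ref{lemma:vanishing-for-seed-manifold}, the invariant $\lambda(N_2,\psi_2)$ is therefore well-defined in $L^0(\Q(i))$ and equals
\[
\lambda(N_2,\psi_2)=\sum_{j}\Bigl([\lambda_{r_j}(A,i^{s_j})]-[\lambda_{r_j}(A,1)]\Bigr),\qquad s_j=\psi_2([\tilde\alpha_j])\in\Z_4.
\]

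Next, I would compute the discriminant of each summand by direct matrix calculation. For $r=1$ one has $\lambda_1(A,1)=0$ and $\det\lambda_1(A,i^s)=-(2-i^s-i^{-s})\bigl(a^2(2-i^s-i^{-s})+1\bigr)$, which yields discriminants $2(2a^2+1)$ for $s=\pm 1$ and $4(4a^2+1)$ for $s=2$. Since $2=(1+i)(1-i)$ and $4a^2+1=(2a+i)(2a-i)$ are norms from $\Q(i)$, the first simplifies to $2a^2+1$ and the second to $1$ modulo norms. For $r=2$, the identity $(A+A^T)^2=(4a^2+1)I$ makes the block-matrix determinant formula tractable, giving $\det\lambda_2(A,i)=2(2a^4+4a^2+1)$, $\det\lambda_2(A,-1)=4(2a^2+1)^2$, and (on the nonsingular quotient) $\det\lambda_2(A,1)=-16(4a^2+1)$. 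Modulo norms, the only nontrivial contribution is $2a^4+4a^2+1$, occurring for $s=\pm 1$. Because discriminants multiply under Witt sum, summing over $j$ yields part~(1) with $n_1$ and $n_2$ counting summands of type $(1,\pm 1)$ and $(2,\pm 1)$, respectively.

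Finally, for part~(2) I need to exhibit a specific $2$-tower of $M$ together with $\psi_2$ realizing both $n_1$ and $n_2$ odd. As in the proof of Lemma~\ref{lemma:vanishing-for-seed-manifold}, the cover $M_1$ is a connected sum of copies of $S^1\times S^2$, and the homology classes $[\alpha_g]\in H_1(M_1)$ can be read off from the Kirby diagram in Figure~\ref{fig:infection-on-Q-sphere} via equivariant Mayer--Vietoris; the $\Gamma_0$-action on $H_1(M_1)$ then permutes these classes explicitly. One chooses $\phi_1$ so that exactly one $\alpha_g$ has nontrivial image in $\Z_2$ (producing one lift with $r=2$) and at least one other $\alpha_g$ has trivial image (producing two lifts with $r=1$), and then chooses $\psi_2\colon H_1(M_2)\to\Z_4$ taking an odd value on exactly one lift of each type. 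The main technical obstacle is the bookkeeping in this last step: tracking the $\Gamma_0$-orbit of $[\alpha_0]\in H_1(M_1)$, understanding how further lifts pass to $H_1(M_2)$, and verifying that the space of admissible pairs $(\phi_1,\psi_2)$ is rich enough to pin both parities independently.
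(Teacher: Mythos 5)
Part~(1) of your proposal is essentially correct and mirrors the paper's argument: you identify that $\alpha$ lies in the commutator subgroup so it lifts to $M_1$ with degree $1$, that the further lift to $M_2$ has degree $1$ or $2$, you invoke Proposition~\ref{proposition:t-equivalence-of-infected-manifold}, Lemma~\ref{lemma:toral-sum-addivitiy}, and Lemma~\ref{lemma:vanishing-for-seed-manifold} for well-definedness, and you apply Corollary~\ref{corollary:lambda-of-infected-manifold}. Your discriminant computations are carried out by direct determinant evaluation rather than via Lemma~\ref{lemma:discriminant-of-Seifert-matrix}'s factorization $\dis[\lambda_r(A,\omega)]=\prod\Delta_A(\text{roots})$, and your intermediate determinants differ from the paper's by square (hence norm) factors, but the residues modulo norms all agree with Lemma~\ref{lemma:discriminant-of-amphichiral-infection-knots}, so this part is sound.

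Part~(2), however, is a plan rather than a proof, and the plan as written has a genuine gap. You propose to read off the classes $[\alpha_g]\in H_1(M_1)$ directly from the Kirby diagram of $M$ via an equivariant Mayer--Vietoris computation, then choose $\phi_1$ and $\psi_2$ so that ``exactly one lift of each type takes an odd value in $\Z_4$.'' None of this bookkeeping is carried out, and you acknowledge as much. More importantly, the scheme of hitting exactly one $r{=}2$ lift and exactly one $r{=}1$ lift with an odd value is not obviously realizable: $\psi_2$ is a single cohomology class, and its values on the various lifts are heavily constrained. Indeed, the paper's explicit construction (Section~\ref{subsection:combinatorial-computation}) produces a tower in which \emph{four} lifts lie in the support of $\psi_2$ -- one with $r=2$, three with $r=1$, taking values $3,1,1,3\in\Z_4$ -- and the desired discriminant comes out only because the contribution from two of the $r=1$ lifts is $(2a^2+1)^2$, a square, hence a norm. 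Getting to this point requires an explicit model: the paper replaces $M$ by a presentation $2$-complex $X$ with $\pi_1(X)\cong\pi_1(M)$ (which is a $p$-tower map), builds the $2$-tower of $X$ combinatorially, and reads off the lifts of the commutator loop $\alpha=(c_1,c_2)$ from a concrete $1$-complex picture of $X_2$. Your proposal to work with $H_1(M_1)$ and $H_1(M_2)$ directly would be substantially harder to execute and, as written, does not verify that the required $(\phi_1,\psi_2)$ exists. The missing content -- an explicit tower, an explicit character, and a determination of all $(r_j, s_j)$ -- is precisely the substance of Proposition~\ref{proposition:discriminant-of-Q-sphere}~(2), so this gap needs to be filled.
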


In the proof of
Proposition~\ref{proposition:discriminant-of-Q-sphere}, we need the
following lemma.  We define the \emph{(symmetrized) Alexander
  polynomial} $\Delta_A(t)$ of a $2g\times 2g$ matrix $V$ by
\[
\Delta_V(t)=t^{-g}\cdot \det(tV-V^T).
\]
We remark that, in contrast to the case of knots, there is no $(\pm
t^r)$-factor ambiguity in the definition of the Alexander polynomial
of a \emph{matrix}.  $\Delta_V(1)$ is always~$+1$ for any Seifert
matrix~$V$.  For our Seifert matrix $A$ of the knot $K$, we have
\[
\Delta_A(t)=-a^2t + (2a^2+1) -a^2 t^{-1}.
\]

\begin{lemma}
  \label{lemma:discriminant-of-Seifert-matrix}
  \mbox{}\Nopagebreak
  \begin{enumerate}
  \item If $\omega=\zeta_d^k$ and $\Delta_A(\omega) \ne 0$, then
    \[
    \dis [\lambda_1(A,\omega)]= \Delta_A(\omega) \quad \text{in }
    \frac{\Q(\zeta_d^{\vphantom{-1}}+\zeta_d^{-1})^\times}{\{z\bar z
      \mid z\in \Q(\zeta_d)^\times\}}.
    \]
  \item If $\omega=\zeta_d^k$ and $\Delta_A(\sqrt{\omega}) \ne 0 \ne
    \Delta_A(-\sqrt{\omega})$, then
    \[
    \dis[\lambda_2(A,\omega)]=
    \Delta_A(\sqrt{\omega})\Delta_A(-\sqrt{\omega}) \quad \text{in }
    \frac{\Q(\zeta_d^{\vphantom{-1}}+\zeta_d^{-1})^\times}{\{z\bar z
      \mid z\in \Q(\zeta_d)^\times\}},
    \]
    where $\sqrt{\omega}$ and $-\sqrt{\omega}$ denote the zeros of
    $x^2=\omega$.
  \end{enumerate}
\end{lemma}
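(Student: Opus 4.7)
The plan is to compute $\det\lambda_r(A,\omega)$ explicitly for $r=1,2$, convert determinant to discriminant via the sign $(-1)^{n(n+1)/2}$ (with $n$ the rank), and absorb the leftover prefactors into norms $z\bar z$ from $\Q(\zeta_d)^\times$.

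For part~(1), the identity $\omega\lambda_1(A,\omega) = (1-\omega)(\omega A - A^T)$ follows at once from the definition. Taking determinants of these $2g\times 2g$ matrices and using $\Delta_A(\omega) = \omega^{-g}\det(\omega A - A^T)$ yields $\det\lambda_1(A,\omega) = (1-\omega)^{2g}\omega^{-g}\Delta_A(\omega)$, so $\dis\lambda_1(A,\omega) = (-1)^g\det\lambda_1(A,\omega)$ from the rank-$2g$ sign. The prefactor rearranges as $(1-\omega)^{2g}\omega^{-g} = [-(1-\omega)(1-\omega^{-1})]^g$, which is $(-1)^g$ times the norm $[(1-\omega)\overline{(1-\omega)}]^g$, so the two $(-1)^g$ contributions cancel and $\dis\lambda_1(A,\omega) \equiv \Delta_A(\omega)$ modulo norms in $\Q(\zeta_d)$.

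For part~(2), the key observation is that $\lambda_2(A,\omega)$ admits the tensor decomposition
$$\lambda_2(A,\omega) = (I_2 - S_\omega)\otimes A + (I_2 - S_\omega^{-1})\otimes A^T,\quad S_\omega = \bigl(\begin{smallmatrix}0 & 1 \\ \omega & 0\end{smallmatrix}\bigr),$$
and that $K^2$, with $x$ acting as $S_\omega$, is a free rank-one module over the $K$-algebra $F = K[x]/(x^2 - \omega)$, where $K = \Q(\zeta_d)$. Under this identification $\lambda_2(A,\omega)$ becomes an $F$-linear endomorphism of $K^{2g}\otimes_K F \cong F^{2g}$, represented by $\lambda_1(A,\tau)$ over $F$, where $\tau$ is the class of $x$ (the natural involution on $F$ is $\bar\tau = \tau^{-1}$, consistent with $S_\omega^* = S_\omega^{-1}$). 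By transitivity of determinants, $\det_K\lambda_2(A,\omega) = N^F_K(\det_F \lambda_1(A,\tau))$, and applying the determinant formula from part~(1) over $F$ and expanding the norm as the product of the values at $\tau$ and $-\tau$ yields
$$\det_K\lambda_2(A,\omega) = (-1)^g(1-\omega)^{2g}\omega^{-g}\,\Delta_A(\sqrt\omega)\Delta_A(-\sqrt\omega).$$
Since rank $4g$ makes $\dis\lambda_2 = \det\lambda_2$, and the prefactor $(-1)^g(1-\omega)^{2g}\omega^{-g}$ is a norm by the same computation as in part~(1), we conclude $\dis\lambda_2 \equiv \Delta_A(\sqrt\omega)\Delta_A(-\sqrt\omega)$ modulo norms in $\Q(\zeta_d)$.

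The main technical point is the descent from the auxiliary $K$-algebra $F$ back to $K$: a naive approach via splitting fields only gives equality modulo norms in $F$, which is strictly weaker than what is required. Routing the argument through the determinant-norm identity $\det_K = N^F_K \circ \det_F$ sidesteps any separate Galois-cohomology step, since the $F$-norm already lands in $K$ and the nuisance factors reassemble into genuine norms over $\Q(\zeta_d)$. The case where $\omega$ is a square in $K$ (so $F \cong K\times K$) is handled identically, with the $F$-module splitting as a direct sum along the two roots $\pm\sqrt\omega \in K$.
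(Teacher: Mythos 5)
Your overall strategy is sound and, for part~(2), genuinely different from the paper's. The paper proves part~(2) by a direct computation: it sets $G=(A-A^T)^{-1}A$, rewrites $\lambda_2(A,\omega)$ as a $2\times 2$ block matrix with entries in the commutative domain $\Q(G)$, evaluates the block determinant there, and factors the resulting quadratic in $G$ explicitly (using $\det(A-A^T)=1$). You instead identify $\lambda_2(A,\omega)$ with $\lambda_1(A,\tau)$ over the quadratic algebra $F=K[x]/(x^2-\omega)$ and descend via $\det_K = N^F_K\circ\det_F$. This is cleaner, makes transparent why part~(2) is a ``norm of part~(1)'', and (as the paper notes without proof) generalizes immediately to $\lambda_r(A,\omega)$ for $r>2$ by replacing $F$ with $K[x]/(x^r-\omega)$. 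Your formula $\det_K\lambda_2 = (-1)^g(1-\omega)^{2g}\omega^{-g}\Delta_A(\sqrt\omega)\Delta_A(-\sqrt\omega)$ checks out (the paper's displayed intermediate formula has a dropped power of $(1-\omega)(1-\omega^{-1})$, harmless modulo norms), and for part~(1) your identity $\omega\lambda_1(A,\omega)=(1-\omega)(\omega A-A^T)$ is a slightly slicker route to the same determinant.

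There is one gap: the case $\omega=1$, which is allowed by the hypotheses ($\Delta_A(1)=1\neq 0$ and $\Delta_A(-1)\neq 0$ always) and is in fact used in the paper's later Lemma on $K_a$. When $\omega=1$ your forms degenerate: $\lambda_1(A,1)$ is the zero matrix, and $\lambda_2(A,1)$ is singular, so $\dis$ refers to the nonsingular quotient and the identity $\dis\lambda=(-1)^{r(r+1)/2}\det\lambda$ with $r$ the full rank no longer applies (your determinant formulas correctly return $0$, not the discriminant). In the $F$-module picture the problem shows up as $1-\tau$ failing to be a unit in $F\cong K\times K$ when $\tau=(1,-1)$. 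You need to treat $\omega=1$ separately, as the paper does: $\lambda_1(A,1)$ is zero so $\dis=1=\Delta_A(1)$, and a basis change shows the nonsingular part of $\lambda_2(A,1)$ is $A+A^T$, whose discriminant is $\Delta_A(-1)=\Delta_A(1)\Delta_A(-1)$. With that case added, your proof is complete.
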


Indeed it turns out that
Lemma~\ref{lemma:discriminant-of-Seifert-matrix} generalizes to the
case of $\lambda_r(A,\omega)$ with $r>2$ in an obvious form.  Since we
do not need it, we do not address the case that $r>2$; below we prove
it for $r=1,2$ by a straightforward computation.

\begin{proof}
  (1) For $\omega=1$,
  \[
  \lambda_1(A,\omega) = (1-\omega)A + (1-\omega^{-1})A^T
  \]
  is a zero matrix, and so $\dis[\lambda_1(A,\omega)]=1=\Delta_A(1)$
  as claimed.  Suppose $\omega\ne 1$ and $A$ is $2g\times 2g$.  It can
  be easily verified that
  \[
  (-1)^{g(2g+1)} \det \lambda_1(A,\omega) = (\omega-1)^g
  (\omega^{-1}-1)^g \Delta_A(\omega).
  \]
  Since $\Delta_A(\omega)\ne 0$, the matrix $\lambda_1(A,\omega)$ is
  nonsingular.  So $\dis[\lambda_1(A,\omega)]=\Delta_A(\omega)$.

  (2) For $\omega=1$, the matrix 
  \[
  \lambda_2(A,\omega)=
  \begin{bmatrix}
    A+A^T & -A-\omega^{-1}A^T \\
    -A^T-\omega A & A+A^T
  \end{bmatrix}
  \]
  is singular.  By a simple basis change it can be seen that the
  nonsingular part of $\lambda_2(A,1)$ is given by $A+A^T$.  (Note
  that $A+A^T$ is nonsingular for any Seifert matrix~$A$.)  Therefore
  \[
  \dis \lambda_2(A,1)=(-1)^{g(2g+1)} \det(A+A^T) =
  \Delta_A(-1)=\Delta_A(1)\Delta_A(-1)
  \]
  as claimed.  Suppose $\omega\ne 1$.  To compute $\det
  \lambda_2(A,\omega)$, we make a variable change: let
  $G=(A-A^T)^{-1}A$.  Then $(A-A^T)^{-1}A^T=G-1$.  (We denote an
  identity matrix by $1$.)  Since $\det(A-A^T)=1$, we have
  \[
  \det\lambda_2(A,\omega)=\det
  \begin{bmatrix}
    2G-1 & -G-\omega^{-1}(G-1) \\
    -(G-1)-\omega G & 2G-1
  \end{bmatrix}.
  \]
  Now, viewing the above matrix as one over the commutative domain
  $\Q(G)$, the determinant can be computed in a straightforward way;
  this gives us
  \begin{align*}
    \det\lambda_2(A,\omega) &=\det \Big[ (1-\omega) (1-\omega^{-1})
    \Big(G-\frac{1}{1-\sqrt{\omega}}\Big)
    \Big(G-\frac{1}{1+\sqrt{\omega}}\Big) \Big] \\
    &= \det\Big[ A-\frac{1}{1+\sqrt{\omega}}(A-A^T) \Big] \det \Big[
    A-\frac{1}{1-\sqrt{\omega}}(A-A^T) \Big]\\
    &= \Big(\frac{1}{1-\omega}\Big)^g
    \Big(\frac{1}{1-\omega^{-1}}\Big)^g
    \Delta_A(\sqrt{\omega})\Delta_A(-\sqrt{\omega}).
  \end{align*}
  It follows that $\lambda_2(A,\omega)$ is nonsingular and
  $\dis[\lambda_2(A,\omega)] =
  \Delta_A(\sqrt{\omega})\Delta_A(-\sqrt{\omega})$ for $\omega\ne 1$.
\end{proof}

\begin{proof}[Proof of
  Proposition~\ref{proposition:discriminant-of-Q-sphere}~(1)]
  Since $N$ is obtained from $M$ by knot infection, there is a
  $2$-tower $M_2 \to M_1 \to M_0=M$ which gives rise to $N_2 \to N_1
  \to N_0=N$ and there is $\phi_2\colon\pi_1(M_2) \to \Gamma_2$
  inducing the given $\psi_2$ via pullback by
  Proposition~\ref{proposition:t-equivalence-of-infected-manifold}.
  From this it follows that $\lambda(N_2,\psi_2)$ is well-defined as
  an element of $L^0(\Q(\sqrt{-1}))$, by
  Lemma~\ref{lemma:toral-sum-addivitiy} and
  Lemma~\ref{lemma:vanishing-for-seed-manifold}.

  Let $\tilde\alpha_1,\ldots,\tilde\alpha_k\subset M_2$ be the
  components of the pre-image of $\alpha\subset M_0$.  We claim that
  the degree $r_j$ of $\tilde\alpha_j \to \alpha$ is either $1$ or
  $2$.  For, since $\Gamma_0$ is abelian and $\alpha=(x,y)$ is in the
  commutator subgroup $[\pi_1(M),\pi_1(M)]$, the preimage of $\alpha$
  in $M_1$ consists of simple closed curves which are lifts
  of~$\alpha$.  Since $M_2 \to M_1$ is a double covering, its
  restriction on each $\tilde \alpha_j$ is either one-to-one or
  two-to-one.  The claim follows from this.

  Now, from Corollary~\ref{corollary:lambda-of-infected-manifold}, it
  follows that $\lambda(N_2,\psi_2)$ is a linear combination of terms
  of the form $[\lambda_r(A,\sqrt{-1}^s)]$ where $r\in \{1,2\}$ and
  $s\in \{0,1,2,3\}$.  By a straightforward computation using
  Lemma~\ref{lemma:discriminant-of-Seifert-matrix}, we immediately
  obtain the following, which we state as a lemma for later use.

  \begin{lemma}
    \label{lemma:discriminant-of-amphichiral-infection-knots}
    For the Seifert matrix $A$ of our $K_a$,
    \begin{align*}
      \dis[\lambda_1(A,\pm1)] & = 1,\\
      \dis[\lambda_1(A,\pm\sqrt{-1})] &= 2a^2+1, \\
      \dis[\lambda_2(A,\pm1)] & = 1, \text{ and}\\
      \dis[\lambda_2(A,\pm\sqrt{-1})] &= 2a^4+4a^2+1 \quad \text{in }
      \smash[t]{\frac{\Q^\times}{\{z\bar z \mid z\in
          \Q(\sqrt{-1})^\times\}}}.
    \end{align*}
  \end{lemma}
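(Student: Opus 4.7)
The plan is to reduce the lemma to Lemma~\ref{lemma:discriminant-of-Seifert-matrix} combined with a direct computation of the Alexander polynomial of $K_a$ and a couple of elementary norm identifications in $\Q(\sqrt{-1})/\Q$. First I would compute
\[
\Delta_A(t) = t^{-1}\det(tA-A^T) = -a^2 t + (2a^2+1) - a^2 t^{-1}
\]
from the given Seifert matrix, and record the evaluations $\Delta_A(1)=1$, $\Delta_A(-1)=4a^2+1$, and $\Delta_A(\pm\sqrt{-1})=2a^2+1$ (the last because $\sqrt{-1}^{-1}=-\sqrt{-1}$ forces the $t$ and $t^{-1}$ terms to cancel).

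For the $\lambda_1$ rows of the table, I would apply Lemma~\ref{lemma:discriminant-of-Seifert-matrix}(1): the classes $\dis[\lambda_1(A,1)]=\Delta_A(1)=1$ and $\dis[\lambda_1(A,\pm\sqrt{-1})]=2a^2+1$ are immediate, while $\dis[\lambda_1(A,-1)]=4a^2+1$ collapses to~$1$ in the quotient because
\[
4a^2+1 = (2a+\sqrt{-1})(2a-\sqrt{-1})
\]
is a norm of an element of $\Q(\sqrt{-1})^\times$.

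For the $\lambda_2$ rows, I would apply Lemma~\ref{lemma:discriminant-of-Seifert-matrix}(2), which gives $\dis[\lambda_2(A,\omega)] = \Delta_A(\sqrt{\omega})\Delta_A(-\sqrt{\omega})$. The cases $\omega=\pm 1$ produce $4a^2+1$ and $(2a^2+1)^2$ respectively; both are norms (the second because any rational square equals $r\cdot\bar r$ for real $r$), giving $1$. For $\omega=\pm\sqrt{-1}$ the relevant square roots lie in $\Q(\zeta_8)$; using $\zeta_8+\zeta_8^{-1}=\sqrt{2}$ together with the symmetry $\Delta_A(t)=\Delta_A(t^{-1})$ of any knot Alexander polynomial, a short expansion yields
\[
\Delta_A(\pm\zeta_8) = (2a^2+1)\mp a^2\sqrt{2}
\]
whose product is $(2a^2+1)^2 - 2a^4 = 2a^4+4a^2+1$, and the case $\omega=-\sqrt{-1}$ is handled identically since $\sqrt{-\sqrt{-1}}=\zeta_8^{-1}$ and $\Delta_A$ is symmetric.

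The main obstacle is not any single step but rather keeping track of the square roots in $\Q(\zeta_8)$ in the last case and explicitly identifying the two rationals $4a^2+1$ and $(2a^2+1)^2$ as norms, which is what collapses four of the six table entries to~$1$. All remaining arithmetic is routine.
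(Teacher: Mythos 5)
Your proposal is correct and takes essentially the same approach as the paper, which simply declares the lemma to be "a straightforward computation using Lemma~\ref{lemma:discriminant-of-Seifert-matrix}"; you have supplied that computation in full, including the needed norm identifications $4a^2+1=(2a+\sqrt{-1})(2a-\sqrt{-1})$ and $(2a^2+1)^2 = (2a^2+1)\cdot\overline{(2a^2+1)}$ and the evaluation $\Delta_A(\pm\zeta_8)=(2a^2+1)\mp a^2\sqrt{2}$.
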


  \noindent This completes the proof of
  Proposition~\ref{proposition:discriminant-of-Q-sphere}~(1).
\end{proof}

\subsection{Combinatorial computation of intersection form defects}
\label{subsection:combinatorial-computation}

In this subsection, we show
Proposition~\ref{proposition:discriminant-of-Q-sphere}~(2) by
computing the invariant for a specific tower and character.  In order
to compute the invariant using
Corollary~\ref{corollary:lambda-of-infected-manifold}, we need to
understand explicitly the behaviour of the pre-image of the infection
curve $\alpha\subset M$.  As a general technique for this, we will
consider a $p$-tower map $X\to M$ of a 2-complex $X$, from which the
pre-images of the infection curve $\alpha$ can be read off
algorithmically.  This provides a combinatorial method to compute the
intersection form defect invariants.  Since we will use the same
technique again in later sections, we illustrate how this method
proceeds in detail.

For convenience, for a 1-complex $X$ and a group $G$, we will describe
a map $\pi_1(X) \to G$ as an assignment of elements in $G$ to 1-cells
of~$X$; such an assignment defines (the homotopy class of) a map $X
\to K(G,1)$ sending the $0$-skeleton of $X$ to a basepoint and sending
1-cells of $X$ to paths in $K(G,1)$ representing the associated
elements in~$G$.

We construct the complex $X$ and a tower of height two with deck
transformation groups $\Gamma_0 = \Z_{r_1}\oplus \Z_{r_2}$ and
$\Gamma_1=\Z_2$ as follows:

\begin{enumerate}
\item[(0)] We start with $S^1\vee S^1$, which is a 1-complex with one
  0-cell $*$ and two 1-cells that we denote by $c_1$ and~$c_2$.  Let
  $X=X_0$ be the complex obtained by attaching two 2-cells to $S^1\vee
  S^1$ along $c_1^{r_1}$ and~$c_2^{r_2}$ respectively.
\item[(1)] Let $X_1$ be the universal abelian cover of~$X_0$.  Then
  $X_1$ is the union of $r_1+r_2$ 2-disks which are lifts of the
  2-cells of $X_0$, and has the homotopy type of its 1-dimensional
  subcomplex (which is indeed a strong deformation retract of $X_1$),
  as illustrated in Figure~\ref{fig:covers-of-Q-sphere}; $\tilde c_1$,
  $\tilde c_2$ are the lifts of $c_1$ and $c_2$ based at the basepoint
  $*\in X_2$.
\item[(2)] Assigning $1\in\Gamma_1=\Z_2$ to the 1-cell denoted by
  $e_1$ in Figure~\ref{fig:covers-of-Q-sphere} and $0\in \Gamma_1$ to
  all the other 1-cells, we obtain a map $\pi_1(X) \to \Gamma_1=\Z_2$
  which gives rise to a double cover $X_2$ of~$X_1$.  $X_2$~has the
  homotopy type of the 1-complex (which is a strong deformation
  retract of $X_2$) illustrated in
  Figure~\ref{fig:covers-of-Q-sphere}.  $*$~is the basepoint.
\end{enumerate}
Finally, assigning $1\in\Gamma_2$ to the 1-cell denoted by $e_2$ in
Figure~\ref{fig:covers-of-Q-sphere} and $0\in\Gamma_2$ to the other
1-cells, we define a map $\phi_2 \colon \pi_1(X_2) \to \Gamma_2=\Z_4$.

\begin{figure}[ht]
  \begin{center}
    \includegraphics[scale=.65]{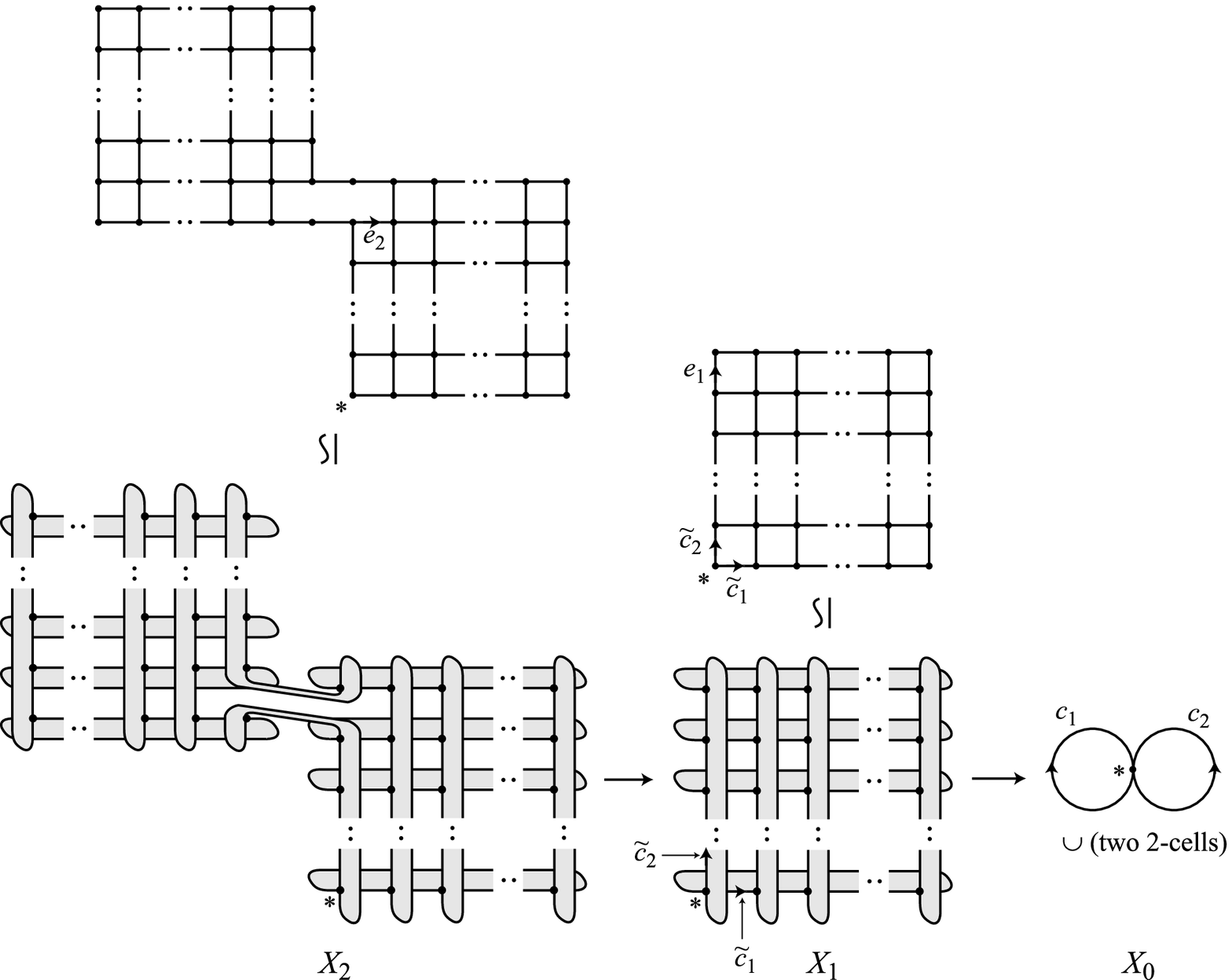}
    \caption{}\label{fig:covers-of-Q-sphere}
  \end{center}
\end{figure}

Obviously there is a map $X \to M$ which induces an isomorphism
$\pi_1(X) \to \pi_1(M)$ sending $[c_1]$ and $[c_2]$ to $x$ and $y$,
respectively.
$X\to M$ is a $p$-tower map for any prime~$p$ since $\pi_1(X) \cong
\pi_1(M)$ (or alternatively by
Lemma~\ref{lemma:2-connected-maps-induce-t-equivalence}).  Therefore,
there is a unique $2$-tower $M_2 \to M_1 \to M_0=M$ which gives rise
to our tower of $X$ via pullback.  Since $N$ is obtained by infection
from $M$, there is a map $N \to M$ which is a $p$-tower map for any
prime $p$, by
Proposition~\ref{proposition:t-equivalence-of-infected-manifold}.
Pullback via $N\to M$ gives rise to an associated $2$-tower $N_2 \to
N_1 \to N_0=N$.  Also, $\phi_2\colon\pi_1(X_2) \to \Gamma_2$ gives
rise to $\psi_2\colon\pi_1(N_2) \to \Gamma_2$.

Now we compute $\lambda(N_2,\psi_2)$, using
Corollary~\ref{corollary:lambda-of-infected-manifold}.  Recall that
the infection is performed along the curve $\alpha$ in~$M$.  Let
$\tilde\alpha_1,\tilde\alpha_2,\ldots \subset M_2$ be the components
of the pre-image of~$\alpha$.  To apply
Corollary~\ref{corollary:lambda-of-infected-manifold} we need to
compute the degree $r_j$ of the covering $\tilde\alpha_i \to \alpha$,
and $\phi_2([\tilde\alpha_j]) \in \Gamma_2=\Z_4$.

We can read off the necessary data $r_j$ and $\phi_2([\alpha_j])$
algorithmically from the 1-complexes in
Figure~\ref{fig:covers-of-Q-sphere}.  As a general case, suppose that
$p\colon \tilde X \to X$ is a finite covering map and $\alpha$ is a
loop in $X$ based at $*\in X$.  For a path $\gamma$ in $X$ based at
$*$, we denote by $\tilde\gamma_v$ the lift of $\gamma$ in $\tilde X$
based at $v\in V=p^{-1}(*)$.  We construct a collection $S$ of loops
in $\tilde X$ as follows: initially let $S$ be the empty set and mark
all $v\in V$ as ``white''.  While there is a ``white'' $v$ in $V$, we
repeat the following: as a new element, insert into $S$ the loop
$\widetilde{(\alpha^{r})}_{v}$ where $r$ is the minimal positive
integer such that $\widetilde{(\alpha^r)}_{v}$ is a loop, and mark the
endpoints of $\widetilde{(\alpha^k)}_{v}$ as ``black'' for all $1\le
k\le r$.  We denote the result by the following notation.

\begin{definition}
  \label{definition:collection-of-lifts}
  We define $\mathcal{L}(\alpha,\tilde X|X)$ to be the collection $S$
  constructed above.
\end{definition}

We apply the above algorithm to our cover $X_2 \to X_0$.  As an abuse
of notation, let
$\alpha=(c_1,c_2)=c_1^{\vphantom{-1}}c_2^{\vphantom{-1}}c_1^{-1}c_2^{-1}$
be the loop in $X_0$ which represents the class
$[\alpha]\in\pi_1(M_0)=\pi_1(X_0)$ of the infection curve, and write
$\mathcal{L}(\alpha,X_2|X_0)=\{\tilde \alpha_j\}$.  For each $\tilde
\alpha_j$, $r_j$ is the integer such that $\tilde \alpha_j$ is a lift
of~$\alpha^{r_j}$.

As examples, in Figure~\ref{fig:lifts-of-Q-sphere} we illustrate four
elements of the collection $\mathcal{L}(\alpha,X_2|X_0)$, say
$\tilde\alpha_1,\ldots,\tilde\alpha_4$, as curves in (the 1-complex
with the homotopy type of)~$X_2$.  The black dot(s) on each
$\tilde\alpha_j$ represents the basepoint(s) giving~$\tilde\alpha_j$.
Note that $r_1=2$ and $r_2=r_3=r_4=1$.

\begin{figure}[ht]
  \begin{center}
    \includegraphics[scale=.68]{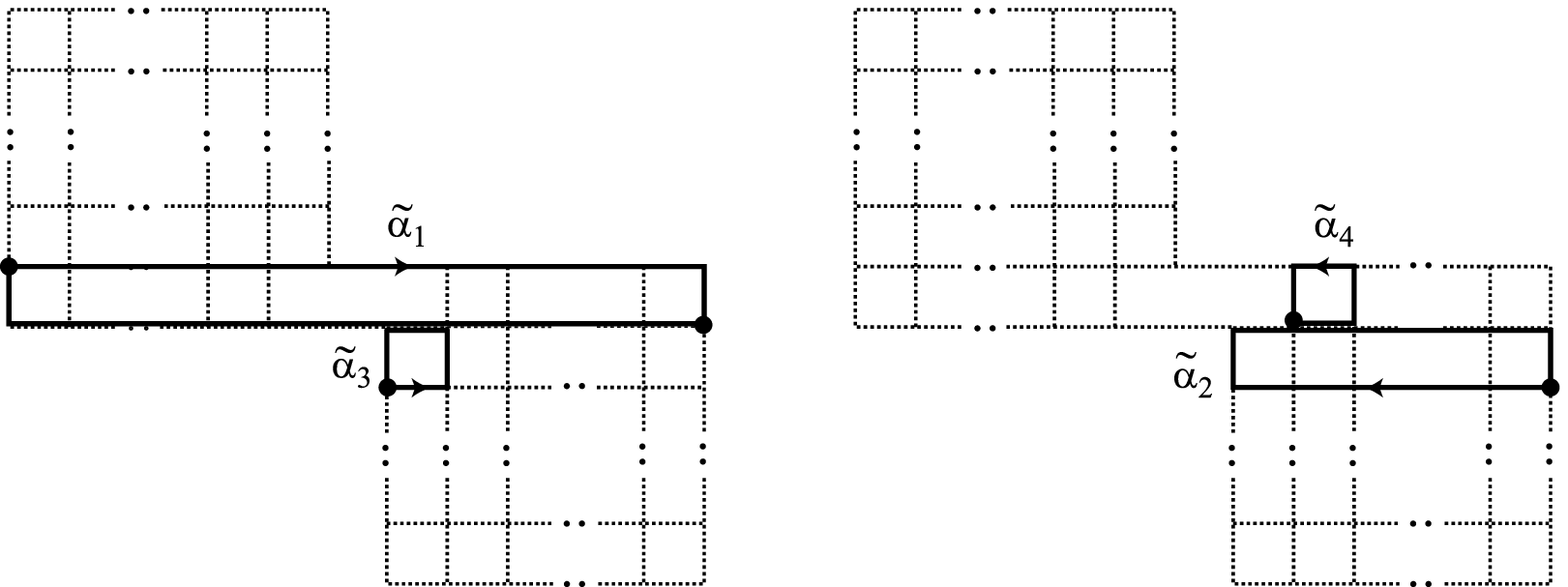}
    \caption{}\label{fig:lifts-of-Q-sphere}
  \end{center}
\end{figure}

We observe the following: among the loops in
$\mathcal{L}(\alpha,X_2|X_0)$, only
$\tilde\alpha_1,\ldots,\tilde\alpha_4$ pass through the 1-cell~$e_2$.
(In fact, for each 1-cell, exactly four loops in
$\mathcal{L}(\alpha,X_2|X_0)$ pass through it.)  From our definition
of $\phi_2$, it follows immediately that
$\tilde\alpha_1,\ldots,\tilde\alpha_4$ are sent to $3$, $1$, $1$, $3
\in \Z_4$ by $\phi_2$, respectively, and all other loops in
$\mathcal{L}(\alpha,X_2|X_0)$ are in the kernel of~$\phi_2$.

Now we can compute the intersection form defect invariant from the
data obtained above.  Let $A$ be a Seifert matrix of $K$.  Then, by
Corollary~\ref{corollary:lambda-of-infected-manifold}, we have
\[
\begin{split}
  \lambda(N_2,\psi_2) &=[\lambda_2(A,-\sqrt{-1})]-[\lambda_2(A,1)]\\
  &\mathrel{\phantom{=}} {} +
  [\lambda_1(A,-\sqrt{-1})]-[\lambda_1(A,1)] \\
  &\mathrel{\phantom{=}} {} + 2\big([\lambda_1(A,\sqrt{-1})] -
  [\lambda_1(A,1)]\big)  \in L^0(\Q(\sqrt{-1})).
\end{split}
\]
From the discriminant computation in
Lemma~\ref{lemma:discriminant-of-amphichiral-infection-knots}, it
follows that
\[
\dis\lambda(N_2,\psi_2) = (2a^2+1)(2a^4+4a^2+1).
\]
This proves
Proposition~\ref{proposition:discriminant-of-Q-sphere}~(2).

\subsection{Realization of independent discriminants}

Recall that our $\Sigma_i$ is $M$ infected by the knot~$K_{a_i}$.  By
the invariance of the intersection form defects under homology
cobordism (Theorem~\ref{theorem:invariants-from-covering-tower}) and
by the computation done in
Proposition~\ref{proposition:discriminant-of-Q-sphere}, if the $a_i$
are chosen in such a way that
\[
(2a_i^2+1)(2a_i^4+4a_i^2+1) \ne
(2a_j^2+1)^{n_1}(2a_j^4+4a_j^2+1)^{n_2} \text{ in
}\frac{\Q^\times}{\{z\bar z\mid z\in \Q(\sqrt{-1})\}}
\]
for any $i\ne j$ and any $n_1,n_2$, then $\Sigma_i$ is not
($\Z_2$-)homology cobordant to $\Sigma_j$ for any $i\ne j$.  This
subsection is devoted to the proof of the existence of such a
sequence~$\{a_i\}$.

In order to distinguish the elements in $\Q^\times$ modulo norms, we
appeal to some number theory.  Recall from
Section~\ref{section:basic-properties} that for each prime $p$ the
norm residue symbol induces a homomorphism
\[
(-,D)_p\colon \frac{\Q^\times}{\{z\bar z\mid z\in \Q(\sqrt{-1})\}} \to
\{\pm 1\}
\]
where $D=-4$.  Since $-4=2^2(-1)$, we may assume that $D=-1$.  In the
following proposition, we will construct a sequence $\{a_i\}$
together with a sequence of primes $\{p_i\}$ which are ``dual'' to the
values of the invariants of the $\Sigma_i$ in the following sense:
\[
\begin{aligned}
  \big((2a_i^2+1)(2a_i^4+4a_i^2+1),-1\big)_{p_i} &= -1 && \text{for
    any $i$, and}\\
  \big((2a_j^2+1)^{n_1}(2a_j^4+4a_j^2+1)^{n_2},-1\big)_{p_i} &= 1 &&
  \text{for any $i\ne j$ and $n_1,n_2$}.
\end{aligned}
\]
The existence of such $p_i$ completes the proof that the manifolds
$\Sigma_i$ obtained from the $a_i$ (including $\Sigma_0$) are not
($\Z_2$-)homology cobordant to each other.

\begin{proposition}
  \label{proposition:dual-primes}
  There are sequences $a_1,a_2,\ldots$ of positive integers and
  $p_1,p_2,\ldots$ of primes such that
  \begin{enumerate}
  \item $(2a_i^2+1,-1)_{p_i}=-1$ and $(2a_i^4+4a_i^2+1,-1)_{p_i}=1$.
  \item $(2a_j^2+1,-1)_{p_i}=1$ and $(2a_j^4+4a_j^2+1,-1)_{p_i}=1$ for
    $i\ne j$.
  \end{enumerate}
\end{proposition}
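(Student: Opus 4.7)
The plan is to construct the sequences inductively, using Lemma~\ref{lemma:computation-norm-residue-symbol} to translate the norm residue symbol conditions into parity conditions on valuations, and then realizing these parities via Dirichlet's theorem and the Chinese Remainder Theorem. First I would simplify the target: since $D = -4$ differs from $-1$ by the square~$4$, and $2 = (1+\sqrt{-1})(1-\sqrt{-1})$ is a norm in $\Q(\sqrt{-1})$, we have $(x,D)_p = (x,-1)_p$ for every odd prime~$p$. Applying Lemma~\ref{lemma:computation-norm-residue-symbol} with $b = -1$ and $v_p(-1)=0$ yields $(x,-1)_p = (-1)^{v_p(x)(p-1)/2}$. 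In particular, for $p \equiv 3 \pmod 4$ this simplifies to $(-1)^{v_p(x)}$, nontrivial exactly when $v_p(x)$ is odd, so it suffices to control the parity of certain valuations.

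Suppose inductively that $a_1,\dots,a_{n-1}$ and $p_1,\dots,p_{n-1}$ have been chosen satisfying (1) and (2) up to index $n-1$, with each $p_i\ge 11$. By Dirichlet's theorem there are infinitely many primes $p \equiv 3 \pmod 8$; for such $p$ the supplementary laws give $(-2/p) = (-1/p)(2/p) = (-1)(-1) = 1$, so $-1/2$ is a square modulo~$p$. I would choose $p_n \equiv 3 \pmod 8$ with $p_n\ge 11$, distinct from $p_1,\dots,p_{n-1}$, and strictly larger than every prime divisor of $\prod_{j<n}(2a_j^2+1)(2a_j^4+4a_j^2+1)$.

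Next pick $a_n^\ast$ with $2(a_n^\ast)^2+1 \equiv 0 \pmod{p_n}$. A Hensel-type adjustment, replacing $a_n^\ast$ by $a_n^\ast+p_n$ if necessary (and noting that $p_n\nmid a_n^\ast$ since otherwise $p_n\mid 1$, so $p_n\nmid 4a_n^\ast$), produces $\tilde a_n$ with $v_{p_n}(2\tilde a_n^2+1) = 1$. The key algebraic observation is that from $2\tilde a_n^2 \equiv -1 \pmod{p_n}$ one immediately computes
\[
2\tilde a_n^4+4\tilde a_n^2+1 \;\equiv\; -\tilde a_n^2+4\tilde a_n^2+1 \;=\; 3\tilde a_n^2+1 \;\equiv\; -\tfrac{1}{2} \;\not\equiv\; 0 \pmod{p_n},
\]
so $v_{p_n}(2\tilde a_n^4+4\tilde a_n^2+1) = 0$ for free. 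I would then apply the CRT to adjust $\tilde a_n$ modulo $\prod_{i<n} p_i$, keeping its class mod $p_n^2$ unchanged, so that its residue modulo each $p_i$ avoids the at most six roots of $2x^2+1$ and $2x^4+4x^2+1$ in $\Z/p_i\Z$. Since every $p_i\ge 11>6$, such residues exist for each~$p_i$, and CRT yields a positive integer $a_n$ with $v_{p_i}(2a_n^2+1)=v_{p_i}(2a_n^4+4a_n^2+1)=0$ for all $i<n$.

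Verification is then routine. The case $i=n$ of (1) is forced by the valuations at $p_n$ just computed. The case $j=n,\ i<n$ of (2) follows because both valuations at $p_i$ are zero. The case $i=n,\ j<n$ of (2) holds because $p_n$ was chosen larger than every prime divisor of $(2a_j^2+1)(2a_j^4+4a_j^2+1)$ for $j<n$, forcing both valuations at $p_n$ to vanish. All remaining cases are the inductive hypothesis. The main obstacle is precisely the simultaneous control at $p_n$, since one must force $v_{p_n}(2a_n^2+1)$ odd while $v_{p_n}(2a_n^4+4a_n^2+1)$ is even; this is resolved by the elementary but essential coincidence $2\tilde a_n^4+4\tilde a_n^2+1\equiv -\tfrac{1}{2}\pmod{p_n}$ whenever $2\tilde a_n^2\equiv -1\pmod{p_n}$, which makes the second valuation automatically zero.
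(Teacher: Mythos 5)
Your proof is correct, though it takes a genuinely different route from the paper's. The paper constructs the integer $a_n$ first and extracts a prime from it: it applies Lemma~\ref{lemma:non-square-lemma} (proved via Pell's equation and continued fractions) to find an odd multiple $a_n$ of $\prod_{j<n}(2a_j^2+1)(2a_j^4+4a_j^2+1)$ for which $2a_n^2+1$ is not a square, picks a prime $p_n$ with $v_{p_n}(2a_n^2+1)$ odd, and shows $p_n\equiv -1\pmod 4$ by a parity argument mod $4$. Because $a_n$ is a multiple of all the earlier $2a_j^2+1$ and $2a_j^4+4a_j^2+1$, the cross-vanishing conditions in (2) come for free. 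You instead choose the prime $p_n$ first, via Dirichlet's theorem (infinitely many primes $\equiv 3\pmod 8$, guaranteeing $-2$ is a quadratic residue and $p_n\equiv -1\pmod 4$), and then build $a_n$ by a Hensel adjustment at $p_n$ and a CRT argument at the earlier primes. Both proofs hinge on the same arithmetic fact at $p_n$ — namely that $2a^2+1$ and $2a^4+4a^2+1$ are relatively prime, which you verify directly by the congruence $2a^4+4a^2+1\equiv -\tfrac12$ when $2a^2\equiv -1$, and the paper states as the coprimality — so that forcing odd valuation on the first factor automatically gives zero valuation on the second. The trade-off: the paper's route is more self-contained (just Pell equations), while yours replaces the non-square lemma with Dirichlet and gives stronger control, producing $v_{p_n}(2a_n^2+1)=1$ exactly rather than merely odd, at the cost of quoting a deeper theorem.
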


\begin{proof}
  Note that for an odd prime $p$, by
  Lemma~\ref{lemma:computation-norm-residue-symbol},
  \begin{align*}
    (x,-1)_p & = \Big((-1)^{v_p(x)v_p(-1)}
    \frac{(-1)^{v_p(x)}}{x^{v_p(-1)}} \Big)^{\frac{p-1}{2}} \\
    &= (-1)^{v_p(x)\cdot \frac{p-1}{2}} \\
    &=
    \begin{cases}
      -1 &\text{if } v_p(x)\text{ is odd and } p\equiv -1 \bmod 4\\
      1 &\text{otherwise}.
    \end{cases}
    \tag{$*$}
  \end{align*}

  We will inductively choose $a_n$ and a prime factor $p_n$ of
  $2a_n^2+1$ in such a way that (1) and (2) are satisfied whenever
  $i,j\le n$.  Let $a_1=1$ and $p_1=3$.  From the formula $(*)$ it
  follows that $a_1$ and $p_1$ satisfy~(1).

  Suppose $a_1,\ldots,a_{n-1}$ and $p_1,\ldots,p_{n-1}$ have been
  chosen.  To choose $a_n$, we need the following:

  \begin{lemma}
    \label{lemma:non-square-lemma}
    For any odd integer $q$, there is an odd multiple $a$ of $q$ such
    that $2a^2+1$ is not a square.
  \end{lemma}

  We postpone the proof of Lemma~\ref{lemma:non-square-lemma} and
  continue the proof of Proposition~\ref{proposition:dual-primes}.  By
  Lemma~\ref{lemma:non-square-lemma}, there is a positive odd multiple
  $a_n$~of
  \[
  q=\prod_{j<n} (2a_j^2+1)(2a_j^4+4a_j^2+1)
  \]
  such that $2a_n^2+1$ is not a square; then there is a prime factor
  $p_n$ of $2a_n^2+1$ such that $v_{p_n}(2a_n^2+1)$ is odd.  We may
  assume that $p_n \equiv -1 \bmod 4$; for, if $p\equiv 1\bmod 4$ for
  all prime $p$ such that $v_{p}(2a_n^2+1)$ is odd, then
  $2a_n^2+1\equiv 1 \bmod 4$, but it is impossible since $a_n$ is odd.

  From the formula $(*)$, it follows that $(2a_n^2+1,-1)_{p_n}=-1$.
  Also, $v_{p_n}(2a_n^4+4a_n^2+1)=0$ since $2a_n^2+1$ and
  $2a_n^4+4a_n^2+1$ are relatively prime.  So by the formula $(*)$ we
  have $(2a_n^4+4a_n^2+1,-1)_{p_n}=1$.

  Suppose $j<i$.  Then $p_j$ divides neither $2a_i^2+1$ nor
  $2a_i^4+4a_i^2+1$ since $p_j$ divides~$a_i$.  From the formula
  $(*)$, it follows that
  \[
  (2a_i^2+1,-1)_{p_j}=1=(2a_i^4+4a_i^2+1,-1)_{p_j}.
  \]
  If $p_i$ divides $2a_j^2+1$, then $p_i$ divides $a_i$ by our choice
  of~$a_i$.  However, since $p_i$ is a prime factor of $2a_i^2+1$,
  this implies $p_i=1$, which is a contradiction.  So
  $v_{p_i}(2a_j^2+1)=0$.  Similarly $v_{p_i}(2a_j^4+4a_j^2+1)=0$.  By
  the formula $(*)$, it follows that
  \[
  (2a_j^2+1,-1)_{p_i}=1=(2a_j^4+4a_j^2+1,-1)_{p_i}. \qedhere
  \]
\end{proof}

\begin{proof}[Proof of Lemma~\ref{lemma:non-square-lemma}]
  We may assume that $q$ is positive.  Consider a Diophantine equation
  $x^2=2y^2+1$.  From the theory of Pell's equation and continued
  fractions (for example, refer to~\cite{Hua:1982-1}), it follows that
  all positive solutions $(x_n,y_n)$ are given by the following
  recurrence relation: $x_1=3, y_1=2$, and
  \[
  \begin{split}
    x_{n+1} & = 3x_n+4y_n \\
    y_{n+1} &= 2x_n+3y_n
  \end{split}
  \]
  Choose any positive odd integer~$k$.  If $kq\ne y_n$ for all $n$,
  then for $a=kq$, $2a^2+1$ is not a square.  Suppose $kq=y_n$ for
  some~$n$.  Then $x_n>y_n>q$.  So $y_{n+1} > 5y_n$, and $(k+2)q =
  y_n+2q < 3y_n < y_{n+1}$.  Since $\{y_n\}$ is strictly increasing,
  it follows that $(k+2)q \ne y_i$ for all $i$, that is, for
  $a=(k+2)q$, $2a^2+1$ is not a square.
\end{proof}  

\subsection{Vanishing of signature invariants} 

In this subsection we prove that the multisignatures,
$\eta$-invariants, and Harvey's $L^2$-invariants vanish for
our~$\Sigma_i$.



Recall that the $\eta$-invariant $\tilde\eta(M,\phi)$ is defined for a
closed $3$-manifold $M$ endowed with a finite dimensional unitary
representation $\phi\colon \pi_1(M) \to U(k)$ as a signature defect,
as in~\cite{Atiyah-Patodi-Singer:1975-1,Atiyah-Patodi-Singer:1975-2}.
We need the following formula:

\begin{lemma}
  \label{lemma:additivity-of-eta-inv-under-infection}
  Suppose $M$ is a closed 3-manifold and $\phi\colon \pi_1(M) \to
  U(k)$ is a unitary representation.  Let $K$ be a knot with
  zero-surgery manifold $M_K$, and $N$ be the manifold obtained from
  $M$ by infection by $K$ along a simple closed curve $\alpha\subset
  M$.  Let $\psi$ be the composition 
  \[
  \psi\colon \pi_1(N) \to \pi_1(M) \xrightarrow{\phi} U(k),
  \]
  where the first map is induced by the $p$-tower map $N\to M$ given
  by Proposition~\ref{proposition:t-equivalence-of-infected-manifold}.
  Let $\phi_K$ be the composition
  \[
  \pi_1(M_K)\to H_1(M_K)=\Z \to U(k),
  \]
  where the last map sends a (positive) meridian of $K$ to
  $\phi([\alpha]) \in U(k)$.  Then we have
  \[
  \tilde\eta(N,\psi)=\tilde\eta(M,\phi)+\tilde\eta(M_K,\phi_K).
  \]
\end{lemma}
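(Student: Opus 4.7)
The plan is to realise $\tilde\eta$ as an Atiyah--Patodi--Singer signature defect of a 4-manifold bounded by a (multiple of) the given 3-manifold, and to construct such a 4-manifold for $N$ out of bounding 4-manifolds for $M$ and $M_K$ by imitating the toral sum construction in the proof of Lemma~\ref{lemma:toral-sum-addivitiy}. Recall the bordism formula: if $\rho\colon\pi_1(Y)\to U(k)$ extends over a topological 4-manifold $V$ with $\partial V = sY$, then $s\,\tilde\eta(Y,\rho)=\sign_{\tilde\rho}(V)-k\,\sigma(V)$, where $\sign_{\tilde\rho}(V)$ denotes the signature of the twisted intersection form with coefficients in $\C^k$ via the extension $\tilde\rho$.

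First I would choose the bounding data. Since $\phi_K$ factors through $\pi_1(M_K)\to\Z$ and $\Omega_3^{top}(B\Z)=0$ (observed in the proof of Lemma~\ref{lemma:toral-sum-addivitiy}), there exists a 4-manifold $V_K$ with $\partial V_K = M_K$ over $\Z$, and hence $\phi_K$ extends to $\tilde\phi_K\colon\pi_1(V_K)\to U(k)$. Because $\Omega_3^{top}(BU(k))$ is torsion (indeed it vanishes, by an Atiyah--Hirzebruch computation using that $H_*(BU(k);\Z)$ is even-degree and torsion free), there exist $r\ge 1$ and a 4-manifold $V_M$ with $\partial V_M = rM$ over which $\phi$ extends to $\tilde\phi\colon\pi_1(V_M)\to U(k)$.

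Next I would build $V_N$ bounded by $rN$ by the 4-dimensional analogue of the toral sum from the proof of Lemma~\ref{lemma:toral-sum-addivitiy}: form $V_N = V_M \cup r V_K$ by identifying, for each of the $r$ copies of $M$ in $\partial V_M$, the solid torus $T\subset M$ (the tubular neighborhood of $\alpha$) with the solid torus $T'\subset M_K$ sitting in the corresponding copy of $\partial V_K$, via the infection homeomorphism. Because infection identifies a meridian of $K$ with a parallel of $\alpha$ and by definition $\phi_K$ sends that meridian to $\phi([\alpha])$, the extensions $\tilde\phi$ and $\tilde\phi_K$ agree on the gluing tori and assemble into an extension $\tilde\psi\colon \pi_1(V_N)\to U(k)$ of~$\psi$.

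The twisted and ordinary signatures of $V_N$ now split additively by a Mayer--Vietoris argument copying the one in the proof of Lemma~\ref{lemma:toral-sum-addivitiy}: with $\C^k$-coefficients twisted by the relevant representation, $H_2(V_N;\C^k)$ orthogonally decomposes as the direct sum of $H_2(V_M;\C^k)$ and $r$ copies of $H_2(V_K;\C^k)$; the key input is that $H_1(T';\C^k)\to H_1(V_K;\C^k)$ is injective, which follows from the existence of the retract $s\colon M_K\to T'$ just as in that lemma. This yields
\begin{align*}
\sign_{\tilde\psi}(V_N) &= \sign_{\tilde\phi}(V_M)+r\,\sign_{\tilde\phi_K}(V_K), \\
\sigma(V_N) &= \sigma(V_M)+r\,\sigma(V_K).
\end{align*}
Substituting into the APS formula for $(N,\psi)$, $(M,\phi)$, and $(M_K,\phi_K)$ and dividing by $r$ gives the claimed additivity. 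The main technical point will be verifying the orthogonal splitting of the twisted intersection form on $V_N$, but this reduces to the same retract argument used in Lemma~\ref{lemma:toral-sum-addivitiy} and so should present no new obstacle.
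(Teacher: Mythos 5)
There is a genuine gap at the very first step. You assert that a compact 4-manifold $V_M$ with $\partial V_M = rM$ exists over which the representation $\phi$ extends, and you justify this by computing $\Omega_3^{top}(BU(k))$, where $BU(k)$ is the topological classifying space (whose integral homology is even-degree and torsion free). But this is the wrong space. Extending $\phi\colon\pi_1(M)\to U(k)$ to $\tilde\phi\colon\pi_1(V_M)\to U(k)$ is the same as extending the flat $U(k)$-bundle over $V_M$, which is what the Atiyah--Patodi--Singer signature-defect formula for $\tilde\eta$ actually requires; this is governed by the Eilenberg--MacLane space $K(U(k)^\delta,1)$, where $U(k)^\delta$ is $U(k)$ with the discrete topology, not by the topological $BU(k)$. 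The relevant bordism group is therefore $\Omega_3^{top}(K(U(k)^\delta,1))\cong H_3(U(k)^\delta;\Z)$ (group homology), and this is not torsion. For example, if $\phi\colon\pi_1(T^3)=\Z^3\to U(1)$ sends the three generators to $e^{2\pi\sqrt{-1}\,\theta_j}$ with $\theta_1,\theta_2,\theta_3$ linearly independent over $\Q$, then the image of the generator of $H_3(\Z^3;\Z)\cong\Z$ survives rationally in $H_3(U(1)^\delta;\Q)$, so $(T^3,\phi)$ has infinite order. Thus the existence of $V_M$ is unjustified, and the symmetric null-bordism decomposition cannot get started.

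The paper avoids this by not asking $M$ to bound at all. It takes only the null-bordism $W_K$ of $M_K$ over $\Z$ (which exists since $\Omega_3^{top}(B\Z)=0$, as you correctly used for $V_K$) together with the trivial cobordism $M\times[0,1]$, and glues $W_K$ to $M\times[0,1]$ along the solid torus $T\subset M_K$ and the regular neighborhood of $\alpha$ in $M\times\{1\}$. The result is a 4-manifold $V$ with $\partial V=(-M)\cup N$ over $U(k)$, and the APS index theorem applied to $V$ gives the difference $\tilde\eta(N,\psi)-\tilde\eta(M,\phi)$ directly as the signature defect of $V$. The remainder of your argument, namely the Mayer--Vietoris splitting of the twisted intersection form using the retract $s\colon M_K\to T'$ to conclude that $H_1(T';\C^k)\to H_1(W_K;\C^k)$ is injective, is sound and is exactly what the paper uses; you just need to apply it to $V=(M\times[0,1])\cup W_K$ (whose $M\times[0,1]$ piece contributes zero to both twisted and untwisted signatures) rather than to a nonexistent $V_M\cup rV_K$.
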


Since Lemma~\ref{lemma:additivity-of-eta-inv-under-infection} can be
proved using a standard folklore argument using the
Atiyah-Patodi-Singer index theorem, we just give a sketch of a proof:
by Lemma~\ref{lemma:null-bordism-over-Z}, there is a 4-manifold $W_K$
bounded by $M_K$ over~$\Z$.  View $M_K$ as the exterior of $K$ filled
in with a solid torus~$T$.  Attaching $W_K$ and $M\times[0,1]$ along
$T$ and a regular neighborhood of $\alpha$ in $M=M\times 1$, we obtain a
4-manifold $V$ with $\partial V=(-M) \cup N$ over~$U(k)$.  Using the
fact that $T \hookrightarrow W_K$ has a left homotopy inverse, one can
show that the signature defect of $V$ is the sum of those of $M\times
[0,1]$ and $W_K$, which are equal to zero and
$\tilde\eta(M_K,\phi_K)$, respectively.  By the index theorem, this is
exactly $\tilde\eta(N,\psi)-\tilde\eta(M,\phi)$.

In this subsection, $\widehat G$ denotes the algebraic closure of a
group $G$ with respect to $\Z$-coefficients in the sense
of~\cite{Cha:2004-1}, or Levine's algebraic
closure~\cite{Levine:1989-1}.  We denote by $p_G\colon G \to \widehat
G$ the natural map into the algebraic closure.
Following~\cite{Levine:1994-1}, we consider invariants of $M$ of the
form $\tilde\eta(M,\theta\circ p_{\pi_1(M)})$, where $\theta$ is a
representation of $\widehat{\pi_1(M)}$.

\begin{lemma}
  \label{lemma:vanishing-of-multisignature}
  Suppose $M$ is a 3-manifold such that $\tilde\eta(M,\theta\circ
  p_{\pi_1(M)})=0$ for any $\theta\colon \widehat{\pi_1(M)} \to U(k)$.
  Then all multisignatures of $M$ vanish.
\end{lemma}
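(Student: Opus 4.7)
The plan is to reduce the vanishing of multisignatures to the hypothesis by realizing each multisignature as an $\eta$-invariant of a one-dimensional unitary representation induced by a character, and then observing that such characters automatically factor through the algebraic closure.

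First I would recall that for any character $\chi\colon\pi_1(M)\to\Z_d$ and any $d$-th root of unity $\omega\in\C$, the Wall multisignature $\sigma_\omega(M,\chi)$ (equivalently, the Atiyah-Singer $\alpha$-invariant or Casson-Gordon invariant) coincides with $\tilde\eta(M,\rho_\omega\circ\chi)$, where $\rho_\omega\colon\Z_d\to U(1)$ is the $1$-dimensional unitary representation sending $1\in\Z_d$ to $\omega$. This is standard and follows from the $G$-signature theorem of Atiyah-Patodi-Singer \cite{Atiyah-Patodi-Singer:1975-2}. Hence to show that all multisignatures of $M$ vanish, it suffices to produce, for each such pair $(\chi,\omega)$, a unitary representation $\theta$ of $\widehat{\pi_1(M)}$ with $\theta\circ p_{\pi_1(M)}=\rho_\omega\circ\chi$, and then invoke the hypothesis.

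The next step uses the basic property, shared by both Levine's algebraic closure \cite{Levine:1989-1} and the $\Z$-coefficient algebraic closure of \cite{Cha:2004-1}, that $p_G\colon G\to \widehat G$ induces an isomorphism on $H_1(-;\Z)$. Since $\chi$ takes values in the abelian group $\Z_d$, it factors through $H_1(\pi_1(M);\Z)$; combined with the $H_1$-isomorphism, this yields a unique homomorphism $\hat\chi\colon\widehat{\pi_1(M)}\to \Z_d$ with $\hat\chi\circ p_{\pi_1(M)}=\chi$. Setting $\theta=\rho_\omega\circ \hat\chi$ gives the desired unitary representation of $\widehat{\pi_1(M)}$.

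Putting these pieces together,
\[
\sigma_\omega(M,\chi) \;=\; \tilde\eta(M,\rho_\omega\circ\chi) \;=\; \tilde\eta(M,\theta\circ p_{\pi_1(M)}) \;=\; 0
\]
by hypothesis, and since $\chi$ and $\omega$ were arbitrary, all multisignatures of $M$ vanish. I do not expect a real obstacle: the argument is essentially an unpacking of the definition of the multisignature together with the $H_1$-preserving property of the algebraic closure. The only mildly delicate point is to be sure that whichever convention of ``multisignature'' one fixes (prime-power order characters à la Gilmer-Livingston, or Wall's more general version) can be assembled from these one-dimensional $\eta$-invariants, which is where I would take a little care in the citation.
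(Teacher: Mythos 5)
Your proof is correct and matches the paper's argument essentially step for step: identify the multisignature as an $\eta$-invariant of a $\Z_d$-factoring unitary representation, then use the $H_1$-isomorphism property of $p_G$ to lift that representation to $\widehat{\pi_1(M)}$. No further commentary is needed.
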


\begin{proof}
  Since the multisignatures associated to $\Z_d$-valued characters are
  known to be equivalent to the $\eta$-invariants associated to
  representations $\rho$ of $\pi_1(M)$ factoring through $\Z_d$, it
  suffices to to check that every such representation $\rho$ factors
  through $\widehat{\pi_1(M)}$.  Since $\rho$ factors through
  $H_1(M)$, the proof is finished by applying the following property
  of the algebraic closure to the case of $G=\pi_1(M)$: for any group
  $G$, the map $p_G \colon G \to \widehat G$ induces an isomorphism on
  $H_1(-)$.
\end{proof}

Now suppose $M=L_1\#L_2$ is our seed manifold used in the previous
subsections, and $N$ is obtained from $M$ by infection along $\alpha$
by a knot~$K$.  Recall that $L_i$ is a lens space with vanishing
multisignatures.

\begin{lemma}
  \label{lemma:vanishing-of-eta-invariants}
  If the Alexander polynomial of $K$ has no zero in the unit circle,
  then for any $\theta\colon \widehat{\pi_1(N)} \to U(k)$,
  $\tilde\eta(N,\theta\circ p_{\pi_1(N)})$ vanishes.
\end{lemma}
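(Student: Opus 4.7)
The plan is to apply Lemma~\ref{lemma:additivity-of-eta-inv-under-infection} to split $\tilde\eta(N,\theta\circ p_{\pi_1(N)})$ into a contribution from the seed manifold $M$ and one from the zero-surgery manifold $M_K$, and then to verify that each summand vanishes under the given hypotheses. For Lemma~\ref{lemma:additivity-of-eta-inv-under-infection} to apply I first need to identify $\theta\circ p_{\pi_1(N)}$ with a representation pulled back from $\pi_1(M)$ along the infection map. By Proposition~\ref{proposition:t-equivalence-of-infected-manifold}, the map $f\colon N\to M$ is an integral homology equivalence, and it is $\pi_1$-surjective (Seifert--van Kampen); combining Levine~\cite{Levine:1989-1} with Remark~\ref{remark:Levine's-closure-and-t-equivalence}, $f_*$ induces an isomorphism $\widehat{\pi_1(N)}\xrightarrow{\cong}\widehat{\pi_1(M)}$. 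Letting $\bar\theta$ be the representation of $\widehat{\pi_1(M)}$ corresponding to $\theta$ under this isomorphism and setting $\phi=\bar\theta\circ p_{\pi_1(M)}$, naturality of $p_G$ gives $\theta\circ p_{\pi_1(N)}=\phi\circ f_*$; Lemma~\ref{lemma:additivity-of-eta-inv-under-infection} then yields
\[
\tilde\eta(N,\theta\circ p_{\pi_1(N)})=\tilde\eta(M,\phi)+\tilde\eta(M_K,\phi_K),
\]
where $\phi_K\colon\pi_1(M_K)\to U(k)$ factors through $H_1(M_K)=\Z$ and sends the positive meridian to $\phi([\alpha])$.

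To handle $\tilde\eta(M,\phi)$, I would exploit that $M=L_1\# L_2$ with $\pi_1(M)=\Z_{r_1}*\Z_{r_2}$. Standard additivity of the reduced eta invariant under connected sum (from the vanishing of the twisted signature defect on the $1$-handle cobordism between $L_1\sqcup L_2$ and $L_1\# L_2$) reduces the problem to showing $\tilde\eta(L_i,\phi|_{\pi_1(L_i)})=0$ for each $i$. Since $\pi_1(L_i)=\Z_{r_i}$ is cyclic, each $\phi|_{\pi_1(L_i)}$ splits as a direct sum of $1$-dimensional characters; further additivity over direct sums of coefficient systems expresses each $\tilde\eta(L_i,-)$ as a sum of multisignature components of $L_i$, which are zero by our choice of the $L_i$.

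For $\tilde\eta(M_K,\phi_K)$, diagonalizing the unitary matrix $\phi_K(\mu)$ for a positive meridian $\mu$ writes $\phi_K\cong\bigoplus_i\chi_{\omega_i}$, where $\chi_\omega\colon\pi_1(M_K)\to U(1)$ sends $\mu$ to $\omega\in S^1$. By Levine~\cite{Levine:1994-1}, $\tilde\eta(M_K,\chi_\omega)$ equals the Levine--Tristram signature $\sigma_K(\omega)$. The hypothesis that $\Delta_K$ has no zero on $S^1$ makes the Hermitian form $(1-\omega)V+(1-\bar\omega)V^T$ nonsingular for every $\omega\in S^1\setminus\{1\}$, so $\sigma_K$ is constant on the connected set $S^1\setminus\{1\}$; a limit analysis at $\omega=1$ using $((1-\omega)V+(1-\bar\omega)V^T)/\theta\to i(V^T-V)$ for $\omega=e^{i\theta}$, combined with the fact that $V-V^T$ is the standard symplectic intersection form of the Seifert surface (so $i(V^T-V)$ has signature $0$), identifies the constant as $0$. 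Hence each $\sigma_K(\omega_i)=0$ and $\tilde\eta(M_K,\phi_K)=0$.

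The main obstacle is the first step: securing the factorization $\theta\circ p_{\pi_1(N)}=\phi\circ f_*$ through the isomorphism of algebraic closures induced by the infection map, which is where Levine's theorem on $\pi_1$-surjective integral homology equivalences does the heavy lifting. Once this reduction is in place, the rest of the argument assembles standard additivity properties of the reduced eta invariant (under connected sum and direct sum of coefficient systems) with the classical vanishing criterion for the Levine--Tristram signature of a knot whose Alexander polynomial has no unit root.
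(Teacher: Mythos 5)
Your argument is correct and follows essentially the same route as the paper: pass through the isomorphism of algebraic closures induced by the infection map $N\to M$ (using the $\pi_1$-surjectivity / normal surjectivity needed for Levine's closure), apply the additivity Lemma~\ref{lemma:additivity-of-eta-inv-under-infection}, reduce $\tilde\eta(M,\phi)$ to multisignatures of the lens spaces, and reduce $\tilde\eta(M_K,\phi_K)$ to the Levine--Tristram signature which vanishes when $\Delta_K$ has no unit root. The only difference is that the paper cites the known facts $\tilde\eta(M_K,\chi_\omega)=\sigma_K(\omega)$ and $\sigma_K\equiv 0$ directly (referencing~\cite{Friedl:2003-4}), whereas you spell out the constancy-plus-limit argument for the latter; both are fine.
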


\begin{proof}
  By (the proof of)
  Proposition~\ref{proposition:t-equivalence-of-infected-manifold},
  the $p$-tower map $N\to M$ is 2-connected.  It follows that the
  induced map $\widehat{\pi_1(N)} \to \widehat{\pi_1(M)}$ is an
  isomorphism, by results of \cite{Levine:1989-1,Cha:2004-1}.  (To
  obtain this when $\widehat G$ designates Levine's algebraic closure,
  we need an additional condition that $\pi_1(N)\to \pi_1(M)$ is
  normally surjective, which can be verified easily by a van Kampen
  argument.)  So, for any $\theta\colon \widehat{\pi_1(N)} \to U(k)$,
  there is $\theta'\colon \widehat{\pi_1(M)} \to U(k)$ making the
  following diagram commute:
  \[
  \begin{diagram}
    \node{\pi_1(N)} \arrow{s}\arrow{e,t}{p_{\pi_1(N)}}
    \node{\widehat{\pi_1(N)}} \arrow{s}\arrow{e,l}{\theta}
    \node{U(k)}
    \\
    \node{\pi_1(M)} \arrow{e,r}{p_{\pi_1(M)}}
    \node{\widehat{\pi_1(M)}} \arrow{ne,r}{\theta'}
  \end{diagram}
  \]
  So, $\psi=\theta\circ p_{\pi_1(N)}$ is induced by $\phi=\theta'\circ
  p_{\pi_1(M)}$ as in
  Lemma~\ref{lemma:additivity-of-eta-inv-under-infection}.  It follows
  that $\tilde\eta(N,\psi)$ is the sum of $\tilde\eta(M,\phi)$ and
  $\tilde\eta(M_K,\phi_K)$ for some~$\phi_K$.  We will show that both
  $\tilde\eta(M,\phi)$ and $\tilde\eta(M_K,\phi_K)$ are zero.  Since
  $M=L_1\#L_2$,
  \[
  \tilde\eta(M,\phi) = \tilde\eta(L_1,\phi_1) + \tilde\eta(L_2,\phi_2)
  \]
  for some~$\phi_i$.  Since $\pi_1(L_i)$ is a (finite) cyclic group,
  $\phi_i$ is a sum of 1-dimensional representations.  So we may
  assume that $\phi_i$ is 1-dimensional, that is,
  $\tilde\eta(L_i,\phi_i)$ is a multisignature, which vanishes by our
  choice of~$L_i$.  Let $A$ be a Seifert matrix of~$K$.  It is known
  that $\tilde\eta(M_K,\phi_K)$ is determined by $\{\sign
  (1-\omega)A+(1-\bar\omega)A^T\}_{\omega\in S^1}$ (e.g., see
  \cite{Friedl:2003-4}) Since $\Delta_A(t)$ has no zero on $S^1$,
  $\sign (1-\omega)A+(1-\bar\omega)A^T = 0$ whenever $\omega\in S^1$,
  and therefore, $\tilde\eta(M_K,\rho_K)$ vanishes.
\end{proof}

From Lemma~\ref{lemma:vanishing-of-eta-invariants} and
Lemma~\ref{lemma:vanishing-of-multisignature}, it follows that the
manifolds $\Sigma_i$ constructed in
Section~\ref{subsection:construction=of-exotic-examples} have
vanishing multisignatures and Levine's $\tilde\eta$-invariants, as
claimed in Theorem~\ref{theorem:exotic-rational-spheres-intro} (2)
and~(3).

\long\def\ignoreme{

\begin{proof}
  First we investigate the structure of~$\widehat{\pi_1(N)}$.  For
  this purpose, we need the following properties of the algebraic
  closure:
  \begin{enumerate}
  \item[(i)] If $G$ is abelian, then $\widehat G=G$ and $p_G$ is the
    identity map.
  \item[(ii)] Suppose $\pi \to G$ is 2-connected (and in addition,
    suppose $\pi\to G$ is normally surjective when $\widehat G$ is
    Levine's algebraic closure~\cite{Levine:1989-1}).  Then the
    induced map $\widehat\pi \to \widehat G$ is an isomorphism.
  \item[(iii)] The functor $G \to \widehat G$ commutes with direct
    limit.  In particular, $\widehat{G_1 * G_2}=\widehat{G_1} *
    \widehat{G_2}$, and similarly for amalgamated products.
  \end{enumerate}
  From (i) and (iii), it follows that $\widehat{\pi_1(M)}=\Z_{r_1} *
  \Z_{r_2}$.  By Van Kampen, we can write $\pi_1(N)$ as an
  amalgamated product:
  \[
  \pi_1(N) = \pi_1(M-\alpha) \mathbin{\mathop{*}\limits_{\Z\oplus\Z}}
  \pi_1(S^3-K)
  \]
  Since $\pi_1(S^3-K) \to H_1(S^3-K)=\Z$ is 2-connected, the algebraic
  closure of $\pi_1(S^3-K)$ is $\Z$ by (i) and~(ii).  So, applying the
  algebraic closure functor to the above displayed equation, we have
  \[
  \widehat{\pi_1(N)} = \widehat{\pi_1(M-\alpha)}
  \mathbin{\mathop{*}\limits_{\Z\oplus\Z}} \Z
  \]
  by (i) and (iii).  Note that this says that $\widehat{\pi_1(N)}$ is
  independent of the choice of~$K$.  In particular, viewing $M$ as $M$
  infected by the unknot along $\alpha$, it follows that
  \[
  \widehat{\pi_1(N)} = \widehat{\pi_1(M)} = \Z_{r_1} * \Z_{r_2}.
  \]
  Therefore, each representation $\theta\colon \widehat{\pi_1(N)} \to
  U(k)$ is of the form $\rho_1 * \rho_2$, where $\rho_i \colon
  \Z_{r_i} \to U(k)$ for $i=1,2$.  Denote $\rho=\theta\circ
  p_{\pi_1(N)}$.

  To compute $\tilde\eta(N,\rho)$, we choose 4-manifolds $W_1$, $W_2$
  such that $\partial W_i = rL_i$ over $\Z_{r_i}$ for some common
  multiple $r$ of $r_1, r_2$.  Also, we choose $W_{K}$ such that
  $\partial W_{K}=M_{K}$ over $\Z$, where the $\Z$-structure is given
  by $\pi_1(M_{K}) \to H_1(M_{K}) =\Z$.  Such $W_1$, $W_2$, and
  $W_{K}$ exist since the $\Omega_3(B\Z_{r_i})=\Z_{r_i}$ and
  $\Omega_3(B\Z)=0$.  Choose embeddings of a 3-ball $B^3$ into $L_1$
  and $L_2$ so that $\Sigma_0=(L_1-\inte B^3)\cup_{\partial B^3}
  (L_2-\inte B_3)$, and choose embeddings of $S^1\times D^2$ into $M$
  and $M_{K}$ from which $\Sigma$ is obtained as a toral sum.  Let
  \[
  W = \Big(W_1 \mathbin{\mathop{\cup}\limits_{rB^3}} W_2\Big)
  \mathbin{\mathop{\cup}\limits_{r(S^1\times D^2)}} rW_{K}.
  \]
  Then $\partial W=N$ over $\Z_{r_1} * \Z_{r_2}$.  By a
  Mayer-Vieotoris argument, for any homology coefficient $R$ given as
  $\Z[\Z_{r_1} * \Z_{r_2}] \to R$, we have an exact sequence
  \begin{multline*}
    0 \to H_2(W_1;R)\oplus H_2(W_2;R) \oplus H_2(W_{K};R)^r \to H_2(W;R)\\
    \to H_1(S^1\times D^2;R)^r \to H_1(W_1;R)\oplus H_1(W_2;R) \oplus
    H_1(W_{K};R)^r.
  \end{multline*}
  We claim that $H_1(S^1 \times D^2;R) \to H_1(W_{K};R)$ is injective;
  for, since $W_{K}=\partial M_{K}$ over $\Z$, $S^1\times D^2 \to
  W_{K}$ has a left homotopy inverse through which $W_{K} \to
  B(\Z_{r_1}*\Z_{r_2})$ factors, and so the induced map on $H_1(-;R)$
  is injective.  Therefore we have
  \[
  H_2(W;R) \cong H_2(W_1;R)\oplus H_2(W_2;R) \oplus H_2(W_{K};R)^r.
  \]
  From this it follows that
  \[
  \tilde\eta(N,\rho) = \tilde\eta(L_1,\rho_1) + \tilde\eta(L_2,\rho_2)
  + \tilde\eta(M_{K},\rho_{K})
  \]
  where $\rho_{K}$ is a representation of $\pi_1(M_K)$ factoring
  through $H_1(M_K)=\Z$.

  $\rho_1$, $\rho_2$, and $\rho_K$ are direct sums of 1-dimensional
  representations, since they factor through cyclic groups.  Thus we
  may assume that $\rho_1$, $\rho_2$, $\rho_K$ are 1-dimensional.  Now
  $\tilde\eta(L_i,\rho_i)$ is no more than the multisignature, which
  vanishes by our choice of~$L_i$, for $i=1,2$.  It is known that
  $\tilde\eta(M_K,\rho_K)$ is determined by the Levine-Tristram
  signature~$\sigma_K$ (e.g., see~\cite{Friedl:2003-4}).  By the
  hypothesis, $\sigma_K$ vanishes, and thus so does
  $\tilde\eta(M_K,\rho_K)$.  It follows that $\tilde\eta(N,\rho)=0$.
\end{proof}

} 

For each integer $n\ge 0$, Harvey defined the $L^2$-signature
invariants $\rho_n(N)$~\cite{Harvey:2006-1}.  It is the von
Neumann-Cheeger-Gromov $L^2$-signature defect associated to the
natural map $\pi_1(N) \to \pi_1(N)/\pi_1(N)^{(n)}_H$, where
$G_H^{(n)}$ denotes the $n$th term of the torsion-free derived
series~\cite{Cochran-Harvey:2004-1} for a group~$G$.

\begin{lemma}
  If $N$ is a rational homology sphere, then $\rho_n(N)=0$ for any
  $n\ge 0$.
\end{lemma}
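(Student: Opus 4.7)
The plan is to show that for a rational homology sphere $N$, the quotient group $\pi_1(N)/\pi_1(N)^{(n)}_H$ to which $\rho_n$ is associated is trivial, so $\rho_n(N)$ reduces to the ordinary signature defect, which vanishes.

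First I would recall the definition of the torsion-free derived series: $G^{(0)}_H=G$, and $G^{(n+1)}_H$ is the kernel of the natural map $G^{(n)}_H \to (G^{(n)}_H)^{ab}/\text{$\Z$-torsion}$. Setting $G=\pi_1(N)$, for $n=0$ we have $G/G^{(0)}_H=1$ tautologically. For $n=1$, $G/G^{(1)}_H$ is the maximal torsion-free quotient of $G^{ab}=H_1(N;\Z)$. Since $N$ is a rational homology sphere, $H_1(N;\Z)$ is finite, hence $G/G^{(1)}_H=1$, i.e. $G^{(1)}_H=G$. By a trivial induction, $G^{(n)}_H=G$ for every $n\ge 1$: once $G^{(n)}_H=G$, the next quotient $(G^{(n)}_H)^{ab}/\text{torsion}=H_1(N;\Z)/\text{torsion}=0$, whence $G^{(n+1)}_H=G$.

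Consequently, for every $n\ge 0$, the coefficient system $\pi_1(N) \to \pi_1(N)/\pi_1(N)^{(n)}_H$ defining $\rho_n(N)$ is the map to the trivial group. The $L^2$-signature associated to the trivial coefficient system coincides with the ordinary signature. Hence, for any compact topological $4$-manifold $W$ with $\partial W = N$ (which exists because $\Omega_3^{top}=0$), the von Neumann-Cheeger-Gromov $L^2$-signature defect is
\[
\rho_n(N) = \sigma^{(2)}(W) - \sigma(W) = \sigma(W) - \sigma(W) = 0.
\]

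There is no real obstacle here; the only point requiring care is verifying that the definition of $\rho_n$ is indeed insensitive to the chosen null-bordism, which is standard and guaranteed by the $L^2$-induction property applied to the trivial group. The argument relies solely on the finiteness of $H_1(N;\Z)$ for a rational homology sphere and the definition of the torsion-free derived series, which is exactly why Harvey's invariants fail to detect the exotic homology cobordism types produced by our construction.
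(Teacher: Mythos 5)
Your argument is correct and follows the same route as the paper: the vanishing of $b_1(N)$ forces $\pi_1(N)^{(n)}_H = \pi_1(N)$ for all $n$, so $\rho_n(N)$ is a signature defect with untwisted coefficients and hence zero. (One small caveat: the general Cochran--Harvey definition of $G^{(n+1)}_H$ uses torsion as a module over $\Z[G/G^{(n)}_H]$ rather than $\Z$-torsion of $(G^{(n)}_H)^{ab}$, but this discrepancy is irrelevant in your induction since $G/G^{(n)}_H$ is always trivial here.)
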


\begin{proof}
  From the definition of the torsion-free derived series
  in~\cite{Cochran-Harvey:2004-1}, it follows that $\pi_1(N)^{(n)}_H =
  \pi_1(N)$ for all~$n$, since the first Betti number $b_1(N)$ is
  zero.  Being the signature defect for untwisted coefficients,
  $\rho_n(N)$ vanishes.
\end{proof}

From this it follows that the manifolds $\Sigma_i$ have vanishing
Harvey's invariants, as claimed in
Theorem~\ref{theorem:exotic-rational-spheres-intro}~(4).  This
completes the proof of
Theorem~\ref{theorem:exotic-rational-spheres-intro}.

\section{Intersection form defects of links}
\label{section:witt-class-defects-of-links}

Two links $L$ and $L'$ in $S^3$ are said to be (topologically)
\emph{concordant} if there is a locally flat $s$-cobordism $C$
embedded in $S^3\times [0,1]$ from $L\times\{0\} \subset S^3\times
\{0\}$ to $L'\times\{1\} \subset S^3\times \{1\}$.  $C$~is called a
\emph{concordance}.  (When $C$ is a smooth submanifold in
$S^3\times[0,1]$, $L$ and $L'$ are said to be \emph{smoothly
  concordant}.)  A link concordant to a trivial link is called a
\emph{slice link}.  Or equivalently, a slice link is a link which
bounds disjoint union of locally flat 2-disks in $D^4$, regarding
$S^3$ as the boundary of~$D^4$.

For a link $L$ in $S^3$, the closed 3-manifold obtained from $S^3$ by
performing surgery along the zero-linking framing of each component of
$L$ is called the \emph{zero-surgery manifold}.  The following fact is
well-known:

\begin{lemma}
  \label{lemma:homology-cobordism-from-concordance}
  If two links are concordant, their zero-surgery manifolds are
  homology cobordant.
\end{lemma}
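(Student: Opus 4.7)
The plan is to construct a cobordism $W$ between $M_{L_0}$ and $M_{L_1}$ directly from a concordance $C \subset S^3 \times [0,1]$, then verify that $W$ is a homology cobordism. Since the components of $C$ are $n$ locally flat annuli whose boundary circles form $L_0$ and $L_1$ (with induced $0$-framings), $C$ admits a tubular neighborhood of the form $\nu(C) \cong C \times D^2$ compatible with the $0$-framings on both ends. I will define $W$ to be the $4$-manifold obtained from $S^3 \times [0,1]$ by removing $\inte\nu(C)$ and gluing in $D^2 \times C$ along $\partial_v \nu(C) \cong S^1 \times C$ via the $0$-surgery identification. Restricted to either end this construction is the standard $0$-surgery, so $\partial W = M_{L_0} \sqcup (-M_{L_1})$.

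For the homology computation, I will establish that the inclusion $M_{L_0} \hookrightarrow W$ induces an isomorphism on $H_*$ (the argument for $M_{L_1}$ is symmetric). Let $Y = (S^3 \times [0,1]) \setminus \inte\nu(C)$ denote the concordance exterior. Using the long exact sequence of the pair $(S^3 \times [0,1], Y)$ together with the Thom isomorphism $H_k(S^3 \times [0,1], Y) \cong H_{k-2}(C)$, I will first deduce that $H_*(Y) \cong H_*(E(L_0))$, realized by the natural inclusion $E(L_0) \hookrightarrow Y$. The two Mayer-Vietoris decompositions $W = Y \cup (D^2 \times C)$ and $M_{L_0} = E(L_0) \cup \bigsqcup_i (D^2 \times S^1)$, glued along disjoint unions of tori, fit into a commutative ladder of Mayer-Vietoris sequences; since the inclusions induce isomorphisms on the gluing tori, on $Y$ versus $E(L_0)$, and on each handle factor, a five-lemma chase yields the required isomorphism $H_*(M_{L_0}) \xrightarrow{\cong} H_*(W)$.

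The main subtlety is in the identification of the generators of $H_1(D^2 \times C) \cong \Z^n$ with classes on the boundary tori. Under the $0$-surgery convention each generator is the $S^1$-factor of an attached $D^2$-fiber, which on the boundary corresponds not to the longitude $\lambda_i$ but to the meridian $\mu_i$ of $L_i$; conversely, the $0$-framed longitude $\lambda_i$ (equal to $\sum_j \operatorname{lk}(L_i,L_j)\mu_j$ in $H_1(Y)$) is killed by the core disk of the handle. With this convention fixed, the map $H_1(S^1 \times C) \to H_1(Y) \oplus H_1(D^2 \times C)$ takes the block form $\bigl(\begin{smallmatrix} I & -I \\ L & 0 \end{smallmatrix}\bigr)$ with $L$ the linking matrix of $L_0$, exactly matching the analogous map for the Mayer-Vietoris sequence of $M_{L_0}$. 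A naive interchange of the roles of meridian and longitude in the handle produces a spurious vanishing of $H_1$, so this bookkeeping step is where the proof demands care.
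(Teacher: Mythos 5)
The paper gives no proof of this lemma---it is cited as well known---so there is no argument in the text to compare yours against; I evaluate it on its own terms. Your construction is the standard one and it is correct: surger out the concordance $C$ from $S^3\times[0,1]$ using a framing of the (trivial) normal bundle that restricts to the zero-framings on both ends, obtain a cobordism $W$ between the two zero-surgery manifolds, and show each inclusion $M_{L_i}\hookrightarrow W$ is a homology isomorphism. The essential facts are all in place: the Thom-isomorphism ladder for $(S^3,E(L_0))\hookrightarrow(S^3\times[0,1],Y)$ gives $H_*(E(L_0))\cong H_*(Y)$ by the five lemma, because $L_0\hookrightarrow C$ is a homotopy equivalence and hence $H_*(L_0)\cong H_*(C)$; the same homotopy equivalence makes all three side maps of the Mayer--Vietoris ladder for $M_{L_0}=E(L_0)\cup\bigl(\bigsqcup D^2\times S^1\bigr)$ versus $W=Y\cup(D^2\times C)$ isomorphisms; and the five lemma then finishes the proof in every degree, for both ends.

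Two remarks. First, since the paper works in the topological locally flat category, the existence of a normal $D^2$-bundle for $C\subset S^3\times[0,1]$ should be attributed to Freedman--Quinn, and the assertion that the framing can be chosen to match the zero-framings at \emph{both} ends needs the small (but genuine) observation that the relative Euler number of the trivial normal bundle vanishes, so matching the framing at one end forces it at the other. Second, the block matrix $\bigl(\begin{smallmatrix}I&-I\\L&0\end{smallmatrix}\bigr)$ you record does not quite agree with the basis conventions you set up earlier---with the zero-surgery gluing sending the meridian $\partial D^2\times\{pt\}$ of the handle to $\lambda_i$ and the core direction to $\mu_i$, I compute something of the form $\bigl(\begin{smallmatrix}L&I\\0&-I\end{smallmatrix}\bigr)$. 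This is pure bookkeeping and does not affect the proof: the five-lemma step requires only that the ladder commute and that the three side maps be isomorphisms, not the explicit shape of the connecting homomorphism. In fact, once the five-lemma framework is set up the matrix computation is a sanity check rather than a logical step, so the ``subtlety'' you flag is less central than you suggest.
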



By Lemma~\ref{lemma:homology-cobordism-from-concordance}, one can
apply Theorem~\ref{theorem:invariants-from-covering-tower} to extract
link concordance invariants from the intersection form defects of
($p$-towers of) zero-surgery manifolds.  In particular, for a slice
link, we obtain the following vanishing theorem.

\begin{theorem}
  \label{theorem:slice-obstruction-from-covering-tower}
  Suppose that $L$ is a slice link with zero-surgery manifold $M$,
  \[
  M_n \to \cdots \to M_1 \to M_0=M
  \]
  is a $p$-tower, and $\phi_n\colon \pi_1(M_n) \to \Z_d$ is a
  character with $d=p^a$.  Then
  \begin{enumerate}
  \item $(M_n,\phi_n)$ is trivial in $\Omega^{top}_3(B\Z_{d})$, and
  \item $\lambda(M_n,\phi_n)=0$ in $L^0(\Q(\zeta_{d}))$.
  \end{enumerate}
\end{theorem}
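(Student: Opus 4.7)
The plan is to reduce the claim to the case of the unlink and exploit the fact that the corresponding null-bordism has the homotopy type of a $1$-complex. First, since $L$ is slice, it is concordant to the $m$-component unlink, where $m$ denotes the number of components of~$L$. By Lemma~\ref{lemma:homology-cobordism-from-concordance} the zero-surgery manifold $M$ is then homology cobordant (hence $\Z_p$-homology cobordant) to the zero-surgery manifold $M'=\#^m(S^1\times S^2)$ of the unlink. Invoking Theorem~\ref{theorem:invariants-from-covering-tower}, the given $p$-tower $M_n\to\cdots\to M_0=M$ corresponds via the induced bijections to a unique $p$-tower $M'_n\to\cdots\to M'_0=M'$, and the given $\phi_n$ corresponds to some $\phi'_n\colon\pi_1(M'_n)\to\Z_d$ with the property that $(M_n,\phi_n)=0$ in $\Omega^{top}_3(B\Z_d)$ if and only if $(M'_n,\phi'_n)=0$, and $\lambda(M_n,\phi_n)=\lambda(M'_n,\phi'_n)$ whenever these invariants are defined. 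It therefore suffices to establish both conclusions for $(M'_n,\phi'_n)$.

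Next, take the canonical null-bordism $V'$ of $M'$, namely the boundary connected sum of $m$ copies of $S^1\times D^3$, which deformation retracts to $\vee^m S^1$. The inclusion $M'\hookrightarrow V'$ induces an isomorphism of fundamental groups, both being the free group~$F_m$. I claim the analogous property persists throughout the tower: inductively, let $V'_i\to V'_{i-1}$ be the connected cover obtained by extending $\phi'_{i-1}$ along the isomorphism $\pi_1(M'_{i-1})\xrightarrow{\cong}\pi_1(V'_{i-1})$. Then $\partial V'_i=M'_i$, and the inclusion $M'_i\hookrightarrow V'_i$ again induces an isomorphism on~$\pi_1$, since both $\pi_1(M'_i)$ and $\pi_1(V'_i)$ are identified with the subgroup $\ker(\phi'_{i-1})$ of $\pi_1(V'_{i-1})=\pi_1(M'_{i-1})$. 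In particular $V'_i$ is a connected 4-manifold with the homotopy type of a finite graph, being a covering space of a space homotopy equivalent to $\vee^m S^1$.

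Finally, $\phi'_n$ extends via the isomorphism $\pi_1(M'_n)\cong\pi_1(V'_n)$ to make $V'_n$ a 4-manifold over $\Z_d$ with $\partial V'_n=M'_n$. This immediately yields $(M'_n,\phi'_n)=0$ in $\Omega^{top}_3(B\Z_d)$, proving~(1). For~(2), the definition of $\lambda$ gives
\[
\lambda(M'_n,\phi'_n)=[\lambda_{\Q(\zeta_d)}(V'_n)]-i^*_{\Q(\zeta_d)}\sigma(V'_n) \in L^0(\Q(\zeta_d)),
\]
and both terms vanish since $V'_n$ has the homotopy type of a 1-complex, so that $H_2(V'_n;\Q(\zeta_d))=0=H_2(V'_n;\Q)$.

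There is no real obstacle in this argument; the only point requiring care is the inductive verification that the coincidence $\pi_1(M'_i)=\pi_1(V'_i)$ persists throughout the tower, which is precisely what permits $V'_n$ to serve simultaneously as a null-bordism over $\Z_d$ and as a 4-manifold with vanishing twisted intersection form at the top level.
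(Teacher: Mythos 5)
Your proof is correct, and it takes a mildly but genuinely different route from the paper's. Both proofs begin identically: reduce to the unlink via Lemma~\ref{lemma:homology-cobordism-from-concordance} and Theorem~\ref{theorem:invariants-from-covering-tower}. After that, the paper observes that $M'_n$, being a connected cover of $\#^m(S^1\times S^2)$, is itself a connected sum of copies of $S^1\times S^2$ (a $3$-manifold classification fact, relying on prime decomposition plus the structure theory of closed $3$-manifolds with free fundamental group), and then invokes Lemma~\ref{lemma:vanishing-for-S^1xD^2}, which handles one $S^1\times S^2$ at a time via Lemma~\ref{lemma:additivity-under-connected-sum}. You instead bypass the $3$-manifold classification entirely: you build the null-bordism tower $V'_i$ directly by taking covers of $V'=\natural^m(S^1\times D^3)$, using that $\pi_1(M'_{i-1})\to\pi_1(V'_{i-1})$ stays an isomorphism so that each $\phi'_{i-1}$ extends, and conclude from the fact that a cover of a space with the homotopy type of a graph is again a graph, so that $H_2(V'_n;\K)=0$ for any coefficients. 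The trade-off: the paper's argument leans on a nontrivial $3$-dimensional input but keeps the $4$-dimensional bookkeeping modular (the connected-sum and vanishing lemmas are reused elsewhere), while your argument stays closer to the ground and only needs elementary covering-space theory together with Van Kampen, at the price of re-verifying the persistence of the $\pi_1$-isomorphism up the tower. Both are valid; your exposition correctly records the one delicate point, namely that the preimage of $M'_{i-1}$ in $V'_i$ is connected and the induced map on $\pi_1$ remains an isomorphism, which follows because the inclusion $M'_{i-1}\hookrightarrow V'_{i-1}$ is $\pi_1$-surjective.
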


\begin{proof}
  By Lemma~\ref{lemma:homology-cobordism-from-concordance} and
  Theorem~\ref{theorem:invariants-from-covering-tower}, we may assume
  that $L$ is a trivial link.  Then, $M$ is the connected sum of $m$
  disjoint copies of $S^1\times S^2$, where $m$ is the number of
  components of~$L$.  Therefore, being a cover of $M$, $M_n$ is a
  connected sum of disjoint copies of $S^1\times S^2$.  Appealing to
  Lemma~\ref{lemma:vanishing-for-S^1xD^2}, the proof is completed.
\end{proof}

\subsection{$p$-towers of surgery manifolds of $\widehat F$-links}

In this subsection we will show that there are many highly nontrivial
$p$-towers of the zero-surgery manifold for a large class of links in
$S^3$.  We start with a description of the class of links we think of.

In this section, we denote by $\widehat G$ the algebraic closure of a
group $G$ with respect to $\Z_{(p)}$-coefficients, in the sense
of~\cite{Cha:2004-1}.  Suppose $L$ is an $m$-component link with zero
surgery manifold $M$, and let $X=\bigvee^m S^1$, the wedge of $m$
circles.  We say that a map of $X$ into $S^3-L$ or $M$ is a
\emph{meridian map} if the image of the $i$th circle is an $i$th
meridian of~$L$.  Let $F=\pi_1(X)$ be the free group of rank $m$, and
$\pi=\pi_1(S^3-L)$.  We say that $L$ is a \emph{$\Z_{(p)}$-coefficient
  $\widehat F$-link} if there is a meridian map $X \to S^3-L$ inducing
an isomorphism $\widehat F \to \widehat \pi$ and the preferred
longitudes of $L$ are in the kernel of $\pi \to \widehat \pi$.  We
note that this is a $\Z_{(p)}$-analogue of the notion of an $\widehat
F$-link due to Levine~\cite{Levine:1989-1}; the definition of an
$\widehat F$-link in~\cite{Levine:1989-1} is identical with ours
except that Levine's algebraic closure is used in place of
our~$\widehat G$.  Henceforth, an \emph{$\widehat F$-link} always
designates a $\Z_{(p)}$-coefficient $\widehat F$-link in our sense.
(An $\widehat F$-link in the sense of~\cite{Levine:1989-1} is a
$\Z_{(p)}$-coefficient $\widehat F$-link; it could be an interesting
question whether the converse is true, i.e., whether the two notions
are equivalent.)

By arguments of \cite{Levine:1989-1}, we have the following facts: the
property that $\widehat F \to \widehat \pi$ is an isomorphism is
independent of the choice of a meridian map, and a link concordant to
an $\widehat F$-link is an $\widehat F$-link.  We remark that it is a
long-standing conjecture that any link with vanishing Milnor
$\bar\mu$-invariants is an $\widehat F$-link (in the sense
of~\cite{Levine:1989-1}).

\begin{proposition}
  \label{proposition:t-equivalence-of-F-hat-links}
  Suppose $L$ is an $\widehat F$-link with zero-surgery manifold~$M$.
  Then any meridian map $X=\bigvee^m S^1 \to M$ is a $p$-tower map, in
  the sense of Definition~\ref{definition:t-equivalence}.
\end{proposition}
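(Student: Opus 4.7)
The plan is to reduce to Proposition~\ref{proposition:algebraic-closure-and-t-equivalences}. Writing $f\colon X\to M$ for the meridian map, I will show that $f$ induces an isomorphism $\widehat{\pi_1(X)}\to\widehat{\pi_1(M)}$; since both $X$ and $M$ have finite $2$-skeletons, Proposition~\ref{proposition:algebraic-closure-and-t-equivalences} then implies that $f$ is a $p$-tower map.

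Set $F=\pi_1(X)$ (the free group of rank $m$) and $\pi=\pi_1(S^3-L)$. A van Kampen argument gives $\pi_1(M)=\pi/N$, where $N$ is the normal closure of the preferred longitudes $\ell_1,\ldots,\ell_m$. The $\widehat F$-link hypothesis supplies two ingredients: first, the meridian map $X\to S^3-L$ induces an isomorphism $\widehat F\to\widehat\pi$; second, each $\ell_i$ lies in $\ker(p_\pi\colon\pi\to\widehat\pi)$, and hence so does the normal closure $N$. Consequently $p_\pi$ factors as $\pi\xrightarrow{q}\pi_1(M)\xrightarrow{\bar p}\widehat\pi$ for a unique $\bar p$, where $q$ denotes the quotient map.

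The key step will be to show that the induced map $\widehat q\colon\widehat\pi\to\widehat{\pi_1(M)}$ is an isomorphism. For this I plan to invoke the universal property of the algebraic closure from~\cite{Cha:2004-1}: since $\widehat\pi$ is algebraically closed (i.e., $p_{\widehat\pi}$ is an isomorphism), the map $\bar p$ extends uniquely to $\tilde p\colon\widehat{\pi_1(M)}\to\widehat\pi$ characterized by $\tilde p\circ p_{\pi_1(M)}=\bar p$. Naturality of $p$ together with the uniqueness clause of the universal property then forces $\tilde p\circ\widehat q=\id_{\widehat\pi}$ and $\widehat q\circ\tilde p=\id_{\widehat{\pi_1(M)}}$, since in each case both sides agree after precomposition with the canonical map from $\pi$ (respectively $\pi_1(M)$). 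Composing this isomorphism with the $\widehat F$-link hypothesis $\widehat F\cong\widehat\pi$ gives the desired isomorphism $\widehat F\cong\widehat{\pi_1(M)}$.

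The main technical obstacle I anticipate is pinning down cleanly the universal property of $\widehat{(-)}$ used above. Since $\widehat G$ need not be finitely presented even when $G$ is, one cannot simply appeal to the $2$-connected inverting property recalled just before Proposition~\ref{proposition:algebraic-closure-and-t-equivalences}; instead, the extension and its uniqueness have to be extracted from the direct-limit construction of~\cite{Cha:2004-1}. Once this is in hand the remaining diagram chase is formal.
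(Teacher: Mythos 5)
Your proposal is correct and reaches the same target as the paper's proof (an isomorphism $\widehat{\pi}\to\widehat{\pi_1(M)}$ where $\pi=\pi_1(S^3-L)$, from which Proposition~\ref{proposition:algebraic-closure-and-t-equivalences} finishes the job), but the key step is executed by a different mechanism. The paper writes $\pi_1(M)=\Coker(\ell)$ where $\ell\colon F\to\pi$ sends generators to longitudes, shows $\widehat\ell=0$ from the $\widehat F$-link hypothesis and idempotence, and then concludes $\widehat{\pi_1(M)}=\Coker(\widehat\ell)=\widehat\pi$ by invoking that the algebraic closure functor is a left adjoint and hence preserves cokernels. You instead construct the inverse of $\widehat q\colon\widehat\pi\to\widehat{\pi_1(M)}$ explicitly via the adjunction: the paper's route is slicker and more modular (one appeal to ``left adjoints preserve colimits''), while yours is more hands-on but needs no colimit formalism. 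The technical obstacle you anticipate is not actually an obstacle: the ``universal property'' you need is precisely the statement that $\widehat{(-)}$ is left adjoint to the inclusion of algebraically closed groups into groups, and the paper's own proof asserts exactly this (it is the reason it gives for colimit preservation). One small gap in your sketch worth making explicit: to conclude $\widehat q\circ\tilde p=\id_{\widehat{\pi_1(M)}}$ from agreement after precomposing with $p_{\pi_1(M)}$, you need the uniqueness half of the adjunction with target $\widehat{\pi_1(M)}$, which requires knowing $\widehat{\pi_1(M)}$ is itself algebraically closed; that is the idempotence $\widehat{\widehat G}=\widehat G$, which the paper also uses (and cites to~\cite{Levine:1989-1}). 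So your argument ultimately rests on the same two facts as the paper's --- adjunction and idempotence --- just deployed in a different order.
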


\begin{proof}
  Since $L$ is an $\widehat F$-link, a meridian map $X \to S^3-L$ is a
  $p$-tower map.  So it suffices to show the inclusion $S^3-L \to M$
  is a $p$-tower map.  To prove this, we need the following two
  properties of the algebraic closure functor $E(G)=\widehat G$:

  \begin{enumerate}
  \item The algebraic closure functor preserves direct limits, that
    is, $E(\varinjlim G_i)$ is the direct limit of the system
    $\{E(G_i)\}$ \emph{in the full subcategory of algebraically closed
      groups}.
  \item The algebraic closure functor is an idempotent, that is,
    $E(E(G))=E(G)$.
  \end{enumerate}
  
  (1) holds since $E\colon \{\text{groups}\} \to \{\text{algebraically
    closed groups}\}$ is a left adjoint of the inclusion functor
  $\{\text{algebraically closed groups}\} \to \{\text{groups}\}$.  For
  a proof of (2), refer to \cite{Levine:1989-1}.

  Let $\pi=\pi_1(S^3-L)$, $G=\pi_1(M)$, $F$ be a free group of rank
  $m$, and $\ell\colon F \to \pi$ be a map sending the $i$th generator
  of $F$ to an $i$th preferred longitude of~$L$.  (Here $m$ is the
  number of components of $L$ as before.)  Then $G=\Coker\{\ell\}$.
  By our hypothesis that $L$ is an $\widehat F$-link, the composition
  $F\xrightarrow{\ell} \pi \to \widehat\pi$ is the zero map.  So it
  induces the zero map $\widehat F \to \widehat{\widehat\pi}$.  By (2)
  above, it follows that $\widehat\ell\colon \widehat F\to
  \widehat\pi$ is the zero map.  Therefore $\widehat\pi\cong
  \Coker\{\widehat\ell\}\cong \widehat G$ by (1) above (recall that
  the cokernel is a direct limit).  Now, by
  Proposition~\ref{proposition:algebraic-closure-and-t-equivalences},
  the inclusion $S^3-L \to M$ is a $p$-tower map.
\end{proof}

\begin{remark}
  An interesting question related to
  Proposition~\ref{proposition:t-equivalence-of-F-hat-links} is the
  following: if $L$ is an $m$-component link with vanishing
  $\bar\mu$-invariants, then is a meridian map of $\bigvee^m S^1$ into
  the surgery manifold of $L$ a $p$-tower map?  An affirmative answer
  may be viewed as an evidence supporting the conjecture that a link
  with vanishing $\bar\mu$-invariant is an $\widehat F$-link.
\end{remark}

As stated in the corollary below, it follows that the character groups
of iterated $p$-covers of the surgery manifold of $L$ are highly
nontrivial, provided that $L$ is an $\widehat F$-link which is not a
knot.

\begin{corollary}
  If $M$ is the zero-surgery manifold of an $m$-component $\widehat
  F$-link $L$ and
  \[
  M_n \to \cdots\to M_1 \to M_0=M
  \]
  is a $p$-tower consisting of connected covers $M_{i+1} \to M_i$ with
  deck transformation groups $\Gamma_i$, then for any abelian group
  $\Gamma_n$, we have
  \label{corollary:size-of-character-group}
  $\Hom(\pi_1(M_n),\Gamma_n)\cong (\Gamma_n)^{r_n}$, where the rank
  $r_n$ is given by
  \[
  r_n=\Big(\prod_{i=0}^{n-1} |\Gamma_i|\Big) (m-1)+1.
  \]
\end{corollary}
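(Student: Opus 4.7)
The plan is to reduce the computation to the wedge of circles $X = \bigvee^m S^1$ via the $p$-tower map provided by Proposition~\ref{proposition:t-equivalence-of-F-hat-links}, and then count the number of circles in each iterated cover by a Euler characteristic argument.

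First, I fix a meridian map $f_0 \colon X = \bigvee^m S^1 \to M$; by Proposition~\ref{proposition:t-equivalence-of-F-hat-links}, $f_0$ is a $p$-tower map. Pulling back the given tower $M_n \to \cdots \to M_0 = M$ along $f_0$ yields a $p$-tower $X_n \to \cdots \to X_0 = X$ with the same sequence of deck transformation groups $\Gamma_0, \ldots, \Gamma_{n-1}$. By Definition~\ref{definition:t-equivalence}, the lift $f_n \colon X_n \to M_n$ induces a bijection
\[
f_n^* \colon \Hom(\pi_1(M_n), \Gamma_n) \xrightarrow{\cong} \Hom(\pi_1(X_n), \Gamma_n).
\]
This reduces the problem to computing the right-hand side.

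Second, I compute $\pi_1(X_n)$ by induction on $n$. Since $X_0 = \bigvee^m S^1$ is a $1$-complex and every covering space of a $1$-complex is again a $1$-complex, each $X_i$ has the homotopy type of a wedge $\bigvee^{s_i} S^1$ with $\pi_1(X_i)$ free of rank $s_i$. Because $X_i \to X_{i-1}$ is a $|\Gamma_{i-1}|$-sheeted covering, Euler characteristics multiply, i.e.\ $1 - s_i = |\Gamma_{i-1}|(1 - s_{i-1})$. Starting from $s_0 = m$, an easy induction gives
\[
s_n = \Big(\prod_{i=0}^{n-1} |\Gamma_i|\Big)(m-1) + 1 = r_n.
\]
Since $\pi_1(X_n)$ is a free group of rank $r_n$ and $\Gamma_n$ is abelian, $\Hom(\pi_1(X_n), \Gamma_n) \cong \Gamma_n^{r_n}$, which combined with the bijection above yields the claimed formula.

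There is essentially no serious obstacle: the hard content is already packaged in Proposition~\ref{proposition:t-equivalence-of-F-hat-links} (which identifies the algebraic closures of $\pi_1(M)$ and $F$), and the remaining work is only the transparent observation that pullback of a $p$-tower along the meridian map produces a tower of graphs whose first Betti numbers are determined by Euler characteristic multiplicativity.
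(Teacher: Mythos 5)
Your proof is correct and follows essentially the same route as the paper: reduce to the wedge $X=\bigvee^m S^1$ via the $p$-tower map supplied by Proposition~\ref{proposition:t-equivalence-of-F-hat-links}, observe that all iterated covers of $X$ are $1$-complexes, and compute the rank by multiplicativity of Euler characteristic. The only cosmetic difference is that the paper passes directly to $b_1(M_n)$ using $\chi(M_n)=d\chi(M)$ for $d=\prod|\Gamma_i|$, while you run the Euler-characteristic recursion level by level; both reach $r_n=(\prod_{i=0}^{n-1}|\Gamma_i|)(m-1)+1$ in one line.
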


\begin{proof} 
  By Proposition~\ref{proposition:t-equivalence-of-F-hat-links}, we
  may assume that $M=\bigvee^m S^1$.  Since $M$ is a 1-complex,
  $H_1(M_n)$ is a free abelian group.  Therefore it suffices to show
  that the first Betti number $b_1(M_n)$ is the number $r_n$ given
  above.  Let $d=|\Gamma_0|\cdots |\Gamma_{n-1}|$.  Since $M_n$ is a
  $d$-fold cover of $M$, the Euler characteristic
  $\chi(M_n)=1-b_1(M_n)$ can also be computed as follows:
  \[
  \chi(M_n) = d\cdot \chi(M) = d(1-m)
  \]
  From this it follows that $b_1(M_n)=r_n$.
\end{proof}

\section{Computation for iterated Bing doubles}
\label{section:iterated-bing-double}

For a link $L$ in $S^3$, let $BD(L)$ be the (untwisted) Bing double
of~$L$.  $BD(L)$ is obtained from $L$ as follows: let $V$ be an
unknotted solid torus in $S^3$, and $L_{orbit}$ be the 2-component
link in $V$ illustrated in Figure~\ref{figure:bing-double-orbit}.  For
a link $L$ with $m$ components, let $h_i$ be a homeomorphism of $V$
onto a tubular neighborhood of the $i$th component of $L$ which sends
a preferred longitude and meridian of $V$ to those of the $i$th
component of $L$ respectively ($1\le i\le m$).  Then $BD(L)=
\bigcup_{i=1}^m h_i(L_{orbit})\subset S^3$.

\begin{figure}[ht]
  \begin{center}
    \includegraphics[scale=.9]{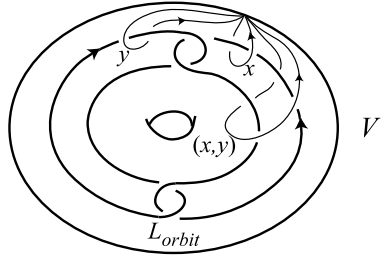}
  \end{center}
  \caption{The link $L_{orbit}$ in an unknotted solid torus $V\subset
    S^3$.}
  \label{figure:bing-double-orbit}
\end{figure}

We define the \emph{$n$th iterated Bing double} $BD_n(L)$ of $L$
inductively by $BD_0(L)=L$ and $BD_n(L)=BD_{n-1}(BD(L))$ for $n>0$.

For a knot $K$ in $S^3$, $BD_n(K)$ is obtained from a trivial link by
infection as follows: let $\alpha$ be a meridional curve of a tubular
neighborhood $U$ of a trivial knot $O$ in $S^3$, and take an iterated
Bing double $BD_n=BD_n(O)$ contained in $U$.  Then $BD_n$ is a trivial
link in $S^3$, and by performing infection on $BD_n$ by $K$ along
$\alpha$, we obtain $BD_n(K)$.
Figure~\ref{figure:bd_n-infection-curve} illustrates $BD_n$ and
$\alpha$ for $n=2$.

\begin{figure}[ht]
  \begin{center}
    \includegraphics[scale=.9]{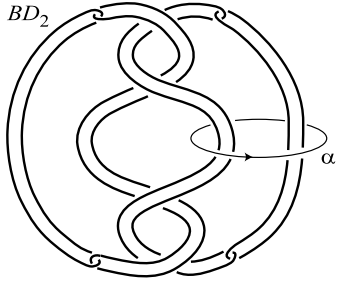}
  \end{center}
  \caption{The infection curve $\alpha$ for $n=2$.}
  \label{figure:bd_n-infection-curve}
\end{figure}


We denote the mirror image of a link $L$ with reversed orientation
by~$-L$.  Recall from the introduction that a link $L$ is said to be
\emph{2-torsion} if a connected sum of $L$ and itself is slice.  Note
that a connected sum of two links is defined by choosing a ``disk
basing'' for each link in the sense of Lin and
Habegger~\cite{Habegger-Lin:1990-1}.  The defining condition of a
2-torsion link $L$ should be understood as that $L\# L$ is slice for
some choice of disk basings.  (We remark that $L\#L$ may not be slice
for some other disk basings even when $L$ is 2-torsion.)  If $L$ is
2-torsion, then $L$ is concordant to~$-L$, and when $L$ is a knot, the
converse is also true.  An amphichiral knot is 2-torsion.

Observe that $BD_n\cup \alpha$ is isotopic to $-(BD_n \cup \alpha)$;
applying a $\pi$-rotation to Figure~\ref{figure:bd_n-infection-curve}
about an horizontal axis, we obtain one from another.  From this, it
follows that $BD_n(K)$ is isotopic to $-(BD(-K))$.  As a consequence,
we have

\begin{lemma}
  \label{lemma:torsion-property-of-bd_n}
  If $K$ is amphichiral, then $BD_n(K)$ is isotopic to $-BD_n(K)$.  If
  $K$ is 2-torsion, then $BD_n(K)$ is 2-torsion.
\end{lemma}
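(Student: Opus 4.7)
The plan is to deduce both assertions from the symmetry
$BD_n(K)\cong -BD_n(-K)$ as oriented links in $S^3$, which has just
been established via a $\pi$-rotation of $BD_n\cup\alpha$.

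For the first assertion, amphichirality gives $K\cong -K$ as oriented
knots, so applying the iterated Bing double construction yields
$BD_n(-K)\cong BD_n(K)$; combining with the displayed symmetry
produces $BD_n(K)\cong -BD_n(K)$.

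For the second assertion, I will use naturality of Bing doubling
under concordance of the infection knot. Since $BD_n(K)$ is obtained
from the trivial link $BD_n$ by infection along a fixed curve
$\alpha\subset S^3$ by $K$, any concordance from $K$ to $K'$ in
$S^3\times I$ can be glued into the product cobordism of
$(S^3,BD_n\cup\alpha)$ along a tubular neighborhood of $\alpha$,
producing a concordance from $BD_n(K)$ to $BD_n(K')$. Since a
2-torsion knot $K$ is (topologically) concordant to $-K$, this yields
a concordance from $BD_n(K)$ to $BD_n(-K)$, which by the symmetry is
isotopic to $-BD_n(K)$.

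It then remains to observe that whenever a link $L$ is concordant to
$-L$, $L$ is 2-torsion: viewing the concordance from $L$ to $-L$
inside $D^4$ after the standard bending construction produces
disjoint slice disks for $L\#L$ with an appropriate choice of disk
basings (this is the link analogue of the familiar fact that $K\#(-K)$
is slice for knots). Applying this to $L=BD_n(K)$ finishes the proof.
The only point requiring care throughout is a coherent choice of disk
basings, which I expect to be the main bookkeeping obstacle; however,
this is routine since the meridians of the components of $BD_n(K)$
are canonically determined by the iterated Bing doubling construction.
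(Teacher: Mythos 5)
Your proof is correct and takes essentially the same route as the paper: the paper derives both assertions as ``a consequence'' of the isotopy $BD_n(K)\cong -BD_n(-K)$, and you fill in the two unstated supporting facts (concordance naturality of infection, and that a link concordant to its reverse mirror is $2$-torsion via the bending construction), both of which are standard. One small point worth being aware of: the paper's preceding remark only explicitly asserts the implication ``concordant to $-L$ implies $2$-torsion'' in the knot case, whereas you invoke it for links; your bending argument does establish the link case, since the construction itself produces the disk basing for which $L\#L$ is slice, which is exactly what the paper's definition of a $2$-torsion link requires.
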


In this section we prove that certain iterated Bing doubles are
2-torsion but not slice, as stated below:

\begin{theorem}
  \label{theorem:non-slice-torsion-bd_n}
  Let $\{a_i\}$ be the sequence of integers given by
  Proposition~\ref{proposition:dual-primes} and $K_{a}$ be the
  amphichiral knot shown in Figure~\ref{fig:infection-amph-knot}.
  Then $BD_n(K_{a_i})$ is not slice for any $i$ and~$n$.
\end{theorem}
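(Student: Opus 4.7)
The plan is to deploy the slice obstruction of Theorem~\ref{theorem:slice-obstruction-from-covering-tower}: for each $i$ and $n$, I shall exhibit a $2$-tower of the zero-surgery manifold of $BD_n(K_{a_i})$ together with a $\Z_4$-character on its top cover whose associated intersection form defect has nonzero discriminant, witnessed by the norm residue symbol at $p_i$. Since $\lambda$ vanishes on every such tower of a slice link, this will force $BD_n(K_{a_i})$ not to be slice.

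First I would reduce the entire problem to a combinatorial computation on a graph. Let $M$ be the zero-surgery manifold of the trivial link $BD_n$; then $M\simeq \#^{2^n}(S^1\times S^2)$, and the zero-surgery manifold $N$ of $BD_n(K_{a_i})$ is obtained from $M$ by infection by $K_{a_i}$ along the curve $\alpha$ of Figure~\ref{figure:bd_n-infection-curve}. The wedge $X=\bigvee^{2^n}S^1$ of meridional circles maps into $M$ by a map that is integrally $2$-connected, hence is a $p$-tower map for every prime $p$ by Lemma~\ref{lemma:2-connected-maps-induce-t-equivalence}. Combined with Proposition~\ref{proposition:t-equivalence-of-infected-manifold}, every $2$-tower of $X$ together with a $\Z_4$-character on its top cover pulls back canonically to analogous data on $N$, with the same intersection form defect invariant. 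So it is enough to choose the tower and character on the graph $X$.

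Next I would design the tower. The geometric input is that $\alpha$, viewed in $\pi_1(X)=F_{2^n}$, is an $n$-fold nested commutator of the meridional generators (and so lies in the $n$th derived subgroup, as mentioned in Section~\ref{section:iterated-bing-double}). I would build a tower
\[
X_{n+1}\to X_n\to\cdots\to X_1\to X_0=X
\]
of height $n+1$ in which each of the first $n-1$ double covers is chosen to peel off exactly one layer of the iterated commutator structure of $\alpha$, so that some component of the preimage of $\alpha$ in $X_{n-1}$ represents a simple commutator $[x,y]$ of a pair of loops in the graph $X_{n-1}$. I would then paste on top of $X_{n-1}$ the height-$2$ construction of Section~\ref{subsection:combinatorial-computation} essentially verbatim, yielding one further double cover, a final double cover $X_{n+1}$, and a $\Z_4$-character $\phi_{n+1}\colon\pi_1(X_{n+1})\to\Z_4$. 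Running the algorithmic lifting procedure $\mathcal{L}(\alpha,X_{n+1}|X)$ of Definition~\ref{definition:collection-of-lifts} should yield lifts $\tilde\alpha_j$ whose contributions to $\phi_{n+1}$ mirror exactly those of the four distinguished lifts $\tilde\alpha_1,\ldots,\tilde\alpha_4$ in Figure~\ref{fig:lifts-of-Q-sphere}, i.e., one double lift and three simple lifts mapping to $\pm 1\in\Z_4$, with all other lifts having trivial image.

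Finally I would assemble the pieces. Corollary~\ref{corollary:lambda-of-infected-manifold} applied to the Seifert matrix of $K_{a_i}$, followed by Lemma~\ref{lemma:discriminant-of-amphichiral-infection-knots}, should then reproduce the computation of Section~\ref{subsection:combinatorial-computation} term by term and yield
\[
\dis\lambda(N_{n+1},\psi_{n+1})=(2a_i^2+1)(2a_i^4+4a_i^2+1)\quad\text{in }\Q^\times/\{z\bar z\mid z\in\Q(\sqrt{-1})^\times\}.
\]
By Proposition~\ref{proposition:dual-primes}, this class has norm residue symbol $-1$ at $p_i$, hence is not a norm, so $\lambda(N_{n+1},\psi_{n+1})\neq 0$ in $L^0(\Q(\sqrt{-1}))$, and Theorem~\ref{theorem:slice-obstruction-from-covering-tower} forces $BD_n(K_{a_i})$ to be non-slice. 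The main obstacle is the inductive combinatorial design of the first $n-1$ levels of the tower: one must choose the $\Z_2$-characters on the graph covers $X_0,\ldots,X_{n-2}$ so that an $n$-fold iterated commutator degenerates into a simple commutator after successive pullback, and so that the lifts of $\alpha$ in $X_{n+1}$ behave combinatorially exactly as in Figure~\ref{fig:lifts-of-Q-sphere}. This is finite but intricate graph-cover bookkeeping that should yield to induction on $n$.
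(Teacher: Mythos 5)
Your overall strategy is exactly the paper's: reduce to a tower on the graph $X = \bigvee^{2^n}S^1$ via the $p$-tower maps $X \to M \leftarrow N$, run the lifting algorithm $\mathcal{L}(\alpha,\,\cdot\,|X)$ to feed Corollary~\ref{corollary:lambda-of-infected-manifold}, compute the discriminant with Lemma~\ref{lemma:discriminant-of-amphichiral-infection-knots}, and kill it with the dual prime $p_i$ from Proposition~\ref{proposition:dual-primes}. The gap is in the one place you yourself flag as the ``main obstacle'': the design of the tower, and I do not believe the $\Z_2$-covers you propose do what you claim.

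Concretely, you take the first $n-1$ levels to be \emph{double} covers, ``chosen to peel off exactly one layer of the iterated commutator structure of $\alpha$.'' A single $\Z_2$-cover does not do this. If $\phi_0\colon F_{2^n}\to\Z_2$ is any surjection, the lift of $\alpha=x^{(n)}_1=[x^{(n-1)}_1,x^{(n-1)}_2]$ at the basepoint is $[\widetilde{x^{(n-1)}_1},\widetilde{x^{(n-1)}_2}]$, and those lifts are themselves lifts of depth-$(n-1)$ iterated commutators; you don't get a ``simple commutator'' of two independent loops after one double cover, and even after $n-1$ double covers the structure is a commutator of complicated words, not of two 1-cells as in Figure~\ref{fig:covers-of-Q-sphere}. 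The paper instead uses deck groups $\Gamma_k=(\Z_2)^{2^{n-k}}$, large enough to \emph{separate all the remaining generators at each stage}; this is what makes the inductive Lemma~\ref{lemma:computation-of-lifts} work — in $\bar X_{k+1}$ the only non-null-homotopic lift of $x^{(k+1)}_i$ is an honest 4-gon, while all other lifts die. That collapse to a 4-gon is precisely the ``one layer peeled off'' you want, and it fails if the deck group sees only one bit.

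A second, smaller problem is that the ``height-2 construction of Section~\ref{subsection:combinatorial-computation}'' and Figure~\ref{fig:lifts-of-Q-sphere} are tied to the seed $\pi_1(M)=\Z_{r_1}*\Z_{r_2}$ — the relations $c_i^{r_i}$ are essential to the shape of the cover in Figure~\ref{fig:covers-of-Q-sphere} — so it doesn't paste ``essentially verbatim'' onto a free group, and the resulting lift picture (you predict one double and three simple distinguished lifts) is not what the paper's Lemma~\ref{lemma:lift-behaviour-of-bd-infection-curve} produces (one double and one simple). That discrepancy turns out to be harmless for the discriminant because squares are norms, but it signals that your combinatorics aren't the right ones. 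To close the gap you need the paper's Lemma~\ref{lemma:computation-of-lifts} and Lemma~\ref{lemma:lift-behaviour-of-bd-infection-curve}, or a genuinely new argument that a tower of double covers can produce a character isolating a small, explicit set of lifts with known $\Z_4$-values; the latter is not obvious and is not supplied.
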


Theorem~\ref{theorem:non-slice-bing-souble-intro} stated in the
introduction is an immediate consequence of
Theorem~\ref{theorem:non-slice-torsion-bd_n}.

\begin{remark}
  \label{remark:vanishing-of-signatures-for-bd_n}
  It is known that many known invariants vanish for (iterated) Bing
  doubles.  (Refer to Cimasoni's paper \cite{Cimasoni:2006-1} for an
  excellent discussion on this.)  Harvey's invariant $\rho_k$
  \cite{Harvey:2006-1} and Levine's
  $\tilde\eta$-invariants~\cite{Levine:1994-1} detect some examples of
  non-slice Bing doubles, however, both invariant vanish for our
  $BD_n(K_{a_i})$, as explained below:
  \begin{enumerate}
  \item The 2-torsion property of $BD_n(K_{a_i})$ implies the
    vanishing of~$\rho_k$; since $\rho_k(L)=-\rho_k(-L)$ for any link
    $L$~\cite{Harvey:2006-1},
    $\rho_k(BD_n(K_{a_i}))=-\rho_k(-BD_n(K_{a_i}))=-\rho_k(BD_n(K_{a_i}))$,
    and so $\rho_k(BD_n(K_{a_i}))=0$.  Or alternatively, one may
    appeal to a formula for $\rho_k$ of infected manifold: it is known
    that $\rho_k(BD_n(K))=\rho_k(BD_n)+\epsilon \rho_0(K)= \epsilon
    \rho_0(K)$ for some $\epsilon\in\{0,1\}$~\cite{Harvey:2006-1}, and
    $\rho_0(K)$ is equal to the integral of the Levine-Tristram
    signature function $\sigma_K$ over the unit
    circle~\cite{Cochran-Orr-Teichner:1999-1,Cochran-Orr-Teichner:2002-1}.
    So, if the integral of $\sigma_K$ vanishes (in particular if $K$
    is torsion in the knot concordance group), then $\rho_k(BD_n(K))$
    vanishes.
  \item Using a similar argument (see also the proof of
    Lemma~\ref{lemma:vanishing-of-eta-invariants}), it is shown that
    if $\sigma_K=0$, or equivalently if $K$ is torsion the algebraic
    knot concordance group, then Levine's invariant
    $\tilde\eta(M,\theta\circ p_{\pi_1(M)})$ of the zero-surgery
    manifold $M$ of $BD_n(K)$ vanishes for any unitary representation
    $\theta$ of the algebraic closure of~$\pi_1(M)$.
  \end{enumerate}
\end{remark}


\subsection{$p$-towers for iterated Bing doubles}

The proof of Theorem~\ref{theorem:non-slice-torsion-bd_n} proceeds
similarly to that of
Theorem~\ref{theorem:exotic-rational-spheres-intro} discussed in
Section~\ref{section:homology-cobordism-for-b1=0}.  The outline is as
follows: denote the zero surgery manifolds of $BD_n$ and $BD_n(K)$ by
$M$ and $N$, respectively, and let $X=\bigvee^{2^n} S^1$.  By
Proposition~\ref{proposition:t-equivalence-of-infected-manifold} there
is a $p$-tower map $N \to M$ since $N$ is obtained from $M$ by
infection along~$\alpha$.  Also, the meridian map $X \to M$ sending
the $i$th circle to the $i$th meridian of $BD_n$ is a $p$-tower map
by Proposition~\ref{proposition:t-equivalence-of-F-hat-links} (or more
directly, since it is a $\pi_1$-isomorphism).  We consider $p$-towers
of $M$ and $N$ which are induced by a $p$-tower of $X$ that will be
constructed combinatorially.  We show the nontriviality of the
intersection form defects extracted from the $p$-tower of $N$ by a
computation using
Corollary~\ref{corollary:lambda-of-infected-manifold} as in
Section~\ref{subsection:combinatorial-computation}.  This proves
Theorem~\ref{theorem:non-slice-torsion-bd_n}.

Our construction of a $p$-tower of~$X_0=X$ as follows.  $p=2$ will be
used throughout this section.  We define inductively covers $X_{k}\to
X_{k-1}$ for $1\le k \le n$, some 1-cells $c^{(k)}_i$ of $X_k$ for $1
\le i \le 2^{n-k}$, and a map $\phi_k\colon \pi_1(X_k) \to \Gamma_k =
(\Z_2)^{2^{n-k}}$.  Initially, viewing $X_0$ as a 1-complex with one
$0$-cell $*$ and $2^n$ 1-cells, let $c^{(0)}_i$ be the $i$th
(oriented) 1-cell of $X_0$ and define $\phi_0\colon \pi_1(X_0) \to
\Gamma_0=(\Z_2)^{2^n}$ by assigning to each 1-cell $c^{(0)}_i$ the
$i$th standard basis element $e_i\in \Gamma_0$.  Suppose $k < n$ and
$X_k$, $c^{(k)}_i$, and $\phi_k\colon\pi_1(X_k)\to \Gamma_k$ have been
defined.  We define $X_{k+1}$ to be the $\Gamma_k$-cover of $X_k$
determined by $\phi_k$, and choose a basepoint $*\in X_{k+1}$ from the
preimage of $*\in X_k$.  Let $c^{(k+1)}_i \subset X_{k+1}$ be the lift
of $c^{(k)}_{2i-1} \subset X_k$ based at $*\in X_{k+1}$.
$\phi_{k+1}\colon \pi_1(X_{k+1}) \to \Gamma_{k+1}=(\Z_2)^{2^{n-k-1}}$
is defined to be the map induced by the assignment $c^{(k+1)}_i \to
e_i$, (other cells) $\to 0$.  Finally, let $X_{n+1}$ be the double
cover of $X_n$ determined by $\phi_n\colon \pi_1(X_n) \to \Gamma_n =
\Z_2$.

As an abuse of notation, we denote by $\alpha$ a loop in $X_0$
representing the class of $[\alpha]\in\pi_1(M_0)=\pi_1(X_0)$ of the
infection curve~$\alpha\subset M_0$.  $\alpha$ can be described as an
iterated commutator on the loops $c^{(0)}_i$.  In order to give an
explicit commutator expression, we define inductively loops
$x^{(k)}_i$ in $X_0$ based at $*$ for $0 \le k\le n$ and $1\le i \le
2^{n-k}$ as follows: $x^{(0)}_i = c^{(0)}_i$ and
\[
x^{(k+1)}_i = (x^{(k)}_{2i-1}, x^{(k)}_{2i}) =
x^{(k)}_{2i-1}x^{(k)}_{2i} (x^{(k)}_{2i-1})^{-1}(x^{(k)}_{2i})^{-1}.
\]
Then, $\alpha=x^{(n)}_1$.  For, in
Figure~\ref{figure:bing-double-orbit}, the meridian of the solid torus
$V$ containing $L_{orbit}$ is the commutator of the meridians of the
two components of $L_{orbit}$, and applying this relation inductively,
it follows that $\alpha=x^{(n)}_1$.  Note that $[x^{(k)}_i] \in
\pi_1(X_0)^{(k)} \subset \pi_1(X_k) \subset \pi_1(X_0)$.  From this it
follows that any lift of $\alpha$ in $X_n$ is a loop.

In order to compute intersection form defects, we need to investigate
the collection $\mathcal{L}(\alpha,X_{n+1}|X_0)$ described in
Definition~\ref{definition:collection-of-lifts}, as we did in
Section~\ref{subsection:combinatorial-computation}.  Since any lift of
$\alpha$ in $X_n$ is a loop and since $X_{n+1}$ is a double cover of
$X_n$, some lifts of $\alpha$ in $X_{n+1}$ may not be a loop but any
lift of $\alpha^2$ in $X_{n+1}$ is a loop.  In other words, writing
$\mathcal{L}(\alpha,X_{n+1}|X_0)=\{\tilde\alpha_{j}\}$, each
$\tilde\alpha_{j}$ is a lift of $\alpha^{r_j}$ in $X_{n+1}$ for some
$r_j\in \{1,2\}$.

The essential property of our 2-tower is the following:

\begin{lemma}
  \label{lemma:lift-behaviour-of-bd-infection-curve}
  For any $d>0$ and $0 \le s <d$, there is a character
  $\varphi_{n+1}\colon \pi_1(X_{n+1}) \to \Z_d$ such that all the
  $\tilde\alpha_{j}\in \mathcal{L}(\alpha,X_{n+1}|X)$ are in the
  kernel of $\varphi_{n+1}$ except two, say $\tilde\alpha_{1}$ and
  $\tilde\alpha_{2}$, with the following properties: $r_1=2$, $r_2=1$,
  and $\tilde\alpha_{1}, \tilde\alpha_{2}$ are sent to $s, -s\in \Z_d$
  by $\varphi_{n+1}$, respectively.
\end{lemma}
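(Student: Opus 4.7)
The plan is to construct the character $\varphi_{n+1}$ as supported on a single strategically chosen $1$-cell $e\subset X_{n+1}$. A natural candidate is $e$ = the iterated lift through the tower of the $1$-cell $c^{(0)}_2\subset X_0$ based at the basepoint $*\in X_{n+1}$, obtained by lifting $c^{(0)}_2$ successively through the covers starting at $*$. I would set $\varphi_{n+1}(e)=-s$ and $\varphi_{n+1}(e')=0$ for every other $1$-cell. Then for any loop $\gamma\subset X_{n+1}$, $\varphi_{n+1}(\gamma)$ equals $-s$ times the signed count of traversals of $e$ along $\gamma$.

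With this definition, the whole problem reduces to a combinatorial claim: among all loops $\tilde\alpha_j\in\mathcal{L}(\alpha,X_{n+1}|X_0)$, exactly two traverse the edge $e$, with signed counts $+1$ and $-1$, and these two lifts have the asserted $r$-values $r_1=2$ and $r_2=1$. The $r$-value statements can be verified by evaluating $\phi_n$ on the $X_n$-lift of $\alpha$ at each relevant preimage; $\phi_n$ detects the parity of traversals of $c^{(n)}_1$, and this parity is computed directly from the iterated-commutator expansion of $\alpha$, tracked level by level through the tower.

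To establish the signed-count claim I would proceed by induction on $n$, exploiting the recursive identity $\alpha=x^{(n)}_1=(x^{(n-1)}_1,x^{(n-1)}_2)$ together with an analogous recursive decomposition of the tower into two sub-towers, one attached to the generators $c^{(0)}_1,\ldots,c^{(0)}_{2^{n-1}}$ and one to $c^{(0)}_{2^{n-1}+1},\ldots,c^{(0)}_{2^n}$, matching the inductive definition of the $x^{(k)}_i$. A direct induction on $k$ shows that $x^{(k)}_i$ only involves the generators $c^{(0)}_{(i-1)2^k+1},\ldots,c^{(0)}_{i\cdot 2^k}$, so in the expansion of $\alpha$ the letter $c^{(0)}_2$ appears only inside $x^{(n-1)}_1$ and $(x^{(n-1)}_1)^{-1}$. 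Consequently the traversals of $e$ by any lift in $\mathcal{L}$ occur only during the sub-trajectories corresponding to these two subwords, and the inductive hypothesis applied to the first sub-tower governs these contributions.

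The main obstacle will be the bookkeeping required to carry out this induction cleanly: one must match the preimages of $*$ in $X_{n+1}$ with preimages in the sub-cover so that the inductive claim transfers, and then verify that the two sub-trajectory contributions (from $x^{(n-1)}_1$ and from $(x^{(n-1)}_1)^{-1}$) cancel for all preimages except the two asserted ones, for which they combine into a single net $\pm 1$ use of $e$. Once this matching is established, the target values $\varphi_{n+1}(\tilde\alpha_1)=s$ and $\varphi_{n+1}(\tilde\alpha_2)=-s$ will follow immediately, and the lemma will be proved.
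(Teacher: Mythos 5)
Your proposed edge $e$ is not the one used in the paper, and for $n\ge 2$ it is the wrong choice: the lemma's conclusion $r_1=2$ fails for it, and this failure is fatal for the intended application.

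The paper takes $e$ to be (one of the two lifts to $X_{n+1}$ of) the $1$-cell $c^{(n-1)}_2(0)\subset X_n$. Tracing the definition $c^{(k+1)}_i = c^{(k)}_{2i-1}(0)$ back down, $c^{(n-1)}_2$ projects to $c^{(0)}_{2^{n-1}+1}\subset X_0$, not to $c^{(0)}_2$. So your choice agrees with the paper's only when $n=1$. The paper's choice is a "top-level connecting cell" of the tower, meaning it is one of the cells glueing together the copies $Y_{n-1}(g)$ in the cut-paste picture of $X_n$; your iterated lift of $c^{(0)}_2$ lies instead deep inside $Y_{n-1}(0)$. The distinction matters because the paper's argument first proves a structural lemma (Lemma~\ref{lemma:computation-of-lifts}) saying that lifts of $\alpha$ in $X_n$ are either null-homotopic after collapsing each $Y_{n-1}(g)$ to a point or are explicit $4$-gons through the top-level cells; a character supported on a top-level cell therefore vanishes on all but a small, explicitly describable set of lifts. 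Your character is supported inside a collapsed piece, so that reduction is not available.

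Concretely, for $n=2$ one can carry out the very verification you describe (evaluating $\phi_n$ at the relevant preimages), and it yields the wrong answer. Writing a vertex of $X_2$ as $(v,g)$ with $v\in X_1$ the $(\Z_2)^4$-coordinate and $g\in\Gamma_1=(\Z_2)^2$, the two loops in $\mathcal{L}(\alpha,X_{n+1}|X_0)$ with nonzero net traversal of your edge $e$ are those based at $(\underline{0},(1,0),0)$ and $(\underline{0},(1,1),0)$. Both of these satisfy $\phi_2=0$, since the lift of $\alpha$ at $*\in X_2$ meets $c^{(1)}_1$-preimages only at $\Gamma_1$-coordinates $(0,0)$ and $(0,1)$, and shifting by $(1,0)$ or $(1,1)$ never hits $(0,0)$. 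Hence $r_1=r_2=1$, contradicting the lemma's requirement $r_1=2$. Meanwhile the one lift of $\alpha^2$ (based over $*$) never meets $e$, so cannot serve as your $\tilde\alpha_1$.

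This is not a cosmetic issue. With both surviving lifts of $r$-value $1$ and $\varphi$-values $s,-s$, Corollary~\ref{corollary:lambda-of-infected-manifold} gives $\lambda(N_{n+1},\psi_{n+1}) = [\lambda_1(A,\zeta_d^s)]+[\lambda_1(A,\zeta_d^{-s})]$. Its discriminant is $\Delta_A(\zeta_d^s)\Delta_A(\zeta_d^{-s}) = \Delta_A(\zeta_d^s)\overline{\Delta_A(\zeta_d^s)}$, which is a norm and hence trivial in $\Q(\zeta_d+\zeta_d^{-1})^\times/\{z\bar z\}$. The whole point of getting $r_1=2$ is to produce the $[\lambda_2(A,\cdot)]$ term, whose discriminant $\Delta_A(\sqrt\omega)\Delta_A(-\sqrt\omega)$ gives the new prime factor $2a^4+4a^2+1$ that the paper's number-theoretic argument needs. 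So your character, even if you supplied the missing combinatorial argument, would yield a vanishing invariant and cannot prove the theorem.

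Separately, you should note that the inductive scheme you sketch (split the expansion of $\alpha$ into the $x^{(n-1)}_1$ and $x^{(n-1)}_2$ halves and induct on the sub-tower over the first half of the generators) is not developed to the point where it could be checked; the paper instead proves by induction a precise statement (Lemma~\ref{lemma:computation-of-lifts}) about how lifts of $x^{(k+1)}_i$ meet the top-level cells $\bar c^{(k)}_j(g)$ of each floor of the tower, and then deduces the lemma by a finite analysis of the double cover $X_{n+1}\to X_n$. If you want to salvage your approach, you need to change $e$ to (a lift of) $c^{(n-1)}_2(0)$ and prove something like Lemma~\ref{lemma:computation-of-lifts} first.
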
  

Before proving Lemma~\ref{lemma:lift-behaviour-of-bd-infection-curve},
we give a proof of Theorem~\ref{theorem:non-slice-torsion-bd_n} using
our invariants associated to the characters given by
Lemma~\ref{lemma:lift-behaviour-of-bd-infection-curve}.

\begin{proof}[Proof of Theorem~\ref{theorem:non-slice-torsion-bd_n}]
  As before, let $M$ and $N$ be the zero surgery manifolds of
  $BD_n$ and $BD_n(K_{a_i})$, and $X \to M$ be the meridian map.  Let
  \begin{gather*}
    M_{n+1} \to M_n \to \cdots \to M_0=M\\
    \intertext{and}
    N_{n+1} \to N_n \to \cdots \to N_0=N
  \end{gather*}
  be the $2$-towers which correspond, via the 2-tower maps $X\to M
  \leftarrow N$, to the 2-tower of $X$ constructed above.

  Let $\varphi_{n+1}\colon\pi_1(X_{n+1}) \to \Z_4$ be a map given by
  Lemma~\ref{lemma:lift-behaviour-of-bd-infection-curve} applied to
  the case of $d=4$ and $s=1$.  Let $\psi_{n+1}\colon \pi_1(N_{n+1}) \to
  \Z_4$ be the map induced by~$\varphi_{n+1}$.  By
  Lemma~\ref{lemma:lift-behaviour-of-bd-infection-curve}~(1) and
  Corollary~\ref{corollary:lambda-of-infected-manifold} we have
  \[
  \lambda(N_{n+1},\psi_{n+1}) = [\lambda_2(A,\sqrt{-1})] +
  [\lambda_1(A,-\sqrt{-1})] \in L^0(\Q(\sqrt{-1}).
  \]
  where $A$ is a Seifert matrix of~$K_{a_i}$.  Applying the
  discriminant map
  \[
  \dis\colon L^0(\Q(\sqrt{-1}) \to \frac{\Q^\times}{\{z\cdot \bar z
    \mid z\in \Q(\sqrt{-1})^\times\}}
  \]
  defined in Section~\ref{subsection:norm-residue-symbol}, we obtain
  \[
  \dis \lambda(N_{n+1},\psi_{n+1}) = (2a_i^2+1)(2a_i^4+4a_i^2+1)
  \]
  by Lemma~\ref{lemma:discriminant-of-amphichiral-infection-knots}.
  From the norm residue symbol computation in
  Proposition~\ref{proposition:dual-primes}, it follows that $\dis
  \lambda(N_{n+1},\psi_{n+1})$ is nontrivial.  Therefore
  $BD_n(K_{a_i})$ is not slice by
  Theorem~\ref{theorem:slice-obstruction-from-covering-tower}.
\end{proof}

\subsection{Lifts of the infection curve $\alpha$}

In this subsection we complete the proof of
Theorem~\ref{theorem:non-slice-torsion-bd_n} by showing
Lemma~\ref{lemma:lift-behaviour-of-bd-infection-curve}.  In order to
give a precise description of lifts of $x^{(k)}_i$ in $X_k$, we
consider the following cut-paste construction of our tower.  For $0\le
k\le n$, let $Y_k$ be $X_k$ with the 1-cells $c^{(k)}_i$ removed
($1\le i \le 2^{n-k}$).  For each $g\in \Gamma_k$, let $Y_k(g)$ be a
copy of~$Y_k$.  We obtain $X_{k+1}$ by taking the disjoint union
$\bigcup_{g\in\Gamma_k} Y_k(g)$ and then attaching $2^{2(n-k)}$
1-cells $c^{(k)}_i(g)$ which goes from (starting point of $c^{(k)}_i)
\in Y_k(g)$ to (endpoint of $c^{(k)}_i)\in Y_k(g+e_i)$ for
$g\in\Gamma_k$ and $1\le i\le 2^{n-k}$.  We regard $*\in Y_k \subset
X_k$ as the basepoint of $Y_k$, and $*\in Y_k = Y_k(0) \subset
X_{k+1}$ as the basepoint of~$X_{k+1}$, where $0\in \Gamma_k$ is the
(additive) identity.  Then $c^{(k+1)}_i$ is the 1-cell
$c^{(k)}_{2i-1}(0) \subset X_{k+1}$.
Figure~\ref{figure:cover-scheme-of-bd_n} is a schematic diagram of
$Y_k$ and (part of) $X_{k+1}$.

\begin{figure}[ht]
  \begin{center}
    \includegraphics[scale=.85]{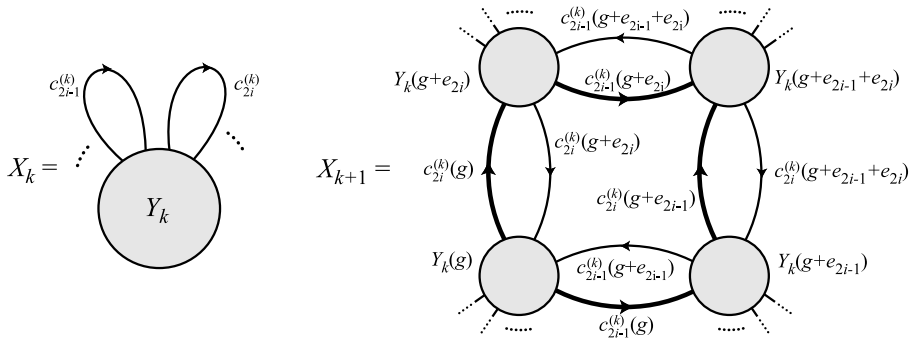}
  \end{center}
  \caption{A schematic diagram of $Y_k$ and $X_{k+1}$.}
  \label{figure:cover-scheme-of-bd_n}
\end{figure}

Let $\bar X_{k+1}$ be the 1-complex obtained by collapsing each
$Y_k(g)\subset X_{k+1}$ to a point for $g\in\Gamma_k$.  For a path
$\gamma$ in $X_{k+1}$, denote its composition with $X_{k+1} \to \bar
X_{k+1}$ by~$\bar\gamma$.  In particular $\bar c^{(k+1)}_i(g)$ is the
image of $c^{(k+1)}_i(g)\subset X_{k+1}$ in $\bar X_{k+1}$.  Note that
the map $\phi_{k+1}\colon\pi_1(X_{k+1}) \to \Gamma_{k+1}$ factors
through $\pi_1(\bar X_{k+1})$.

\begin{lemma}
  \label{lemma:computation-of-lifts}
  For a 0-cell $v$ in $X_{k+1}$, let $\gamma_v$ be the lift of the
  path $x^{(k+1)}_i$ in $X_{k+1}$ based at~$v$.  Note that $v$ lies in
  $Y_{k}(g)$ for some $g\in\Gamma_k$.  Then the following holds:
  \begin{enumerate}
  \item[$(1)$] For $j\ne i$, $\bar\gamma_v$ never meets (the interior
    of) $\bar c^{(k+1)}_j \subset \bar X_{k+1}$.
    $\bar\gamma_v$~passes through $\bar c^{(k+1)}_i$ algebraically
    $\begin{Bmatrix} +1 \\ -1 \\ 0 \end{Bmatrix}$ times
    $\begin{Bmatrix} \text{if }v=*\in Y_{k}(0)\subset X_{k+1}, \hfill\\
      \text{if }v=*\in Y_{k}(e_{2i})\subset X_{k+1}, \hfill \\
      \text{otherwise.} \hfill \end{Bmatrix}$
  \item[$(2)$] If $v\ne *\in Y_{k}(g)$, then $\bar\gamma_v$ is
    null-homotopic (rel $\partial$) in $\bar X_{k+1}$.  If $v=*\in
    Y_{k}(g)$, then $\bar\gamma_v$ is homotopic (rel $\partial$) to a
    4-gon
    \[
    \bar c^{(k)}_{2i-1}(g) \cdot \bar c^{(k)}_{2i}(g+e_{2i-1})
    \cdot \big(\bar c^{(k)}_{2i-1}(g+e_{2i}) \big)^{-1} \cdot \big( \bar
    c^{(k)}_{2i}(g) \big)^{-1}
    \]
    in $\bar X_{k+1}$.  (See Figure~\ref{figure:cover-scheme-of-bd_n},
    where the 4-gon is illustrated as bold edges.)
  \end{enumerate}
\end{lemma}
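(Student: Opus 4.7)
The plan is induction on $k$. The base case $k=0$ is a direct computation: since $Y_0$ is a single point, $\bar X_1 = X_1$ is the Cayley graph of $\Gamma_0$, and lifting the commutator $x^{(1)}_i = c^{(0)}_{2i-1} c^{(0)}_{2i} (c^{(0)}_{2i-1})^{-1} (c^{(0)}_{2i})^{-1}$ starting at any vertex $g \in \Gamma_0$ immediately traces out the claimed 4-gon $c^{(0)}_{2i-1}(g) \cdot c^{(0)}_{2i}(g + e_{2i-1}) \cdot c^{(0)}_{2i-1}(g + e_{2i})^{-1} \cdot c^{(0)}_{2i}(g)^{-1}$; the signed counts in part~(1) are then read off directly.

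For the inductive step, I would decompose $x^{(k+1)}_i = aba^{-1}b^{-1}$ with $a = x^{(k)}_{2i-1}$ and $b = x^{(k)}_{2i}$ and analyze each of the four pieces of $\gamma_v$ in turn. Two structural inputs drive the analysis. First, $[x^{(k)}_j] \in \pi_1(X_0)^{(k)}$, which is characteristic (hence normal) in $\pi_1(X_0)$ and is contained in $\pi_1(X_k)$ by the abelian-tower construction, so $x^{(k)}_j$ lifts to a loop in $X_k$ starting at \emph{any} vertex. Second, the inductive hypothesis identifies the $\bar X_k$-projection of this loop, when the base vertex is $*\in Y_{k-1}(h)$, as the 4-gon $\bar c^{(k-1)}_{2j-1}(h) \cdot \bar c^{(k-1)}_{2j}(h+e_{2j-1}) \cdot \bar c^{(k-1)}_{2j-1}(h+e_{2j})^{-1} \cdot \bar c^{(k-1)}_{2j}(h)^{-1}$, and as null-homotopic otherwise. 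I then transfer this $\bar X_k$-information to $\bar X_{k+1}$ via the defining identity $c^{(k)}_i = c^{(k-1)}_{2i-1}(0)$: an edge of $X_k$ contributes a transition in $\bar X_{k+1}$ precisely when it has this form. A short parity/index check shows that among the four 4-gon edges only $c^{(k-1)}_{2j-1}(h)$ (contributing $c^{(k)}_j$ positively when $h=0$) and $c^{(k-1)}_{2j-1}(h + e_{2j})^{-1}$ (contributing $c^{(k)}_j$ negatively when $h = e_{2j}$) can possibly qualify, while the even-indexed 4-gon edges and the edges internal to some $Y_{k-1}(g')$ remain within a single $Y_k$-sheet and project to constants in $\bar X_{k+1}$.

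For Case~(1), where $v = *\in Y_k(g)$, each of the four pieces starts at the basepoint of some $Y_k(h_j)$, so the previous paragraph contributes exactly one $c^{(k)}_?$-transition per piece; concatenation of these transitions assembles to the 4-gon in $\bar X_{k+1}$ advertised in the lemma. For Case~(2), where $v \ne *$, I would exploit the commutator structure: pieces~1 and~3 lift the same underlying loop in $X_k$ (at the common projection $u$ of $v$) in opposite directions, and pieces~2 and~4 do the same, so their four contributions in $\bar X_{k+1}$ pair into mutually inverse edge-sequences, forcing the word to reduce to the identity in the free group $\pi_1(\bar X_{k+1})$. Part~(1) is then immediate from part~(2) by direct inspection of the 4-gon: $\bar c^{(k+1)}_j = \bar c^{(k)}_{2j-1}(0)$ appears with signed count $+1$ exactly at $g=0$, $-1$ exactly at $g = e_{2i}$, and $0$ otherwise, while for $j \ne i$ the index $2j-1$ never matches the index $2i-1$ of any 4-gon edge. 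The most delicate step will be the cancellation in Case~(2) when pieces~2 or~4 themselves carry nontrivial $\phi_k$-values (which happens only for very specific choices of $u$); handling it will require a short subcase split on the $\Gamma_{k-1}$-index of $u$ to verify that the commutator identity forces the four pieces to pair up correctly.
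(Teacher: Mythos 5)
Your proposal takes essentially the same inductive approach as the paper (decompose $x^{(k+1)}_i$ as a commutator and analyze the lift piece by piece), but the dependency structure is different: the paper alternates $(1_k) \Rightarrow (2_{k+1}) \Rightarrow (1_{k+1})$, using the \emph{first} statement of $(1_k)$ --- the actual-path claim that $\bar a_\ell$ never meets $\bar c^{(k)}_j$ for $j\ne\ell$ --- as the main input to $(2_{k+1})$, whereas you feed the homotopy-class statement $(2_k)$ directly into both $(1_{k+1})$ and $(2_{k+1})$.

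There is a genuine gap in your Case~(2) argument, and it is not cosmetic. The commutator structure does not by itself ``force the word to reduce to the identity'': if the $\bar X_{k+1}$-projections of pieces $3$ and $4$ were literal inverses of those of pieces $1$ and $2$, the concatenation would be a honest commutator $\bar p_1\bar p_2\bar p_1^{-1}\bar p_2^{-1}$ in the free group $\pi_1(\bar X_{k+1})$, which is in general nontrivial. Worse, for $v\ne *$ pieces $1$ and $3$ are not literal inverses in the first place --- they are lifts from \emph{different} basepoints --- unless piece $2$ is a loop. The correct mechanism, which the paper makes explicit, is: when $v\ne *$, at least one of $\bar a_{2i-1},\bar a_{2i}$ has algebraic count zero through its associated $\bar c^{(k)}_\ell$ (the second statement of $(1_k)$); the corresponding two pieces then lift to null-homotopic loops, because the first statement of $(1_k)$ forces the lift to lie in the two-edge circle $\bar c^{(k)}_\ell(g)\cup\bar c^{(k)}_\ell(g+e_\ell)$, where it has degree zero; and \emph{only then} do the lifts of the other pair become literal inverses and cancel. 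Your outline does not establish the null-homotopy of the count-zero pair, and the homotopy-class statement $(2_k)$ by itself does not control which $c^{(k)}_j$-cells the actual path $a_\ell$ traverses, so the circle-containment step is not available to you without a further homotopy-invariance argument that you have not supplied. A related issue: the first sentence of Part~(1) is an actual-path claim (``never meets''), and ``direct inspection of the 4-gon'' in Part~(2) controls only the homotopy class; the paper proves that claim directly from the word structure of $x^{(k+1)}_i$ (it contains no letter $x_\ell^{\pm 1}$ with $\ell=(j-1)2^{k+1}+1$ for $j\ne i$), and you would need a comparable direct argument rather than deducing it from $(2)$.
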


\begin{proof}
  Denote (1) and (2) stated above by $(1_{k+1})$ and $(2_{k+1})$.  We
  show the lemma by proving the following implications: $(1_{k})
  \Rightarrow (2_{k+1}) \Rightarrow (1_{k+1})$.  Note that for
  $k+1=0$, the initial condition $(1_0)$ holds obviously.
  \def\qedsymbol{}
\end{proof}
  
\begin{proof}[Proof of $(2_{k+1})$ $\Rightarrow$ $(1_{k+1})$]
  Indeed the first statement of $(1_{k+1})$ is proved without using
  $(2_{k+1})$.  For, observe that $c^{(k+1)}_j$ is a lift of the
  1-cell $c^{(0)}_\ell\subset X_0$ representing $x_\ell \in
  \pi_1(X_0)$, where $\ell=(j-1)\cdot 2^{k+1}+1$.  If $i\ne j$, then
  since $x^{(k+1)}_i$ is a word in $x_{(i-1)\cdot 2^{k+1}+1}, \ldots,
  x_{i\cdot 2^{k+1}}$, $x^{(k+1)}_i$ does not contain $x_\ell^{\pm1}$.
  Therefore any lift of $x^{(k+1)}_i$ in $X_{k+1}$ never passes
  through~$c^{(k+1)}_j$.  From this the first conclusion of
  $(1_{k+1})$ follows.
  
  To prove the second statement of $(1_{k+1})$, suppose $(2_{k+1})$
  holds and suppose $\bar\gamma_v$ passes through $\bar c^{(k+1)}_i =
  \bar c^{(k)}_{2i-1}(0)$ algebraically nonzero times.  Then by the
  first statement of $(2_{k+1})$, $v$ should be $*\in Y_{k}(g) \subset
  X_{k+1}$ for some $g\in\Gamma_{k-1}$.  By the second statement of
  $(2_{k+1})$, it follows that $g=0$ or $e_{2i}$ and in each case
  $\bar\gamma_v$ passes through $\bar c^{(k)}_{2i-1}(0)$ algebraically
  $+1$ and $-1$ times, respectively. \def\qedsymbol{}
\end{proof}

\begin{proof}[Proof of $(1_k) \Rightarrow (2_{k+1})$]
  Suppose $(1_k)$ holds.  Let $a_\ell$ be the lift of $x^{(k)}_{\ell}$
  in $X_k$ which is based at $v\in Y_k(g)=Y_k\subset X_k$.  Note that
  $a_{\ell}$ is a loop in $X_k$.  Since $x^{(k+1)}_{i}=x^{(k)}_{2i-1}
  x^{(k)}_{2i} (x^{(k)}_{2i-1})^{-1} (x^{(k)}_{2i})^{-1}$, $\gamma_v$
  is obtained by concatenating some lifts of $a_{2i-1}$, $a_{2i}$,
  $a_{2i-1}^{-1}$, and $a_{2i}^{-1}$ in~$X_{k+1}$.

  We claim that if $\bar a_\ell$ passes through $\bar c^{(k)}_{\ell}$
  algebraically 0 times in $\bar X_{k}$, then for any lift $a'_\ell$
  in $X_{k+1}$ of $a_\ell$, $\bar a'_\ell$ is a loop null-homotopic
  (rel $\partial$) in~$\bar X_{k+1}$.  For, observe the following
  facts: first, by the first statement of~$(1_k)$, $\bar a_\ell$ never
  meets $\bar c^{(k)}_j$ in $\bar X_k$ for $j\ne \ell$.  So, from the
  construction of $X_{k+1}$ from $X_k$, it follows that $\bar a'_\ell$
  is contained in the circle $\bar c^{(k)}_{\ell}(g) \cup \bar
  c^{(k)}_{\ell}(g+e_{\ell}) \subset \bar X_{k+1}$ (see Figure
  \ref{figure:cover-scheme-of-bd_n}).  Second, from the hypothesis of
  the claim it follows that $\bar a'_\ell$ is a loop, and furthermore,
  has degree zero as a map into the circle $\bar c^{(k)}_{\ell}(g)
  \cup \bar c^{(k)}_{\ell}(g+e_{\ell})$.  Thus $\bar a'_\ell$ is
  null-homotopic.

  From the claim, it follows that $\bar\gamma_v$ is not null-homotopic
  in $\bar X_{k+1}$ only if $\bar a_\ell$ passes through $\bar
  c^{(k)}_{\ell}$ algebraically nonzero times for \emph{both}
  $\ell=2i-1$ and~$2i$.  This is equivalent to the condition that
  $v=*\in Y_k(g)$ by the second statement of~$(1_k)$.  In this case,
  looking at the lifts of $a_{2i-1}$, $a_{2i}$, $a_{2i-1}^{-1}$, and
  $a_{2i}^{-1}$ in $X_{k+1}$, it is easily verified that
  $\bar\gamma_v$ is of the desired form.
\end{proof}

\begin{proof}[Proof of Lemma~\ref{lemma:lift-behaviour-of-bd-infection-curve}]
  From the construction discussed at the beginning of this subsection,
  one can see that the 1-complex $X_n$ is as in
  Figure~\ref{figure:nth-cover-of-bd_n}.  Recall that $X_{n+1}$ is the
  double cover of $X_n$ determined by the assignment $c^{(n)} \to 1$,
  (other 1-cells) $\to 0$.  So, taking two disjoint copies of the
  1-complex $R$ obtained from $X_n$ by cutting $c^{(n)}_1 =
  c^{(n-1)}_1(0) \subset X_n$ and then attaching them appropriately,
  one obtains $X_{n+1}$, as illustrated in
  Figure~\ref{figure:nth-cover-of-bd_n}.  Let $e$ be the one of the
  copies $c^{(n-1)}_2(0)$ in $X_{n+1}$ as shown in
  Figure~\ref{figure:nth-cover-of-bd_n}.  Define a character
  $\phi\colon \pi_1(X_{n+1}) \to \Z_d$ by assigning $-s\in\Z_d$ to $e$
  and $0$ to other 1-cells.

  \begin{figure}[ht]
    \begin{center}
      \includegraphics[scale=.8]{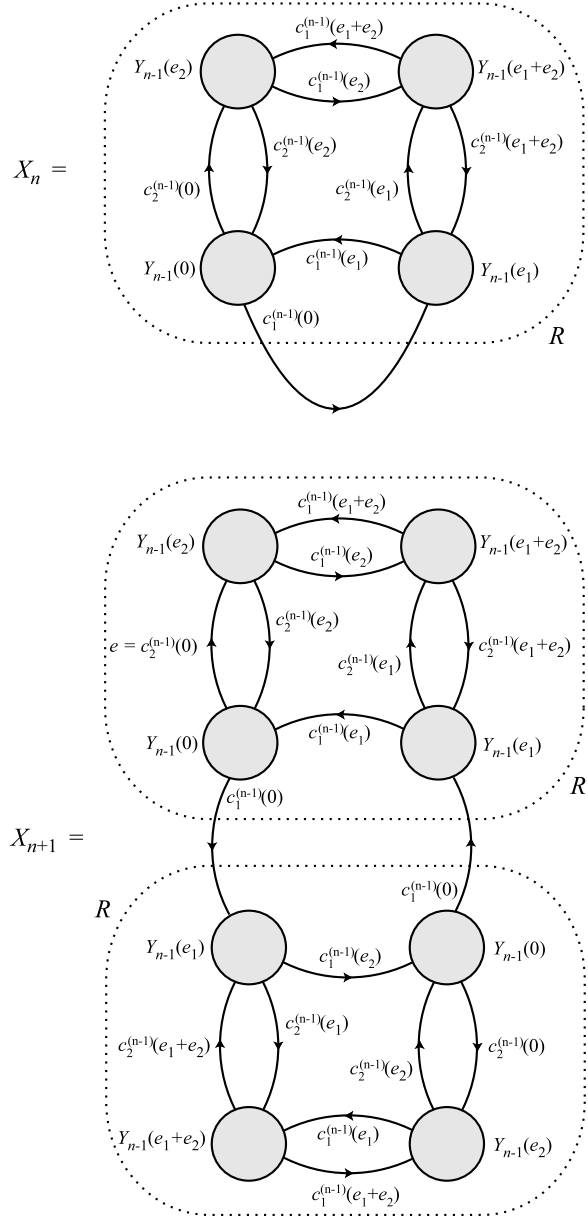}
    \end{center}
    \caption{The covers $X_{n}$ and $X_{n+1}$.}
    \label{figure:nth-cover-of-bd_n}
  \end{figure}

  \begin{figure}[ht]
    \begin{center}
      \includegraphics[scale=.65]{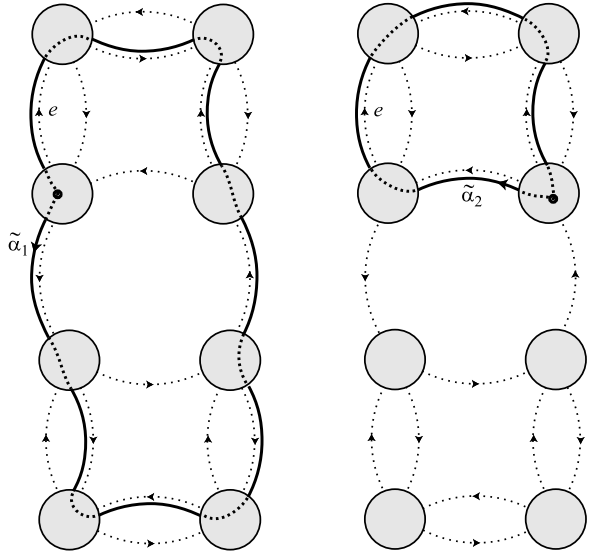}
    \end{center}
    \caption{Some lifts of $\alpha^2$ and $\alpha$ in $X_{n+1}$.}
    \label{figure:lifts-in-n+1st-cover-of-bd_n}
  \end{figure}


  For our purpose, it suffices to investigate loops
  $\tilde\alpha_j\in\mathcal{L}(\alpha,X_{n+1}|X_0)$ which are not
  null-homotopic in
  \[
  X'_{n+1} = (X_{n+1}\text{ with each } Y_{n-1}(-) \text{ in
    Figure~\ref{figure:nth-cover-of-bd_n} collapsed}),
  \]
  since those null-homotopic in $X'_{n+1}$ are in the kernel of $\phi$
  defined above.  Recall that, in
  Lemma~\ref{lemma:computation-of-lifts}, we gave a description of
  loops in $\mathcal{L}(\alpha,X_n|X_0)$ which are not null-homotopic
  in~$\bar X_n$.  Lifting those loops (or their square), one can
  easily sketch the $\tilde\alpha_j$ as curves in~$X'_{n+1}$.  From
  this it can be seen that two of the $\tilde\alpha_j$, say
  $\tilde\alpha_1$ and $\tilde\alpha_3$, are lifts of $\alpha^2$,
  i.e., $r_1=r_3=2$, and there are four other loops that are
  non-null-homotopic in $X'_{n+1}$, say
  $\tilde\alpha_2,\tilde\alpha_4,\tilde\alpha_5, \tilde\alpha_6$, with
  $r_2=r_4=r_5=r_6=1$.


  Also, it can be verified that exactly two of the $\tilde\alpha_j$
  pass through the 1-cell $e\subset X_{n+1}$; $\tilde\alpha_1$ passes
  through $e$ algebraically $-1$ times, and one of
  $\tilde\alpha_2,\tilde\alpha_4, \tilde\alpha_5, \tilde\alpha_6$, say
  $\tilde\alpha_2$, passes through $e$ algebraically $+1$ times.  We
  illustrate $\tilde\alpha_1$ and $\tilde\alpha_2$ in
  Figure~\ref{figure:lifts-in-n+1st-cover-of-bd_n}.  Therefore, it
  follows that $\phi(\tilde\alpha_1)=s$, $\phi(\tilde\alpha_3)=-s$,
  and all other elements in $\mathcal{L}(\alpha,X_{n+1}|X_0)$ are in
  the kernel of~$\phi$.
\end{proof}

\subsection{Bing doubles and the Levine-Tristram signature}
\label{subsection:bing-double-signature}

In this subsection, as a by-product of the proof of
Theorem~\ref{theorem:non-slice-torsion-bd_n}, we prove the following
generalization of the result of Harvey~\cite{Harvey:2006-1} and
Teichner:

\begin{theorem}
  \label{theorem:bd_n-and-signature}
    For any knot $K$ and any positive integer $n$, the Levine-Tristram
    signature function $\sigma_K$ is determined by $BD_n(K)$.  In
    particular, if $\sigma_K$ is nontrivial, then $BD_n(K)$ is not
    slice.
\end{theorem}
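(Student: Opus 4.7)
The plan is to follow the strategy of the proof of Theorem~\ref{theorem:non-slice-torsion-bd_n}, with two simplifications tailored to extracting the Levine-Tristram signature: use a 2-tower of $X=\bigvee^{2^n}S^1$ truncated to height $n$ (rather than $n+1$), and employ characters that send a distinguished pair of lifts of $\alpha$ to opposite values $\pm s$ while killing every other lift, so that the signature of the intersection-form defect directly isolates $2\sigma_K(\zeta_d^s)$.

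Let $M$ and $N$ be the zero-surgery manifolds of $BD_n$ and $BD_n(K)$, so that $N$ is obtained from $M$ by infection by $K$ along the curve $\alpha\subset M$. Let $X_n\to\cdots\to X_0=X=\bigvee^{2^n}S^1$ be the 2-tower built as in the proof of Theorem~\ref{theorem:non-slice-torsion-bd_n} but truncated at height $n$ (omitting the final $\Z_2$-cover), and pull it back via the 2-tower maps $X\to M$ and $N\to M$ (Proposition~\ref{proposition:t-equivalence-of-infected-manifold}) to obtain 2-towers $M_n\to\cdots\to M_0=M$ and $N_n\to\cdots\to N_0=N$. Because $[\alpha]$ lies in the $n$-th derived subgroup $\pi_1(X_0)^{(n)}$, every $\tilde\alpha_j\in\mathcal{L}(\alpha,X_n|X_0)$ is a loop with covering degree $r_j=1$.

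For each $d=2^N$ and each $s\in\Z_d$, I construct a character $\phi_n\colon\pi_1(X_n)\to\Z_d$ that sends a chosen pair of lifts $\tilde\alpha_1,\tilde\alpha_2$ to $s,-s$ and every other $\tilde\alpha_j$ to $0$. The existence of such $\phi_n$ follows from a combinatorial analysis of $X_n$, parallel to the proof of Lemma~\ref{lemma:lift-behaviour-of-bd-infection-curve} via Lemma~\ref{lemma:computation-of-lifts}, which computes the classes $[\tilde\alpha_j]\in H_1(X_n)$ and verifies that the desired multiset assignment $\{s,-s,0,\ldots,0\}$ respects every $\Z$-linear relation among them (in particular the transfer relation $\sum_j[\tilde\alpha_j]=0$ forced by $[\alpha]=0$ in $H_1(X_0)$). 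Pulling back to $\psi_n\colon\pi_1(N_n)\to\Z_d$, and using $\lambda(M_n,\phi_n)=0$ (by Lemma~\ref{lemma:vanishing-for-S^1xD^2}, since $M$ is a connected sum of $S^1\times S^2$'s) together with $\lambda_1(A,1)=0$, Corollary~\ref{corollary:lambda-of-infected-manifold} yields
\[
\lambda(N_n,\psi_n)=[\lambda_1(A,\zeta_d^s)]+[\lambda_1(A,\zeta_d^{-s})]\in L^0(\Q(\zeta_d)),
\]
where $A$ is a Seifert matrix of $K$. Since $\lambda_1(A,\omega)$ is the Levine-Tristram form and $\sigma_K(\bar\omega)=\sigma_K(\omega)$, the signature is
\[
\sign\lambda(N_n,\psi_n)=\sigma_K(\zeta_d^s)+\sigma_K(\zeta_d^{-s})=2\sigma_K(\zeta_d^s),
\]
and by Theorem~\ref{theorem:invariants-from-covering-tower}, $\sigma_K(\zeta_d^s)$ is determined by the concordance class of $BD_n(K)$ for every $d=2^N$ and every $s$.

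Since the $2^N$-th roots of unity are dense in $S^1$ and $\sigma_K$ is a step function with only finitely many discontinuities (at roots of $\Delta_K$), its values on these roots determine it completely; hence $\sigma_K$ is determined by the concordance class of $BD_n(K)$. The ``in particular'' part follows: if $\sigma_K\not\equiv 0$ then some $\sigma_K(\zeta_d^s)\neq 0$, giving a nonzero invariant, and Theorem~\ref{theorem:slice-obstruction-from-covering-tower} rules out $BD_n(K)$ being slice. The main obstacle is the combinatorial step in the previous paragraph, namely computing the $\Z[D_n]$-module structure of the subgroup of $H_1(X_n)$ generated by the $[\tilde\alpha_j]$ (for the deck group $D_n=\Gamma_0\times\cdots\times\Gamma_{n-1}$) and showing that the assignment $\{s,-s,0,\ldots,0\}$ is realized by some $\phi_n$; this can be carried out inductively using the cut-and-paste picture of Lemma~\ref{lemma:computation-of-lifts} together with the iterated-commutator expression $\alpha=x^{(n)}_1$ in the free group $\pi_1(X_0)=F_{2^n}$.
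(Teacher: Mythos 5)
Your overall strategy matches the paper's: truncate the $2$-tower at height $n$, choose a $\Z_d$-character on $X_n$ that sends exactly two lifts of $\alpha$ to $s$ and $-s$ and kills the rest, conclude via Corollary~\ref{corollary:lambda-of-infected-manifold} that $\lambda(N_n,\psi_n)=[\lambda_1(A,\zeta_d^s)]+[\lambda_1(A,\zeta_d^{-s})]$, take signatures, and use density of $2$-power roots of unity to recover $\sigma_K$. Two issues. First, a small imprecision: $\sign[\lambda_1(A,\omega)]$ is not literally $\sigma_K(\omega)$, since $\sigma_K$ is the two-sided average and they disagree at the finitely many jump points; one should write $\sign\lambda(N_n,\psi_n)=2\sign\lambda_1(A,\zeta_d^s)$ and then invoke density plus the step-function structure of $\sign\lambda_1(A,-)$, as the paper does. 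Easy to fix.

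Second, and more substantially, the central combinatorial step — existence of the character $\phi_n\colon\pi_1(X_n)\to\Z_d$ realizing the values $\{s,-s,0,\ldots,0\}$ on the lifts $\tilde\alpha_j$ — is left as a sketch, and your proposed route (compute the classes $[\tilde\alpha_j]\in H_1(X_n)$ and check that the assignment is compatible with all $\Z$-linear relations among them) is misdirected: since $X_n$ is (homotopy equivalent to) a finite $1$-complex, $H_1(X_n)$ is free abelian, and any assignment of $\Z_d$-values to $1$-cells already defines a character. The real work is not verifying compatibility of a prescribed multiset with relations, but computing which lifts of $\alpha$ hit which $1$-cell. The paper's proof does exactly this and is more direct: it assigns $s$ to the single $1$-cell $c^{(n-1)}_1(0)\subset X_n$ and $0$ to every other $1$-cell, and then reads off from Lemma~\ref{lemma:computation-of-lifts}~(1) that precisely two lifts (based at $*\in Y_{n-1}(0)$ and $*\in Y_{n-1}(e_2)$) cross this cell, with algebraic intersection $+1$ and $-1$, while all other lifts miss it. If you carry your plan through you would in any case need the same computation from Lemma~\ref{lemma:computation-of-lifts}, at which point the direct construction is simply available; as written, this step is a genuine gap.
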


For concreteness, we recall a precise definition of the
Levine-Tristram signature function.  For a Seifert matrix $A$ of a
knot $K$ and $\omega\in S^1 \subset \C$,
\[
\sign \lambda_1(A,\omega) = \sign\big((1-\omega)A +
(1-\bar\omega)A^T\big)
\]
is often called the \emph{$\omega$-signature} of~$K$.  It is known
that if $K$ is algebraically slice and $\omega$ is not a zero of the
Alexander polynomial of $K$, then the $\omega$-signature vanishes.
Note that the zero set of the Alexander polynomial is not invariant
under concordance.  This leads us to think of the average
\[
\sigma_K(\omega) = \lim \frac{\sign \lambda_1(A,\omega_+) +\sign
  \lambda_1(A,\omega_-)}{2}
\]
as a concordance invariant, where $\omega_+,\omega_- \in S^1$ approach
$\omega$ from different sides.  The function $\sigma_K\colon S^1 \to
\Z$ is referred to as the Levine-Tristram signature function of~$K$.

\begin{proof}[Proof of Theorem~\ref{theorem:bd_n-and-signature}]
  We use the notations of the previous subsections: let $N$ be the
  zero-surgery manifold of $BD_n(K)$, which is obtained from the
  zero-surgery manifold $M$ of $BD_n$ by infection by $K$
  along~$\alpha$, and $X \to M$ be the meridian map such that
  $[\alpha]\in \pi_1(M)=\pi_1(X)$ is represented by the loop
  $x^{(n)}_1$ in~$X$.  We consider the 2-tower $X_n \to \cdots \to
  X_0=X$ constructed above.  We need the following analogue of
  Lemma~\ref{lemma:lift-behaviour-of-bd-infection-curve}:
  
  \begin{lemma}
    For any $d>0$ and $0 \le s <d$, there is a character
    $\varphi_n\colon \pi_1(X_n) \to \Z_d$ such that all
    $\tilde\alpha_{j}\in \mathcal{L}(\alpha,X_n|X)$ are in the kernel
    of $\varphi_n$ except two, which are sent to $s$ and $-s\in\Z_d$
    by $\varphi_n$, respectively.
  \end{lemma}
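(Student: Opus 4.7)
The plan is to imitate the proof of Lemma~\ref{lemma:lift-behaviour-of-bd-infection-curve}, but in a substantially simplified form, because every lift of $\alpha=x^{(n)}_1$ in $X_n$ is already a loop. This was noted just before that lemma, as $[x^{(n)}_1]\in\pi_1(X_0)^{(n)}\subset\pi_1(X_n)$. Consequently the algorithm of Definition~\ref{definition:collection-of-lifts} produces $r_j=1$ for every element, and $\mathcal{L}(\alpha,X_n|X)$ consists of exactly one loop $\gamma_v$ for each 0-cell $v$ of $X_n$, namely the lift of $\alpha$ based at $v$. No passage to a further double cover is needed, in contrast to the situation of $X_{n+1}$ treated previously.

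I would then define $\varphi_n\colon\pi_1(X_n)\to\Z_d$ by assigning $s$ to the single 1-cell $c^{(n)}_1 = c^{(n-1)}_1(0)$ and $0$ to every other 1-cell of $X_n$. Since $X_n$ is a 1-complex, this prescription extends uniquely to a well-defined homomorphism, and the value $\varphi_n(\gamma_v)$ is then $s$ times the signed number of times $\gamma_v$ traverses $c^{(n)}_1$.

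The heart of the argument is a direct appeal to Lemma~\ref{lemma:computation-of-lifts}(1) with $k+1=n$ and $i=1$: the signed count of $\bar\gamma_v$ through $\bar c^{(n)}_1$ in $\bar X_n$ equals $+1$ when $v = *\in Y_{n-1}(0)$, $-1$ when $v = *\in Y_{n-1}(e_2)$, and $0$ otherwise. Because the collapse $X_n\to\bar X_n$ only crushes the vertices within each $Y_{n-1}(g)$ and leaves the connecting edges $c^{(n-1)}_i(g)$ intact, these signed counts coincide with those of $\gamma_v$ through $c^{(n)}_1$ in $X_n$ itself. Thus $\varphi_n$ sends exactly two members of $\mathcal{L}(\alpha,X_n|X)$ to $+s$ and $-s$ respectively, and annihilates every other member, as required.

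The only point requiring attention—rather than a genuine obstacle—is the bookkeeping to identify $c^{(n)}_1$ with $c^{(n-1)}_1(0)$ under the cut-paste description and to confirm that the two distinguished basepoints $*\in Y_{n-1}(0)$ and $*\in Y_{n-1}(e_2)$ are indeed distinct 0-cells of $X_n$. Both facts are built into the setup of the preceding subsection, so no new geometric ingredient beyond Lemma~\ref{lemma:computation-of-lifts} is needed; in effect the current lemma is the ``first-order'' version of Lemma~\ref{lemma:lift-behaviour-of-bd-infection-curve}, which is why it will suffice to recover only the Levine-Tristram signature information rather than the finer torsion-sensitive data obtained from $X_{n+1}$.
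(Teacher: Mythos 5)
Your proof is correct and is essentially the paper's own argument: both define $\varphi_n$ by assigning $s$ to the single 1-cell $c^{(n-1)}_1(0) = c^{(n)}_1$, zero elsewhere, and both invoke Lemma~\ref{lemma:computation-of-lifts}(1) with $k+1=n$, $i=1$ to identify the exactly two basepoints (namely $*\in Y_{n-1}(0)$ and $*\in Y_{n-1}(e_2)$) whose lifts meet $c^{(n)}_1$ algebraically $+1$ and $-1$ times. Your explicit remarks that every lift of $\alpha$ in $X_n$ is already a loop (so each $r_j = 1$) and that the collapse $X_n \to \bar X_n$ does not affect algebraic counts through the surviving edge $c^{(n)}_1$ are just making explicit what the paper leaves tacit.
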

  
  \begin{proof}
    Recall that the 1-complex $X_n$ is as in
    Figure~\ref{figure:nth-cover-of-bd_n}.  Define $\phi\colon
    \pi_1(X_n) \to \Z_d$ by assigning $s\in\Z_d$ to the 1-cell
    $c^{(n-1)}_1(0)$, and $0\in\Z_d$ to other 1-cells of~$X_n$.

    By Lemma~\ref{lemma:computation-of-lifts}~(1), all lifts of
    $\alpha$ in $X_n$ are killed by $\phi$ except those based at $*\in
    Y_{k-1}(0)$ or $*\in Y_{k-1}(e_2)$, which we denote by
    $\tilde\alpha_1$ and~$\tilde\alpha_2$.  We illustrate
    $\tilde\alpha_1$ and $\tilde\alpha_2$ in
    Figure~\ref{figure:lifts-in-nth-cover-of-bd_n} (a dot on each
    $\tilde\alpha_i$ represents the point that $\tilde\alpha_i$ is
    based at).  Note that $\tilde\alpha_1$ and $\tilde\alpha_2$ meet
    the 1-cell $c^{(n-1)}_1(0)$ $+1$ and $-1$ times algebraically,
    respectively.  Since $s$ is assigned to $c^{(n-1)}_{1}(0)$, we
    have $\phi(\tilde\alpha_1)=s$ and $\phi(\tilde\alpha_2)=-s$.
    \begin{figure}[ht]
      \begin{center}
        \includegraphics[scale=.65]{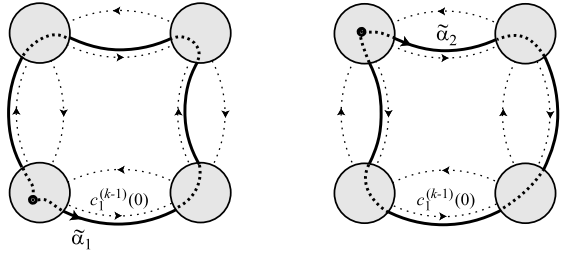}
      \end{center}
      \caption{Some lifts of $\alpha$ in $X_{n}$.}
      \label{figure:lifts-in-nth-cover-of-bd_n}
    \end{figure}
  \end{proof}

  We continue the proof of Theorem~\ref{theorem:bd_n-and-signature}.
  For a given $d=2^r$ and $0\le s<d$, let $\varphi_n\colon
  \pi_1(X_n)=\pi_1(M_n) \to \Z_d$ be a map given by the above lemma.
  Let $\psi_n\colon \pi_1(N_n) \to \Z_d$ be the map induced
  by~$\varphi_n$.  Since $BD_n$ is a trivial link, the invariant
  $\lambda(M_n,-)$ always vanishes by
  Theorem~\ref{theorem:slice-obstruction-from-covering-tower}.
  Therefore, by the above lemma and
  Corollary~\ref{corollary:lambda-of-infected-manifold}, we have
  \[
  \lambda(N_n,\psi_n) = [\lambda_1(A,\zeta_d^s)] +
  [\lambda_1(A,\zeta_d^{-s})] \in L^0(\Q(\zeta_d))
  \]
  where $A$ is a Seifert matrix of~$K$.  We apply the signature map
  $\sign\colon L^0(\Q(\zeta_{d})) \to \Z$ discussed in
  Section~\ref{section:homology-cobordism-invariants}.  Since
  $\lambda_1(A,\omega)$ and $\lambda_1(A,\omega^{-1})$ are the
  transpose of each other, they have the same signature.  So we have
  \[
  \sign \lambda(N_n,\psi_n)= 2\sign \lambda_1(A,\zeta_{d}^s).
  \]
  It is known that the $\omega$-signature has nontrivial jumps only at
  finitely many points on~$S^1$.  Since $\{\zeta_{2^r}^s \mid r,s \in
  \Z\}$ is a dense subset of $S^1$, it follows that $\sigma_K$ is
  determined by $\{\lambda(N_n,\psi_n)\mid r,s\in \Z\}$.  In
  particular, if $BD_n(K)$ is slice, then $\sigma_K$ is trivial by
  Theorem~\ref{theorem:slice-obstruction-from-covering-tower}.
\end{proof}

\section{Intersection form defects of $(n)$-solvable manifolds and
  links}
\label{section:obstruction-to-solvability}


Following \cite{Cochran-Orr-Teichner:2002-1}, we say that a closed
3-manifold $M$ is \emph{$(n)$-solvable} if there is a spin 4-manifold
$W$ with $\pi=\pi_1(W)$ satisfying the following: $\partial W=M$, the
inclusion induces an isomorphism $H_1(M) \cong H_1(W)$, and there are
elements
\[
u_1,\ldots,u_r, v_1,\ldots,v_r\in H_2(W;\Z[\pi/\pi^{(n)}])
\]
such that $2r=b_2(W)$ and $\lambda_n(v_i, v_j)=0$, $\mu_n(v_i)=0$,
$\lambda_n(v_i,u_j)=\delta_{ij}$ for any $i, j$, where $\lambda_n$ is
the $\Z[\pi/\pi^{(n)}]$-valued intersection pairing on
$H_2(W;\Z[\pi/\pi^{(n)}])$ and $\mu_n$ is the
$(\Z[\pi/\pi^{(n)}]/\text{involution})$-valued self-interesection
quadratic form on $H_2(W;\Z[\pi/\pi^{(n)}])$.  In this case $W$ is
called an \emph{$(n)$-solution of~$M$}.  A~link $L$ is
\emph{$(n)$-solvable} if the zero-surgery manifold of $L$ is
$(n)$-solvable.

\begin{remark}
  In this paper, all results on solvability are proved without using
  that a solution $W$ is spin nor that the self-intersection $\mu_n$
  vanishes on the~$v_i$.
\end{remark}

\subsection{Obstructions to being $(n)$-solvable}

In this subsection we show that our invariants from $p$-towers of
height $<n$ vanishes for $(n)$-solvable manifolds and links:

\begin{theorem}
  \label{theorem:Witt-invariant-for-solvable-manifold}
  Suppose $W$ is an $(n)$-solution of $M$ and $H_1(M)$ is $p$-torsion
  free.  Then the following holds:
  \begin{enumerate}
  \item The inclusion $M \to W$ is a $p$-tower map of
    height~$n$, in the sense of
    Definition~\ref{definition:t-equivalence}.
  \item For any $p$-tower
    \[
    M_{n-1} \to \cdots \to M_1 \to M_0 = M
    \]
    and for any $\phi\colon \pi_1(M_{n-1}) \to \Z_d$ with $d$ a power
    of $p$, $\lambda(M_{n-1},\phi)=0$ in $L^0(\Q(\zeta_d))$.
  \end{enumerate}
\end{theorem}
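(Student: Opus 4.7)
The plan is to prove (1) by inducting up the tower, using the $(n)$-solution structure to control homology at each level, and then to deduce (2) by lifting the tower and character of $M$ to $W$ and producing a Lagrangian in the $\K$-coefficient intersection form of the 4-manifold $W_{n-1}$.

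For part (1), I would prove inductively that for $i=0,1,\ldots,n$ the pullback cover $W_i$ of $W$ is well-defined and the inclusion $M_i\hookrightarrow W_i$ induces a bijection $\Hom(\pi_1(W_i),\Gamma)\xrightarrow{\cong}\Hom(\pi_1(M_i),\Gamma)$ for every abelian $p$-group~$\Gamma$. The base case $i=0$ follows from $H_1(M)\cong H_1(W)$ together with the $p$-torsion-freeness of $H_1(M)$. For the inductive step one needs $H_1(M_{i+1};\Z_p)\xrightarrow{\cong} H_1(W_{i+1};\Z_p)$; the key input is that the Lagrangian $v_1,\ldots,v_r \in H_2(W;\Z[\pi/\pi^{(n)}])$ from the $(n)$-solution restricts to give a half-rank null submodule of the twisted intersection form on $W_i$ over $\Z_{(p)}[\pi_1(W_i)/\pi_1(W_i)^{(n-i)}]$. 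Combined with Lemma~\ref{lemma:connectedness-of-p-covers} (the $\Z_p$-version of Levine's lemma) and Poincar\'e--Lefschetz duality on $W_i$ rel $M_i$, this forces the required isomorphism on $H_1$ of the next-level covers.

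For part (2), after (1) the given tower and character on $M$ pull back to a tower $W_{n-1}\to\cdots\to W_0=W$ and a character $\tilde\phi\colon \pi_1(W_{n-1})\to \Z_d$ with $\partial W_{n-1}=M_{n-1}$, so
\[
\lambda(M_{n-1},\phi) = [\lambda_\K(W_{n-1})] - i^*_\K\sigma(W_{n-1}), \qquad \K=\Q(\zeta_d).
\]
The critical observation is that the combined representation $\pi_1(W)\to$ (iterated extension of $\Gamma_0,\ldots,\Gamma_{n-2},\Z_d$) defining the twist is solvable of derived length at most $n$, hence factors through $\pi_1(W)/\pi_1(W)^{(n)}$. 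Consequently the $\K$-coefficient intersection form on $W_{n-1}$ is the image of $\lambda_n$ on $W$ under this representation (via Shapiro's lemma passing to an induced representation on $\pi_1(W)$). Pushing the Lagrangian $v_1,\ldots,v_r$ through yields a self-annihilating subset of $\bar H_2(W_{n-1};\K)$; an Euler characteristic rank count shows its span has exactly half the $\K$-dimension, hence is a Lagrangian and $[\lambda_\K(W_{n-1})]=0$ in $L^0(\K)$. The same Lagrangian argument with trivial coefficients (so $\{v_i\}$ pushed into $H_2(W_{n-1};\Q)$) forces $\sigma(W_{n-1})=0$, and the conclusion follows.

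The principal obstacle is part (1): showing that the Lagrangian provides enough control on each successive cover to upgrade from a $\Z_{(p)}$-rank equality to an honest $p$-completed isomorphism on $H_1$. This is where the $p$-torsion-freeness hypothesis on $H_1(M)$ becomes decisive, propagating into each subsequent cover by the standard transfer. A secondary technical point in part (2) is checking that the pushed Lagrangian genuinely attains half-rank after the twist; this is handled by using Shapiro's lemma to convert the twisted $\K$-coefficient computation on $W_{n-1}$ into an untwisted computation on a single further cyclic cover, where Euler characteristic bounds the total $\K$-rank of $\bar H_2$ and matches the rank supplied by the Lagrangian.
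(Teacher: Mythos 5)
Your proposal correctly identifies the two main structural ingredients of the paper's proof: the Lagrangian/dual pair $\{u_i,v_i\}\subset H_2(W;\Z[\pi/\pi^{(n)}])$ coming from the $(n)$-solution, pushed forward to covers via the containment $\pi^{(n)}\subset\pi_1(W_n)$, and Levine's lemma (in the form of Lemma~\ref{lemma:connectedness-of-p-covers}) to propagate $\Z_p$-coefficient acyclicity up the tower. The Shapiro's-lemma reframing of the twisted computation on $W_{n-1}$ as a computation with an induced coefficient system on $W$ is a legitimate alternative to the paper's explicit intersection-number calculation with the pushed-forward classes $u_{ik},v_{jl}$, and your observation that the induced representation is trivial on $\pi^{(n)}$ (because $\pi^{(n)}\subset\ker\tilde\phi$ and $\pi^{(n)}$ is normal in $\pi$) is correct.

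The genuine gap is the rank count. You assert that ``an Euler characteristic rank count shows [the span of the pushed Lagrangian] has exactly half the $\K$-dimension,'' and later that ``Euler characteristic bounds the total $\K$-rank of $\bar H_2$.'' This is false in the direction you need: for the nontrivial coefficient system one has $b_0(\K)=b_4(\K)=0$, so $\chi(W_{n-1})=-b_1(\K)+b_2(\K)-b_3(\K)$, i.e.\ $b_2(\K)=\chi(W_{n-1})+b_1(\K)+b_3(\K)$. Since $b_1(\K)$ and $b_3(\K)$ are not a priori controlled, the Euler characteristic gives a \emph{lower} bound on $b_2(\K)$ (once $b_3$ is killed by Levine's lemma and duality), but you already have the lower bound $b_2\ge 2rs$ from the dual pair; what you need is the \emph{upper} bound $b_2\le 2rs$, and for that you need to control $b_1(\K)$. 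The same issue reappears in your inductive step for part~(1), where the claim that the restricted Lagrangian is ``half-rank'' on each $W_i$ again requires an a priori upper bound on $b_2(W_i;\Z_p)$. The paper supplies exactly the missing tool: Lemma~\ref{lemma:b_2-of-p-covering} ($b_2(\tilde V;\Z_p)\le p^a\, b_2(V;\Z_p)$ for a connected $p^a$-fold $p$-cover) and its twisted analogue Lemma~\ref{lemma:twisted-b_2-of-p-covering} ($b_2(V;\K)\le b_2(V;\Z_p)$ for nontrivial $\K$-coefficients), both proved via Levine's lemma following the pattern of \cite[Prop.~2.1]{Cha:2006-1}. Without those lemmas, or an equivalent bound on $b_1$ of the covers, the Lagrangian argument cannot conclude metabolicity, and your proof does not close.
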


Theorem~\ref{theorem:solvablity-obstruction} is an immediate
consequence of
Theorem~\ref{theorem:Witt-invariant-for-solvable-manifold}.  

\begin{proof}[Proof of
  Theorem~\ref{theorem:Witt-invariant-for-solvable-manifold}~(1)]
  We use an induction on~$n$.  Fix $n$, and if $n>0$, then suppose that
  \[
  W_n \to W_{n-1} \to \cdots \to W_0=W
  \]
  is a $p$-tower of height $n$ for $W$, and let $M_i=\partial W_i$ for
  $i\le n$.  As we did in
  Section~\ref{section:homology-cobordism-invariants}, it suffices to
  show that $H_1(M_n;\Z_r)\to H_1(W_n;\Z_r)$ is an isomorphism for any
  power $r$ of~$p$.  Since $H_1(M;\Z_p)\cong H_1(W;\Z_p)$,
  $H_1(W,M;\Z_p)=0$.  By applying Levine's
  Lemma~\ref{lemma:levine's-lemma} inductively, we have
  $H_1(W_n,M_n;\Z_p)=0$, and so $H_1(W_n,M_n;\Z_r)=0$.  It follows
  that $H_1(M_n;\Z_r)\to H_1(W_n;\Z_r)$ is surjective.  So, it
  suffices to prove:
  
  \begin{assertion}
    \label{assertion:intersection-form-over-Z_p^a}
    $H_2(W_n;\Z_r) \to H_2(W_n,M_n;\Z_r)$ is an isomorphism.
  \end{assertion}

  To prove Assertion~\ref{assertion:intersection-form-over-Z_p^a}, we
  investigate the intersection form of~$W_n$.  Let $W^{(n)}$ be the
  cover of $W$ corresponding to the $n$th derived subgroup $\pi^{(n)}$
  of~$\pi$.  Since $W$ is an $(n)$-solution, there are
  \[
  u_1,\ldots,u_r, v_1,\ldots,v_r \in
  H_2(W;\Z[\pi/\pi^{(n)}])=H_2(W^{(n)})
  \]
  such that $\lambda_n(u_i,u_j)=0$ and
  $\lambda_n(u_i,v_j)=\delta_{ij}$ where $r=\frac12 b_2(W)$.  Note
  that (the $\pi^{(n)}$-coset of) an element $g\in \pi$ acts on
  $H_2(W^{(n)})$ via the covering transformation on~$W^{(n)}$.  (As a
  convention we assume that it is a right action.)

  Fixing basepoints of the covers $W_i$, we can regard $\pi_1(W_i)$ as
  a (possibly non-normal) subgroup of~$\pi$.  Since each $W_i \to
  W_{i-1}$ is an abelian cover, we have $\pi^{(n)} \subset
  \pi_1(W_n)$, that is, $W^{(n)}$ is a cover of $W_n$.  So there is an
  induced map $H_2(W^{(n)}) \to H_2(W_n)$.  Note that the index
  $s=[\pi:\pi_1(W_n)]$ is finite.  Choosing coset representatives
  $g_k\in \pi$, we write the right cosets of the subgroup $\pi_1(W_n)
  \subset \pi$ as $\pi_1(W_n)g_k$ for $1\le k\le s$.  Let $u_{ik}$,
  $v_{ik} \in H_2(W_n)$ be the images of $u_i\cdot g_k$, $v_i\cdot g_k
  \in H_2(W^{(n)})$, respectively, where $\mathstrut\cdot g_k$ denotes
  the action of $g_k$ on $H_2(W^{(n)})$.
  
  For a 4-manifold $V$, denote the untwisted $\Z$-valued
  intersection pairing on $H_2(V)$ by $I_V(-,-)$.

  \begin{assertion}
    \label{assertion:lagrangian-and-dual}
    $I_{W_n}(u_{ik}, u_{jl})=0$ and
    $I_{W_n}(u_{ik},v_{jl})=\delta_{ij}\delta_{kl}$ for any $i, j, k,
    l$.
  \end{assertion}
  
  \begin{proof}
    Note that
    \[
    0 = \lambda_n(u_i,u_j) = \sum_{g\pi^{(n)} \in \pi/\pi^{(n)}}
    I_{W^{(n)}}(u_i,u_j\cdot g) \cdot (g\pi^{(n)}) \quad\text{in }
    \Z_p[\pi/\pi^{(n)}].
    \]
    It follows that $I_{W^{(n)}}(u_i,u_j\cdot g)=0$ for any $g\in \pi$.
    Therefore
    \begin{align*}
      I_{W_n}(u_{ik},u_{jl}) &= \sum_{g\pi^{(n)} \in
        \pi_1(W_n)/\pi^{(n)}}
      I_{W^{(n)}}(u_i\cdot g_k, u_j \cdot g_l g) \\
      &= \sum_{g\pi^{(n)}\in \pi_1(W_n)/\pi^{(n)}} I_{W^{(n)}}(u_i ,
      u_j \cdot g_l^{\vphantom{-1}} g g_k^{-1}) = 0.
    \end{align*}
    Similarly, for $g\in\pi$,
    \[
    I_{W^{(n)}}(u_i,v_j\cdot g)=
    \begin{cases}
      1 & \text{if } i = j \text{ and } g \in \pi^{(n)} \\
      0 & \text{otherwise}
    \end{cases}
    \]
    and
    \[
    I_{W_n}(u_{ik}, v_{jl}) = \sum_{g\pi^{(n)}\in\pi_1(W_n)/\pi^{(n)}}
    I_{W^{(n)}}(u_i, v_j \cdot g_l^{\vphantom{-1}}g g_k^{-1}).
    \]
    Note that $g_l^{\vphantom{-1}} g g_k^{-1} \in \pi^{(n)}$ if and
    only if $g g_k^{-1}g_l^{\vphantom{-1}} \in \pi^{(n)}$ since
    $\pi^{(n)}$ is a normal subgroup.  If
    $gg_k^{-1}g_l^{\vphantom{-1}} \in \pi^{(n)}$ for $g\in
    \pi_1(W_n)$, then $k=l$ since $\pi^{(n)} \subset \pi_1(W_n)$, and
    consequently $g\in \pi^{(n)}$.  It follows that $I_{W_k}(u_{ik},
    v_{jl})=\delta_{ij} \delta_{kl}$.
  \end{proof}  

  We denote the $i$th Betti number of a space or a pair by $b_i(-)$,
  and the $\Z_p$-coefficient Betti number by $b_i(-;\Z_p) =
  \rank_{\Z_p}H_i(-;\Z_p)$.
  
  \begin{assertion}
    \label{assertion:b_2-of-W_n}
    $b_2(W_n)=2rs=b_2(W_n;\Z_p)$.
  \end{assertion}

  \begin{proof}
    First we consider the case of $n=0$.  Recall the assumption that
    $H_1(M)$ is $p$-torsion free.  Since $H_1(M)\cong H_1(W)$,
    $b_1(W;\Z_p)=b_1(W)$.  Since $M\to W$ induces an
    $H_1$-isomorphism, $b_3(W;\Z_p)=b_1(W,M;\Z_p)=0$ and
    $b_3(W)=b_1(W,M)=0$.  Since the $\Z_p$-coefficient Euler
    characteristic of $W$ is equal to the integral coefficient Euler
    characteristic, we have $b_2(W;\Z_p)=b_2(W)=2r$.

    For the case of $n\ge 1$, we need the following lemma, whose proof
    is postponed:
  
    \begin{lemma}
      \label{lemma:b_2-of-p-covering}
      Suppose $V$ is a compact 4-manifold.  Then for any connected
      regular $p^a$-fold cover $\tilde V$ of $V$, $b_2(\tilde V;\Z_p)
      \le p^a\cdot b_2(V;\Z_p)$.
    \end{lemma}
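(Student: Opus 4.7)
The plan is to reduce the assertion to the case of a single $\Z_p$-fold cover and then establish the bound in that case via a filtration argument on the coefficient system $\Z_p[\Z_p]$.

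First, since every finite $p$-group is nilpotent, the group $\Gamma$ admits a normal series $\{e\}=\Gamma_0 \triangleleft \Gamma_1 \triangleleft \cdots \triangleleft \Gamma_a = \Gamma$ in which each $\Gamma_i$ is normal in $\Gamma$ and each successive quotient $\Gamma_{i+1}/\Gamma_i$ is cyclic of order $p$. The associated tower
\[
\tilde V = V^{(0)} \to V^{(1)} \to \cdots \to V^{(a)} = V,\qquad V^{(i)}=\tilde V/\Gamma_i,
\]
presents $\tilde V \to V$ as a composition of $a$ connected regular $\Z_p$-covers; connectedness at every stage is automatic because $\pi_1(V) \twoheadrightarrow \Gamma$ surjects onto each quotient $\Gamma/\Gamma_i$. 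Iterating the $\Gamma=\Z_p$ case $a$ times would then give the desired bound $b_2(\tilde V;\Z_p) \le p^a b_2(V;\Z_p)$.

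Next, for a connected regular $\Z_p$-cover $W\to V$ I will identify $H^*(W;\Z_p)$ with $H^*(V;\Z_p[\Gamma])$ (the standard identification with cohomology in the local coefficient system induced by the classifying map $\pi_1(V) \to \Gamma$). Writing $\Z_p[\Gamma]=\Z_p[t]/(t-1)^p$, this is a local Artinian $\Z_p$-algebra with maximal ideal $I=(t-1)$ satisfying $I^p=0$. The filtration
\[
\Z_p[\Gamma] = I^0 \supset I \supset I^2 \supset \cdots \supset I^{p-1} \supset I^p = 0
\]
consists of $\Z_p[\Gamma]$-submodules whose successive quotients $I^k/I^{k+1}$ are each one-dimensional over $\Z_p$ with trivial $\Gamma$-action (because $(t-1)$ carries $I^k$ into $I^{k+1}$). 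The short exact sequences $0\to I^{k+1}\to I^k\to \Z_p\to 0$ of local systems on $V$ give long exact cohomology sequences from which the inequality
\[
\dim H^i(V;I^k) \le \dim H^i(V;I^{k+1}) + \dim H^i(V;\Z_p)
\]
follows at once. A descending induction on $k$, starting from $H^i(V;I^p)=0$, then yields $\dim H^i(V;\Z_p[\Gamma]) \le p\cdot b_i(V;\Z_p)$, i.e., $b_i(W;\Z_p) \le p\cdot b_i(V;\Z_p)$, completing the $\Z_p$ case.

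The main conceptual step is recognizing this filtration. A natural first instinct is to combine the Cartan-Leray spectral sequence $E^2_{s,t}=H_s(\Gamma;H_t(\tilde V;\Z_p))\Rightarrow H_{s+t}(V;\Z_p)$ with Nakayama's lemma for the local ring $\Z_p[\Z_p]$, which yields $b_i(\tilde V;\Z_p)\le p\cdot\dim (H_i(\tilde V;\Z_p))_{\Gamma}$; unfortunately the spectral sequence only bounds the $E^\infty$-piece $E^\infty_{0,i}$ (not $E^2_{0,i}$) by $b_i(V;\Z_p)$, and the incoming differentials $d^2\colon E^2_{2,i-1}\to E^2_{0,i}$ and $d^3\colon E^3_{3,i-2}\to E^3_{0,i}$ obstruct recovering the full bound. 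The filtration by powers of the augmentation ideal circumvents this cleanly by replacing one unwieldy spectral sequence with a sequence of length-two extensions, each contributing exactly one copy of $\dim H^i(V;\Z_p)$ to the total.
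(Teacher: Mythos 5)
Your proof is correct, and it takes a genuinely different route from the one the paper gives.

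The paper's proof is deferred: it says the lemma follows by modifying the proof of Proposition~2.1 of~\cite{Cha:2006-1}, whose essential input is a pair of inequalities bounding $b_1$ of relative pairs in $p$-group covers (the analogue of Lemma~2.5 there), proved using Levine's Lemma~\ref{lemma:levine's-lemma} together with topological manipulations of $1$-complexes inside the pair; assembling those into a bound on $b_2$ then exploits the compact $4$-manifold structure (Poincar\'e duality, Euler characteristic considerations). Your argument, by contrast, is purely algebraic once the cover is decomposed: the subnormal series in the $p$-group reduces to a single $\Z_p$-cover, and then the identification $H^*(W;\Z_p)\cong H^*(V;\Z_p[\Z_p])$ together with the filtration of $\Z_p[\Z_p]=\Z_p[t]/(t-1)^p$ by powers of the augmentation ideal yields $\dim H^i(V;\Z_p[\Gamma])\le p\cdot b_i(V;\Z_p)$ via subadditivity along the resulting short exact sequences of local systems. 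Everything checks: the chief series with each $\Gamma_i\triangleleft\Gamma$ and $\Gamma_{i+1}/\Gamma_i\cong\Z_p$ exists because $\Gamma$ is a finite $p$-group, connectedness of each intermediate cover follows from surjectivity of $\pi_1(V)\to\Gamma/\Gamma_i$, each $V^{(i)}$ is again a compact $4$-manifold so the iteration is legitimate, and the $\Gamma$-action on each $I^k/I^{k+1}$ is trivial since $t\equiv 1$ mod $I$. What your approach buys is generality and transparency: it establishes $b_i(\tilde V;\Z_p)\le p^a\,b_i(V;\Z_p)$ for every $i$ and for any space with finitely generated $\Z_p$-homology, with no appeal to manifold duality, and it avoids importing an external proof. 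The paper's route is the one that parallels the cited prior work, and its Lemma 2.5-style inequalities are also reused for the twisted-coefficient analogue (Lemma~\ref{lemma:twisted-b_2-of-p-covering}); but for this particular lemma your filtration argument is the more self-contained one. Your aside about the Cartan-Leray spectral sequence is also accurate: incoming differentials into $E^r_{0,i}$ prevent the naive Nakayama bound from closing, and the augmentation-ideal filtration is exactly the device that sidesteps that.
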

    
    Applying this lemma to $W_0,\ldots,W_n$ inductively, we have
    \[
    b_2(W_n) \le b_2(W_n;\Z_p) \le s \cdot b_2(W;\Z_p)=s \cdot b_2(W)
    = 2rs.
    \]
    By Assertion~\ref{assertion:lagrangian-and-dual}, $b_2(W_n) \ge
    2rs$.  From this Assertion~\ref{assertion:b_2-of-W_n} follows.
  \end{proof}
  
  Now we continue the proof of
  Assertion~\ref{assertion:intersection-form-over-Z_p^a}.  From
  Assertion~\ref{assertion:b_2-of-W_n} and the universal coefficient
  theorem
  \[
  H_2(W_n;\Z_p)=(H_2(W_n)\otimes \Z_p) \oplus
  \operatorname{Tor}(H_1(W_n), \Z_p),
  \]
  it follows that $H_2(W_n)$ is $p$-torsion free and $H_1(W_n)$ is
  torsion and $p$-torsion free.  So, for any power $r$ of $p$, we have
  $H_2(W_n;\Z_r)=(H_2(W_n)/\text{torsion})\otimes \Z_r$ by the
  universal coefficient theorem.  By a similar argument, we have
  $H_2(W_n,M_n;\Z_r)=(H_2(W_n,M_n)/\text{torsion})\otimes \Z_r$.  By
  Assertions \ref{assertion:lagrangian-and-dual}
  and~\ref{assertion:b_2-of-W_n}, the map 
  \[
  H_2(W_n)/\text{torsion} \to H_2(W_n,M_n)/\text{torsion}
  \]
  is an isomorphism.  From this
  Assertion~\ref{assertion:intersection-form-over-Z_p^a} follows.
  This completes the proof of
  Theorem~\ref{theorem:Witt-invariant-for-solvable-manifold}~(1).
\end{proof}

\begin{proof}
  [Proof of
  Theorem~\ref{theorem:Witt-invariant-for-solvable-manifold} (2)]

  By Theorem~\ref{theorem:Witt-invariant-for-solvable-manifold} (1),
  there is a $p$-tower
  \[
  W_{n-1} \to \cdots \to W_1 \to W_0 = W
  \]
  such that $\partial W_i = M_i$ and $\Hom(\pi_1(W_{n-1}),\Z_d)
  \approx \Hom(\pi_1(M_{n-1}),\Z_d)$.  Thus the given $\phi\colon
  \pi_1(M_{n-1}) \to \Z_d$ extends to $\psi\colon \pi_1(W_{n-1}) \to
  \Z_d$.  From this it follows that $\lambda(M_{n-1},\phi)$ is
  well-defined as an element in $L^0(\Q(\zeta_d))$.

  We will show that $\lambda(M_{n-1},\phi)$ vanishes by investigating
  intersection form of the bounding 4-manifold~$W_{n-1}$.  By applying
  Assertions \ref{assertion:lagrangian-and-dual}
  and~\ref{assertion:b_2-of-W_n} of the proof of
  Theorem~\ref{theorem:Witt-invariant-for-solvable-manifold} (1) to
  $n-1$, it follows that the untwisted intersection form on
  $H_2(W_{n-1};\Q)$ is metabolic and so the ordinary signature
  $\sigma(W_{n-1})$ vanishes.

  It remains to show that the intersection form
  $\lambda^{\Q(\zeta_d)}_{W_{n-1}}$ on $H_2(W_{n-1};\Q(\zeta_d))$ is
  Witt trivial.  Indeed, we can construct a ``Lagrangian'' and its
  dual for $\lambda^{\Q(\zeta_d)}_{W_{n-1}}$, as we did for the
  untwisted intersection form $I_{W_n}$ in
  Assertion~\ref{assertion:lagrangian-and-dual} of the proof of
  Theorem~\ref{theorem:Witt-invariant-for-solvable-manifold}~(1).
  Details are as follows.  Let $u_i, v_i \in
  H_2(W;\Z[\pi/\pi^{(n)}])=H_2(W^{(n)})$, where $1 \le i \le r =
  \frac12 b_2(W)$, be the elements such that $\lambda_n(u_i,u_j)=0$
  and $\lambda_n(u_i,v_j)=\delta_{ij}$ as before.  Let
  $\pi_1(W_{n-1})g_k$ be the right cosets of $\pi_1(W_{n-1}) \subset
  \pi=\pi_1(W)$, where $1\le k \le s=[\pi:\pi_1(W_{n-1})]$.  Let
  $u_{ik}$, $v_{jl} \in H_2(W_n)$ be the images of $u_i\cdot g_k$,
  $v_j\cdot g_l$.  Similarly to
  Assertion~\ref{assertion:lagrangian-and-dual} of the proof of
  Theorem~\ref{theorem:Witt-invariant-for-solvable-manifold} (1), for
  any $h\in \pi_1(W_{n-1})$ we have
  \begin{align*}
    I_{W_n}(u_{ik},u_{jl}\cdot h) &= \sum_{g\pi^{(n)} \in
      \pi_1(W_n)/\pi^{(n)}} I_{W^{(n)}} (u_i, u_j\cdot
    g^{\vphantom{-1}}_\ell h g g_k^{-1}) = 0,
    \\
    I_{W_n}(u_{ik},v_{jl}\cdot h) & = \sum_{g\pi^{(n)} \in
      \pi_1(W_n)/\pi^{(n)}} I_{W^{(n)}} (u_i, v_j\cdot
    g^{\vphantom{-1}}_\ell h g g_k^{-1})
    \\
    &=
    \begin{cases}
      1 & \text{if $i=j$, $k=l$, and $h\in \pi_1(W_n)$} \\
      0 & \text{otherwise}.
    \end{cases}
  \end{align*}
  To obtain the last equality, observe that the concerned summand can
  be nonzero if and only if $hgg_k^{-1}g_l^{\vphantom{-1}}\in
  \pi^{(n)}$; if it is the case, then $k$ should be equal to $l$ since
  since $h,g\in \pi_1(W_{n-1})$, and consequently $g$ should be in the
  coset $h^{-1}\pi^{(n)}$.  Since $g\in \pi_1(W_n)$, this can occur
  only when $h\in\pi_1(W_n)$.

  Let $\lambda^{\Z[\Z_d]}_{W_{n-1}}$ be the $\Z[\Z_d]$-valued
  intersection form on $H_2(W_{n-1};\Z[\Z_d]) = H_2(W_n)$, where
  $\Z_d$ is identified with $\pi_1(W_{n-1})/\pi_1(W_n) =$ the covering
  transformation group of $W_{n} \to W_{n-1}$.  From the above
  computation, it follows that
  \begin{align*}
    \lambda^{\Z[\Z_d]}_{W_{n-1}}(u_{ik}, u_{jl}) &= 0,\\
    \lambda^{\Z[\Z_d]}_{W_{n-1}}(u_{ik}, v_{jl}) &= \sum_{h\pi_1(W_n)
      \in \frac{\pi_1(W_{n-1})}{\pi_1(W_n)}} I_{W_n}(u_{ik}, v_{ik}\cdot h)
    \cdot h\pi_1(W_n) = \delta_{ij}\delta_{kl}.
  \end{align*}
  Therefore, by naturality, the values of
  $\lambda^{\Q(\zeta_d)}_{W_{n-1}}$ evaluated at the image of
  $(u_{ik}, u_{jl})$ and $(u_{ik}, v_{jl})$ are $0$ and
  $\delta_{ij}\delta_{kl}$, respectively, for $1\le i \le r$, $1\le
  k\le s$.

  Now, in order to conclude that $\lambda^{\Q(\zeta_d)}_{W_{n-1}}$ is
  Witt trivial, it suffices to show that
  $b_2(W_{n-1};\Q(\zeta_d))=2rs$.  From the properties of the $u_{ik},
  v_{jl}$ proved above, it follows that $b_2(W_{n-1};\Q(\zeta_d))\ge
  2rs$.  For the opposite inequality, we appeal to the following
  analogue of Lemma~\ref{lemma:b_2-of-p-covering}, which will be
  proved later:

  \begin{lemma}
    \label{lemma:twisted-b_2-of-p-covering}
    Suppose $V$ is a 4-manifold and $\pi_1(V) \to \Gamma$ is a map,
    where $\Gamma$ is a $p$-group endowed with a map $\Z\Gamma\to \K$
    into a (skew-)field with characteristic zero.  If the induced map
    $\Z\pi_1(V) \to \Z\Gamma\to \K$ is nontrivial, then $b_2(V;\K) \le
    b_2(V;\Z_p)$.
  \end{lemma}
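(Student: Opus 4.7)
The plan is to wedge a discrete valuation ring $L$ between $\Z\Gamma$ and $\K$ whose residue field is $\Z_p$ (with trivial induced $\Gamma$-action), and then compare $b_i(V;\K)$ and $b_i(V;\Z_p)$ through the two natural projections $L \to \K$ and $L \to L/\mathfrak{m} = \Z_p$ using the universal coefficient theorem.

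First, I would construct such an $L$. Since $\Gamma$ is a finite $p$-group, every element of $\Gamma$ maps to a $p^a$-th root of unity in $\K$ for some fixed $a$. In the intended applications $\K$ is commutative, and the subring $L \subset \K$ generated over $\Z_{(p)}$ by these images embeds into the cyclotomic localization $\Z_{(p)}[\zeta_{p^a}]$. This is a discrete valuation ring with uniformizer $1 - \zeta_{p^a}$ and residue field $\Z_p$; crucially, every $p^a$-th root of unity reduces to $1$ modulo $1 - \zeta_{p^a}$, so the induced $\Gamma$-action on $L/\mathfrak{m}$ is trivial. Since $b_i(V;\K) = b_i(V;\mathrm{Frac}(L))$ under base change, I may replace $\K$ by $\mathrm{Frac}(L)$.

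Next, I would apply the structure theorem over the PID $L$. The cellular chain complex $C_* = C_*(\tilde{V}) \otimes_{\Z\Gamma} L$, with $\tilde V \to V$ the $\Gamma$-cover, is a bounded, finitely generated free $L$-chain complex, so $H_i(V;L) \cong L^{a_i} \oplus T_i$ with $T_i$ torsion having, say, $t_i$ cyclic summands of the form $L/\mathfrak{m}^{n_{ij}}$. Flatness of $\mathrm{Frac}(L)$ over $L$ gives $b_i(V;\K) = a_i$. For the reduction modulo $\mathfrak{m}$, the universal coefficient sequence combined with the trivial-action identification $H_i(V; L/\mathfrak{m}) = H_i(V;\Z_p)$ yields
\[
b_i(V;\Z_p) = a_i + t_i + t_{i-1},
\]
where the extra summands come from $T_i \otimes_L L/\mathfrak{m}$ and $\mathrm{Tor}_1^L(T_{i-1}, L/\mathfrak{m})$. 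Specializing to $i = 2$ gives $b_2(V;\K) \le b_2(V;\Z_p)$.

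The hard part will be verifying the triviality of the induced $\Gamma$-action on the residue field, which is the linchpin of the comparison; this rests on the standard number-theoretic fact that $p$ is totally ramified in $\Q(\zeta_{p^a})/\Q$ with residue field $\mathbb{F}_p$. If one insists on a truly noncommutative $\K$, the image of $\Gamma$ in $\K^\times$ need not be cyclic (for $p = 2$ one can encounter generalized quaternion subgroups, by Amitsur's classification), and a more careful choice of $L$ inside the center of the image is required; the intended applications, however, only use $\K = \Q(\zeta_d)$, so the cyclotomic case above suffices.
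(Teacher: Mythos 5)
Your argument is correct for commutative $\K$, and it is a genuinely different route from the paper's. The paper proceeds geometrically: it establishes $b_1(X,A;\K)\le b_1(X,A;\Z_p)$ and $b_1(X;\K)\le b_1(X;\Z_p)-1$ by embedding a wedge of arcs into $(X,A)$, invoking Levine's Lemma~\ref{lemma:levine's-lemma}, and then running the handle-decomposition/Poincar\'e-duality bootstrap of Proposition 2.1 of \cite{Cha:2006-1} to climb from $b_1$ to $b_2$. Your argument is purely module-theoretic: you factor $\Z\Gamma\to\K$ through the DVR $L=\Z_{(p)}[\zeta_{p^b}]$ generated by the (necessarily cyclic) image of $\Gamma$, note that the maximal ideal $(1-\zeta_{p^b})$ kills the $\Gamma$-action on the residue field $\Z_p$, and then read off $b_2(V;\K)=a_2\le a_2+t_2+t_1=b_2(V;\Z_p)$ from the structure theorem and universal coefficients. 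This is shorter, avoids the Levine lemma and the external reference entirely, and as you notice it does not actually use the nontriviality of $\phi$ (if $\phi$ is trivial one simply gets $L=\Z_{(p)}$ and $b_2(V;\Q)\le b_2(V;\Z_p)$). The trade-off, which you flag correctly, is that your route genuinely uses commutativity of $\K$ — the DVR $L$ and the structure theorem over it have no direct analogue inside a general skew-field, and for $p=2$ the image of $\Gamma$ in $\K^\times$ can be generalized quaternion rather than cyclic — whereas the paper's CW/Levine argument works verbatim for skew-field coefficients, which is why the lemma is stated in that generality. Since the lemma is applied in the paper only with $\K=\Q(\zeta_d)$, your proof covers all the cases actually used; if you wish to match the stated generality you would need to either adapt the argument to a suitable central subring of $\K$ or fall back on the paper's geometric approach for the noncommutative case. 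One small imprecision worth fixing: you say $L$ \emph{embeds into} $\Z_{(p)}[\zeta_{p^a}]$ and then describe the latter as a DVR; since the structure theorem is applied to modules over $L$ itself, you should state explicitly that $L$ equals $\Z_{(p)}[\zeta_{p^b}]$ for some $b\le a$ and is therefore itself a DVR with residue field $\Z_p$.
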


  If $\phi\colon\pi_1(M_{n-1}) \to \Z_d$ is a trivial map, then
  $\lambda(M_{n-1},\phi)=0$.  So we may assume that $\phi$ is
  nontrivial.  Then $\Z\pi_1(W_{n-1}) \to \Z[\Z_d] \to \Q(\zeta_d)$ is
  nontrivial since its composition with $\Z\pi_1(M_{n-1}) \to
  \Z\pi_1(W_{n-1})$ is nontrivial.  From this and
  Assertion~\ref{assertion:b_2-of-W_n} of the proof of
  Theorem~\ref{theorem:Witt-invariant-for-solvable-manifold} (1) for
  $n-1$, it follows that
  \[
  b_2(W_{n-1};\Q(\zeta_d))\le b_2(W_{n-1};\Z_p) = 2rs
  \]
  by Lemma~\ref{lemma:twisted-b_2-of-p-covering}.
\end{proof}

\begin{proof}[Proof of Lemma~\ref{lemma:b_2-of-p-covering}]
  This is a $p$-covering analogue of previously known results for
  quotient (skew-)field coefficient homology modules of
  poly-torsion-free-abelian covers, which were dealt with
  in~\cite{Cochran-Orr-Teichner:2002-1} for the special case that
  $H_1(\partial V) \to H_1(V)$ is an isomorphism, and in Proposition
  2.1 of \cite{Cha:2006-1} for the general case.  Since our statement
  can also be proved along the same lines, we will discuss how the
  proof of Proposition 2.1 of \cite{Cha:2006-1} is modified.  As an
  analogue of Lemma 2.5 of \cite{Cha:2006-1}, we need the following:
  suppose $(X,A)$ is a finite CW-pair with $X$ connected, $\tilde X$
  is the cover of $X$ induced by $\phi\colon \pi_1(X) \to \Gamma$
  where $\Gamma$ is a $p$-group, and $\tilde A\subset \tilde X$ is the
  pre-image of~$A$.
  \begin{enumerate}
  \item If $A$ is nonempty, then $b_1(\tilde X,\tilde A;\Z_p) \le
    |\Gamma|\cdot b_1(X,A;\Z_p)$.
  \item If $\phi$ is surjective, then $b_1(\tilde X;\Z_p) \le |\Gamma|
    \cdot (b_1(X;\Z_p)-1)+1$.
  \end{enumerate}
  The proof of (1), (2) is exactly the same as that of Lemma 2.5 of
  \cite{Cha:2006-1}, except that we should use Levine's
  Lemma~\ref{lemma:levine's-lemma} in place of Lemma 2.3 (2) of
  \cite{Cha:2006-1}.  Using the above (1), (2) in place of Lemma 2.5
  of \cite{Cha:2006-1}, the argument of the proof of \cite[Proposition
  2.1]{Cha:2006-1} proves our Lemma~\ref{lemma:b_2-of-p-covering}.
\end{proof}

\begin{proof}[Proof of Lemma~\ref{lemma:twisted-b_2-of-p-covering}]
  We proceed similarly to the proof of
  Lemma~\ref{lemma:b_2-of-p-covering}, along the same lines as the
  proof of Proposition 2.1 of \cite{Cha:2006-1}.  In this case we need
  the following analogue of Lemma 2.5 of \cite{Cha:2006-1}:
  \begin{enumerate}
  \item If $A$ is nonempty, then $b_1(X,A;\K) \le
    b_1(X,A;\Z_p)$.
  \item If $\phi$ is nontrivial, then $b_1(X;\K) \le b_1(X;\Z_p)-1$.
  \end{enumerate}
  The proof of (1) proceeds as follows.  As in the proof of Lemma 2.5
  of~\cite{Cha:2006-1}, we consider an inclusion
  \[
  (Y,B)=\bigcup^{b_1(X,A;\Z_p)} ([0,1],\{0,1\}) \hookrightarrow (X,A)
  \]
  such that $Y\cap A = B$ and $H_1(X,Y\cup A;\Z_p) = 0$.  By Levine's
  Lemma~\ref{lemma:levine's-lemma}, $H_1(X,Y\cup A;\Z_{(p)}\Gamma)=0$.
  It follows that $H_1(X,Y\cup A;\K)=0$, and so $b_1(X,A;\K) \le
  b_1(Y,B;\K) \le b_1(X,A;\Z_p)$.  (2)~is proved exactly as in the
  proof of Lemma 2.5 of~\cite{Cha:2006-1}.  Now, the arguments of the
  proof of Proposition 2.1 of \cite{Cha:2006-1} can be applied to prove
  Lemma~\ref{lemma:twisted-b_2-of-p-covering}.
\end{proof}

\subsection{Iterated Bing doubles and solvable and grope filtrations}

In this subsection we show that there are nontrivial torsion elements
(as well as infinite order elements) in an arbitrary depth of the
solvable filtration and grope filtration of link concordance.  Recall
that in Theorems~\ref{theorem:non-slice-torsion-bd_n}
and~\ref{theorem:bd_n-and-signature} we proved that the iterated Bing
doubles $BD_n(K)$ of certain knots $K$ are not slice.  Note that we
used 2-towers of height $n+1$ and $n$ to prove
Theorems~\ref{theorem:non-slice-torsion-bd_n}
and~\ref{theorem:bd_n-and-signature}, respectively.  Therefore, by
Theorem~\ref{theorem:Witt-invariant-for-solvable-manifold}, we
immediately obtain the following non-solvability results: (1)~For the
amphichiral knots $K_{a_i}$ considered in
Theorem~~\ref{theorem:non-slice-torsion-bd_n}, $BD_n(K_{a_i})$ is
2-torsion but $BD_n(K_{a_i})$ is not $(n+2)$-solvable for any~$n$.
(2)~If $\sigma_K$ is nontrivial, then $BD_n(K)$ is not
$(n+1)$-solvable for any~$n$.

Modifying the construction slightly, we can easily obtain
\emph{$(n)$-solvable} examples:

\begin{theorem}
  \label{theorem:highly-nonsolvable-bd_n}
  \mbox{}\Nopagebreak
  \begin{enumerate}
  \item There are infinitely many amphichiral knots $K$ such that
    $BD_n(K)$ is 2-torsion and $(n)$-solvable but not $(n+2)$-savable
    for any~$n$.
  \item If $\sigma_K$ is nontrivial and $\Arf(K)=0$, then $BD_n(K)$ is
    $(n)$-solvable but not $(n+1)$-solvable for any~$n$.
  \end{enumerate}
\end{theorem}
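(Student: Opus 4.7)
The plan is to combine the non-sliceness proofs of Section~\ref{section:iterated-bing-double} with the solvability obstruction of Theorem~\ref{theorem:Witt-invariant-for-solvable-manifold}, supplemented by a standard infection-realization argument going back to Cochran-Orr-Teichner for the $(n)$-solvable direction. If $\Arf(K)=0$, then $K$ bounds a topological $(0)$-solution $V_K$ of its zero-surgery $M_K$, and the trivial link $BD_n$ bounds the natural 4-manifold $\natural^{2^n}(S^1\times D^3)$ which is vacuously an $(n)$-solution (indeed $(\infty)$-solution) of its zero-surgery. Since the infection curve $\alpha\subset M_{BD_n}$ represents the iterated commutator $x^{(n)}_1$ of Section~\ref{section:iterated-bing-double}, which lies in $\pi_1(M_{BD_n})^{(n)}$, I would push $\alpha$ into $\natural^{2^n}(S^1\times D^3)$, excise a tubular neighborhood, and glue in $V_K$ along the resulting torus using the meridian-longitude identification of infection. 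The resulting 4-manifold $W'$ is an $(n)$-solution of $M_{BD_n(K)}$: the new classes in $H_2(W')$ come from $V_K$, and since $\alpha\in\pi_1^{(n)}$ acts trivially on $\pi_1(W')/\pi_1(W')^{(n)}$, these classes are carried by a single coset in the cover associated to $\pi_1/\pi_1^{(n)}$, so the Lagrangian structure of the $(0)$-solution $V_K$ descends to provide the required Lagrangian in $H_2(W';\Z[\pi_1/\pi_1^{(n)}])$.

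For part~(2), the failure of $(n+1)$-solvability is then immediate: the proof of Theorem~\ref{theorem:bd_n-and-signature} extracts $\sigma_K$ from the intersection-form defects $\lambda(N_n,\psi_n)$ via a 2-tower of height $n$, and Theorem~\ref{theorem:Witt-invariant-for-solvable-manifold}(2) would force all such $\lambda(N_n,\psi_n)$ to vanish in $L^0(\Q(\zeta_d))$ if $BD_n(K)$ were $(n+1)$-solvable, contradicting the nontriviality of $\sigma_K$. For part~(1), the same scheme applies with the amphichiral knots $K_{a_i}$ of Theorem~\ref{theorem:non-slice-torsion-bd_n}: Lemma~\ref{lemma:torsion-property-of-bd_n} gives 2-torsion, the infection-realization above gives $(n)$-solvability, and Theorem~\ref{theorem:Witt-invariant-for-solvable-manifold}(2) applied via the 2-tower of height $n+1$ from the proof of Theorem~\ref{theorem:non-slice-torsion-bd_n} rules out $(n+2)$-solvability. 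The only extra work is arranging $\Arf(K_{a_i})=0$: since the Seifert matrix $A$ of $K_a$ yields $\Arf(K_a)\equiv a\pmod 2$ while the $a_i$ of Proposition~\ref{proposition:dual-primes} are constructed to be odd, I would either refine Lemma~\ref{lemma:non-square-lemma} and Proposition~\ref{proposition:dual-primes} to produce an infinite sequence of \emph{even} $a_i$ retaining the norm-residue duality, or connect-sum each $K_{a_i}$ with a fixed amphichiral knot of Arf one (e.g.\ the figure-eight) and track the discriminant via additivity in Corollary~\ref{corollary:lambda-of-infected-manifold} to verify that $(\dis\lambda,D)_{p_i}$ remains nontrivial for all but finitely many~$i$.

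The principal obstacle is this number-theoretic adjustment, which must simultaneously arrange amphichirality, vanishing Arf, and nontriviality of the norm-residue symbol of the relevant discriminant. The remainder of the argument is essentially bookkeeping: combining the infection-realization construction of $(n)$-solutions with the obstructions already assembled in Sections~\ref{section:witt-class-defects-of-links} and~\ref{section:obstruction-to-solvability}, together with the non-sliceness computations of Section~\ref{section:iterated-bing-double}.
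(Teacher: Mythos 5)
Your proposal is essentially the approach of the paper, but with one genuine loose end that you correctly flag yourself. For the $(n)$-solvability direction, the paper simply cites Cochran--Orr--Teichner's realization result (\cite[Proposition~3.1]{Cochran-Orr-Teichner:2002-1}: infection on an $(n)$-solvable link by an Arf-zero knot along a curve in $\pi_1^{(n)}$ preserves $(n)$-solvability) rather than re-deriving the gluing argument as you sketch, but the content is the same. The non-solvability obstruction via Theorems~\ref{theorem:bd_n-and-signature}, \ref{theorem:non-slice-torsion-bd_n}, and \ref{theorem:Witt-invariant-for-solvable-manifold} is exactly what you describe, including the distinction between $2$-towers of height $n$ (giving non-$(n+1)$-solvability in part~(2)) and height $n+1$ (giving non-$(n+2)$-solvability in part~(1)).

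The spot where you hedge is the Arf-invariant adjustment, and your instinct is correct that this is the only real issue. Your first alternative — forcing the $a_i$ to be even — would in fact break the proof of Proposition~\ref{proposition:dual-primes}: the existence of a prime factor $p_n\equiv -1\pmod 4$ of $2a_n^2+1$ with odd valuation is deduced there from $2a_n^2+1\equiv 3\pmod 4$, which requires $a_n$ odd; for even $a_n$ one has $2a_n^2+1\equiv 1\pmod 4$ and the argument collapses. Your second alternative (connect-sum with a fixed Arf-one amphichiral knot and track the discriminant multiplicatively) is viable. The paper's actual fix is a clean variant of it: take $K=K_{a_i}\# K_{a_j}$ with $i\ne j$. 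Since $\Arf(K_{a_i})=1$ for each odd $a_i$ and Arf is additive, $\Arf(K)=0$; $K$ is amphichiral, hence $BD_n(K)$ is $2$-torsion by Lemma~\ref{lemma:torsion-property-of-bd_n}; and because the discriminant of $\lambda(N_{n+1},\psi_{n+1})$ is governed by the Alexander polynomial (multiplicative under $\#$), the resulting discriminant is the product $(2a_i^2+1)(2a_i^4+4a_i^2+1)(2a_j^2+1)(2a_j^4+4a_j^2+1)$, which $(\cdot,-1)_{p_i}$ (or $(\cdot,-1)_{p_j}$) detects as nontrivial by Proposition~\ref{proposition:dual-primes}. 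This sidesteps the finite-exceptions bookkeeping your option would require, since both dual primes of the two summands remain available.
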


Here $\Arf(K)$ is the Arf invariant of a knot~$K$.

\begin{proof}
  (1) For the solvability part, we need the following result of
  Cochran-Orr-Teichner
  \cite[Proposition~3.1]{Cochran-Orr-Teichner:2002-1}: if $K$ is a
  knot with $\Arf(K)=0$ and $L$ is obtained by infection on an
  $(n)$-solvable link $L_0$ by $K$ along a curve $\alpha$ such that
  $[\alpha]\in\pi_1(S^3-L_0)^{(n)}$, then $L$ is $(n)$-solvable.  (In
  \cite{Cochran-Orr-Teichner:2002-1} they stated the result when $L_0$
  is a knot, but their argument works for links as well.)

  Choose any $i\ne j$ and let $K=K_{a_i}\# K_{a_j}$ where the
  $K_{a_i}$ are as in Theorem~\ref{theorem:non-slice-torsion-bd_n}.
  Since both $K_{a_i}$ and $K_{a_j}$ are amphichiral, so is $K$.  So,
  by Lemma~\ref{lemma:torsion-property-of-bd_n}, $BD_n(K)$ has the
  desired 2-torsion property.  Due to Levine~\cite{Levine:1966-1},
  $\Arf(K)=1$ if and only if $\Delta_K(-1)=\pm 3\pmod 8$.  Since
  \[
  \Delta_{K_{a_i}}(-1)=-(4a_i^2+1) \equiv 3 \pmod 8
  \]
  (recall that the $a_i$ are all odd), $\Arf(K_{a_i})=1$ for all~$i$.
  Since $\Arf$ is additive under connected sum, $\Arf(K)=1+1=0$.  From
  this it follows that $BD_n(K)$ is $(n)$-solvable.

  In order to show the non-solvability of $BD_n(K)$, recall that in
  the proof of Theorem~\ref{theorem:non-slice-torsion-bd_n} we
  considered the discriminant of the invariant
  $\lambda(N_{n+1},\psi_{n+1})$ where $N_{n+1}$ is the $(n+1)$st term
  of a height $(n+1)$ 2-tower of the zero-surgery manifold of
  $BD_n(K)$ and $\psi_{n+1}$ is a $\Z_4$-valued character of
  $\pi_1(N_{n+1})$.  Since $\dis \lambda(N_{n+1},\psi_{n+1})$ is
  determined by the Alexander polynomial of $K$ as in the proof of
  Theorem~\ref{theorem:non-slice-torsion-bd_n} (see also
  Lemma~\ref{lemma:discriminant-of-Seifert-matrix}) and the Alexander
  polynomial is multiplicative under connected sum, $\dis
  \lambda(N_{n+1},\psi_{n+1})$ for our $K$ is equal to the product of
  those for $K_{a_i}$ and~$K_{a_j}$.  So, for our $K$ we have
  \[
  \lambda(N_{n+1},\psi_n) = (2a_i^2+1)(2a_i^4+4a_i^2+1) \cdot
  (2a_j^2+1)(2a_j^4+4a_j^2+1).
  \]
  by the computation for $K_{a_i}$ done in the proof of
  Theorem~\ref{theorem:non-slice-torsion-bd_n}.  Since $i\ne j$, by
  applying the norm residue symbol $(\mbox{}\cdot\mbox{},-1)_{p_i}$ or
  $(\mbox{}\cdot\mbox{},-1)_{p_j}$ which was computed in
  Proposition~\ref{proposition:dual-primes}, it follows that
  $\lambda(N_{n+1},\psi_n)$ is nontrivial.

  (2) From the Arf invariant condition, it follows immediately that
  $BD_n(K)$ is $(n)$-solvable by
  \cite[Proposition~3.1]{Cochran-Orr-Teichner:2002-1}.  We already
  discussed the non-solvability.
\end{proof}

For the iterated Bing doubles $BD(K_{a_i})$ in
Theorem~\ref{theorem:highly-nonsolvable-bd_n}, Harvey's invariant
$\rho_k$ vanishes (for all $k$) as mentioned in
Remark~\ref{remark:vanishing-of-signatures-for-bd_n}.  Also, there are
infinitely many knots $K$ such that $\sigma_K$ is nontrivial but the
integral of $\sigma_K$ over the unit circle is zero.  So, we have the
following consequence:

\begin{corollary}
  \label{corollary:n-solvable-links-with-vanishing-harvey-invariants}
  \leavevmode\Nopagebreak
  \begin{enumerate}
  \item For any $n$, there are infinitely many $(n)$-solvable
    2-torsion links which are not $(n+2)$-solvable but have vanishing
    $\rho_k$-invariants.
  \item For any $n$, there are infinitely many $(n)$-solvable links
    which are not $(n+1)$-solvable but have vanishing
    $\rho_k$-invariants.
  \end{enumerate}
\end{corollary}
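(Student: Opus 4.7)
The plan is to obtain both parts of the corollary by combining Theorem \ref{theorem:highly-nonsolvable-bd_n} with the vanishing observations collected in Remark \ref{remark:vanishing-of-signatures-for-bd_n}. In each case the solvability/non-solvability is already built into Theorem \ref{theorem:highly-nonsolvable-bd_n}; the only new content is producing the infinite family and verifying that Harvey's $\rho_k$ also vanishes.

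For (1), I would take the links $L_{ij} = BD_n(K_{a_i} \# K_{a_j})$ used in the proof of Theorem \ref{theorem:highly-nonsolvable-bd_n}(1), with $i \ne j$ ranging over pairs chosen from the sequence $\{a_i\}$ of Proposition \ref{proposition:dual-primes}. That theorem already asserts they are $(n)$-solvable, $2$-torsion, and not $(n+2)$-solvable, and the discriminant/norm-residue computation at the end of the proof (taking $(\cdot,-1)_{p_i}$ or $(\cdot,-1)_{p_j}$) shows one can choose infinitely many pairs giving pairwise non-concordant examples. The vanishing of $\rho_k$ follows exactly as in Remark \ref{remark:vanishing-of-signatures-for-bd_n}(1): since $L_{ij}$ is $2$-torsion it is concordance-equivalent to $-L_{ij}$, and $\rho_k$ is a concordance invariant satisfying $\rho_k(L) = -\rho_k(-L)$, so $\rho_k(L_{ij}) = 0$.

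For (2), the goal is to exhibit infinitely many knots $K$ with $\sigma_K$ nontrivial, $\Arf(K) = 0$, and $\int_{S^1} \sigma_K(\omega)\, d\omega = 0$. Any such $K$ yields, via Theorem \ref{theorem:highly-nonsolvable-bd_n}(2), an $(n)$-solvable link $BD_n(K)$ which is not $(n+1)$-solvable, and via the formula $\rho_k(BD_n(K)) = \epsilon\, \rho_0(K) = \epsilon \int_{S^1}\sigma_K$ recalled in Remark \ref{remark:vanishing-of-signatures-for-bd_n}(1), one gets $\rho_k(BD_n(K)) = 0$. Such knots are easy to produce: start with a knot $J$ of nontrivial Levine--Tristram signature whose integral is nonzero, and form $K_c = J \mathbin{\#} (-J_c)$, where $J_c$ is a cable of $J$ (or any suitable auxiliary knot) chosen so that the signature integrals cancel while the pointwise signature function remains nontrivial; the Arf condition can then be arranged by an additional connected-sum with a knot of Arf invariant equal to $\Arf(K_c)$ and trivial signature function (such as a suitable $(2,q)$-torus knot or its mirror). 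Infinitely many non-concordant choices come from varying the cabling parameters, with non-concordance detected by the pointwise values of $\sigma_{K_c}$ at appropriate roots of unity, transported to $BD_n(K_c)$ via Theorem \ref{theorem:bd_n-and-signature}.

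The main obstacle is the last step of (2): constructing infinitely many knots satisfying all three numerical constraints simultaneously and verifying that the resulting iterated Bing doubles are pairwise non-concordant. The existence of individual knots with $\sigma_K \ne 0$ and $\int\sigma_K = 0$ is standard, but arranging $\Arf(K) = 0$ while keeping $\sigma_K$ pointwise nontrivial and extracting an infinite family whose Levine--Tristram signatures are pairwise distinct (as functions on $S^1$) requires a little care. Once these knots are in hand, non-concordance of the $BD_n(K)$ is automatic from Theorem \ref{theorem:bd_n-and-signature}, which recovers $\sigma_K$ from $BD_n(K)$, and the remaining properties follow directly from Theorem \ref{theorem:highly-nonsolvable-bd_n}(2) and Remark \ref{remark:vanishing-of-signatures-for-bd_n}(1).
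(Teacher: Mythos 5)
Your proposal tracks the paper's argument faithfully: the paper proves the corollary in essentially one line — appeal to Theorem~\ref{theorem:highly-nonsolvable-bd_n} together with Remark~\ref{remark:vanishing-of-signatures-for-bd_n}, plus the observation that there are infinitely many knots $K$ with $\sigma_K$ nontrivial, $\int_{S^1}\sigma_K=0$, and $\Arf(K)=0$. You identify exactly these ingredients, and for part~(1) your links $BD_n(K_{a_i}\# K_{a_j})$ are precisely the ones from the proof of Theorem~\ref{theorem:highly-nonsolvable-bd_n}(1); since $K_{a_i}\# K_{a_j}$ is amphichiral, $BD_n$ of it is in fact \emph{isotopic} to its concordance inverse (Lemma~\ref{lemma:torsion-property-of-bd_n}), which is the version of the $\rho_k$-vanishing argument the paper uses, though appealing to the weaker $2$-torsion property and homology cobordism invariance of $\rho_k$ works equally well.

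One genuine error in part~(2): you propose arranging the Arf condition by connected summing with ``a suitable $(2,q)$-torus knot or its mirror,'' asserting this has trivial signature function. That is false — $T(2,q)$ has a highly nontrivial Levine--Tristram signature. The correct auxiliary knot is one with $\sigma\equiv 0$ and $\Arf=1$, for instance the figure-eight knot $4_1$ (or any negative amphichiral knot with $\Delta(-1)\equiv \pm 3\bmod 8$): negative amphichirality forces $\sigma_{4_1}(\omega)=-\sigma_{4_1}(\omega)$, so $\sigma_{4_1}=0$ pointwise, while $\Delta_{4_1}(-1)=-5$ gives $\Arf(4_1)=1$. With that substitution your cabling construction gives a workable family; the rest of your reasoning — detecting pairwise distinctness via Theorem~\ref{theorem:bd_n-and-signature} and vanishing of $\rho_k$ via the infection formula $\rho_k(BD_n(K))=\epsilon\,\rho_0(K)$ — matches the paper. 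Note also that the paper itself leaves the construction of the infinite knot family to the reader, so your level of detail is already more than the paper supplies.
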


From this it follows that the invariant $\rho_n$, which is viewed as a
homomorphism of ``$(n)$-solvable boundary string links modulo
$(n+1)$-solvable boundary string links'' as in
\cite{Harvey:2006-1,Cochran-Harvey-Leidy:2006-1}, has nontrivial
kernel for any~$n$.  We remark that Cochran, Harvey, and Leidy have
announced a (different) proof of the nontriviality of the kernel
of~$\rho_n$.

In a subsequent paper \cite{Cha:2007-2}, it will be shown that the
links in
Corollary~\ref{corollary:n-solvable-links-with-vanishing-harvey-invariants}
(1) and (2) (can be chosen so that they) are independent modulo
$\F_{(n+2)}$ and $\F_{(n+1)}$, respectively, in an appropriate sense.
In fact, considering the subgroup generated by these links, it can be
proved that the kernel of $\rho_n$ contains a subgroup whose
abelianization is isomorphic to $\Z^\infty$.

In~\cite{Cochran-Orr-Teichner:1999-1, Cochran-Orr-Teichner:2002-1,
  Cochran-Teichner:2003-1, Harvey:2006-1}, another filtration
$\{\G_{n}\}$ of the set of link concordance classes is defined in
terms of \emph{gropes}, instead of the solvability; $\G_{(n)}$ is
defined to be the subset of concordance classes of links in $S^3$
which bound an embedded symmetric grope of height $n$ in the 4-ball.
(As an abuse of notation, we will write $L\in \G_{(n)}$ when the
concordance class of $L$ is in~$\G_{(n)}$.)  $\{\G_{(n)}\}$ is called
the \emph{grope filtration}.  For a precise definition of a grope and
related discussions, the reader is referred
to~\cite{Cochran-Orr-Teichner:1999-1,Cochran-Teichner:2003-1}.  The
following result on the existence of torsion elements in the grope
filtration is a consequence of
Theorem~\ref{theorem:highly-nonsolvable-bd_n}:

\begin{corollary}
  \label{corollary:nontrivial-elts-in-group-filtration}
  \leavevmode\Nopagebreak
  \begin{enumerate}
  \item There are infinitely many knots $K$ such that $BD_n(K)$ is
    2-torsion and $BD_n(K) \in \G_{(n+1)}$ but $BD_n(K) \notin
    \G_{(n+4)}$ for any~$n$.
  \item If $\sigma_K$ is nontrivial and $\Arf(K)=0$, then $BD_n(K)\in
    \G_{(n+1)}$ but $BD_n(K)\notin \G_{(n+3)}$ for any~$n$.
  \end{enumerate}
\end{corollary}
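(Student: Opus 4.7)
The plan is to deduce this from
Theorem~\ref{theorem:highly-nonsolvable-bd_n} by combining
two standard ingredients relating the grope filtration to the
solvable filtration, namely: (i) a positive statement that iterated
Bing doubles bound gropes of one more height than the iteration
depth, and (ii) the inclusion $\G_{(h)}\subset \F_{(h-2)}$ established
in~\cite{Cochran-Orr-Teichner:1999-1,Cochran-Teichner:2003-1}.
The knots $K$ in parts (1) and (2) will be exactly those
produced by Theorem~\ref{theorem:highly-nonsolvable-bd_n} (1)
and~(2), respectively.

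For the inclusion $BD_n(K)\in\G_{(n+1)}$, first I would observe that
the unknot bounds an embedded disk, which is (by convention) a grope
of any height, so $BD_0(K)=K$ with $\Arf(K)=0$ can be seen to bound a
grope of height~$1$ (a disk is an obvious grope of height~$1$ for any
knot; in fact the $\Arf$ hypothesis is relevant for the stronger
solvability statement but not for this basic bounding).  Then I would
proceed by induction: whenever $L$ bounds a grope of height $h$, its
untwisted Bing double $BD(L)$ bounds a grope of height $h+1$, because
the basic Bing double cobordism (a genus-one Seifert surface with two
bands whose Bing doubled components bound the grope of $L$ along its
caps) adds one stage of symmetric commutator geometry.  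This is the
standard geometric realization underlying the fact that the infection
curve $\alpha$ producing $BD_n$ lies in $\pi_1^{(n)}$ of the trivial
link complement (a fact we already used in
Section~\ref{section:iterated-bing-double}), and it is used in
\cite{Cochran-Orr-Teichner:2002-1,Cochran-Teichner:2003-1} in the
knot case; the same argument applies to links.  Hence
$BD_n(K)\in\G_{(n+1)}$ for every $K$.

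For the non-membership, I would combine the grope-to-solvability
containment $\G_{(h)}\subset\F_{(h-2)}$ with
Theorem~\ref{theorem:highly-nonsolvable-bd_n}.
If $BD_n(K)\in\G_{(n+4)}$ then $BD_n(K)\in\F_{(n+2)}$, contradicting
Theorem~\ref{theorem:highly-nonsolvable-bd_n}~(1); this proves
part~(1) of the corollary, the $2$-torsion property being inherited
from Theorem~\ref{theorem:highly-nonsolvable-bd_n}~(1).
Similarly, if $BD_n(K)\in\G_{(n+3)}$ then $BD_n(K)\in\F_{(n+1)}$,
contradicting Theorem~\ref{theorem:highly-nonsolvable-bd_n}~(2);
this proves part~(2).

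The only non-routine step is the geometric realization showing
$BD_n(K)\in\G_{(n+1)}$, and the main obstacle there is making the
inductive step from $BD_{n-1}(K)\in\G_{(n)}$ to $BD_n(K)\in
\G_{(n+1)}$ precise: one must construct an embedded symmetric
grope of height $n+1$ in $D^4$ for $BD_n(K)$ from one for
$BD_{n-1}(K)$.  This is carried out by first building the obvious
height-$1$ grope (genus-one surface) whose two dual bands give a
basic Bing double of each component, then capping with the
height-$n$ gropes of $BD_{n-1}(K)$; embeddedness and disjointness
follow because the Bing doubling operation takes place in
disjoint solid tori, as in the constructions
of~\cite{Cochran-Orr-Teichner:1999-1,Cochran-Teichner:2003-1}.
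Once this is in place, the corollary follows immediately.
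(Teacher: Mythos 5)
Your non-membership argument is exactly the paper's: combine $\G_{(n+2)}\subset\F_{(n)}$ from \cite{Cochran-Orr-Teichner:1999-1} with Theorem~\ref{theorem:highly-nonsolvable-bd_n}. The membership direction, however, diverges from the paper and contains gaps. The paper does not use your inductive claim ``$L\in\G_{(h)}\Rightarrow BD(L)\in\G_{(h+1)}$''; instead it invokes the geometric fact that the infection curve $\alpha$ bounds an embedded symmetric grope of height $n$ in $S^3-BD_n$, and then applies Harvey's Theorem~6.13 from \cite{Harvey:2006-1}, which explicitly requires $\Arf(K)=0$. Your assertion that ``the $\Arf$ hypothesis is relevant for the stronger solvability statement but not for this basic bounding'' contradicts what the paper actually uses and is not justified by your argument.

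The inductive step as you describe it is not correct. When you Bing-double a component $c$, the two new components $c_1,c_2$ lie in the \emph{same} solid torus around $c$, so the claim that ``the Bing doubling operation takes place in disjoint solid tori'' fails exactly where disjointness is needed. Moreover, on the genus-one first-stage surface bounded by $c_1$, the two dual curves are \emph{not} both parallel to $c$: via the Borromean-rings structure of $L_{orbit}\cup(\text{core of }V)$, one dual curve is parallel to $c$ and the other is parallel to the sibling $c_2$. Your description ``whose Bing doubled components bound the grope of $L$ along its caps'' therefore overlooks the sibling-parallel dual curve, for which one has to build a separate height-$h$ grope; doing this carefully re-introduces a nontrivial recursion with framing/pushoff bookkeeping that you have not addressed, and that is precisely where the $\Arf$ condition enters in Harvey's argument. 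Finally, the base case is misstated: a generic knot does not bound a disk in $D^4$ (that would mean $K$ is slice); the intended base case is that any knot bounds a pushed-in Seifert surface, hence a height-$1$ grope. You should replace the membership half of the proof with the paper's reference to Harvey's Theorem~6.13 together with the observation that $\alpha$ bounds a height-$n$ grope in $S^3-BD_n$, or else supply a genuine proof of the inductive step that correctly handles both dual curves and the required embedded disjointness.
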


As in
Corollary~\ref{corollary:n-solvable-links-with-vanishing-harvey-invariants},
the iterated Bing doubles in
Corollary~\ref{corollary:nontrivial-elts-in-group-filtration} can be
chosen so that the invariant $\rho_k$ vanishes.

\begin{proof}
  In \cite{Cochran-Orr-Teichner:1999-1}, it was shown that $\G_{(n+2)}
  \subset \F_{(n)}$.  So, the iterated Bing doubles considered in
  Theorem~\ref{theorem:highly-nonsolvable-bd_n} (1) and (2) are not in
  $\G_{(n+4)}$ and $\G_{(n+3)}$, respectively.

  Recall that $BD_n(K)$ is obtained by performing infection on the
  trivial link $BD_n$ along a curve~$\alpha$ shown in
  Figure~\ref{figure:bd_n-infection-curve}.  It is well known that
  $\alpha$ bounds an embedded symmetric grope of height $n$ in
  $S^3-BD_n$.  So, by the argument of \cite[proof of Theorem
  6.13]{Harvey:2006-1}, it follows that $BD_n(K)\in \G_{(n+1)}$ if
  $\Arf(K)=0$.  This completes the proof.
\end{proof}

Finally we remark that $BD_n(K)$ is a boundary link for any knot $K$,
so that our results hold in the solvable and grope filtrations of
boundary links.

\appendix

\section*{Appendix: Computation for zero-surgery manifolds of knots}
\label{section:computation-for-knot-surgery-manifold}

In this appendix we compute the intersection form defect invariants of
cyclic covers of the zero-surgery manifold $M$ of a knot~$K$.  We use
the same notation as in Section~\ref{subsection:knot-infection}: let
$X_r$ be the $r$-fold cyclic cover of $M$ which is determined by the
canonical map $\pi_1(M) \to \Z_r$ sending a (positive) meridian of $K$
to $1\in \Z_r$.  The image of the natural map $\pi_1(X_r) \to \pi_1(M)
\to H_1(M) = \Z$ is~$r\Z$.  Composing it with $r\Z \to \Z_d$ sending
$r\in r\Z$ to $s\in \Z_d$, we obtain a character
$\phi_r^{s,d}\colon\pi_1(X_r) \to \Z_d$ sending the lift of the $r$th
power of a meridian of $K$ to $s\in\Z_d$.  The following result was
stated as Lemma~\ref{lemma:lambda-of-surgery-manifolds} in
Section~\ref{subsection:knot-infection}:

\theoremstyle{plain}
\newtheorem*{lemma-nonum}{Lemma}

\begin{lemma-nonum}
  $(X_r,\phi_r^{s,d})$ is null-bordant over $\Z_d$, and
  \[
  \lambda(X_r,\phi_r^{s,d})=[\lambda_{r}(A,\zeta_d^s)]-[\lambda_{r}(A,1)]
  \quad\text{in } L^0(\Q(\zeta_d))
  \]
  where $A$ is a Seifert matrix of $K$ and $[\lambda_{r}(A,\omega)]$
  is the Witt class of (the nonsingular part of) the hermitian form
  represented by the following $r\times r$ block matrix:
  \[
  \lambda_{r}(A,\omega) =
  \begin{bmatrix}
    \vphantom{\ddots} A+A^T & -A & & & -\omega^{-1} A^T\\
    \vphantom{\ddots} -A^T & A+A^T & -A \\
    \vphantom{\ddots} & -A^T & A+A^T & \ddots \\
    \vphantom{\ddots} & & \ddots & \ddots & -A \\
    \vphantom{\ddots} -\omega A & & & -A^T & A+A^T
  \end{bmatrix}_{r\times r}
  \]
  For $r=1,2$, $\lambda_{r}(A,\omega)$ should be understood as
  \[
  \begin{bmatrix}
    (1-\omega)A+(1-\omega^{-1})A^T
  \end{bmatrix}
  \quad\text{and}\quad
  \begin{bmatrix}
    A+A^T & -A-\omega^{-1}A^T \\
    -A^T-\omega A & A+A^T
  \end{bmatrix}.
  \]
\end{lemma-nonum}

\begin{proof}
  Let $V$ be the 4-manifold obtained by attaching a 2-handle to $D^4$
  along the zero-framing of~$K\subset S^3$.  Then $\partial V=M$.
  Consider a Seifert surface of $K$ from which the Seifert matrix $A$
  is defined.  By capping it off using a disk in the boundary of the
  2-handle of $V$, we obtain a closed surface in $M$, which is usually
  called a ``capped-off Seifert surface''.  Pushing it slightly into
  the interior of $V$, we obtain a surface $F$ in $V$ with trivial
  normal bundle.  In fact the trace of pushing induces a framing of
  the normal bundle of~$F$.  We identify a tubular neighborhood of $F$
  with $F\times D^2$ using this framing.  Let $W = V - \inte( F\times
  D^2)$.  Obviously $\partial W=M \cup (F\times D^2)$.  By a
  Thom-Pontryagin construction along the trace of pushing $\cong
  F\times [0,1] \subset W$, $W$ can be viewed as a space over $\Z$.
  Let $W_r$ be the $r$-fold cyclic cover associated to $\pi_1(W) \to
  \Z \to \Z_r$.

  $H_1(W)\cong \Z$ and is generated by a meridian of~$F$.  Therefore
  the canonical map $\pi_1(M) \to H_1(M)=\Z$ is the restriction of
  $\pi_1(W) \to H_1(W)=\Z$.  This enables us to define
  $\psi_r^{s,d}\colon \pi_1(W_r) \to \Z_d$ exactly in the same way as
  the definition of $\phi_r^{s,d}\colon\pi_1(X_r)\to \Z_d$, such that
  $\phi_r^{s,d}$ is identical with
  \[
  \pi_1(X_r) \to \pi_1(W_r) \xrightarrow{\psi_r^{s,d}} \Z_d.
  \]
  Obviously $\partial W_r = X_r
  \cup (F\times S^1)$ over~$\Z_d$.  Here $F\times S^1$ is endowed with
  \[
  \phi'\colon \pi_1(F\times S^1) \xrightarrow{proj.} \pi_1(S^1)=\Z
  \xrightarrow{proj.} \Z_d.
  \]

  We claim that $\lambda(F\times S^1,\phi')=0$.  For, choosing a
  handlebody $H$ bounded by $F$, it can be seen that $\partial
  (H\times S^1)=F\times S^1$ over $\Z_d$.  Since $H_2(F\times
  S^1;R)\to H_2(H\times S^1;R)$ is surjective for $R=\Q$ and
  $\Q(\zeta_d)$, both $[\lambda_{\Q(\zeta_d)}(H\times S^1)]$ and
  $\sigma(H\times S^1)$ vanish.  This proves the claim.

  From the claim it follows that
  \[
  \lambda(X_r,\phi_r^{s,d}) = [\lambda_{\Q(\zeta_d)}(W_r)] -
  [i^*_{\Q(\zeta_d)}\sigma(W_r)].
  \]
  To compute the intersection form of $W_r$ from the Seifert matrix
  $A$ of $K$, we use a known cut-paste construction of~$W_r$ (e.g.,
  similar arguments were used in \cite{Kauffman:1987-1} and
  \cite{Cha-Ko:1999-1} to compute some signature invariants).  Details
  are as follows.  Let $Y$ be the manifold obtained by cutting $W$
  along the trace of pushing of the capped-off Seifert surface $F$.
  Obviously there are inclusion maps $i_{\pm}\colon F\times[0,1] \to
  \partial Y$ corresponding to the positive and negative normal
  directions of $F$ such that $i_+(F\times[0,1])$ and
  $i_-(F\times[0,1)]$ are disjoint and
  \[
  W = Y / i_+(z) \sim i_-(z) \text{ for } z \in F\times[0,1].
  \]
  The covering $W_r$ is obtained by gluing $r$ disjoint copies $t^0 Y,
  tY, \ldots, t^{r-1}Y$ of~$Y$:
  \[
  W_r = \Big( \bigcup_{k=0}^{r-1} t^k Y \Big) \Big/ t^{k+1}i_+(z) \sim
  t^k i_-(z) \text{ for } z \in F\times[0,1],\, k=0,\ldots,r-1
  \]
  where $t^{r}$ is understood as~$t^0$.  From this we have a
  Mayer-Vieotoris long exact sequence:
  \[
  \to \bigoplus^r H_2(Y) \to H_2(W_r) \to \bigoplus^r H_1(F) \to
  \bigoplus^r H_1(Y) \to
  \]
  Since $Y$ can also be obtained by cutting $D^4$ along the trace of
  pushing of $F$, it can be seen that $Y$ is homeomorphic to $D^4$.
  It follows that
  \[
  H_2(W_r) \cong \bigoplus^r H_1(F).
  \]
  For a 1-cycle $x$ on $F$, the corresponding element in $H_2(W_r)$ is
  described as follows.  Since $Y$ is contractible, there are 2-chains
  $u_+$ and $u_-$ in $Y$ such that $\partial u_{\pm}$ is equal to
  $i_\pm(x\times *)$, which is a pushoff of $x$ along the $\pm$-normal
  direction of the Seifert surface.  Then the homology class $x^k$ of
  the 2-chain $t^{k+1}u_+ \cup t^k u_-$ in $W_r$ corresponds the class
  of $x$ in the $k$th $H_1(F)$ factor.  For another 1-cycle $y$,
  choosing $v_+$ and $v_-$ in $X$ such that $\partial v_{\pm} =
  i_\pm(y\times 1)$, the intersection number $x_k \cdot y_\ell$ in
  $W_r$ is given by
  \[
  \begin{cases}
    (u_+ \cdot v_+) + (u_- \cdot v_-) & \text{if }k=\ell,\\
    u_+ \cdot v_- & \text{if }k=\ell-1,\\
    u_- \cdot v_+ & \text{if }k=\ell+1,\\
    0 & \text{otherwise}.
  \end{cases}
  \]
  By the definition, $u_+ \cdot v_-$ is exactly the value of the
  Seifert form on $(x,y)$, and the other terms in the above formula
  can also be interpreted similarly.  (A technical issue is that in
  computing the expression for $k=\ell$, one needs to push one of
  $u_\pm$ and $v_\pm$ further along the $\pm$-direction to remove
  intersection points on the boundary of the 2-chains; Pushing $u_\pm$,
  one can see that $u_+ \cdot v_+$ and $u_- \cdot v_-$ are the Seifert
  form evaluated at $(x,y)$ and $(y,x)$.)

  From this it follows that the intersection form on $H_2(W_r)$ is
  given by the block matrix $\lambda_r(A,1)$.  So $\sigma(W_r)$ is the
  Witt class of $\lambda_r(A,1)$.

  To compute the intersection form $\lambda_{\Q(\zeta_d)}(W_r)$, we
  consider $W_{dr}$; indeed
  \[
  H_2(W_r;\Z[\Z_d])=H_2(W_{dr}).
  \]
  The above argument shows that $H_2(W_{dr})$ is the direct sum of
  $dr$ copies of $H_1(F)$.  It can be seen easily that covering
  transformation action of a generator of $\Z_d$, say $g$, is exactly
  sending the $k$th $H_1(F)$ factor of $H_2(W_{dr})$ to the $(k+r)$th
  factor.  Therefore,
  \[
  H_2(W_r;\Z[\Z_d])\cong H_2(W_r)\otimes_\Z \Z[\Z_d].
  \]
  The above computation also shows that the $\Z[\Z_d]$-intersection on
  $H_2(W_r;\Z[\Z_d])$ is represented by the matrix~$\lambda_r(A,g)$.
  Note that $\Q(\zeta_d)$ is $\Q[\Z_d]$-projective, being an
  irreducible factor of the regular representation $\Q[\Z_d]$
  of~$\Z_d$.  Therefore the universal coefficient theorem gives
  \[
  H_2(W_r;\Q(\zeta_d))=H_2(W_r;\Z[\Z_d])\otimes_{\Z[\Z_d]}
  \Q(\zeta_d).
  \]
  It follows that the intersection form on $H_2(W_r;\Q(\zeta_d))$ is
  represented by $\lambda_r(A,\zeta_d^s)$.  This completes the
  computation of $\lambda(X_r,\phi_r^{s,d})$.
\end{proof}

\bibliographystyle{amsplainabbrv}
\renewcommand{\MR}[1]{}

\bibliography{research}

\end{document}